\newcommand{\ignore}[1]{\relax}
\title%[]
{Successive minimum spanning trees}
\date{4 June 2019}
\author[Svante Janson]{Svante Janson}
\thanks{The work was partly supported by the Knut and Alice Wallenberg Foundation.}
\address[Svante Janson]{Department of Mathematics, Uppsala University, PO Box 480,
SE-751~06 Uppsala, Sweden}
\email{svante.janson@math.uu.se}
\urladdr{http://www.math.uu.se/svante-janson}
\author[Gregory B. Sorkin]{Gregory B. Sorkin}
\address[Gregory B. Sorkin]{Department of Mathematics,
The London School of Economics and Political Science,
Houghton Street, London WC2A 2AE, England}
\email{g.b.sorkin@lse.ac.uk}
\urladdr{http://personal.lse.ac.uk/sorkin/}
\keywords{%
miminum spanning tree;
second-cheapest structure;
inhomogeneous random graph;
optimization in random structures;
discrete probability;
multi-type branching process;
functional fixed point;
robust optimization;
Kruskal's algorithm%
}
\subjclass[2010]{Primary: 05C80, 60C05; Secondary: 05C22, 68W40}
\numberwithin{equation}{section}
\renewcommand\le{\leqslant}
\renewcommand\ge{\geqslant}
\renewcommand\leq{\le}
\renewcommand\geq{\ge}
\definecolor{brown}{cmyk}{0, 0.72, 1, 0.45}
\definecolor{grey}{gray}{0.5}
\definecolor{lightRed}{cmyk}{0, 0.3, 0.3, 0.0}
\theoremstyle{plain}% default
\newtheorem{theorem}{Theorem}[section]
\newtheorem{lemma}[theorem]{Lemma}
\newtheorem{corollary}[theorem]{Corollary}
\newtheorem{conjecture}[theorem]{Conjecture}
\theoremstyle{definition}
\newtheorem{example}[theorem]{Example}
\newtheorem{remark}[theorem]{Remark}
\newenvironment{romenumerate}[1][-10pt]{% optional argument changes indentation
\addtolength{\leftmargini}{#1}\begin{enumerate}% gives (i), (ii) etc.
 }{\end{enumerate}}
\newcounter{oldenumi}
\newenvironment{alphenumerate}[1][-10pt]{% optional argument changes indentation
\addtolength{\leftmargini}{#1}\begin{enumerate}% gives (i), (ii) etc.
 }{\end{enumerate}}
\newcounter{thmenumerate}
\newcounter{xenumerate}   %no left indentation; thus wider lines
\newcommand\pfitemx[1]{\par#1:}
\newcommand\pfitemref[1]{\pfitemx{\ref{#1}}}
\newcommand{\refT}[1]{Theorem~\ref{#1}}
\newcommand{\refC}[1]{Corollary~\ref{#1}}
\newcommand{\refL}[1]{Lemma~\ref{#1}}
\newcommand{\refR}[1]{Remark~\ref{#1}}
\newcommand{\refS}[1]{Section~\ref{#1}}
\newcommand{\refSS}[1]{Section~\ref{#1}}
\newcommand{\refE}[1]{Example~\ref{#1}}
\newcommand{\refF}[1]{Figure~\ref{#1}}
\newcommand{\refConj}[1]{Conjecture~\ref{#1}}
\newcommand{\refTab}[1]{Table~\ref{#1}}
\xdef\klockan{\the\count1.0\the\count255}
\xdef\klockan{\the\count1.\the\count255}\fi
\newcommand{\sumi}{\sum_{i=1}^\infty}
\newcommand{\sumik}{\sum_{i=1}^k}
\newcommand{\sumin}{\sum_{i=1}^n}
\newcommand\set[1]{\ensuremath{\{#1\}}}
\newcommand\xpar[1]{(#1)}
\newcommand\bigpar[1]{\bigl(#1\bigr)}
\newcommand\Bigpar[1]{\Bigl(#1\Bigr)}
\newcommand\bigsqpar[1]{\bigl[#1\bigr]}
\newcommand\xcpar[1]{\{#1\}}
\newcommand\abs[1]{|#1|}
\newcommand\bigabs[1]{\bigl|#1\bigr|}
\def\rompar(#1){\textup(#1\textup)}    % usage: \rompar(...)
\newcommand\xfrac[2]{#1/#2}
\newcommand\Bigparfrac[2]{\Bigpar{\frac{#1}{#2}}}
\def\xexp(#1){e^{#1}}
\newcommand\ceil[1]{\lceil#1\rceil}
\newcommand\floor[1]{\lfloor#1\rfloor}
\newcommand\ntoo{\ensuremath{{n\to\infty}}}
\newcommand\Ntoo{\ensuremath{{N\to\infty}}}
\newcommand\ktoo{\ensuremath{{k\to\infty}}}
\newcommand\ttoo{\ensuremath{{t\to\infty}}}
\newcommand\norm[1]{\|#1\|}
\newcommand\downto{\searrow}
\newcommand\upto{\nearrow}
\newcommand\punkt{.\spacefactor=1000}    % om problem!
\newcommand\iid{i.i.d\punkt}
\newcommand\ie{i.e\punkt}
\newcommand\eg{e.g\punkt}
\newcommand\viz{viz\punkt}
\newcommand\cf{cf\punkt}
\newcommand{\as}{a.s\punkt}
\newcommand{\aex}{a.e\punkt}
\newcommand\whp{w.h.p\punkt}
\newcommand\whpx{w.h.p\xperiod}
\newcommand{\tend}{\longrightarrow}
\newcommand\pto{\overset{\mathrm{p}}{\tend}}
\newcommand\eqd{\overset{\mathrm{d}}{=}}
\newcommand\eqdef{\stackrel{\operatorname{def}}{=}}
\newcommand\op{o_{\mathrm p}}
\newcommand\Op{O_{\mathrm p}}
\newcommand\bbR{\mathbb R}
\newcounter{CC}
\newcounter{cc}
\newcommand\E{\operatorname{\mathbb E{}}}
\renewcommand\P{\operatorname{\mathbb P{}}}
\newcommand\Var{\operatorname{Var}}
\newcommand\Exp{\operatorname{Exp}}
\newcommand\Po{\operatorname{Po}}
\newcommand\Bi{\operatorname{Bi}}
\newcommand\ga{\alpha}
\newcommand\gd{\delta}
\newcommand\gf{\varphi}
\newcommand\gam{\gamma}
\newcommand\gG{\Gamma}
\newcommand\kk{\kappa}
\newcommand\gl{\lambda}
\newcommand\gs{\sigma}
\newcommand\gth{\theta}
\newcommand\eps{\varepsilon}
\renewcommand\phi{\xxx}  %% WARNING
\newcommand\cC{\mathcal C}
\newcommand\cF{\mathcal F}
\newcommand\cM{\mathcal M}
\newcommand\cP{\mathcal P}
\newcommand\cS{{\mathcal S}}
\newcommand\cV{\mathcal V}
\newcommand\ett[1]{\boldsymbol1\xcpar{#1}}
\newcommand\qw{^{-1}}
\newcommand\qww{^{-2}}
\newcommand\qq{^{1/2}}
\newcommand\intoo{\int_0^\infty}
\newcommand\oi{[0,1]}
\newcommand\ooi{[0,1)}
\newcommand\ooo{[0,\infty)}
\newcommand\dd{\,\mathrm{d}}
\newcommand\ddx{\mathrm{d}}
\newcommand\rhs{right-hand side}
\newcommand\pij{p_{ij}}
\newcommand\pmij{p^-_{ij}}
\newcommand\pmx{p^-}
\newcommand\matris[1]{\xpar{#1}}
\newcommand\matrisn[1]{\matris{#1}\ijn}
\newcommand\pijx{\matris{\pij}}
\newcommand\pijxn{\matrisn{\pij}}
\newcommand\ijn{_{i,j=1}^n}
\newcommand\gnpij{\ensuremath{G(n,\pijx)}}
\newcommand\xG{\dot G}
\newcommand\xx{\mathbf x}
\newcommand\ccc{\mathfrak C^*}
\newcommand\FFm{(F_{k-1}(s))_{s\ge0}}
\newcommand\intS{\int_{\cS}}
\newcommand\gx{\gs} % {\hat\gs}
\newcommand\gxk{{\gx_k}}
\newcommand\pg{permanent giant}
\newcommand\ug{unique giant}
\newcommand\chix{\widehat\chi}
\newcommand\mst{minimum spanning tree}
\newcommand\gvnkt{G^{\cV}(n,\kk_t)}
\newcommand\fX{\mathfrak X}
\newcommand\HS{Hilbert--Schmidt}
\newcommand\oot{[0,t)}
\newcommand\normHS[1]{\norm{#1}_{\mathrm{HS}}}
\newcommand\normLL[1]{\norm{#1}_{L^2}}
\newcommand\mm{^{(m)}}
\newcommand\muki{\mu_{k-1}}
\newcommand\rhoki{\rho_{k-1}}
\newcommand\gxki{\gx_{k-1}}
\newcommand\xPoi{^{\mathrm P}}
\newcommand\xExp{^{\mathrm E}}
\newcommand\xUni{^{\mathrm U}}
\newcommand\eG{G\xExp}
\newcommand\eF{F\xExp}
\newcommand\eT{T\xExp}
\newcommand\pT{T\xPoi}
\newcommand\uT{T\xUni}
\newcommand\eX{X\xExp}
\newcommand\uX{X\xUni}
\newcommand\wi{w_{i}}
\newcommand\mmm{\mathrm{mst}}
\newcommand\hT{\hat T}
\newcommand\xT{\overline T}
\newcommand\xxdots{\dots}
\newcommand\setdiff{\bigtriangleup}
\newcommand\gsx{\gs_\infty}
\newcommand\ab{(a,b]}
\newcommand\gdimp{\bar\gd}
\newcommand\Knoo{K_n^\infty}
\newcommand\ER{Erd\H os--R\'enyi}
\newcommand{\Wk}{W_k}
\renewcommand{\ss}{s}
\newcommand{\ssv}{\mathbf{\ss}}
\renewcommand{\tt}{\tau}
\newcommand{\rr}{r}
\newcommand{\rrv}{\mathbf{\rr}}
\newcommand{\mg}{\succeq}
\newcommand\card{\abs}
\newcommand{\pu}{{\bar p}}
\newcommand{\prr}{{\hat p}}
\begin{document}

\begin{abstract}
In a complete graph $K_n$ with edge weights drawn independently from
a uniform distribution $U(0,1)$
(or alternatively an exponential distribution $\Exp(1)$),
let $T_1$ be the MST (the spanning tree of minimum weight)
and let $T_k$ be the MST after deletion of the edges of all previous trees
$T_i$, $i <k$.
We show that each tree's weight $w(T_k)$
converges in probability to a constant $\gamma_k$
with $2k-2\sqrt k <\gam_k<2k+2\sqrt k$,
and we conjecture that $\gam_k = 2k-1+o(1)$.
The problem is distinct from that of \cite{FriezeJ},
finding $k$ MSTs of combined minimum weight,
and for $k=2$ ours has strictly larger cost.

Our results also hold (and mostly are derived) in a multigraph model
where edge weights for each vertex pair follow a Poisson process;
here we additionally have $\E(w(T_k)) \to \gam_k$.
Thinking of an edge of weight $w$ as arriving at time $t=n w$,
Kruskal's algorithm defines forests $F_k(t)$,
each initially empty and eventually equal to $T_k$,
with each arriving edge added to the first $F_k(t)$ where it does not create a cycle.
Using tools of inhomogeneous random graphs we obtain structural results
including that $C_1(F_k(t))/n$,
the fraction of vertices in the largest component of $F_k(t)$,
converges in probability to a function $\rho_k(t)$, uniformly for all $t$,
and that a giant component appears in $F_k(t)$ at a time $t=\gs_k$.
We conjecture that the functions $\rho_k$ tend to time translations of
a single function,
$\rho_k(2k+x)\to\rho_\infty(x)$ as \ktoo, uniformly in $x\in\bbR$.

Simulations and numerical computations
give estimated values of $\gamma_k$ for small $k$, and support the conjectures just stated.
\end{abstract}

\maketitle

\section{Introduction}\label{S:intro}

\subsection{Problem definition and main results}

Consider the complete graph $K_n$ with edge costs
that are
\iid{} random variables, with a uniform distribution $U(0,1)$
or, alternatively, an
exponential distribution $\Exp(1)$.
A well-known problem is to find the minimum (cost) spanning tree $T_1$, and
its cost or ``weight'' $w(T_1)$.
A famous result by \citet{Frieze} shows that
as \ntoo,
$w(T_1)$ converges in probability to $\zeta(3)$,
in both the uniform and exponential cases.

Suppose now that we want a second spanning tree $T_2$, edge-disjoint from the
first, and that we do this in a greedy fashion by first finding the \mst{}
$T_1$, and then the \mst{} $T_2$
using only the remaining edges.
(I.e., the \mst{} in $K_n\setminus T_1$,
meaning the graph with edge set $E(K_n)\setminus E(T_1)$.)
We then continue and define $T_3$ as the \mst{} in $K_n\setminus(T_1\cup
T_2)$, and so on. The main purpose of the present paper is to show that the
costs $w(T_2)$, $w(T_3)$, \dots also converge in probability to some
constants.

\begin{theorem}   \label{T0}
For each $k\ge1$, there exists a constant $\gam_k$ such that,  as \ntoo,
$w(T_k)\pto\gam_k$
(for both uniform and exponential cost distributions).
\end{theorem}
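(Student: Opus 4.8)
The plan is to work in a Poisson multigraph model, run Kruskal's algorithm simultaneously on all the forests $F_j$, and read off $\gam_k$ from the limiting component structure.

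\textbf{Reduction and a formula for the weight.}
Since \whp{} every edge used by $T_1,\dots,T_k$ has weight $O(n^{-1}\log n)$, the $U(0,1)$ model, the $\Exp(1)$ model, and the multigraph model in which each vertex pair carries an independent rate-$1$ Poisson process of arrivals can be coupled so that the trees $T_1,\dots,T_k$, and hence their weights, agree up to $\op(1)$; so it suffices to prove $w(T_k)\pto\gam_k$ in the Poisson model (which additionally yields $\E w(T_k)\to\gam_k$). Rescaling time by $t=nw$, the edges arrive as a rate-$\binom n2$ Poisson process, and each arriving edge is routed to the first forest $F_j(t)$ in which it does not close a cycle; then $F_j(\infty)=T_j$. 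Writing $N_k(t)=\card{E(F_k(t))}=n-\#\mathrm{comp}(F_k(t))$ and integrating by parts,
\begin{equation*}
 w(T_k)=\frac1n\int_0^\infty t\dd N_k(t)=\frac1n\int_0^\infty\bigpar{n-1-N_k(t)}\dd t,
\end{equation*}
and $n-1-N_k(t)=\#\mathrm{comp}(F_k(t))-1$. Hence it suffices to show that $N_k(t)/n$ converges in probability, for each $t$, to a deterministic $\kappa_k(t)$ with $\kappa_k(t)\to1$ fast enough to control the tail of the integral; then $\gam_k:=\int_0^\infty\bigpar{1-\kappa_k(t)}\dd t$.

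\textbf{The structural core.}
The key point is that an edge arriving at time $s$ between $u$ and $v$ is routed to $F_k$ exactly when $u,v$ lie in the same component of $F_{k-1}(s^-)$ but in different components of $F_k(s^-)$: each edge of $F_{k-1}$, having been rejected by $F_{k-2}$, already joins two vertices in one component of $F_{k-2}$, so the component partitions $\Pi_1(t),\Pi_2(t),\dots$ are successively finer, and ``rejected by $F_1,\dots,F_{k-1}$'' collapses to ``rejected by $F_{k-1}$''. Thus the growth of $F_k$ is a multitype process whose rates are governed by the block sizes in $\Pi_1,\dots,\Pi_{k-1}$; assign to each vertex the type recording the sizes of the blocks containing it in $\Pi_1(t),\dots,\Pi_k(t)$. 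I would then prove, by induction on $k$ and using the theory of inhomogeneous random graphs (following Bollob\'as--Janson--Riordan), that (a) the empirical type distribution at time $t$ converges to a deterministic measure characterised by a functional fixed-point / evolution equation, uniformly on compact $t$-intervals; and (b) consequently $F_k(t)$ inherits the component structure of the associated inhomogeneous random graph, so that $C_1(F_k(t))/n\pto\rho_k(t)$ and in particular $N_k(t)/n\pto\kappa_k(t)$.

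\textbf{Tails and conclusion.}
An edge reaches $F_k$ only once $F_{k-1}$ has begun producing cycles, so inductively $F_k(t)$ is connected \whp{} by time $O(\log n)$, and more quantitatively $\E\bigsqpar{\#\mathrm{comp}(F_k(t))}-1$ decays in $t$ fast enough, uniformly in $n$, that $\tfrac1n\int_T^\infty\bigpar{\#\mathrm{comp}(F_k(t))-1}\dd t$ is small uniformly in $n$ once $T$ is large. On $[0,T]$ the integrand is bounded by $1$ and converges pointwise in probability, so bounded convergence gives $\tfrac1n\int_0^T\bigpar{\#\mathrm{comp}(F_k(t))-1}\dd t\pto\int_0^T\bigpar{1-\kappa_k(t)}\dd t$; combining the two ranges yields $w(T_k)\pto\gam_k$ (and, in the Poisson model, $\E w(T_k)\to\gam_k$ by uniform integrability). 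The main obstacle is the structural core: showing that the empirical type distribution of the coupled forests concentrates on a deterministic limit uniformly in $t$, and that $F_k(t)$ — a forest produced by a complicated sequential rule, not a graph with independent edges — is nonetheless faithfully modelled by an inhomogeneous random graph. The reductions between cost models, the integration by parts, and the tail estimates are comparatively routine once this is in hand.
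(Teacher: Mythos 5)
Your overall strategy matches the paper's: reduce the uniform and exponential models to the Poisson multigraph model by coupling (and an a priori bound that the relevant edges have cost $O(\log n/n)$), run Kruskal's algorithm to define the nested forests $F_1(t)\subseteq G_1(t)$, etc., and then, inducting on $k$, apply the Bollob\'as--Janson--Riordan theory of inhomogeneous random graphs to obtain $C_1(F_k(t))/n\pto\rho_k(t)$ and conclude by a Fubini-type identity for the cost together with a tail bound. Your integration-by-parts formula $w(T_k)=\tfrac1n\int_0^\infty\bigpar{n-1-e(F_k(t))}\dd t$ is the paper's Lemma~\ref{LF1} specialized to $T_k$, and is an equivalent route to the paper's expression $\gam_k=\tfrac12\int_0^\infty\bigpar{\rho_{k-1}(t)^2-\rho_k(t)^2}t\dd t$; the tail-control via a connectedness estimate and a three-epsilon argument also mirrors Lemmas~\ref{LWin} and~\ref{LWoo}.

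The gap is exactly where you flag it, in the ``structural core'', and the specific type space you propose does not work. You suggest giving each vertex a type recording ``the sizes of the blocks containing it in $\Pi_1(t),\dots,\Pi_k(t)$''. Two problems. First, this type is a \emph{snapshot} at time $t$, whereas what controls the edges of $G_k(t)$ is a \emph{history}: the number of $G_k$-edges between $i$ and $j$ by time $t$ is $\Po\bigpar{(t-\tau(i,j))_+/n}$, where $\tau(i,j)$ is the first time $i$ and $j$ share a component of $F_{k-1}$, so the kernel must depend on when a vertex entered the giant, not merely its current block size. A time-varying type does not fit the BJR framework, which requires a fixed type assignment $x_1,\dots,x_n$ whose empirical distribution converges, together with a kernel $\kk_t(x_i,x_j)$. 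Second, your type over-parametrizes: only the status in $F_{k-1}$ matters for the induction, and only the giant matters asymptotically. The paper's key device, which your sketch lacks, is to take as the type of vertex $i$ the scalar $x_i=\tau_{k-1}(i)$, the time $i$ joins the permanent giant of $F_{k-1}$ (defined in \refSS{SSmore}), and to show via Lemma~\ref{L1} that $\tau(i,j)\approx\tau(i)\vee\tau(j)$ up to $\op(n^2)$ total error. This reduces the interaction to the one-dimensional, time-\emph{in}dependent kernel $\kk_t(x,y)=(t-x\vee y)_+$ on $\ooo$ with vertex measure $\mu_{k-1}([0,x])=\rho_{k-1}(x)$, at which point the BJR survival-probability formula gives $\rho_k(t)$ and the stability theorem transfers the result from the idealized $G^{\cV}(n,\kk_t)$ to the actual $G_k(t)$. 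Without that reduction of the type space, condition (a) of your plan does not plug into the BJR machinery, and (b) does not follow; so this is the one genuine idea missing from an otherwise correctly aimed sketch.

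One smaller point: showing $C_1(F_k(t))/n\pto\rho_k(t)$ alone does not give your $N_k(t)/n\pto\kappa_k(t)$ ``in particular''. You need control of how many edges enter $F_k$ per unit time, which in the paper comes from the susceptibility convergence $\chi(G_k(t))/n\pto\rho_k(t)^2$ (Corollary~\ref{Cchi}), itself requiring not just $C_1/n\to\rho_k$ but also $\sup_t C_2/n\pto0$ (Theorem~\ref{T1}\ref{T1C2}); and uniformity in $t$ is used in the tail argument. These are extractable from the BJR toolkit, but they are additional steps, not immediate corollaries of the giant-component size.
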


The result extends easily to other distributions of the edge costs, see
\refR{Rotherdistributions}, but we consider in this paper only the uniform
and exponential cases.

A minor technical problem is that $T_2$
and subsequent trees do not always
exist; it may happen that $T_1$ is a star and then $K_n\setminus T_1$ is
disconnected. This happens only with a small probability, and \whp{} (with
high probability, \ie, with probability $1-o(1)$ as \ntoo) $T_k$ is
defined for every fixed $k$, see \refS{SotherModels}.
However, in the main part of the paper we avoid this problem completely by
modifying the model:
we assume that we have a multigraph,
which we denote by $\Knoo$,
with an infinite number of copies of
each edge in $K_n$,
and that each edge's copies' costs are
given by the points in a
Poisson process with intensity 1 on $\ooo$.
(The Poisson processes for different edges are, of course, independent.)
Note that when finding $T_1$, we only care about the cheapest copy of each
edge,  and its cost has an $\Exp(1)$ distribution, so the problem for
$T_1$ is the same as the original one. However,
on $\Knoo$ we
never run out of
edges and we can define $T_k$ for all integers $k=1,2,3,\dots$.
Asymptotically, the three models are equivalent,
as shown in \refS{SotherModels},
and \refT{T0} holds for any of the models. In particular:
\begin{theorem}
  \label{T0multi}
For each $k\ge1$,  as \ntoo,
$w(T_k)\pto\gam_k$ also for the  multigraph model with Poisson process costs.
\end{theorem}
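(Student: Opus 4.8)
The plan is to prove the statement directly in the multigraph model $\Knoo$, where the structural analysis of the paper is carried out; \refT{T0} for the uniform and exponential models on $K_n$ then follows from the asymptotic equivalence of the three models. Run Kruskal's algorithm on $\Knoo$ as above: processing the edge‑copies in increasing order of cost, and regarding a copy of cost $w$ as arriving at time $t=nw$, place each arriving copy in the first forest $F_j(t)$ in which it does not close a cycle. Then each $F_k(t)$ is a forest increasing to the spanning tree $T_k$ as \ttoo; writing $\nu_k(t)$ for its number of components, so that $\abs{E(F_k(t))}=n-\nu_k(t)$, and summing over $e\in T_k$ the identity $t_e=\int_0^\infty\ett{t_e>t}\dd t$, we obtain
\begin{equation}\label{E:wTk}
  w(T_k)=\frac1n\sum_{e\in T_k}t_e=\frac1n\int_0^\infty\bigpar{\nu_k(t)-1}\dd t ,
\end{equation}
since $\abs{\set{e\in T_k:t_e>t}}=(n-1)-\abs{E(F_k(t))}=\nu_k(t)-1$. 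It therefore suffices to prove: (a) for each fixed $t$, $\nu_k(t)/n\pto f_k(t)$ for a deterministic $f_k$, with enough uniformity in $t$ (uniformly on compacts is already enough for $w(T_k)$); and (b) $(\nu_k(t)-1)/n$ has, for all large $n$, a deterministic integrable dominating function, so that \eqref{E:wTk} passes to the limit and gives $w(T_k)\pto\gam_k:=\int_0^\infty f_k(t)\dd t<\infty$. A crude bound on $w(T_k)$ then upgrades this to $\E w(T_k)\to\gam_k$ in the Poisson model.

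For (a) I would argue recursively in $k$. The forest $F_1(t)$ is a spanning forest of the graph of the edges whose first copy has arrived, which is exactly the \ER{} process $\gnx{1-e^{-t/n}}\approx\gnx{t/n}$; hence $\nu_1(t)/n$ has the classical deterministic limit, and \eqref{E:wTk} with $k=1$ recovers Frieze's $\zeta(3)$. For general $k$, a copy of $\set{i,j}$ is offered to $F_k$ precisely when $i$ and $j$ lie in a common component of each of $F_1(t),\dots,F_{k-1}(t)$ --- equivalently, when $\set{i,j}$ lies inside a block of the common refinement $\cP_{<k}(t)$ of those component partitions --- and is then accepted iff it does not close a cycle in $F_k$. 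Thus, conditionally on the evolving partition $\cP_{<k}$, the process growing $F_k$ is an inhomogeneous random (multi)graph process whose kernel is governed by the block‑size profile of $\cP_{<k}(t)$. The theory of inhomogeneous random graphs then gives that its component structure --- in particular $\nu_k(t)/n$, the giant fraction $C_1(F_k(t))/n$, and the emergence time $\gs_k$ --- converges to limits characterised by an associated multi‑type branching process and a functional fixed‑point equation; feeding the limiting block‑size profile of $\cP_{<k}$, hence of $\cP_{<k+1}$ once $F_k$ is understood, back into the argument closes the recursion.

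The main obstacle is making this recursion rigorous and, crucially, \emph{uniform in $t$}: the kernel seen by $F_k$ is itself random, a functional of $F_1,\dots,F_{k-1}$, so one must show it converges --- in a topology strong enough for the component‑size results, such as the cut norm --- to a deterministic limiting kernel, and do so not merely pointwise in $t$ but uniformly in $t\in\ooo$, in particular through the phase transition at $\gs_k$. This calls for versions of the inhomogeneous‑random‑graph results that allow a convergent sequence of random kernels and are uniform in the time parameter, and that is where most of the technical effort goes; it is also exactly what yields both the uniform convergence $C_1(F_k(t))/n\pto\rho_k(t)$ and, through \eqref{E:wTk}, the convergence of $w(T_k)$. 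By comparison the remaining ingredients are routine. For the domination in (b), $\E[\nu_k(t)]/n$ decays exponentially in $t$ up to a polynomial factor, by a standard estimate on the small components of $F_k(t)$, giving an integrable bound. And the transfer to the uniform and exponential models on $K_n$ comes down to coupling cheapest copies: $T_1$ then agrees across all three models, and the extra higher copies available in $\Knoo$ that are cheap enough to matter are $\Op(\log n)$ in number and each of cost $O(\log n/n)$, so (more available edges can only lower an MST's weight, and by at most the cost of one edge of $T_k$) they change $w(T_k)$ by $\Op(\log^2 n/n)=\op(1)$.
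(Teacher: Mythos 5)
Your overall decomposition is the paper's: the identity $w(T_k)=\tfrac1n\int_0^\infty\bigpar{\nu_k(t)-1}\dd t$ is Lemma~\ref{LF1} applied to $T_k$, the recursive inhomogeneous-random-graph analysis is Theorem~\ref{T1}, and the tail domination is Lemma~\ref{LWin} (which rests on Lemma~\ref{Lqn}, a restarting/coupling argument that is a bit more than a ``standard estimate''). Where your sketch and the paper's implementation diverge is in two places, and the paper's choices may save you effort. First, the paper never extracts $\nu_k(t)/n$ as a static small-component count of the IRG $\xG_k(t)$; it instead tracks the \emph{rate} at which arriving edges are accepted into $F_1\cup\dots\cup F_k$, namely $\tfrac{n-1}2\bigpar{1-\pi(G_k(t))}$, controlled via the susceptibility limit $\chi(G_k(t))/n\pto\rho_k(t)^2$ (Corollary~\ref{Cchi}), itself an immediate consequence of $C_1/n\pto\rho_k$ and $C_2/n\pto 0$; Lemma~\ref{LN} then turns the rate into edge and cost counts. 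This is equivalent to your $\nu_k$ accounting but avoids aggregating a component-size distribution. Second, you anticipate needing cut-metric convergence of the random kernel and a uniform-in-$t$ IRG theorem, but neither is used: at each fixed $t$ the paper compares $\xG_k(t)$ to a deterministic-kernel proxy $\gvnkt$ by showing their symmetric edge difference is $\op(n)$ (Lemmas~\ref{L1}--\ref{L3}) and appeals to the stability theorem \cite[Theorem 3.9]{SJ178}, and it then upgrades $C_1(G_k(t))/n\pto\rho_k(t)$ to uniform-in-$t$ convergence purely by the monotonicity of $C_1(G_k(t))$ in $t$ and the continuity of $\rho_k$. With these adjustments your proposal becomes the paper's proof.
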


\citet{Frieze} also proved that the expectation $\E w(T_1)$ converges
  to $\zeta(3)$.
For the multigraph model just described, this too extends.
\begin{theorem}
  \label{TE}
For the Poisson multigraph model,
$\E w(T_k)\to\gam_k$ for each $k\ge1$ as \ntoo.
\end{theorem}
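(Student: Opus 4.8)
The plan is to deduce \refT{TE} from \refT{T0multi} together with a uniform integrability estimate. Since $w(T_k)\pto\gam_k$ in the Poisson multigraph model, once we know that, for each fixed $k$, the family $\set{w(T_k)}_n$ is uniformly integrable, the convergence $\E w(T_k)\to\gam_k$ follows. One inequality is free: $w(T_k)\ge0$, so Fatou's lemma gives $\liminf_\ntoo\E w(T_k)\ge\gam_k$. The substance is the matching upper bound $\limsup_\ntoo\E w(T_k)\le\gam_k$, and for that it more than suffices to prove $\sup_n\E\bigsqpar{w(T_k)^2}<\infty$.

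The tool is the Kruskal sweepline representation used elsewhere in the paper. The forest $F_k(t)$ is exactly the restriction of $T_k$ to edges of weight $\le t/n$, a subforest of the spanning tree $T_k$, so $\kappa(F_k(t))-1$, its number of components minus one, equals the number of edges of $T_k$ of weight exceeding $t/n$; integrating over $t$,
\begin{equation*}
 w(T_k)=\frac1n\int_0^\infty\bigpar{\kappa(F_k(t))-1}\dd t .
\end{equation*}
Squaring, taking expectations, and applying the \CSineq{} to the double integral gives $\E\bigsqpar{w(T_k)^2}\le\frac1{n^2}\Bigpar{\int_0^\infty\bigpar{\E\bigsqpar{(\kappa(F_k(t))-1)^2}}^{1/2}\dd t}^2$. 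Thus it suffices to exhibit $g_k$ with $\int_0^\infty g_k<\infty$ such that $\E\bigsqpar{(\kappa(F_k(t))-1)^2}\le n^2 g_k(t)^2$ for all $n$ and all $t\ge0$.

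For $t$ in a fixed bounded range $[0,T_0]$ this holds trivially with $g_k\equiv1$, as $\kappa(F_k(t))-1\le n$. For large $t$, $\kappa(F_k(t))-1$ counts the components of $F_k(t)$ apart from the (eventually unique) giant, and both this count and its square have expectations controlled by the first/second-moment estimates on small components that underlie the inhomogeneous random-graph analysis behind \refT{T0multi}. The dominant contribution for large $t$ is from vertices that receive no edge in $F_k(t)$ because they have fewer than $k$ incident edge-copies of weight $\le t/n$ — a $\Po$-variable with mean $\approx t$, so with probability $O_k\bigpar{e^{-t}t^{k-1}}$ — while components of size $\ge2$ are handled through the branching-process comparison; this yields a bound of the form $\E\bigsqpar{(\kappa(F_k(t))-1)^2}\le C_k\,n^2 e^{-t}t^{2k-2}$ throughout the range $T_0\le t\lesssim\log n$, and for $t\gtrsim\log n$ the forest is so well connected that the expectation is $o(1)$. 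Taking $g_k(t)=\sqrt{C_k}\,e^{-t/2}t^{k-1}$ for $t\ge T_0$ makes the estimate uniform in $n$, so $\sup_n\E\bigsqpar{w(T_k)^2}<\infty$, and the theorem follows.

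I expect the main obstacle to be precisely this quantitative step: bounding the (second moment of the) number of small components of the random forest $F_k(t)$ by a function of $t$ that is integrable and independent of $n$, and in particular controlling the near-connectivity window $t\asymp\log n$, where $F_k$ absorbs its last few components. This is the $F_k$-analogue of the estimates \citet{Frieze} used to upgrade $w(T_1)\pto\zeta(3)$ to $\E w(T_1)\to\zeta(3)$, and it should be accessible through the branching-process and concentration machinery already developed for \refT{T0multi}; the genuinely new ingredient is uniformity jointly in $n$ and in the sweepline parameter~$t$.
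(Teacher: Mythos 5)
Your high-level strategy matches the paper's: upgrade $w(T_k)\pto\gamma_k$ (\refT{T0multi}) to $\E w(T_k)\to\gamma_k$ by controlling the tail uniformly in $n$. The identity $w(T_k)=\frac1n\int_0^\infty\bigpar{\kappa(F_k(t))-1}\dd t$ (here $\kappa$ denotes the number of components) is correct and is essentially \refL{LF1} specialized to $T_k$. Where you diverge is in seeking $\sup_n\E\bigsqpar{w(T_k)^2}<\infty$, which you reduce to the estimate $\E\bigsqpar{(\kappa(F_k(t))-1)^2}\le C_k n^2 e^{-t}t^{2k-2}$ uniformly in $n$ and $t\ge T_0$. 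This is where the proof has a genuine gap. The sketch offered for the estimate controls the wrong quantity: vertices with fewer than $k$ incident edge-copies of weight $\le t/n$ are indeed isolated in $F_k(t)$, but this set is only a \emph{subset} of the isolated vertices (a vertex with $\ge k$ incident light edges can still be isolated in $F_k$, e.g.\ if several of those edges are accepted into the same low-index forest), and isolated vertices are only part of $\kappa(F_k(t))-1$; components of size $\ge 2$ are dispatched with a hand-wave. An upper bound on the second moment requires dominating \emph{all} contributions, not identifying the heuristically dominant one. The behavior in the near-connectivity window $t\approx\log n+(k-1)\log\log n$, where the claimed $o(1)$ must still be compared against $n^2g_k(t)^2$ simultaneously for all $n$, is also left entirely vague.

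The paper sidesteps the variance calculation by a truncation argument that needs only first moments. Writing $W_k(0,\infty)=W_k(0,b)+W_k(b,\infty)$: since $W_k(0,b)\le kb$ deterministically, bounded convergence upgrades \refL{LW1}'s convergence in probability to convergence of expectations for each fixed $b$; the key ingredient is then \refL{LWin}, which gives $\E W_k(b,\infty)\le B_k'e^{-b_k'b}$ \emph{uniformly in} $n$. That lemma rests on \refL{Lqn}, a first-moment lower bound on the probability that two fixed vertices lie in the same component of $G_k(t)$, proved by a simple restart-and-independence coupling (independent copies of the process over disjoint time intervals, combined with \refL{thicker}). This is considerably cheaper to execute than your second-moment estimate and delivers the exponential tail without any branching-process variance computations. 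You identified precisely the right obstacle — uniform-in-$n$ exponential control of the small-component count for all $t$ — but the paper routes around it with a first-moment estimate, and your attempt to confront it head-on is left unproved and, as sketched, bounds the wrong side of the quantity in question.
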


It is well known that the \mst{} (with any given costs,
obtained randomly or deterministically)
can be found by
\emph{Kruskal's algorithm} \cite{Kruskal},
which processes the edges in order of increasing cost and keeps those
that join two different components in the forest obtained so far.
(I.e., it keeps each edge that does not form a cycle together with
previously chosen edges.)
As in many other  previous papers on the random \mst{} problem,
from \cite{Frieze} on, our
proofs are based on analyzing the behavior of this algorithm.

Rescale to think of an edge of weight $w$ as arriving at time $t=n w$.
Kruskal's algorithm allows us to construct all trees $T_k$
simultaneously by growing forests $F_k(t)$,
with $F_k(0)$ empty and $F_k(\infty)=T_k$:
taking the edges of $K_n$
(or $\Knoo$)
in order of time arrival (increasing cost),
an edge is added to the first forest $F_k$ where it does not create a cycle.
We will also consider a sequence of graphs $G_k(t) \supseteq F_k(t)$,
where when we add an edge to $F_k$ we also add it to all
the graphs $G_1,\ldots,G_k$;
see \refS{Smodelmodel} for details.

The proof of \refT{T0} is based on a detailed structural characterization
of the graphs $G_k(t)$, given by \refT{T1}
(too detailed to set forth here in full),
relying heavily on the theory of inhomogeneous random graphs
from \cite{SJ178} and related works.
Where $C_1(G_k(t))$ denotes the number of vertices in the largest component of $G_k(t)$
(or equivalently of $F_k(t)$, as by construction they have the same components)
\refT{T1} shows that
$C_1(G_k(t))/n$
converges in probability to some function $\rho_k(t)$, uniformly for all times $t$.
Moreover, each $G_k$ has its own giant-component threshold:
$\rho_k(t)$ is 0 until some time $\gs_k$,
and strictly positive thereafter.

The functions $\rho_k(t)$ are of central interest.
For one thing, an edge is rejected from $F_k$,
making it a candidate for $F_{k+1}$,
precisely if its two endpoints are within the same component of $F_k$,
and it is shown (see \refC{Cchi})
that this is essentially equivalent to the two endpoints both
being within the largest component.
This line of reasoning yields the constants $\gamma_k$ explicitly,
see \eqref{wlim}, albeit not in a form that is easily evaluated.
We are able, at least, to re-prove (in \refE{Egamma1})
that $\gam_1=\zeta(3)$, as first shown in \cite{Frieze}.

The functions $\rho_k$ also appear to have a beautiful structure,
tending to time-translated copies of a single universal function:
\begin{conjecture} \label{Conj-rhoInfinity}
  There exists a continuous increasing function
  $\rho_\infty(x):(-\infty,\infty)\to\ooi$ such that
$\rho_k(2k+x)\to\rho_\infty(x)$ as \ktoo, uniformly in $x\in\bbR$.
\end{conjecture}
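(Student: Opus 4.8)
The plan is to recast \refConj{Conj-rhoInfinity} as a statement about the \emph{recursion} that builds the graphs $G_k$, and to show that, after shifting time, this recursion converges to a fixed point. By \refT{T1} the components of $G_k(t)$ are described --- in the inhomogeneous random graph sense of \cite{SJ178} --- by a kernel on a type space $\cS$ (heuristically the type of a vertex records the times at which it merged into the forests $F_1,\dots,F_k$), and $\rho_k(t)$ is the survival probability of the associated multitype branching process. The crucial point is that the edges available to $F_{k+1}$ between $i$ and $j$ are precisely the copies of the pair $ij$ arriving after $i$ and $j$ have already merged in $F_k$; since the copies of each pair arrive as a Poisson process of rate $1/n$ in $t$, an edge $ij$ is present in $G_{k+1}(t)$, up to the usual $1+o(1)$ factors, with probability $\bigl(t-\tau_k(ij)\bigr)_+/n$, where $\tau_k(ij)$ is the (random) $F_k$-merge time of $i$ and $j$. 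This yields an explicit operator $\Phi$ sending the kernel and the associated fixed-point functions of $G_k$ to those of $G_{k+1}$, with $\rho_{k+1}$ a derived quantity; \refConj{Conj-rhoInfinity} should then follow from the convergence, after translating time by $\gs_k$ --- equivalently by $2k$ --- of the iterates of $\Phi$ to a fixed point of a shift-invariant limiting operator $\Phi_\infty$.

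The first step is to make $\Phi$ precise: fix a suitable normed space of kernels on $\cS$ on which $\Phi$ acts, express the merge-time profile $\tau_k$ of $G_k$ via a first-passage problem on its own multitype branching process, and write down the induced map on survival probabilities --- its scalar shadow is essentially governed by the operator with kernel $(t-\max(\tau_i,\tau_j))_+$ acting on merge-time types, whose leading eigenvalue hitting $1$ marks the threshold $\gs_{k+1}$. The second step is to show the increments $\gs_{k+1}-\gs_k$ are bounded. To leading order they equal $2$, essentially by a handshake count: $k$ edge-disjoint spanning trees contain $\sim 2k/n$ edges per vertex pair on average, hence reach weight $\sim 2k/n$ and, rescaled, time $\sim 2k$ --- this crude estimate is already what underlies the bounds $2k-2\sqrt k<\gam_k<2k+2\sqrt k$. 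Boundedness of the increments makes the shifted kernels $t\mapsto(\text{kernel of }G_k)(t+2k)$ precompact, and any subsequential limit is a fixed point of $\Phi_\infty$ --- the form of $\Phi$ in which, in the shifted coordinates, the entire mass of the $G_k$-giant has merged before time $-\infty$, so that only the freshly arriving copies of each pair contribute.

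The third step is to prove that $\Phi_\infty$ has a unique fixed point, and hence that the whole shifted sequence (not just subsequences) converges; I would attempt this either through a contraction estimate for a suitable power of $\Phi_\infty$ in a weighted norm, or --- probably more robustly --- through a coupling of successive Kruskal processes that sandwiches the shifted iterates between monotone sequences with a common limit. The same analysis should pin the increment down to exactly $2$ and, one hopes, yield the companion conjecture $\gam_k=2k-1+o(1)$ by substituting $\rho_\infty$ into \eqref{wlim}. The final step is to read off the regularity of $\rho_\infty$ from its fixed-point equation: continuity from the smoothing effect of the integral operator $\Phi_\infty$ (as in the standard theory of survival probabilities for inhomogeneous random graphs, \cite{SJ178}), and strict monotonicity from the fact that fresh copies of every pair keep arriving, so the relevant branching process is strictly supercritical, with strictly increasing survival probability, for every $x>\gsx$.

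The principal obstacle is the limit in the second and third steps. Shifting by $\gs_k$ does not by itself force convergence: in the shifted picture the bulk of the $G_k$-giant is swept off towards $t=-\infty$, and one must show this happens at a rate that stabilises, while simultaneously controlling the ``light'' vertices not yet in the $G_k$-giant at time $\approx\gs_k$, whose contribution to the kernel of $G_{k+1}$ is exactly the part that should vanish in the limit. Moreover $\Phi_\infty$ is not obviously monotone in a way compatible with any single norm, so a clean analytic fixed-point argument may be elusive; a probabilistic coupling comparing $G_{k+1}$ directly with a time-shift of $G_k$ looks more promising but correspondingly harder to execute. Finally, although the leading term $2k$ is cheap, upgrading $\gs_k=2k+O(1)$ and $\gam_k=2k+O(\sqrt k)$ to the sharp $2k+\gsx+o(1)$ and $2k-1+o(1)$ demands precisely the cancellations in the merge-time profile that the first- and second-moment estimates behind the present bounds cannot see --- and that is, we expect, the real reason the statement is left as a conjecture.
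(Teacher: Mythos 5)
This statement is Conjecture~\ref{Conj-rhoInfinity}; the paper does not prove it, and offers only numerical evidence (Figures~\ref{Sim1M}, \ref{FigRho3from1to5}, \ref{rho3shift12thou}) together with a brief strategic discussion at the end of Section~8. So there is no proof in the paper to compare yours against; what you have written is a research plan, not a proof, and you acknowledge as much in your final paragraph. The core of your plan — define a map $\Psi$ taking $\rho_{k-1}$ to $\rho_k$ via survival probabilities of the branching process with kernel $\kk_t$, shift time by $2k$, and show convergence of the iterates to a unique fixed point — is exactly the program the authors themselves sketch in Section~8, where they ``leave the possible construction of such a metric as an open problem.'' Your ``third step'' (a contraction estimate for a power of $\Phi_\infty$, or a monotone coupling sandwich) is the step that would actually constitute the proof, and you do not carry it out; that is precisely where the paper also stops.

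Two of your supporting heuristics also overstate what is known. The ``handshake'' argument that the increments $\gs_{k+1}-\gs_k$ are $\approx 2$ is not rigorous: the paper proves only $\gs_k\ge\gs_{k-1}+1$ (Theorem~\ref{Tkk-1}), and the identity~\eqref{intk} (which does follow from the edge count) gives an integral constraint on $\rho_k$, not a direct statement about thresholds — upgrading it to boundedness, let alone convergence, of the increments is itself Conjecture~\ref{Conj-gx}, also open. And ``boundedness of the increments makes the shifted kernels precompact'' is not automatic: you need tightness of the shifted measures $\mu_k(\cdot-2k)$, i.e.\ uniform exponential-type tail control for $1-\rho_k(2k+x)$ as $x\to\infty$ with constants uniform in $k$. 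Theorem~\ref{T1}\ref{T1rholim} gives tails for each $k$ but with $k$-dependent constants $b_k,B_k$, so a new uniform estimate would be required. In short: your proposal is a faithful restatement of the authors' own conjectured route and of the obstacle (no contraction, no coupling, no uniform tails) that keeps the statement a conjecture.
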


This suggests, though does not immediately imply, another conjecture.
\begin{conjecture}\label{Conj-gamma2}
 For some $\gd$, as \ktoo,
$\gam_k=2k+\gd+o(1)$.
\end{conjecture}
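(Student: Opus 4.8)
\emph{Proof proposal.} The plan is to derive \refConj{Conj-gamma2} from \refConj{Conj-rhoInfinity} (strengthened to the full component profile) together with the explicit formula \eqref{wlim}. That formula rests on the exact identity: each edge of $T_k$ arriving at time $t_e$ costs $w(e)=t_e/n=\tfrac1n\int_0^{t_e}\dd t$, so $w(T_k)=\tfrac1n\int_0^\infty\bigl(|T_k|-|F_k(t)|\bigr)\,\dd t$, and since $F_k(t)$ is a forest with the same components as itself, $|T_k|-|F_k(t)|=\kappa_k(t)-1$ with $\kappa_k(t)$ the number of components of $F_k(t)$. Taking expectations and passing to the limit via \refT{T1} and \refT{TE},
\[
  \gam_k=\int_0^\infty h_k(t)\,\dd t,\qquad
  h_k(t):=\lim_{\ntoo}\tfrac1n\,\E\bigl(\kappa_k(t)-1\bigr),
\]
the limiting normalized component count, which \refT{T1} expresses through $\rho_1,\dots,\rho_k$ and the limiting small-component profiles of $F_1,\dots,F_k$. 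The function $h_k\colon\ooo\to\oi$ equals $1$ while $F_k$ has only $o(n)$ edges — i.e.\ for $t$ below the threshold $\gs_{k-1}$ at which $F_{k-1}$ acquires its giant — then descends through a window of width $O(1)$ near time $\approx 2k$, and decays exponentially as \ttoo{} (once the relevant inhomogeneous random graph becomes connected). The known bounds $2k-2\sqrt k<\gam_k<2k+2\sqrt k$ say exactly that this window sits within $O(\sqrt k)$ of $2k$.

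The first step is to upgrade \refConj{Conj-rhoInfinity} to the assertion that $h_k(2k+x)\to h_\infty(x)$ uniformly in $x\in\bbR$, for some $h_\infty\colon\bbR\to\oi$ with $h_\infty(-\infty)=1$ and $h_\infty(+\infty)=0$. The giant part of this is precisely \refConj{Conj-rhoInfinity}; note that for each fixed $j<k$ one has $\rho_j(2k+x)=\rho_j\bigl(2j+(2k-2j+x)\bigr)\to\rho_\infty(+\infty)=1$, so in the window $t=2k+O(1)$ the forests $F_1,\dots,F_{k-1}$ already carry near-spanning giants, only $F_k$ is in transition, and the rejection rates feeding $F_k$ tend to $1$ there. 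The small-component part should come from the same functional fixed-point analysis of \cite{SJ178}, applied to the generating function of component sizes rather than just to $\rho_k$: writing $\tilde h_k(x):=h_k(2k+x)$, the recursive structure in \refT{T1} expressing the law of $F_{k+1}$ in terms of that of $F_k$ becomes, after recentering, $\tilde h_{k+1}=\Phi\bigl[\tilde h_k(\cdot+2),\dots\bigr]$ for a fixed functional $\Phi$ (the shift by $2$ reflecting that consecutive thresholds differ by $2+o(1)$), and one must show this iteration converges to the unique solution of $h_\infty=\Phi[h_\infty(\cdot+2),\dots]$.

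Granting this, the conclusion is a change of variables: subtracting the indicator $\mathbf 1_{t<2k}$ and substituting $t=2k+x$,
\[
  \gam_k=\int_0^\infty h_k(t)\,\dd t
   = 2k+\int_{-2k}^\infty\bigl(h_k(2k+x)-\mathbf 1_{x<0}\bigr)\,\dd x
   \;\to\; 2k+\int_{-\infty}^\infty\bigl(h_\infty(x)-\mathbf 1_{x<0}\bigr)\,\dd x,
\]
so \refConj{Conj-gamma2} holds with $\gd=\int_{-\infty}^\infty\bigl(h_\infty(x)-\mathbf 1_{x<0}\bigr)\,\dd x$ — \emph{provided} this integrand is absolutely integrable and the passage to the limit under $\int$ is justified by dominated convergence.

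That proviso is exactly the gap signalled by ``does not immediately imply,'' and it is the main obstacle. One needs tail estimates on $h_k(2k+x)$ that are \emph{uniform in $k$} and integrable in $x$: for $x\to+\infty$, a bound $h_k(2k+x)\le g(x)$ with $\int^\infty g<\infty$, controlling how fast $F_k$ becomes connected past time $2k$ (the upper tails of $\rho_k$ and of the component-size profile); and for $x\to-\infty$, a bound $1-h_k(2k+x)\le g(x)$ with $\int_{-\infty}g<\infty$, controlling how much of $F_k$ is built \emph{before} time $2k$ (the concentration of $\gs_{k-1}$ and the lower tails of the rejection rates feeding $F_k$ from $F_1,\dots,F_{k-1}$). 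The existing slack $2\sqrt k\to\infty$ is far too crude for this; sharpening the structural estimates of \refT{T1} into such $k$-free, exponentially decaying tail bounds — carried out hand in hand with the recentered limit of the previous step — is where the real work lies, and is the reason the statement is only conjectural. By comparison, the subdominant contributions (the ``$-1$,'' and the discrepancy between the number of small components and the number of non-giant vertices) are controlled routinely once \refT{T1} is available.
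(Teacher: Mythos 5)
Your argument is, after unwinding, the same conditional derivation the paper sketches in \eqref{gamk2}: change variables $t=2k+x$, appeal to \refConj{Conj-rhoInfinity}, and pass to the limit, with the sole missing ingredient being $k$-uniform integrable tail bounds justifying dominated convergence. The paper starts from \eqref{wlim} and subtracts $2k$ via the normalization \eqref{intkk}; you instead write $\gam_k=\int_0^\infty h_k(t)\dd t$ with $h_k(t):=\lim_n\tfrac1n\E(\kappa_k(t)-1)$ and subtract $\ett{t<2k}$. These are equivalent by a single integration by parts, since
\[
h_k(t)=1-\tfrac12\int_0^t\bigpar{\rho_{k-1}(s)^2-\rho_k(s)^2}\dd s,
\]
which follows from $\kappa_k(t)=n-e(F_k(t))$, $e(F_k(t))=e(G_k(t))-e(G_{k+1}(t))$, and \refT{Tlang}. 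You and the paper identify exactly the same obstruction, and both leave it open, so the conclusion stands at the same level of rigor.

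One step in your plan, however, is a false complication. You claim that upgrading \refConj{Conj-rhoInfinity} to a recentered limit $h_k(2k+x)\to h_\infty(x)$ requires a separate fixed-point analysis of ``the generating function of component sizes rather than just $\rho_k$.'' It does not: because $F_k(t)$ is a spanning forest, its component count is $n-e(F_k(t))$, so $h_k$ is a functional of the limiting \emph{edge} density alone, which as shown above is determined by $\rho_{k-1}$ and $\rho_k$ and nothing else. Given \refConj{Conj-rhoInfinity}, one immediately gets $h_k(2k+x)\to 1-\tfrac12\int_{-\infty}^x\bigpar{\rho_\infty(u+2)^2-\rho_\infty(u)^2}\dd u$ with no additional branching-process machinery. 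So the ``first step'' you describe as needing the small-component profile is free; the genuinely open part is precisely the $k$-uniform tail estimate you name at the end, and nothing more. (Also a small slip: your display of $\rho_j(2k+x)\to1$ for fixed $j<k$ appeals to $\rho_\infty(+\infty)=1$, but for fixed $j$ what you actually need and have is \refT{T1}\ref{T1rholim}, i.e.\ $\rho_j(t)\to1$ as $t\to\infty$.)
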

If this conjecture holds, then necessarily $\gd\in[-1,0]$, see \refR{R-gamma2}.

A variety of computational results are
given in \refS{Snumerical}.
They are supportive of \refConj{Conj-rhoInfinity}
(see Figures \ref{Sim1M} and \ref{FigRho3from1to5})
and (see \refF{sim10M})
a stronger version of \refConj{Conj-gamma2}
where we take $\gd=-1$:
\begin{conjecture}\label{Conj2k-1}
As \ktoo,
$\gam_k=2k-1+o(1)$.
\end{conjecture}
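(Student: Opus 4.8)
This is a conjecture, so I sketch the route I consider most promising rather than a complete argument; the plan is to derive \refConj{Conj2k-1} from a strengthened form of \refConj{Conj-rhoInfinity}.

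\emph{Step 1.} Prove \refConj{Conj-rhoInfinity}. The functions $\rho_k$ obey a recursion: by \refC{Cchi} the edges handed on to $F_{k+1}$ are, up to negligibly many exceptions, exactly those with both endpoints in the largest component of $F_k$, so the inhomogeneous-random-graph data that \refT{T1} attaches to $F_{k+1}$ (its kernel, in the sense of \cite{SJ178}) is the image under an explicit, monotone operator of the data attached to $F_k$. I would show that, after translating time by $2k$, iterating this operator converges to a fixed point, from which $\rho_\infty$ is read off. That the translation constant is exactly $2$ is essentially forced: edges are offered to $F_k$ at asymptotic rate $\tfrac n2\rho_{k-1}(t)^2$ per unit time while $F_k$ ends with $n-1$ edges, so
\[
\int_0^\infty\bigl(\rho_{k-1}(t)^2-\rho_k(t)^2\bigr)\dd t=2\qquad\text{for every }k\ge1
\]
(with the convention $\rho_0\equiv1$), and if $\rho_k(t)\approx\rho_\infty(t-\tau_k)$ then letting $k\to\infty$ in this identity gives $\tau_k-\tau_{k-1}\to2$. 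The real content of \refConj{Conj-rhoInfinity} is thus the \emph{convergence}, and its \emph{uniformity in $x$}; this is where I expect the machinery of \cite{SJ178} to have to be pushed hardest, most likely via monotonicity together with a contraction estimate for the operator.

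\emph{Step 2.} Deduce \refConj{Conj-gamma2}. The reasoning underlying \eqref{wlim} identifies $\gam_k$ with the mean of the limiting arrival-time law of a uniformly chosen edge of $T_k$, namely $\gam_k=\tfrac12\int_0^\infty t\,\bigl(\rho_{k-1}(t)^2-\rho_k(t)^2\bigr)\dd t$. Inserting $\rho_j(t)\approx\rho_\infty(t-2j)$ and substituting $t=2k+x$, the factor $2k$ splits off the contribution $k\int_{\bbR}\bigl(\rho_\infty(x+2)^2-\rho_\infty(x)^2\bigr)\dd x=2k$, and the remainder converges to
\[
\gd=\tfrac12\int_{-\infty}^{\infty} x\,\bigl(\rho_\infty(x+2)^2-\rho_\infty(x)^2\bigr)\dd x
 =-1+\int_0^\infty\!\bigl(1-\rho_\infty(u)^2\bigr)\dd u-\int_{-\infty}^0\!\rho_\infty(u)^2\dd u ,
\]
the improper integrals read as symmetric limits; this is \refConj{Conj-gamma2}. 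The reason this does not drop out of \refConj{Conj-rhoInfinity} alone is that moving the limit through the integral requires uniform (in $k$) control of $\rho_k(t)$ near $t=\gs_k$ and, above all, of the exponential rate at which $\rho_k(t)\to1$ as $t\to\infty$; proving such uniform tail bounds is a nontrivial but, I believe, feasible supplement to Step 1.

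\emph{Step 3 (the main obstacle).} Show $\gd=-1$. By the formula just obtained this is equivalent to the identity $\int_0^\infty\bigl(1-\rho_\infty(u)^2\bigr)\dd u=\int_{-\infty}^0\rho_\infty(u)^2\dd u$, that is, the probability measure on $\bbR$ with distribution function $\rho_\infty(\cdot)^2$ has mean $0$; equivalently, writing $\gs_*$ for $\lim_k(\gs_k-2k)$, it says $\gs_*=-\int_{\gs_*}^\infty\bigl(1-\rho_\infty(u)^2\bigr)\dd u$, and equivalently $\sum_{j\ge2}\bigl(\gam_j-\gam_{j-1}-2\bigr)=1-\zeta(3)$. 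I see no soft reason for any of these, and I expect that determining this constant will require genuinely new input: either a closed form — or at least a usable functional equation — for $\rho_\infty$ emerging from the fixed-point analysis of Step 1, from which the mean can be computed directly; or a second conservation law for the Kruskal process beyond the one above, for instance an exact evaluation of $\sum_{j\le K}\gam_j=\tfrac12\int_0^\infty t\,\bigl(1-\rho_K(t)^2\bigr)\dd t$, which \refConj{Conj2k-1} predicts equals $K^2+o(K)$. Because even \refConj{Conj-gamma2} already requires Step 1, and the value $\gd=-1$ requires this further identity on top, Step 3 is the principal difficulty and the reason the statement remains conjectural.
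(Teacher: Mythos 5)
This statement is a conjecture, and the paper offers no proof of it --- only numerical evidence (\refS{Snumerical}, in particular \refF{sim10M} and \refTab{Tgamma10}) and the heuristic sketched in the section on conjectured asymptotics of $\rho_k$: \refConj{Conj-rhoInfinity} together with suitable tail bounds gives the chain \eqref{gamk2}, hence \refConj{Conj-gamma2} with $\gd=\tfrac12\int_{-\infty}^{\infty}x\bigpar{\rho_\infty(x+2)^2-\rho_\infty(x)^2}\dd x$, and the contraction/Banach fixed-point route for the map $\Psi$ is explicitly flagged there as an open problem. Your proposal is a faithful reconstruction of that same route: your conservation law $\int_0^\infty(\rho_{k-1}^2-\rho_k^2)\dd t=2$ is \eqref{intkk}; your Step 2 is exactly the computation in \eqref{gamk2}; and your caveat about needing uniform tail control matches the paper's ``together with suitable tail bounds justifying dominated convergence'' (one needs the constants in \eqref{t1rholim} not to degenerate in $k$, plus control near $\gs_k$).

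Your reformulations of $\gd=-1$ are also correct and worth recording: writing $G:=\rho_\infty(\cdot)^2$ as a CDF and unwinding $\gd=\tfrac12\int x\bigpar{G(x+2)-G(x)}\dd x$ gives $\gd=\E Y-1$ where $Y$ has law $G$, so $\gd=-1$ is precisely $\E Y=0$; and telescoping with $\gam_1=\zeta(3)$ shows $\gam_K-2K\to-1$ is equivalent to $\sum_{j\ge2}\bigpar{\gam_j-\gam_{j-1}-2}=1-\zeta(3)$. You correctly identify Step 3 --- pinning down this mean --- as the genuine obstacle; the paper gives no mechanism for it beyond the numerical agreement, so the gap you name is the same one the authors leave open, and your assessment that genuinely new input (a usable functional equation for $\rho_\infty$, or a further conservation law) would be needed is the right one.
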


Although we cannot prove these conjectures, some
bounds on $\gam_k$ are obtained in \refS{Sbounds}
by a more elementary analysis of the sequence of forests $F_k$.
In particular, \refT{Tbounds} and \refC{Cbounds} lead to the following,
implying that $\gam_k\sim 2k$ as $k\to\infty$.
\begin{corollary}
  \label{Cbound2}
For every $k\ge1$,
\begin{equation}\label{cbound2}
  2k-2k\qq <\gam_k<2k+2k\qq.
\end{equation}
\end{corollary}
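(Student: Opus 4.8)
The plan is to obtain \eqref{cbound2} by reducing to, and giving the substance of, the recursive estimates behind \refT{Tbounds} and \refC{Cbounds}. The starting point is the elementary ``area'' identity
\begin{equation*}
  w(T_k)=\frac1n\int_0^\infty\bigl(\kappa_k(t)-1\bigr)\dd t,
\end{equation*}
where $\kappa_k(t)$ denotes the number of components of $F_k(t)$: since an edge of weight $w$ arrives at time $t=nw$, $|E(T_k)|=n-1$ and $|E(F_k(t))|=n-\kappa_k(t)$, this is just $\sum_{e\in T_k}w(e)=\frac1n\int_0^\infty\bigl(n-1-|E(F_k(t))|\bigr)\dd t$. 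By \refT{T0} and \refT{TE} it suffices to bound $\E w(T_k)$ in the Poisson multigraph model (the simple model is handled by \refS{SotherModels}), and since $0\le\kappa_k(t)-1\le n-1$ the whole question is to locate the ``transition window'' in $t$ during which $F_k(t)$ passes from essentially edgeless to essentially spanning, and to show it sits at $2k+O(\sqrt k)$.

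For the lower bound I would show $F_k(t)$ is still essentially edgeless for $t\le 2k-2\sqrt k$, so that $\kappa_k(t)-1\ge(1-o(1))n$ there. The mechanism is that a copy of a pair $uv$ is passed down to $F_k$ only once $u$ and $v$ already lie in a common component of $F_{k-1}$, so the rate at which $F_k$ gains edges at time $t$ is at most $\frac1n\sum_{D}\binom{|D|}{2}$, summed over components $D$ of $F_{k-1}(t)$; this is $o(n)$ while $F_{k-1}(t)$ has no giant component, and even after $F_{k-1}$'s giant has emerged it grows at a bounded (not instantaneous) rate, so $F_k$ needs a further $\approx2$ units of time to accumulate its own $\Theta(n)$ edges and a giant. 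Inductively this yields a bound of the shape $\gamma_k\ge\gamma_{k-1}+2-\delta_k$; since $\gamma_1=\zeta(3)$ (\refE{Egamma1}) and, as discussed below, $\sum_j\delta_j=O(\sqrt k)$, telescoping gives $\gamma_k\ge\zeta(3)+2(k-1)-O(\sqrt k)>2k-2\sqrt k$. (For small $k$ one may instead just use $\gamma_k\ge\gamma_1=\zeta(3)$, which already settles $k\le2$.)

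For the upper bound I would split the integral at $T=2k+2\sqrt k$ — in fact a little below, to leave room for a bounded additive error. On $[0,T]$ the integrand is at most $n-1$, contributing at most $T$. For $t>T$ it suffices that, \whp, $F_k(t)$ already has a component on all but a small fraction of the vertices and that $\kappa_k(t)$ then decays geometrically in $t$. The first holds because, once $|E(F_{k-1}(t))|$ is within $o(n)$ of $n-1$ — which inductively happens only a bounded time after $F_{k-1}$'s giant emerges, hence by time $\approx\gamma_{k-1}+O(1)\le T$ — a proportion $1-o(1)$ of all arriving copies have both endpoints inside the (almost spanning) giant of $F_{k-1}$ and so are offered to $F_k$; with total arrival rate $\sim n/2$ the forest $F_k$ picks up $\Theta(n)$ edges and a near-spanning giant within $O(1)$ further time. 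The second holds because a vertex outside $F_k$'s giant but inside $F_{k-1}$'s giant is absorbed at rate $\Theta(1)$, so $\E\kappa_k(t)\le Cne^{-c(t-T)}$ and $\frac1n\int_T^\infty(\kappa_k-1)\dd t=O(1)$, absorbed by the slack below $2k+2\sqrt k$. Together this is $\gamma_k\le\gamma_{k-1}+2+\delta_k$, telescoping to $\gamma_k\le\zeta(3)+2(k-1)+O(\sqrt k)<2k+2\sqrt k$.

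The main obstacle is making this recursion quantitatively honest, which is precisely what \refT{Tbounds} is for. The delicate steps are the lower bound's claim that the giant of $F_{k-1}$ grows at a rate bounded away from instantaneous, and the upper bound's claim that $F_{k-1}$ becomes essentially spanning only $O(1)$ after its giant emerges — both needed uniformly in $k$ with explicit error terms. The natural error scale is the $\Theta(\sqrt k)$ fluctuation of the arrival time of a vertex's $\Theta(k)$-th incident copy (a $\mathrm{Gamma}(\Theta(k),1)$ variable), i.e.\ a relative error $\Theta(k^{-1/2})$ per step, which is why the accumulated slack is $O(\sqrt k)$ and why one gets $\pm2\sqrt k$ rather than the conjectured $o(1)$ error of \refConj{Conj2k-1} (for which the sharp structural result \refT{T1} would be needed). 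The remaining ingredients — concentration of the $\Po\!\bigl(\binom n2 t/n\bigr)$ count of arrived copies and of the component counts $\kappa_i(t)$, the monotonicity $|E(F_1(t))|\ge\dots\ge|E(F_k(t))|$ of the forests, and direct checking for small $k$ — are routine.
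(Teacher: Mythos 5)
Your proposal takes a genuinely different route from the paper, and that route has a real gap.

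The paper's proof of \refC{Cbound2} is a short averaging argument. Its only inputs are the cumulative bounds $k^2 \le \gG_k \le k^2 + k$ from \refC{Cbounds}, and the trivial monotonicity $\gam_1 \le \gam_2 \le \cdots$ (the greedy procedure takes cheapest trees first). Given these, for any window length $\ell \ge 1$ one writes $\ell\,\gam_k \le \sum_{i=k}^{k+\ell-1}\gam_i = \gG_{k+\ell-1} - \gG_{k-1}$, and similarly $\ell\,\gam_k \ge \gG_k - \gG_{k-\ell}$, plugs in the quadratic bounds on $\gG$, and optimizes. The $\sqrt k$ arises purely from minimizing $\ell + k/\ell$ over the window length, giving $\ell \approx \sqrt k$; no branching-process timing, no Gamma fluctuations, and no per-step control on the increments $\gam_k - \gam_{k-1}$ enter at all. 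This is why the corollary follows from \refT{Tbounds} and \refT{TE} in a few lines.

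The gap in your version is the central quantitative claim that the recursion $\gam_k = \gam_{k-1} + 2 \pm \delta_k$ holds with $\sum_{j\le k}\delta_j = O(\sqrt k)$, i.e.\ $\delta_k = O(k^{-1/2})$ per step. That is substantially stronger than anything the paper establishes. What \refT{Tbounds} actually proves --- via the coupling of the component-size vector $\ssv$ to the occupancy process $\rrv$ --- is a bound on the \emph{cumulative} sum $\gG_k$, and nothing more; this is entirely consistent with $\gam_k - \gam_{k-1} - 2$ staying bounded away from zero for all $k$. Your heuristic about the arrival time of a vertex's $\Theta(k)$-th incident edge fluctuating on scale $\Theta(\sqrt k)$ suggests \emph{why one might hope} for a $k^{-1/2}$ relative error per step, but it does not produce one; the statement $\gam_k - \gam_{k-1} \to 2$ is essentially \refConj{Conj-gx} (equivalently \refConj{Conj-gamma2}), which the paper explicitly leaves open. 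Your ``area'' identity $w(T_k) = \frac1n\int_0^\infty\bigl(\kappa_k(t)-1\bigr)\dd t$ is correct (it is \refL{LF1} applied to $T_k$, with $N-N(t)=\kappa_k(t)-1$), and your qualitative picture of a transition window near $t \approx 2k$ is consistent with the paper's; but without the averaging step, converting that picture into the stated $\pm 2\sqrt k$ bound would require a per-$k$ concentration estimate that the argument you sketch does not supply and that the paper does not have.
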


See also the related Conjectures
\ref{Conj-gamma1},
\ref{Conj-gx} and
\ref{ConjImproved}.

\begin{remark}\label{RE}
For the simple graph
$K_n$ with, say, exponential costs,
there is as said above a small but
positive probability that $T_k$ does not exist for $k\ge2$.
Hence, either $\E w(T_k)$ is undefined for $k\ge2$,
or we define
$w(T_k)=\infty$ when $T_k$ does not exist,
and then $\E w(T_k)=\infty$ for $k\ge2$ and every $n$.
This is no problem for the convergence in probability in \refT{T0}, but
  it implies that
\refT{TE} does not hold for simple graphs, and the multigraph model is
essential for studying the expectation.
\end{remark}

\begin{remark}
For the \mst{} $T_1$, various further results are known, including refined
estimates for the expectation of the cost $w(T_1)$ \cite{SJ277},
a normal limit law \cite{SJ110},
and asymptotics for the variance \cite{SJ110,SJ110-addendum,Wastlund}.
It seems challenging to show corresponding results for $T_2$ or later trees.
\end{remark}

\subsection{Motivations}
\citet{FriezeJ} recently considered a related problem, where instead of
choosing spanning trees $T_1,T_2,\dots$ greedily one by one, they
choose $k$ edge-disjoint spanning trees with minimum total cost.
It is easy to see, by small examples, that selecting $k$ spanning trees
greedily one by one
does not always give a set of $k$ edge-disjoint spanning trees with minimum
cost, so the problems are different.

We show in \refT{Tdiff}
that, at least
for $k=2$, the two problems also asymptotically have different answers, in
the sense that the limiting values of the minimum cost
--- which exist for both problems  --- are different.
(Also, as discussed in \refS{Sfirstk},
we improve on the upper bound from \cite[Section~3]{FriezeJ}
on the cost of the {net cheapest} $k$ trees,
since our upper bound \eqref{kbounds} on the cost of the {first} $k$ trees
is smaller.)

Both our question and that of \citet{FriezeJ} are natural,
both seem generally relevant to questions of robust network design,
and both have mathematically interesting answers.

\medskip

Another motivation for our question comes from Talwar's ``frugality ratio''
characterizing algorithmic mechanisms (auction procedures) \cite{Talwar}.
The frugality ratio is the
cost paid by a mechanism for a cheapest structure,
divided by the nominal cost of the second-cheapest structure (in the sense of our $T_2$).
Talwar showed that for any matroidal structure
(such as our spanning trees),
in the worst case over all cost assignments,
the frugality ratio of the
famous Vickrey--Clarke--Groves (VCG) auction is 1:
the VCG cost lies between the nominal costs of $T_1$ and $T_2$.
It is natural to wonder, in our randomized setting, how these three costs compare.

\citet{CFMS} show that in
the present setting (MST in $K_n$ with \iid{} $U(0,1)$ edge costs),
the VCG cost is \emph{on average}
exactly 2 times the nominal cost of $T_1$,
and
\citet{JS} show that the VCG cost \emph{converges in probability}
to a limit, namely 2 times the limit $\zeta(3)$ of the cost of $T_1$
(with further results given for all graphs, and all matroids).
\citet{FriezeJ} show that the combined cost of the cheapest pair of
trees converges in expectation to a constant which is numerically about
$4.1704288$,
and Theorem \ref{Tdiff} shows that the cost of $T_1+T_2$
converges in probability to a value that is strictly larger.
It follows that in this average-case setting,
the frugality ratio converges in probability to some value smaller than
$(2 \zeta(3))/ (4.1704288 - \zeta(3))$, about $0.80991$.
So where Talwar found that the (worst-case) frugality ratio
was at most 1 for matroids and could be larger in other cases,
in the present setting it is considerably less than 1.

\subsection{Some notation}

If $x$ and $y$ are real numbers, then $x\vee y:=\max(x,y)$ and $x\land
y:=\min(x,y)$. Furthermore, $x_+:= x\vee 0$.
These operators bind most strongly, e.g., $t-\tau(i)\vee\tau(j)$ means $t-(\tau(i)\vee\tau(j))$.

We use $:=$ as defining its left-hand side, and $\eqdef$ as a reminder that equality
of the two sides is by definition.
We write $\doteq$ for numerical approximate equality,
and $\approx$ for approximate equality in an asymptotic sense
(details given where used).

We use ``increasing'' and ``decreasing'' in their weak senses; for example,
a function $f$ is increasing if $f(x)\le f(y)$ whenever $x\le y$.

Unspecified limits are as \ntoo.
As said above, \whp{} means with probability $1-o(1)$.
Convergence in probability is denoted $\pto$.
Furthermore, if $X_n$ are random variables and $a_n$ are positive constants,
$X_n=\op(a_n)$ means, as usual, $X_n/a_n\pto0$; this is also equivalent to:
for every $\eps>0$, \whp{} $|X_n|<\eps a_n$.

Graph means, in general, multigraph. (It is usually clear from the context
whether we consider a multigraph or simple graph.)
If $G$ is a multigraph, then $\xG$ denotes the
simple graph obtained by merging parallel edges and deleting loops. (Loops
do not appear in the present paper.)
The number of vertices in a graph $G$ is denoted by $|G|$, and the number of
edges by $e(G)$.

For a graph $G$, let $\cC_1(G)$,  $\cC_2(G)$, \dots{} be
the largest component, the second largest
component, and so on, using any rule to break ties.
(If there are less than $k$ components, we define
$\cC_k(G)=\emptyset$.)
Furthermore, let $C_i(G):=|\cC_i(G)|$; thus $C_1(G)$ is the
the number of vertices in the largest component,
and so on.
We generally regard components of a graph $G$ as sets of vertices.

\section{Model and main structural results} \label{Smodel}
\subsection{Model} \label{Smodelmodel}
We elaborate the multigraph model in the introduction.

We consider (random)
(multi)graphs on the vertex set $[n]:=\set{1,\dots,n}$; we usually
omit $n$ from the notation.
The graphs will depend on time, and are denoted
by $G_k(t)$ and $F_k(t)$, where $k=1,2,3,\dots$ and $t\in[0,\infty]$;
they all start  as empty at time $t=0$ and grow as time increases.
We will have $G_k(t)\supseteq G_{k+1}(t)$ and $F_k(t)\subseteq
G_k(t)$ for all $k$ and $t$. Furthermore, $F_k(t)$  will be a forest.
As \ttoo, $F_k(t)$ will eventually become a spanning tree,
$F_k(\infty)$, which is the
$k$th spanning tree $T_k$
produced by the greedy algorithm in the introduction,
operating on the multigraph $G_1(\infty)$.

Since the vertex set is fixed, we may when convenient
identify the multigraphs with sets of edges.
We begin by defining $G_1(t)$ by letting edges arrive as independent Poisson
processes with rate $1/n$ for each pair $\set{i,j}$ of vertices; $G_1(t)$
consists of all edges that have arrived at or before time $t$.
(This scaling of time turns out to be natural and useful.
In essence this is because what is relevant is the cheapest edges on each vertex,
and these have expected cost $\Theta(1/n)$ and thus appear at expected time $\Theta(1)$.)
We define the cost of an edge arriving at time $t$ to be $t/n$, and note
that in $G_1(\infty)$, the costs of the edges joining two vertices form a
Poisson process with rate $1$. Hence, $G_1(\infty)$ is the multigraph model
defined in \refS{S:intro}.

Thus, for any fixed $t\ge0$, $G_1(t)$ is a multigraph where the number of
edges between any two fixed vertices is $\Po(t/n)$, and these numbers are
independent for different pairs of vertices. This is a natural multigraph
version of the \ER{} graph $G(n,t)$.
(The process $G_1(t)$, $t\ge0$, is a continuous-time version of the
multigraph process in \eg{}
\cite{BollobasFrieze} and \cite[Section 1]{SJ97},
ignoring loops.)
Note that $\xG_1(t)$, \ie,  $G_1(t)$
with multiple edges merged,
is simply the random graph $G(n,p)$ with $p=1-e^{-t/n}$.

Next, we let $F_1(t)$ be the subgraph of $G_1(t)$ consisting of every edge that
has arrived at some time $s\le t$ and at that time joined two different
components of $G_1(s)$. Thus, this is a subforest of $G_1(t)$, as stated
above, and it is precisely the forest constructed by Kruskal's algorithm
(recalled in the introduction) operating on $G_1(\infty)$,
at the time all edges with cost $\le t/n$ have been considered.
Hence, $F_1(\infty)$ is the minimum spanning tree $T_1$ of $G_1(\infty)$.

Let $G_2(t):=G_1(t)\setminus F_1(t)$, \ie, the subgraph of $G_1(t)$
consisting of all edges rejected from $F_1(t)$; in other words
$G_2(t)$ consists of the edges that, when they
arrive to $G_1(t)$, have their endpoints in the same component.

We continue recursively. $F_k(t)$ is the subforest of $G_k(t)$ consisting of
all edges in $G_k(t)$ that, when they arrived at some time $s\le t$, joined two
different components in $G_k(s)$. And $G_{k+1}(t):=G_k(t)\setminus F_k(t)$,
consisting of the edges rejected from $F_k(t)$.

Hence, the $k$th spanning tree $T_k$ produced by Kruskal's algorithm
equals $F_k(\infty)$, as asserted above.

Note that $F_k(t)$ is a spanning subforest of $G_k(t)$, in other words, the
components of $F_k(t)$ (regarded as vertex sets) are the same as the
components of $G_k(t)$; this will be used frequently below.
Moreover, each edge in $G_{k+1}(t)$ has endpoints in the same component of
$G_k(t)$; hence, each component of $G_{k+1}(t)$ is a subset of a component
of $G_k(t)$. It follows that an edge arriving to $G_1(t)$ will be passed
through $G_2(t), \dots,G_k(t)$ and to $G_{k+1}(t)$ (and possibly further)
if and only if its endpoints belong to the same component of $G_k(t)$, and thus
if and only if its endpoints belong to the same component of $F_k(t)$.

\subsection{More notation}\label{SSmore}

We say that a component $\cC$ of a graph $G$ is the \emph{\ug}
of $G$ if $|\cC|>|\cC'|$ for every other component $\cC'$;
if there is no such component (\ie, if the maximum size is tied), then we
define the \ug{} to be $\emptyset$.

We say that a component $\cC$ of $F_k(t)$ is the \emph{\pg}
of $F_k(t)$ (or of $G_k(t)$) if it is the \ug{} of $F_k(t)$ and,
furthermore, it is a subset of the \ug{} of $F_k(u)$ for every $u>t$;
if there is no such component then the \pg{} is
defined to be $\emptyset$.

Let $\ccc_k(t)$
denote the \pg{} of
$F_k(t)$. Note that the \pg{} either is empty or the largest component; thus
$|\ccc_k(t)|$ is either 0 or $C_1(F_k(t)) =C_1(G_k(t)) $.
Note also that the \pg{} $\ccc_k(t)$ is an increasing function of $t$:
$\ccc_k(t)\subseteq\ccc_k(u)$ if $t\le u$.
Furthermore, for sufficiently large $t$
(\viz{} $t$ such that $G_k(t)$ is
connected, and thus $F_k(t)$ is the spanning tree $T_k$),
$\ccc_k(t)=\ccc_k(\infty)=[n]$.

\subsection{A structure theorem}

The basis of our proof of Theorems \ref{T0} and \ref{T0multi}
is the following theorem on the
structure of the components of $G_k(t)$.
Recall that $F_k(t)$ has the same components as $G_k(t)$, so the theorem
applies as well to $F_k(t)$.
The proof is given in \refS{Spf}.

For $k=1$, the theorem collects various known results for $G(n,p)$.
Our proof includes this case too, making the proof more
self-contained.

\begin{theorem}\label{T1}
With the definitions above, the following hold for every fixed $k\ge1$ as \ntoo.
  \begin{romenumerate}
\item \label{T1C1}
There exists a continuous increasing function $\rho_k:\ooo\to\ooi$
such that
\begin{equation}\label{t1}
C_1(G_k(t))/n\pto \rho_k(t),
\end{equation}
uniformly in $t\in\ooo$; in other words, for any $\eps>0$, \whp{}, for all
$t\ge0$,
\begin{equation}\label{t1b}
\rho_k(t)-\eps \le C_1(G_k(t))/n\le \rho_k(t)+\eps.
\end{equation}

\item \label{T1C2}
$\sup_{t\ge0}C_2(G_k(t))/n\pto 0$.
\item \label{T1gx}
There exists a threshold $\gx_k>0$ such that $\rho_k(t)=0$ for $t\le\gx_k$,
but $\rho_k(t)>0$ for $t>\gx_k$.
Furthermore, $\rho_k$ is strictly increasing on $[\gx_k,\infty)$.
\item \label{T1rholim}
There exist constants $b_k,B_k>0$ such that
\begin{equation}
  \label{t1rholim}
1-\rho_k(t)\le B_k e^{-b_kt},
\qquad t\ge0.
\end{equation}
In particular,
$\rho_k(t)\to1$ as \ttoo.
\item  \label{T1pg}
If $t>\gx_k$, then \whp{} $G_k(t)$ has a non-empty \pg.
Hence, for every $t\ge0$,
\begin{equation}\label{t1pg}
|\ccc_k(t)|/n
\pto\rho_k(t).
\end{equation}
  \end{romenumerate}
\end{theorem}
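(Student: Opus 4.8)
The plan is to prove \refT{T1} by induction on $k$. The case $k=1$ consists of classical facts about the \ER{} graph, since $G_1(t)$ has the same components as the simple graph $\xG_1(t)=G(n,1-e^{-t/n})$: here $\rho_1(t)$ is the survival probability of a $\Po(t)$ Galton--Watson process (the largest root of $\rho=1-e^{-t\rho}$), $\gx_1=1$, the bound \eqref{t1rholim} is the standard estimate for the number of vertices outside the giant, and $C_2(G_1(t))=\op(n)$ for each fixed $t$ since the second component has order $\log n$ away from the critical window and $o(n)$ inside it. The upgrade to the \emph{uniform} statements in \ref{T1C1} and~\ref{T1C2}, and the permanent-giant statement~\ref{T1pg}, are deferred to the end of the argument, where they are handled uniformly in $k$.

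For the inductive step, assume the theorem for $k-1$. By construction $G_k(t)$ is exactly the set of Poisson-arriving edges whose two endpoints, at their arrival time $s$, already lay in a common component of $F_{k-1}(s)$; once an edge enters $G_k$ it remains, so $t\mapsto G_k(t)$ is increasing. By the induction hypothesis, for $s>\gxki$ the components of $F_{k-1}(s)$ consist of the \pg{} $\ccc_{k-1}(s)$, of size $\rhoki(s)n+\op(n)$, together with components of size $\op(n)$, uniformly in $s$. Introduce the entry time $\tau(i):=\inf\set{s:i\in\ccc_{k-1}(s)}$ of each vertex into this giant. Whenever $\tau(i)\vee\tau(j)\le t$, the vertices $i,j$ both lie in $\ccc_{k-1}$ throughout $\bigl(\tau(i)\vee\tau(j),t\bigr]$, so $G_k(t)$ contains every one of the $\Po\bigl((t-\tau(i)\vee\tau(j))/n\bigr)$ arrivals on $\set{i,j}$ in that interval; the only edges of $G_k(t)$ not of this form are those internal to a \emph{non-giant} component of $F_{k-1}$ at the instant they arrived. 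After discarding the latter, $G_k(t)$ is precisely the inhomogeneous random multigraph of \cite{SJ178} on the vertex set $\set{i:\tau(i)\le t}$, with the type of vertex $i$ equal to $\tau(i)$, type distribution the empirical law of $(\tau(i))_i$, and kernel $\kappa_{k,t}(x,y):=\bigl(t-x\vee y\bigr)_+$.

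Two points must be checked before invoking the theory of \cite{SJ178}. First, the empirical type distribution converges: by the induction hypothesis its distribution function, $s\mapsto|\ccc_{k-1}(s)|/n$, converges in probability, uniformly, to $\rhoki$, so the type law converges to the probability measure $\mu$ on $\ooo$ with distribution function $\rhoki$. Second --- and this is the main obstacle --- the discarded edges must be shown negligible: each of them is internal to a component of $F_{k-1}$ other than $\ccc_{k-1}$, and these components are uniformly $\op(n)$, so deleting all of them changes $C_1(G_k(t))$ and $C_2(G_k(t))$ by only $\op(n)$; this is the quantitative form of the fact that being rejected from $F_{k-1}$ is essentially the same as having both endpoints in $\cC_1(F_{k-1})$ (see \refC{Cchi}), and its proof is the technical core of \refS{Spf}. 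Granting it, \cite{SJ178} yields, for each fixed $t$, $C_1(G_k(t))/n\pto\rho_k(t)$ and $C_2(G_k(t))=\op(n)$, where $\rho_k(t):=\int\rho_{\kappa_{k,t}}\dd\mu$ and $\rho_{\kappa_{k,t}}$ is the maximal $[0,1]$-valued solution of $f(x)=1-\exp\bigl(-\int\kappa_{k,t}(x,y)f(y)\dd\mu(y)\bigr)$. Since $\kappa_{k,t}(x,y)>0$ for all $x,y<t$, the kernel is irreducible, so this solution is unique and depends continuously on $t$; hence $\rho_k$ is continuous, it is increasing by the monotone coupling $G_k(t)\subseteq G_k(t')$, and there is a threshold $\gxk\in(0,\infty)$ --- the unique $t$ at which the norm of the associated integral operator equals $1$ (the norm being continuous, equal to $0$ at $t=0$, and tending to $\infty$) --- with $\rho_k\equiv0$ on $[0,\gxk]$ and $\rho_k$ strictly increasing on $[\gxk,\infty)$. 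Finally, \eqref{t1rholim} follows by writing $1-\rho_k(t)$ as the $\mu$-mass of the types exceeding $t$, which is $1-\rhoki(t)\le B_{k-1}e^{-b_{k-1}t}$, plus the extinction contribution of the types in $[0,t]$, which is exponentially small in $t$ because $\kappa_{k,t}$ grows linearly over the bulk of the type mass (making the multitype branching process strongly supercritical) while the residual mass decays by the induction hypothesis.

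It remains to obtain the uniform versions of~\ref{T1C1} and~\ref{T1C2} and to identify the permanent giant, for all $k\ge1$. Both $t\mapsto C_1(G_k(t))$ and $t\mapsto C_1(G_k(t))+C_2(G_k(t))$ are nondecreasing (edges are only added as $t$ grows), and $\rho_k$ is continuous, so the pointwise limits $C_1(G_k(t))/n\pto\rho_k(t)$ and $C_2(G_k(t))=\op(n)$ promote to the uniform statements \eqref{t1b} and~\ref{T1C2} by the usual comparison on a fine finite grid of times. For~\ref{T1pg}, fix $t>\gxk$ and $\eps\in(0,\rho_k(t))$: \whp{} $C_1(G_k(t))\ge(\rho_k(t)-\eps)n$, by monotonicity $\cC_1(G_k(t))$ is contained in a single component of $G_k(t')$ for every $t'\ge t$, and by the uniform bound~\ref{T1C2} such a component can only be $\cC_1(G_k(t'))$ itself; hence, \whp, $\cC_1(G_k(t))$ is a genuine \pg{} and $|\ccc_k(t)|=C_1(G_k(t))$. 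For $t\le\gxk$ both $|\ccc_k(t)|$ and $C_1(G_k(t))$ are $\op(n)$, so in all cases $|\ccc_k(t)|/n\pto\rho_k(t)$, which is \eqref{t1pg}.
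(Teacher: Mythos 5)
Your overall strategy coincides with the paper's: induct on $k$, replace $G_k(t)$ by an inhomogeneous random graph whose types are the entry times $\tau(i)$ into the permanent giant of $F_{k-1}$, with kernel $\kappa_t(x,y)=(t-x\vee y)_+$ and type law $\mu_{k-1}$ having distribution function $\rho_{k-1}$, and then apply the machinery of \cite{SJ178}. The reduction to a threshold via $\|T_{\kappa_t}\|=1$, the grid argument upgrading pointwise convergence to uniform, and the derivation of the permanent giant from uniform $C_2=\op(n)$ are all essentially the paper's arguments.

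However, there is a genuine gap exactly at the step you flag as ``the main obstacle.'' You assert that because the non-giant components of $F_{k-1}$ are uniformly $\op(n)$, ``deleting all of them changes $C_1(G_k(t))$ and $C_2(G_k(t))$ by only $\op(n)$,'' and you explicitly defer the proof to the paper's Section~\ref{Spf}. Two things are missing. First, smallness of individual components does not by itself bound the \emph{number} of discarded edges: what is actually needed is a bound on the number of vertex pairs lying in a common non-giant component, i.e.\ a bound on $\sum_{\ell\ge2}\binom{C_\ell(G_{k-1}(s))}{2}\approx \tfrac12 n\,\chix(G_{k-1}(s))$, accumulated over $s\le t$. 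This is precisely what Lemma~\ref{L1} in the paper establishes by partitioning time and controlling $\chi$ near $\gx_{k-1}$ and $\chix$ thereafter; it does not follow from $C_2=\op(n)$ alone without that susceptibility estimate. Second, even after establishing that the coupled graphs differ by $\op(n)$ edges (the paper's Lemma~\ref{L3}), deleting $\op(n)$ edges from an arbitrary graph can change $C_1$ by $\Theta(n)$; to conclude that $C_1$ is stable you must invoke a stability theorem for inhomogeneous random graphs, which the paper does via \cite[Theorem~3.9]{SJ178} (with the further technicality of passing to the restricted, irreducible generalized vertex space on $[0,t]$). Neither step is present in your writeup, so the inductive step is not actually closed.

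Two smaller points. For part~\ref{T1rholim}, your route via the multitype branching process (decomposing $1-\rho_k(t)$ into the type mass above $t$ plus the extinction probability for types in $[0,t]$, using strong supercriticality) is a genuinely different approach from the paper's, which instead proves a uniform-in-$n$ bound $q_n(t)\ge 1-B_ke^{-b_kt}$ by a restarting/coupling trick (independent copies of $G_k(t_0)$ over disjoint time intervals) and then passes to the limit; your sketch is plausible but requires a uniform-in-type extinction estimate that you do not supply, whereas the paper's coupling is more elementary and additionally yields the finite-$n$ bound that is reused later (e.g.\ in Lemma~\ref{LWin}). For part~\ref{T1gx}, you state that $\rho_k$ is strictly increasing on $[\gx_k,\infty)$ but only justify weak monotonicity (from $G_k(t)\subseteq G_k(t')$); strictness requires a further argument, which the paper gets by coupling the branching processes $\fX_{\kappa_t}\subseteq\fX_{\kappa_u}$ and observing a positive probability of the extra children saving an otherwise dying process.
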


  We note also a formula for the number of edges in $G_k(t)$, and
  two simple inequalities relating different $k$.
\begin{theorem}\label{Tlang}
  For each fixed $k\ge1$
  and uniformly for $t$ in any finite interval $[0,T]$,
  \begin{equation}\label{langfredag}
	e(G_k(t))/n \pto \frac12\int_0^t \rho_{k-1}(s)^2\dd s.
  \end{equation}
\end{theorem}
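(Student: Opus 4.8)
The plan is to compute $e(G_k(t))$ by summing, over all ordered pairs of vertices, the (conditional) probability that an edge is present in $G_k(t)$, and then to show concentration. Recall that $G_{k+1}$ consists exactly of the edges rejected from $F_k$; iterating, an edge is in $G_k(t)$ precisely if, at each of the times $s_1<s_2<\dots$ at which a copy of $\{i,j\}$ arrived in $[0,t]$, its endpoints lay in the same component of $G_{k-1}$ at that moment --- equivalently, were rejected from $F_1,\dots,F_{k-1}$ in turn. Rather than track the history of a single pair, the cleanest bookkeeping is differential: in the time interval $[s,s+\dd s]$, for each pair $\{i,j\}$ a new copy arrives with probability $\dd s/n + o(\dd s)$, independently of everything; this copy lands in $G_k(s)$ iff $i$ and $j$ are in the same component of $G_{k-1}(s)$, i.e.\ of $F_{k-1}(s)$. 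Hence
\begin{equation*}
  \E\bigl[e(G_k(t))\bigr] = \int_0^t \frac1n\,\E\Bigl[\#\{\{i,j\}: i\sim j \text{ in } F_{k-1}(s)\}\Bigr]\dd s + o(n),
\end{equation*}
and $\#\{\{i,j\}: i\sim j \text{ in } F_{k-1}(s)\} = \frac12\sum_{\cC}|\cC|(|\cC|-1) = \frac12\sum_\cC |\cC|^2 - \frac{n}{2}$, the sum over components $\cC$ of $F_{k-1}(s)=G_{k-1}(s)$.

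The next step is to identify the limit of $\frac1{n^2}\sum_\cC |\cC|^2$. By \refT{T1}\ref{T1C1} the largest component contributes $C_1(G_{k-1}(s))^2/n^2 \pto \rho_{k-1}(s)^2$, uniformly in $s$, while by \refT{T1}\ref{T1C2} we have $\sup_s C_2(G_{k-1}(s))/n\pto 0$, so $\sum_{\cC\neq\cC_1}|\cC|^2 \le C_2(G_{k-1}(s))\sum_{\cC\neq\cC_1}|\cC| \le C_2(G_{k-1}(s))\cdot n = \op(n^2)$ uniformly in $s$. Thus $\frac1{n^2}\sum_\cC|\cC|^2 \pto \rho_{k-1}(s)^2$ uniformly on $[0,T]$, and by bounded convergence (everything is in $[0,1]$) the integral converges: $\frac1{n^2}\int_0^t (\tfrac12\sum_\cC|\cC|^2 - \tfrac{n}{2})\dd s \to \frac12\int_0^t\rho_{k-1}(s)^2\dd s$ in probability, uniformly for $t\in[0,T]$. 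This yields $e(G_k(t))/n \pto \frac12\int_0^t\rho_{k-1}(s)^2\dd s$ once we upgrade from convergence of expectations to convergence in probability.

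For the concentration step I would condition on the entire history of the component structure, i.e.\ on the forests $(F_j(s))_{j<k,\,s\le t}$ --- which are determined by the ``accepted'' edges and are independent of which additional parallel copies arrive --- and observe that, given this, $e(G_k(t))$ is a sum of independent Poisson (or indicator) contributions, one per vertex pair per infinitesimal time step, with conditional mean $M_n(t):=\frac1n\int_0^t(\tfrac12\sum_\cC|\cC(s)|^2-\tfrac n2)\dd s$. Its conditional variance is of the same order $O(n)$, since each of the $\binom n2$ pairs contributes a $\Po(\Theta(t/n))$ count. Hence $e(G_k(t)) = M_n(t) + \Op(\sqrt n) = M_n(t)+\op(n)$, and combined with the previous paragraph $e(G_k(t))/n\pto\frac12\int_0^t\rho_{k-1}(s)^2\dd s$ pointwise in $t$. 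Uniformity on $[0,T]$ then follows because both $e(G_k(t))/n$ and the limit are increasing in $t$ and the limit is continuous, so pointwise convergence of monotone functions to a continuous limit is automatically uniform (a Dini/Pólya-type argument on $[0,T]$).

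The main obstacle is the concentration/uniformity step: one must be careful that conditioning on the lower-level forest structure really does leave the relevant edge counts as independent Poisson variables (this uses the ``splitting'' property of the independent Poisson arrival processes and the fact that acceptance of an edge into $F_j$ depends only on the component structure at its arrival time, not on future or parallel arrivals), and that the error terms are genuinely uniform in $t$ rather than merely pointwise. The ingredients of \refT{T1} --- particularly the uniform-in-$t$ statements \ref{T1C1} and \ref{T1C2} --- do the heavy lifting here, so the remaining work is essentially the routine verification just sketched.
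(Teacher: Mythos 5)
Your proof is correct, but it takes a different route than the paper's. The paper's proof of Theorem~\ref{Tlang} is very short because it recycles Lemma~\ref{LN}: it writes $e(G_k(b))-e(G_k(a)) = N(a,b)-N_{k-1}(a,b)$, where $N(a,b)$ is the number of all arrivals in $(a,b]$ and $N_{k-1}(a,b)$ the number landing in $F_1\cup\dots\cup F_{k-1}$, then applies Lemma~\ref{LN}'s bounds on $N_{k-1}$, partitions $[0,t)$ into small subintervals as in Lemma~\ref{LW1}, and obtains uniformity by monotonicity as in the proof of \refT{T1}\ref{T1C1}. You instead go directly for the conditional Poisson structure: conditioned on $\FFm$, the $G_k$-edges on pair $\set{i,j}$ arrive as an independent Poisson process on $(\tau(i,j),\infty)$ --- this is exactly the observation the paper makes near \eqref{xGkt}--\eqref{pij} and which underlies the whole of Section~\ref{Spf}, so you are entitled to it --- and then compute the conditional mean $M_n(t)=\frac1n\sum_{i<j}(t-\tau(i,j))_+ = \frac1n\int_0^t\bigpar{\tfrac12\sum_\cC|\cC(s)|^2-\tfrac n2}\dd s$ and the conditional variance $O(n)$.

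Two points worth noting. First, the identification of the limit of $M_n(t)/n$ is precisely Corollary~\ref{Cchi} (note $\tfrac12\sum_\cC|\cC|^2-\tfrac n2 = \tfrac n2(\chi(G_{k-1}(s))-1)$, and $\chi(G_{k-1}(s))/n\pto\rho_{k-1}(s)^2$ uniformly), which is exactly the ingredient the paper's Lemma~\ref{LN} also rests on; you rederive it from \refT{T1}\ref{T1C1} and~\ref{T1C2}, which is fine. Second, your Chebyshev step replaces Lemma~\ref{LN}'s thicker/thinner Poisson coupling with a plain conditional-variance bound; this is arguably cleaner, and has the advantage of keeping the whole argument for \refT{Tlang} self-contained rather than inheriting the machinery of Section~\ref{SpfT0}. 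The monotonicity/Dini argument for uniformity is the same finite-partition device the paper uses at \eqref{tretton}. So the proposal is correct, and the route is more elementary/direct, though it trades away the reuse of Lemma~\ref{LN} that makes the paper's version shorter on the page.
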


\begin{theorem}\label{Tkk-1}
$\rho_{k}(t)\le\rho_{k-1}(t)$ for every $t\ge0$, with strict inequality when
  $\rho_{k-1}(t)>0$
(equivalently, when $t>\gx_{k-1}$).
Furthermore,
\begin{equation}\label{gxdiff}
\gx_{k} \ge \gx_{k-1}+1.
\end{equation}
\end{theorem}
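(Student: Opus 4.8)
The plan is to establish the three assertions of the theorem in turn: monotonicity $\rho_k\le\rho_{k-1}$ (immediate from the construction), the threshold gap $\gx_k\ge\gx_{k-1}+1$ (a subcriticality estimate), and the strict inequality (the crux). For monotonicity: by the construction in \refS{Smodelmodel}, every component of $G_k(t)$ is contained in a component of $G_{k-1}(t)$, so $C_1(G_k(t))\le C_1(G_{k-1}(t))$ deterministically for all $t$ and $n$; letting \ntoo{} and using \refT{T1}\ref{T1C1} gives $\rho_k(t)\le\rho_{k-1}(t)$ for every $t\ge0$. (For $k=1$ one reads $\rho_0\equiv1$ on $(0,\infty)$ and $\gx_0:=0$, as forced by \refT{Tlang}.) In particular $\gx_k\ge\gx_{k-1}$, as otherwise $\rho_k>0=\rho_{k-1}$ on a nonempty interval.

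For the gap, fix $t<\gx_{k-1}+1$; it is enough to show $C_1(G_k(t))=\op(n)$, for then $\rho_k(t)=0$ by \refT{T1}\ref{T1C1}, and since $\rho_k$ is continuous and vanishes precisely on $[0,\gx_k]$ this forces $\gx_k\ge\gx_{k-1}+1$. The point is that $G_k$ accumulates edges appreciably only once $G_{k-1}$ has a giant: an edge passes into $G_k$ only while its endpoints lie in a common component of $G_{k-1}$ (equivalently of $F_{k-1}$), so edges arriving before $\gx_{k-1}$ contribute only $\op(n)$-sized pieces, while on $(\gx_{k-1},t]$ each vertex pair receives $G_k$-edges at rate at most $1/n$ for a total duration $t-\gx_{k-1}<1$. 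In the inhomogeneous-random-graph language underlying \refT{T1}, the kernel governing $G_k(t)$ may be taken, up to lower-order corrections from the sub-$\gx_{k-1}$ edges, to be $\kappa_t(x,y)=(t-x\vee y)_+$ on the type space $[\gx_{k-1},\infty)$ with the probability measure $\dd\rho_{k-1}$; the Schur bound for a nonnegative symmetric kernel then gives $\norm{T_{\kappa_t}}\le\sup_x\int(t-x\vee y)_+\dd\rho_{k-1}(y)\le\int_{\gx_{k-1}}^{\infty}(t-y)_+\dd\rho_{k-1}(y)\le t-\gx_{k-1}<1$, so $G_k(t)$ is subcritical. The work lies in identifying this kernel and bounding the corrections --- but that is essentially contained in the proof of \refT{T1}, so I would deduce the gap from that analysis rather than redo it.

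Finally, the strict inequality. For $\gx_{k-1}<t\le\gx_k$ there is nothing to prove, since $\rho_k(t)=0<\rho_{k-1}(t)$ and such $t$ exist because $\gx_k\ge\gx_{k-1}+1>\gx_{k-1}$. For $t>\gx_k$, I would produce a positive-density set of vertices in $\cC_1(G_{k-1}(t))$ that are absent from $\cC_1(G_k(t))$, namely the vertices of $G_{k-1}(t)$-degree exactly $1$ lying in $\cC_1(G_{k-1}(t))$: such a vertex's unique $G_{k-1}$-edge was \emph{accepted} into $F_{k-1}$ (it arrived while the vertex was still isolated in $G_{k-1}$), so the vertex has no $G_k$-edge and is isolated in $G_k(t)$. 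The claim then reduces to: for $t>\gx_{k-1}$, a bounded-below fraction of $\cC_1(G_{k-1}(t))$ has $G_{k-1}$-degree $1$. This follows from the structure of $G_{k-1}$ obtained in the proof of \refT{T1} --- restricted to its giant, $G_{k-1}(t)$ is a supercritical inhomogeneous random graph, which carries linearly many leaves; concretely, by strict monotonicity of $\rho_{k-1}$ (\refT{T1}\ref{T1gx}) a positive fraction of vertices join $\cC_1(G_{k-1})$ only during a short final window $(t-\delta,t]$, and of these a bounded-below fraction acquire no further $G_{k-1}$-edge before $t$. I expect this step, the strict inequality for $t>\gx_k$, to be the main obstacle: neither the monotone containment of components nor the edge-count formula \eqref{langfredag} suffices there, and one must invoke the detailed component- and degree-structure from the proof of \refT{T1} (equivalently, compare the functional fixed points defining $\rho_{k-1}$ and $\rho_k$, which is complicated by their living on different type spaces).
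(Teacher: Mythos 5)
Two of the three assertions are handled correctly, and your first is actually by a cleaner route than the paper's. For $\rho_k\le\rho_{k-1}$, the deterministic containment of each $G_k(t)$-component inside a $G_{k-1}(t)$-component gives $C_1(G_k(t))\le C_1(G_{k-1}(t))$ pathwise, and \refT{T1}\ref{T1C1} converts this to $\rho_k\le\rho_{k-1}$; the paper instead reads the inequality off the representation $\rho_k(t)=\int_0^\infty\rho_{\kk_t}(x)\,\dd\mu_{k-1}(x)$ from \refL{LC1}, using $\rho_{\kk_t}(x)=0$ for $x\ge t$. For the gap $\gx_k\ge\gx_{k-1}+1$, your argument is essentially the paper's: $\mu_{k-1}$-almost every $x$ satisfies $x\ge\gx_{k-1}$ (since $\mu_{k-1}[0,\gx_{k-1})=\rho_{k-1}(\gx_{k-1})=0$), hence $\kk_t\le(t-\gx_{k-1})_+$ a.e.\ and $\norm{T_{\kk_t}}<1$ for $t<\gx_{k-1}+1$. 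The paper uses the Hilbert--Schmidt bound $\norm{T_{\kk_t}}\le\normLL{\kk_t}$ where you use the Schur test; both yield the constant $(t-\gx_{k-1})_+$, a cosmetic difference. Your worry about ``bounding the corrections'' is unnecessary: the exact identity $\rho_k(t)=\rho(\kk_t;\mu_{k-1})$ and the characterization $\norm{T_{\gx_k}}=1$ are already delivered by \refL{LC1} and \eqref{tgx1}, so there is nothing approximate left to control.

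The genuine gap is the strict inequality for $t>\gx_k$. Your observation that a $G_{k-1}(t)$-degree-one vertex is isolated in $G_k(t)$ is correct, but the crux --- that a positive fraction of $\cC_1(G_{k-1}(t))$ has $G_{k-1}(t)$-degree exactly one --- is asserted, not proved, and does not follow from anything in \refT{T1}; establishing it would require re-opening the inhomogeneous-random-graph machinery to extract the degree distribution of $\gvnkt$. You flag this yourself, but then turn away from the simple route in your final parenthetical. There is no type-space mismatch to navigate: the level-$k$ branching process of \refL{LC1} lives \emph{on} $(\cS,\mu_{k-1})$, giving $\rho_k(t)=\int_0^\infty\rho_{\kk_t}(x)\,\dd\mu_{k-1}(x)$, while $\rho_{k-1}(t)$ is simply the total mass $\mu_{k-1}[0,t)$. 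For every $x<t$ the initial particle has positive probability of producing no children at all (its offspring count is Poisson), so $\rho_{\kk_t}(x)<1$ there; when $\rho_{k-1}(t)>0$ the set $[0,t)$ has positive $\mu_{k-1}$-measure, and the integral is strictly less than $\mu_{k-1}[0,t)=\rho_{k-1}(t)$. That is the paper's entire argument for strictness, and it yields your first assertion as a byproduct.
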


Note that inequality \eqref{gxdiff} is weak in that we
conjecture that as \ktoo,
$\gx_k =\gx_{k-1}+2+o(1)$, see \refConj{Conj-gx}.

\section{Bounds on the expected cost}\label{Sbounds}
\subsection{Total cost of the first
 \texorpdfstring{$\boldsymbol{k}$}{k} trees}  \label{Sfirstk}
% \unichar{"1D458}   𝑘       f09d9198

The following theorem gives lower and upper bounds on the total cost
of the first $k$ spanning trees.

\begin{theorem} \label{Tbounds}
Letting $\Wk = \sum_{i=1}^k w(T_k)$ be the total cost of the first $k$
spanning trees,
for every $k\ge1$,
\begin{align} \label{kbounds}
 k^2 \frac{n-1}{n}
  & \leq \E \Wk \leq
 k(k+1) \frac{n-1}{n}
< k^2+k.
\end{align}
\end{theorem}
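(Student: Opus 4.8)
The plan is to bound the cost of each tree $T_k$ by comparing it to the cost of $T_1$ computed in suitable subgraphs, and to exploit the fact that the edges available for $T_k$ are exactly those rejected from $F_1,\dots,F_{k-1}$. For the lower bound, I would argue that for \emph{any} graph with independent $\Exp(1)$ (or Poisson-process) edge costs, the minimum spanning tree has expected weight at least $\frac{n-1}{n}$ — indeed, each of the $n-1$ tree edges is the cheapest available edge at the moment Kruskal's algorithm adds it, and a cheapest-so-far edge incident to a growing component has cost that stochastically dominates... more cleanly: at the time $F_k$ receives its $j$th edge, that edge is being seen for the first time, so conditionally its \emph{arrival time} (in the $t=nw$ scaling) is at least an independent $\Exp$-type increment; summing, $\E w(T_k)\ge (n-1)/n$ for every $k$, hence $\E W_k=\sum_{i=1}^k \E w(T_i)\ge k(n-1)/n$. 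This is too weak; the factor $k^2$ must come from a better argument.

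The sharper lower bound should come from the structural picture: the $i$th edge added to $F_k$ (for $1\le i\le n-1$) is added at a time $t$ which is at least the time at which the $i$th edge was added to each of $F_1,\dots,F_{k-1}$, because a component-merging edge for $F_k$ must have been a cycle-edge, hence already "used up", in all earlier forests — more precisely, $F_k(t)$ has at most as many edges as $F_j(t)$ for $j<k$ is not what I want either. The right statement to prove is: if $\tau^{(k)}_1\le\dots\le\tau^{(k)}_{n-1}$ are the (rescaled) arrival times of the edges of $T_k$, then $\tau^{(k)}_i$ stochastically dominates $\tau^{(1)}_i$ shifted, and in expectation $\E[\sum_i \tau^{(k)}_i/n]$ telescopes to give $\sum_{k=1}^K \E w(T_k)\ge K^2(n-1)/n$. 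I expect the cleanest route is an exchangeability/coupling argument directly on Kruskal's algorithm run simultaneously on all levels: when an edge arrives and is placed in $F_k$, it contributes its cost to $T_k$; the total number of edges placed in $F_1\cup\dots\cup F_K$ by time $t$ is at most (number of edges arrived by time $t$), and one shows that to fill all $K$ forests one needs edges whose costs, re-summed level by level, are bounded below by $K^2(n-1)/n$ in expectation. This lower-bound bookkeeping is the step I expect to be the main obstacle, since it must extract the quadratic-in-$k$ behavior from what is a priori only a linear count.

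For the upper bound $\E W_k\le k(k+1)(n-1)/n$, I would use Theorem \ref{Tlang} together with a direct estimate: build $F_1,\dots,F_k$ greedily and observe that the first $k$ trees together use exactly $k(n-1)$ edges, and that all of these edges have cost at most the cost of the $\big(k(n-1)\big)$th-cheapest edge incident appropriately — more usefully, run Kruskal on the multigraph and note that after the first $k(n-1)$ \emph{accepted} edges are placed, each arriving edge that merges components in $G_k$ gets accepted somewhere in $F_1,\dots,F_k$, so the process of filling the trees finishes quickly. Concretely, I would show $\E[\sum_{i=1}^k w(T_i)] = \E\sum_{\text{accepted }e} w(e)$ and bound this by $\int_0^\infty \P(\text{some }F_i(t)\text{ not yet spanning})\,\big(\text{rate of acceptance}\big)\,dt$; using the exponential tail of $1-\rho_i(t)$ from Theorem \ref{T1}\ref{T1rholim} one can hope to sum to $k(k+1)(n-1)/n$, but the clean combinatorial identity is probably: each of the $k(n-1)$ accepted edges has cost whose expectation, averaged over which "slot" it fills, contributes at most $(k+1)/n$ on average. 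The final inequality $k(k+1)(n-1)/n < k^2+k$ is immediate since $(n-1)/n<1$. I'd expect the upper bound to be the easier half, reducing to a first-moment computation once the bookkeeping of "which arriving edges get accepted into $F_1\cup\cdots\cup F_k$" is set up, whereas matching the quadratic lower bound with the correct constant is the delicate point.
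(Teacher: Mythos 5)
There is a genuine gap, and it starts with a misjudgment of which half is hard. The lower bound is actually the easy part, and you have the key observation in hand without recognizing it: ``the total number of edges placed in $F_1\cup\dots\cup F_K$ by time $t$ is at most (number of edges arrived by time $t$).'' Since the placed edges are a subset of the arrived edges, $T_1\cup\dots\cup T_k$ consists of $k(n-1)$ arrived edges and therefore costs at least as much as the \emph{cheapest} $k(n-1)$ edges in the whole multigraph. The latter are just the first $k(n-1)$ points of a rate-$\binom n2$ Poisson process, so their expected total cost is $\sum_{i=1}^{k(n-1)} i/\binom n2 = \frac{k(n-1)\,(k(n-1)+1)}{n(n-1)}\ge k^2\frac{n-1}{n}$, immediately. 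No coupling, no telescoping over levels, and no extraction of quadratic behavior from a linear count is needed --- the quadratic behavior is already there in the sum $\sum_{i\le k(n-1)} i$. You declared this ``the main obstacle'' and ``the delicate point,'' which is an inversion of the actual difficulty.

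The upper bound is where the real work lies, and your proposal there does not reach a valid argument. Invoking the exponential tail $1-\rho_k(t)\le B_k e^{-b_k t}$ of Theorem~\ref{T1}\ref{T1rholim} cannot yield a clean finite-$n$ bound of the form $k(k+1)(n-1)/n$, because that theorem is asymptotic in $n$ with unspecified constants $b_k,B_k$; and your alternative (``each accepted edge contributes at most $(k+1)/n$ on average'') merely restates the target without an argument. The paper's route is genuinely different: replace the true rejection probabilities $p_j = |B_j|/\binom n2$ by the dominating surrogates $\prr_j = \rr_j/(n-1)$ (a strict overestimate, equality only when all edges of $F_j$ are in one component), build the auxiliary process $\rrv(\tau)$ driven by these surrogates and a common $U(0,1)$ coin, and prove by induction on $\tau$ that $\ssv(\tau)\mg\rrv(\tau)$ (prefix sums of forest sizes dominate). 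The surrogate process is exactly a cell-occupancy model --- an $(n-1)\times\infty$ array where each arrival picks a uniform row and occupies the first empty cell there --- whose expected filling time for $k$ columns is computable exactly as $k(k+1)$ per row, giving $k(k+1)(n-1)/n$ overall. The majorization step is the key idea you are missing, and without it the upper bound does not close.

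Minor note: the final inequality $k(k+1)(n-1)/n < k^2+k$ you handle correctly.
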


Comparing with \citet[Section~3]{FriezeJ},
our upper bound is smaller than their $k^2+3k^{5/3}$
despite the fact that they considered a more relaxed minimization problem
(see \refS{Sunion});
as such ours is a strict improvement.
In both cases the lower bound is simply the expected total cost of
the cheapest $k(n-1)$ edges in $G$,
with \eqref{ewk} matching \cite[(3.1)]{FriezeJ}.

\begin{proof}
The minimum possible cost of the $k$ spanning trees is the cost of
the cheapest $k(n-1)$ edges.
Since each edge's costs (plural, in our model) are given by
a Poisson process of rate 1,
the set of all edge costs is given by a Poisson process of rate $\binom n 2$.
Recall that in a Poisson process of rate $\gl$,
the interarrival times are independent exponential random variables
with mean $1/\gl$,
so that the $i$th arrival, at time $Z_i$, has $\E Z_i = i/\gl$.
It follows in this case that
$ \Wk  \geq \sum_{i=1}^{k(n-1)} Z_i $ and
\begin{align}\label{ewk}
 \E \Wk & \geq \sum_{i=1}^{k(n-1)} \frac i {\binom n 2}
  = \frac{ (k(n-1)) (k(n-1)+1)} {n(n-1)}
  \geq k^2 \frac{n-1}{n} .
\end{align}

We now prove the upper bound.
An arriving edge is rejected from $F_i$ iff both endpoints lie within
its ``forbidden'' set $B_i$ of edges,
namely those edges with both endpoints in one component.
The nesting property of the components means that
$B_1 \supseteq B_2 \supseteq \cdots$.
An arriving edge $e$ joins $F_k$ if
it is rejected from all previous forests, i.e.,
$e \in B_{k-1}$
(in which case by the nesting property, $e$ also belongs to all earlier $B$s)
but can be accepted into $F_k$, i.e., $e \notin B_k$.
The idea of the proof is to show that the first $k$ forests fill reasonably
quickly with $n-1$ edges each,
and we will do this by coupling the forest-creation process
(Kruskal's algorithm) to a simpler, easily analyzable random process.

Let $\ssv(\tt) = \set{\ss_k(\tt)}_{k=0}^{\infty}$
denote the vector of the sizes
(number of edges) of each forest after arrival of the $\tt$'th edge;
we may drop the argument $\tt$ when convenient.
Let $p_k=\card{B_k}/\binom n 2$, the rejection probability for $F_k$.
For any $\tt$, by the nesting property of the components and in turn of the $B_k$,
\begin{align} \label{sdecr}
 s_1 & \geq s_2 \geq \cdots
 & \text{and} &&
 p_1 & \geq p_2 \geq \cdots .
\end{align}
The MST process can be simulated by using
a sequence of \iid{} random variables $\ga(\tt) \sim U(0,1)$,
incrementing $s_k(\tt)$
if both $\ga(\tt) \leq p_{k-1}(\tt)$
(so that $e$ is rejected from $F_{k-1}$ and thus from all previous forests too)
and $\ga(\tt) > p_k(\tt)$
(so that $e$ is accepted into $F_k$).
We take the convention that $p_0(\tau)=1$ for all $\tau$.
For intuition,
note that when $\ss_k=0$ an edge is never rejected from in $F_k$
($p_k=0$, so $\ga \sim U(0,1)$ is never smaller);
when $\ss_k=1$ it is rejected with probability $p_k = 1/\tbinom n 2$;
and when $\ss_k=n-1$ it is always rejected
($\card{B_k}$ must be $\tbinom n 2$, so $p_k=1$).

Given the size
$s_k = \sum_{i=1}^\infty (C_i(F_k)-1)$
of the $k$th forest,
$\card{B_k}=\sumi\binom{C_i(F_k)}2$ is maximized (thus so is $p_k$)
when all the edges are in one component, i.e.,
\begin{align}
p_k &\leq \binom {s_k+1} 2 \left/ \binom n 2 \right. \label{pkub1}
 \\ &\leq \frac{s_k}{n-1}   \label{pkub2}
 =: \pu_k .
\end{align}
The size vector $\ssv(\tt)$ thus determines the values $\pu_k(\tt)$ for all $k$.

Let $\rrv(\tt)$ denote a vector analogous to $\ssv(\tt)$,
but with $\rr_k(\tt)$ incremented
if $\prr_k(\tt)<\ga(\tt)\le\prr_{k-1}(\tt)$,
with
\begin{align}\label{prr}
\prr_k
 & :=
 \frac{\rr_k}{n-1} .
\end{align}
By construction,
\begin{align} \label{rdecr}
 \rr_1 & \geq \rr_2 \geq \cdots
 & \text{and} &&
 \prr_1 & \geq \prr_2 \geq \cdots .
\end{align}
For intuition, here
note that when $\rr_k=0$ an arrival is never rejected from $\rr_k$
($\pu_k=0$);
when $\ss_k=1$ it is rejected with probability
$\pu_k=1/(n-1) > p_k = 1/\tbinom n 2$;
and when $\ss_k=n-1$ it is always rejected
($\pu_k=1$).

Taking each $F_i(0)$ to be an empty forest ($n$ isolated vertices, no edges)
and accordingly $\ssv(0)$ to be an infinite-dimensional 0 vector,
and taking $\rrv(0)$ to be the same 0 vector,
we claim that for all $\tt$,
$\ssv(\tt)$ majorizes $\rrv(\tt)$,
which we will write as $\ssv(\tt) \mg \rrv(\tt)$.
That is, the prefix sums of $\ssv$ dominate those of $\rrv$:
for all $\tt$ and $k$,
$\sum_{i=1}^k \ss_i(\tt) \geq \sum_{i=1}^k \rr_i(\tt)$.

We first prove this;
then use it to argue that edge arrivals to the first $k$ forests, i.e., to $\ssv$,
can only precede arrivals to the first $k$ elements of $\rrv$;
and finally analyze the arrival times of all $k(n-1)$ elements
to the latter to arrive at an upper bound
on the total cost of the first $k$ trees.

We prove $\ssv(\tt) \mg \rrv(\tt)$ by induction on $\tt$,
the base case with $\tt=0$ being trivial.
Figure \ref{kcoupling} may be helpful in illustrating the structure of
this inductive proof.
Suppose the claim holds for $\tt$.
The probabilities $p_k(\tt)$ are used to determine the forests $F_k(\tt+1)$
and in turn the size vector $\ssv(\tt+1)$.
Consider an intermediate object $\ssv'(\tt+1)$,
the size vector that would be given
by incrementing $\ssv(\tt)$ using the upper-bound values $\pu_k(\tt)$
taken from $\ssv(\tt)$ by \eqref{pkub2}.
Then, $\ss_i(\tt+1)$ receives the increment if
$p_{i-1} \geq \ga > p_i$,
and $\ss'_j(\tt+1)$ receives the increment if
$\pu_{j-1} \geq \ga > \pu_j$;
hence,
from $\pu_{i-1} \geq p_{i-1} \geq \ga$ it is immediate that $i \leq j$
and thus $\ssv(\tt+1) \mg \ssv'(\tt+1)$.

\begin{figure}
\begin{tikzcd}[column sep=huge,font=\normalsize,labels={font=\small}]
% arrow label size: \scriptsize; \small; or \normalsize.
 \textbf{F}(\tt) \arrow[r,"p"] \arrow{d}
    & \textbf{F}(\tt+1) \arrow{d}
  \\
  \ssv(\tt) \arrow[rd,"\pu"]
    \arrow[dotted,-]{dd}{\mg}
    \arrow[r,"p"]
     & \ssv(\tt+1) \arrow[dotted,-]{d}{\mg}
  \\
     & \ssv'(\tt+1)  \arrow[dotted,-,font=\Huge]{d}{\mg}
  \\
  \rrv(\tt) \arrow[r,"\prr"]
      & \rrv(\tt+1)
\end{tikzcd}
\caption{\label{kcoupling}%
Coupling of the forests' sizes $\ssv(\tt)$
to a simply analyzable random process $\rrv(\tt)$,
showing the structure of the inductive proof (on $\tt$) that
$\ssv(\tt)$ majorizes $\rrv(\tt)$.
}
\end{figure}
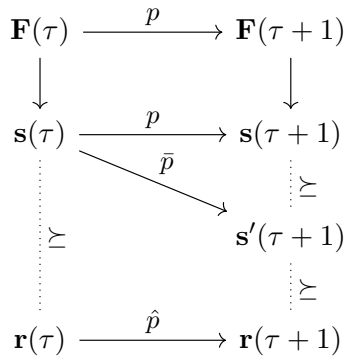

It suffices then to show that $\ssv'(\tt+1) \mg \rrv(\tt+1)$.
These two vectors are obtained respectively from $\ssv(\tt)$ and $\rrv(\tt)$,
with $\ssv(\tt) \mg \rrv(\tt)$ by the inductive hypothesis,
using probability thresholds
$\pu_k(\tt) = f(\ss_k(\tt))$ and
$\prr_k(\tt) = f(\rr_k(\tt))$
respectively,
applied to the common random variable $\ga$,
where $f(s) = s/(n-1)$
(but all that is important is that $f$ is a monotone function of $s$).
Suppose that
\begin{align} \label{splitpoint}
f(\ss_{i-1}) & \geq \ga > f(\ss_i)
& \text{ and } &&
f(\rr_{j-1}) & \geq \ga > f(\rr_j) ,
\end{align}
so that elements $i$ in $\ssv$ and $j$ in $\rrv$ are incremented.
If $i \leq j$, we are done.
(Prefix sums of $\ssv(\tt)$ dominated those of $\rrv(\tt)$,
and an earlier element is incremented in $\ssv'(\tt+1)$ than $\rrv(\tt+1)$,
thus prefix sums of $\ssv'(\tt+1)$ dominate those of $\rrv(\tt+1)$.)
Consider then the case that $i>j$.
In both processes the increment falls between indices $j$ and $i$,
so the $k$-prefix sum inequality continues to hold for $k<j$ and $k \geq i$.
Thus, for $j \leq k < i$,
\begin{align} \label{partsums}
\begin{aligned}
\sum_{\ell=1}^k \ss'_{\ell}(\tt+1)
 &=
 \sum_{\ell=1}^{j-1} \ss_{\ell}(\tt) + \sum_{\ell=j}^k \ss_{\ell}(\tt)
 \\
\sum_{\ell=1}^k \rr_{\ell}(\tt+1)
 &=
  \sum_{\ell=1}^{j-1} \rr_{\ell}(\tt) + 1 + \sum_{\ell=j}^k \rr_{\ell}(\tt) .
\end{aligned}
\end{align}
From $j<i$, \eqref{splitpoint}, and \eqref{sdecr} and \eqref{rdecr} we have
that when $j\le \ell \leq i-1$,
\begin{align}
\ss_{\ell} \geq \ss_{i-1}
 & \geq f^{-1}(\ga) >  \rr_j \geq \rr_{\ell} , \notag
\end{align}
implying
\begin{align}
\ss_\ell \geq \rr_\ell+1 . \label{alphaineq}
\end{align}
In \eqref{partsums},
we have
$\sum_{\ell=1}^{i-1} \ss_{\ell}(\tt)
 \geq
  \sum_{\ell=1}^{i-1} \rr_{\ell}(\tt)$
from the inductive hypothesis that $\ssv(\tt) \mg \rrv(\tt)$,
while using \eqref{alphaineq} gives
\begin{align*}
 \sum_{\ell=j}^k \ss_{\ell}(\tt)
 & \geq \sum_{\ell=j}^k (1+\rr_{\ell}(\tt))
 \geq
  1 + \sum_{\ell=j}^k \rr_{\ell}(\tt) ,
\end{align*}
from which it follows that $\ssv'(\tt+1) \mg \rrv(\tt+1)$,
completing the inductive proof that $\ssv(\tt)\mg \rrv(\tt)$.

Having shown that the vector $\ssv(\tt)$ of component sizes
majorizes $\rrv(\tt)$,
it suffices to analyze the latter.
Until this point we could have used
\eqref{pkub1} rather than \eqref{pkub2}
to define $\pu_k$, $\prr_k$, and the function $f$,
but now we take advantage of the particularly simple nature of the
process governing $\rrv(\tt)$.
Recall that a new edge increments $\rr_i$
for the first $i$ for which the $U(0,1)$
``coin toss'' $\ga(\tt)$ has $\ga(\tt) > \prr_i \eqdef \rr_i/(n-1)$.
Equivalently,
consider an array of cells $n-1$ rows high and infinitely many columns wide,
generate an ``arrival'' at a random row or ``height'' $X(\tt)$ uniform on $1,\ldots,n-1$,
and let this arrival occupy the first unoccupied cell $i$ at this height,
thus incrementing the occupancy $\rr_i$ of column $i$.
This is equivalent because if
$\rr_i$ of the $n-1$ cells in column $i$ are occupied,
the chance that $i$ is rejected
--- that $X(\tt)$ falls into this set and thus the arrival moves along to
test the next column $i+1$ ---
is $\rr_i/(n-1)$, matching \eqref{prr}.

Recalling that the cost of an edge arriving at time $t$ is $t/n$
in the original graph problem, the combined cost $\Wk$ of the first $k$
spanning trees is $1/n$ times
the sum of the arrival times of their $k(n-1)$ edges.
The majorization $\sum_{i=1}^k \ss_i(\tt) \geq \sum_{i=1}^k \rr_i(\tt)$
means that the $\ell$'th arrival to
the first $k$ forests
comes no later than the $\ell$'th arrival to the first $k$ columns
of the cell array.
Thus, the cost $\Wk$ of the first $k$ trees is
at most $1/n$ times the sum of the times of the
$k(n-1)$ arrivals to the array's first $k$ columns.

The continuous-time edge arrivals are a Poisson process
with intensity $1/n$ on each of the $\tbinom n 2$ edges,
thus intensity $(n-1)/2$ in all;
it is at the Poisson arrival times that the discrete time $\tt$ is incremented and
$X(\tt)$ is generated.
Subdivide the ``$X$'' process into the $n-1$ possible values that
$X$ may take on,
so that arrivals at each value (row in the cell array)
are a Poisson process of intensity $\gl = \tfrac12$.
The sum of the first $k$ arrival times in a row
is the sum of the first $k$
arrival times in its Poisson process.
The $i$th such arrival time is the sum of $i$ exponential random variables,
and has expectation $i / \gl$.
The expected sum of $k$ arrival times of a line is thus
$\binom {k+1} 2 / \gl=k(k+1)$,
and (remembering that cost is time divided by $n$),
the expected total cost of all $n-1$ lines is
\begin{align*}
 \frac{n-1}n k(k+1),
\end{align*}
yielding the upper bound in \eqref{kbounds}
and completing the proof of the theorem.
\end{proof}

\begin{corollary}
  \label{Cbounds}
Let $\gG_k:=\sumik\gam_i$. Then, for every $k\ge1$,
\begin{equation}\label{cbounds}
  k^2\le\gG_k=\sumik\gam_i\le k^2+k.
\end{equation}
\end{corollary}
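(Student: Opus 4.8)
The plan is to obtain \refC{Cbounds} by passing to the limit $\ntoo$ in the finite-$n$ bounds \eqref{kbounds} of \refT{Tbounds}; the only additional ingredient is the convergence of expectations in \refT{TE}.

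First I would recall that $\Wk=\sum_{i=1}^k w(T_i)$, so by linearity of expectation $\E\Wk=\sum_{i=1}^k\E w(T_i)$. Since this is a \emph{finite} sum and each $\gam_i$ exists by \refT{T0multi}, \refT{TE} gives $\E\Wk=\sum_{i=1}^k\E w(T_i)\to\sum_{i=1}^k\gam_i=\gG_k$ as \ntoo. It is essential here that we are in the Poisson multigraph model, where \refT{TE} applies; in the simple-graph model $\E w(T_k)$ is problematic for $k\ge2$ (see \refR{RE}), so one genuinely needs the convergence of expectations and not merely the convergence in probability from \refT{T0}.

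Then I would simply take $\ntoo$ in \eqref{kbounds}: the lower bound $k^2\frac{n-1}{n}\to k^2$ and the upper bound $k(k+1)\frac{n-1}{n}\to k(k+1)=k^2+k$, so the limit $\gG_k$ satisfies $k^2\le\gG_k\le k^2+k$, which is exactly \eqref{cbounds}. (The strict inequality $<k^2+k$ recorded in \refT{Tbounds} holds only at finite $n$ and relaxes to $\le$ in the limit.) There is no real obstacle: the corollary is an immediate consequence of \refT{Tbounds} together with \refT{TE}, the one point worth flagging being the reliance on the expectation statement, hence on the multigraph model.
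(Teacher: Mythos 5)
Your proof is correct and is exactly the paper's argument: the paper's proof reads ``Immediate from Theorems~\ref{Tbounds} and~\ref{TE},'' and you have simply unpacked it, correctly flagging that \refT{TE} (hence the multigraph model) is what licenses passing the finite-$n$ bounds to the limit.
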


\begin{proof}
  Immediate from Theorems \ref{Tbounds} and \ref{TE}.
\end{proof}

\begin{example}
  In particular, Corollary \ref{Cbounds} gives $1\le\gam_1\le2$ and
  $4\le\gam_1+\gam_2\le6$.
In fact, we know that $\gam_1=\zeta(3)\doteq1.202$ \cite{Frieze} and
$\gam_1+\gam_2>4.1704$ by \cite{FriezeJ} and \refS{Sunion}, see
\refC{Cdiff}.
Numerical estimates suggest
a value of about 4.30; see \refS{Snumerical},
including \refTab{sim10},
for various estimates
of $\gam_2$.
\end{example}

\subsection{Corollaries and conjectures for the \texorpdfstring{$\boldsymbol{k}$}{k}th tree}
Turning to individual $\gam_k$ instead of their sum $\gG_k$,
we obtain
\refC{Cbound2},
namely that $2k-2k\qq <\gam_k<2k+2k\qq$.

\begin{proof}[Proof of \refC{Cbound2}]
  For the upper bound, we note that obviously $\gam_1\le\gam_2\le\dots$, and
  thus, for any $\ell\ge1$, using both the upper and lower bound in
  \eqref{cbounds},
  \begin{equation}
	\begin{split}
\ell \, \gam_k
&\le\sum_{i=k}^{k+\ell-1}\gam_i
= \gG_{k+\ell-1}-\gG_{k-1}
\le (k+\ell-1)(k+\ell)-(k-1)^2
\\&
=\ell^2+\ell(2k-1)+k-1
	\end{split}
  \end{equation}
and hence
  \begin{equation}\label{cb+}
	\begin{split}
\gam_k
\le2k-1+\ell+\frac{k-1}{\ell}.
	\end{split}
  \end{equation}
Choosing $\ell=\ceil{\sqrt{k}}$ gives the upper bound in \eqref{cbound2}.

For the lower bound we similarly have, for $1\le\ell\le k$,
\begin{equation}
  \begin{split}
	\ell\gam_k\ge\gG_k-\gG_{k-\ell}\ge k^2-(k-\ell)(k-\ell+1)
=-\ell^2-(2k+1)\ell-k
  \end{split}
\end{equation}
and hence
  \begin{equation}\label{cb-}
	\begin{split}
\gam_k
\ge2k+1-\ell-\frac{k}{\ell}.
	\end{split}
  \end{equation}
Choosing, again, $\ell=\ceil{\sqrt{k}}$ gives the lower bound in
\eqref{cbound2}.
\end{proof}

\begin{remark}
  For a specific $k$, we can improve \eqref{cbound2} somewhat by instead
  using \eqref{cb+} and \eqref{cb-} with $\ell=\floor{\sqrt k}$ or
$\ell=\ceil{\sqrt{k}}$.
For example, for $k=2$, taking $\ell=1$ yields
$2\le\gam_2\le5$.
For $k=3$, taking $\ell=2$ yields
$3.5\le\gam_3\le8$.
\end{remark}

Besides these rigorous results, taking increments of the left and right-hand sides of
\eqref{cbounds} also suggests
the following conjecture.

\begin{conjecture}\label{Conj-gamma1}
  For $k\ge1$,
$2k-1\le\gam_k\le2k$.
\end{conjecture}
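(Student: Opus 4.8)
Since \refConj{Conj-gamma1} asks for the sharp bounds $2k-1\le\gam_k\le2k$ for \emph{every} $k\ge1$, not merely asymptotically, it cannot be read off from the partial-sum estimate \eqref{cbounds} alone: recovering individual $\gam_k$ from $\gG_k$ by differencing, as in the proof of \refC{Cbound2}, unavoidably loses a term of order $\sqrt k$. (The case $k=1$ is of course covered by Frieze's theorem, $\gam_1=\zeta(3)\in[1,2]$.) The plan is to work with an exact expression for $\gam_k$. Since an edge $e\in T_k$ lies in $F_k(t)$ exactly when its arrival time $\tau(e)$ is $\le t$, and $F_k(t)$ is a forest on $[n]$ with $e(F_k(t))=n-c_k(t)$, where $c_k(t)$ denotes the number of components of $F_k(t)$ (equivalently of $G_k(t)$),
\begin{equation*}
 w(T_k)=\frac1n\sum_{e\in T_k}\tau(e)
 =\frac1n\int_0^\infty\bigl(|E(T_k)|-e(F_k(t))\bigr)\dd t
 =\frac1n\int_0^\infty\bigl(c_k(t)-1\bigr)\dd t .
\end{equation*}
Passing to the limit --- using \refT{T1}, the tail bound \eqref{t1rholim} to dominate the integrand, and the standard description of component counts in the inhomogeneous random graphs of \refS{Spf} --- yields the representation (a reformulation of \eqref{wlim})
\begin{equation*}
 \gam_k=\int_0^\infty\bar c_k(t)\dd t ,\qquad \bar c_k(t):=\limn c_k(t)/n ,
\end{equation*}
with $\bar c_k$ decreasing from $1$ to $0$. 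Two structural facts will drive the argument: first, $\bar c_k(t)=1$ for $t\le\gx_{k-1}$, because by \refT{Tlang} the edge density $e(G_k(t))/n\to\frac12\int_0^t\rho_{k-1}(s)^2\dd s$ vanishes there (as $\rho_{k-1}\equiv0$ on $[0,\gx_{k-1}]$) and $o(n)$ edges change the component count by $o(n)$; second, $c_{k+1}(t)\ge c_k(t)$, so $\bar c_{k+1}\ge\bar c_k$ pointwise.

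For the \emph{upper bound} I would reduce to the per-step claim $d_k:=\gam_k-\gam_{k-1}\le2$ for all $k\ge1$ (with $\gam_0:=0$, so that $d_1=\gam_1=\zeta(3)<2$); summing then gives $\gam_k=\sum_{i\le k}d_i\le2k$. Concavity of $(\gam_k)$ would be the cleanest lever, but it is incompatible with \eqref{cbounds} (it would give $\gG_k\le\zeta(3)\tbinom{k+1}2<k^2$ for large $k$), so $d_k\le2$ has to come from the recursion $G_{k+1}(t)=G_k(t)\setminus F_k(t)$. The target is a one-sided, all-$n$ form of \refConj{Conj-rhoInfinity} with lag exactly $2$, namely $\bar c_{k+1}(t)\le\bar c_k(t-2)$ for all $t$ (with the convention $\bar c_k\equiv1$ on $(-\infty,0]$). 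Granting this,
\begin{equation*}
 d_{k+1}=\int_0^\infty\bigl(\bar c_{k+1}(t)-\bar c_k(t)\bigr)\dd t
 \le\int_0^\infty\bigl(\bar c_k(t-2)-\bar c_k(t)\bigr)\dd t
 =\int_{-2}^0\bar c_k(u)\dd u=2 .
\end{equation*}
The domination itself would be proved by coupling the Kruskal dynamics: an edge can only begin to build $F_{k+1}$ once it has been rejected from $F_k$ (both endpoints in a single $F_k$-component), which delays the appearance and growth of the giant of $F_{k+1}$; one would quantify this by showing that the effective clock governing $G_{k+1}$ never runs faster than that governing $G_k$ and starts at least one time-unit late, with a further unit available because $\rho_k$ is still small just above $\gx_k$.

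For the \emph{lower bound} the representation at once gives $\gam_k\ge\int_0^{\gx_{k-1}}1\,\dd t=\gx_{k-1}$, so the plan is, first, to strengthen \refT{Tkk-1} from $\gx_k\ge\gx_{k-1}+1$ to $\gx_k\ge\gx_{k-1}+2$ (equivalently, with $\gx_1=1$, to $\gx_k\ge2k-1$), and second, to supply the remaining contribution $\int_{\gx_{k-1}}^\infty\bar c_k\ge2$. It is convenient to write $\gam_k=\gx_k+R_k$ with
\begin{equation*}
 R_k=\int_{\gx_k}^\infty\bar c_k(t)\dd t-\tfrac12\int_0^{\gx_k}\!\!\int_0^t\rho_{k-1}(s)^2\dd s\dd t ,
\end{equation*}
using that $G_k(t)$ is subcritical for $t\le\gx_k$, so that there $\bar c_k(t)=1-\frac12\int_0^t\rho_{k-1}(s)^2\dd s$ (components $=$ vertices $-$ edges, up to an $o(n)$ surplus of independent cycles). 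The conjecture then becomes $2k-1\le\gx_k+R_k\le2k$: granting $\gx_k\ge2k-1$, the remaining point for the lower inequality is $R_k\ge0$, i.e.\ that the supercritical tail of $\bar c_k$ past $\gx_k$ outweighs the subcritical ``dip'' below $1$ before $\gx_k$. This balance is the genuinely delicate step, since both quantities in $R_k$ are controlled only through $\rho_{k-1}$, for which there is no closed form.

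The main obstacle, common to both directions, is precisely the absence of a tractable recursion $\rho_{k-1}\mapsto\rho_k$, or even of sharp two-sided control on $\gx_k-\gx_{k-1}$ and on $\rho_{k-1}$ just above $\gx_{k-1}$: the inhomogeneous random graph describing $G_{k+1}(t)$ has a kernel assembled from the random amount of time each vertex has spent in the giant of $F_k$, and there is no formula for how this transforms under $k\mapsto k+1$. In effect a proof of \refConj{Conj-gamma1} appears to require a quantitative version of the translation phenomenon of \refConj{Conj-rhoInfinity}. A natural intermediate target is the purely asymptotic \refConj{Conj-gamma2}: establishing only $d_k\to2$ would follow from any argument giving an $L^1$-approximation $\bar c_{k+1}(\cdot)\approx\bar c_k(\cdot-2)$ up to $o(1)$, without pinning down the $O(1)$ corrections that \refConj{Conj-gamma1} --- and the sharper \refConj{Conj2k-1} --- demand.
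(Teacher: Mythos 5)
This statement is a \emph{conjecture}, and the paper does not prove it. The paper's only stated motivation is the heuristic of ``taking increments of the left and right-hand sides of \eqref{cbounds}'': if the bounds $k^2\le\gG_k\le k^2+k$ held with equality, differencing would give exactly $2k-1$ and $2k$. This is not a proof, and the paper is explicit that it is conjectural. You correctly recognise this, and your opening observation --- that differencing the partial-sum bounds as in the proof of \refC{Cbound2} can only recover $\gam_k$ up to an $O(\sqrt k)$ error --- is exactly the reason the conjecture remains open.

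Your analysis is sound as far as it goes, and offers a genuine alternative framing of the problem. The representation $\gam_k=\int_0^\infty\bar c_k(t)\dd t$ with $\bar c_k(t)=1-\tfrac12\int_0^t\bigpar{\rho_{k-1}(s)^2-\rho_k(s)^2}\dd s$ is equivalent, after one integration by parts, to the paper's \eqref{wlim} (and relies on \refT{Tlang} and the forest identity $e(F_k(t))=n-c_k(t)$), but it makes the desired translation structure of \refConj{Conj-rhoInfinity} visible in the simplest possible way: $\bar c_{k+1}(\cdot)\approx\bar c_k(\cdot-2)$ would give $d_{k+1}\le2$ by a one-line change of variables. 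The lower-bound decomposition $\gam_k=\gx_k+R_k$, with $R_k$ measuring the balance of the supercritical tail against the subcritical dip, is likewise a clean reduction. You are also right that the upper bound cannot come from concavity of $k\mapsto\gam_k$ (that would contradict \eqref{cbounds}), and that the genuine obstruction is the absence of any tractable description of the map $\rho_{k-1}\mapsto\rho_k$: the kernel $\kk_t$ in \refS{Spf} is built from the times $\tau_{k-1}(i)$, whose law is only known implicitly through $\rho_{k-1}$. In short: you have not proved the conjecture, nor does the paper; your reductions are correct and potentially useful, and you have accurately located the missing ingredient as a quantitative version of \refConj{Conj-rhoInfinity}. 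The one caveat worth recording is that, as \refR{R-gamma2} explains, \refConj{Conj-rhoInfinity} on its own would at best give $\gam_k=2k+\gd+o(1)$ with $\gd\in[-1,0]$; to obtain the non-asymptotic two-sided bound $2k-1\le\gam_k\le2k$ for \emph{every} $k$ one would further need the proposed dominations ($\bar c_{k+1}(t)\le\bar c_k(t-2)$ and $\gx_k\ge\gx_{k-1}+2$) to hold exactly rather than up to $o(1)$, which is a strictly stronger and at present entirely open claim.
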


\begin{remark}\label{R-gamma2}
Moreover, if
$\gam_k=2k+\gd+o(1)$ holds, as conjectured
  in Conjecture \ref{Conj-gamma2},
then
$\gG_k=k^2+k(\gd+1)+o(k)$, and
thus
necessarily $\gd\in[-1,0]$ as a consequence of \refC{Cbounds}.
In fact, the numerical estimates in \refS{Snumerical},
specifically Table \ref{sim10} and Figure \ref{sim10M},
suggest that $\gd=-1$;
see \refConj{Conj2k-1}.
\end{remark}

\subsection{Improved upper bounds} \label{SUB2}
The upper bounds in \refT{Tbounds} and \refC{Cbounds} were proved using the
bound \eqref{pkub2}. A stronger, but less explicit, bound can be proved
by using instead the sharper \eqref{pkub1}.
That is,
we consider the random vectors $\rrv(\tt)$
defined as above
but with \eqref{prr} replaced by
\begin{align}\label{prr+}
\prr_k
 & :=
\binom{\rr_k+1}2\Big/\binom{n}2 .
\end{align}
As remarked before \eqref{pkub1}, this approximation comes from imagining
all edges in each $F_k$ to be in a single component;
this overestimates the probability that an arriving edge is rejected from $F_k$
and, as developed in the previous subsection, gives
$\ssv(\tt) \mg \rrv(\tt)$
just as when $\prr_k$ was defined by \eqref{pkub2}.

Using for consistency our usual time scaling
in which edges arrive at rate $(n-1)/2$,
by a standard martingale argument one can show that,
for each $k\ge1$,

\begin{equation}\label{rlim}
  \frac1n \rr_k(\floor{\tfrac12 nt})\pto g_k(t), \qquad\text{uniformly for $t\ge0$},
\end{equation}
for some continuously differentiable functions $g_k(t)$ satisfying
the differential equations, with $g_0(t):=1$,
\begin{equation}\label{pg}
  g_k'(t)= \tfrac12 \left( g_{k-1}(t)^2-g_k(t)^2 \right),
\qquad g_k(0)=0,
\qquad k\ge1.
\end{equation}
Moreover, using $\ssv(\tt) \mg \rrv(\tt)$ and taking limits,
it can be shown that
\begin{equation}\label{gam+}
  \gG_k:=\sumik \gam_i
\leq \frac12 \intoo  t\bigpar{1-g_k(t)^2}\dd t .
\end{equation}
We omit the details,
but roughly,
in time $dt$, $\frac12 n \dd t$ edges arrive, all costing about $t/n$,
and a $g_k(t)^2$ fraction of them pass beyond the first $k$ graphs
(to the degree that we are now modeling graphs).
Compare \eqref{gam+} with \eqref{lwoo},
with reference to \eqref{wtk}.

For $k=1$, \eqref{pg} has the solution
$g_1(t) = \tanh(t/2)$,
and \eqref{gam+}
yields the bound $\gG_1=\gam_1\le2\ln2\doteq1.386$.
This is better than the bound 2
given by \eqref{cbounds}, but still far from precise since
$\gam_1=\zeta(3)\doteq1.202$.

For $k\ge2$ we do not know any exact solution to \eqref{pg},
but numerical solution of \eqref{pg} and calculation of \eqref{gam+}
(see \refS{Srho2Sim}) suggests that $\gG_k < k^2 + 1$.
We leave the proof of this as an open problem.
If proved, this would be a marked improvement on $\gG_k \leq k^2+k$,
which was the exact expectation of the random process given by \eqref{pkub2}
(that part of the analysis was tight).
In particular, it would establish that $2k-2 \leq \gamma_k \leq 2k$;
see \refConj{ConjImproved}.

For $k=2$,
the numerical calculations in \refS{Srho2Sim} give
$\gam_1+\gam_2\le \num{4.5542}\ldots$ % 4.55423272186262
(see \refTab{TgGimproved})
and thus $\gam_2\le \num{3.3521}\ldots$.
The same value was also obtained using Maple's numerical differential equation solver, with Maple giving greater precision but the two methods agreeing in the digits shown here.
 % 3.35217581886262

\section{Preliminaries and more notation}\label{Sprel}

\subsection{Some random graphs} \label{Gpij}
For a symmetric array $\pijxn$ of probabilities in $\oi$, let
$\gnpij$ be the random (simple) graph on the vertex set
$[n]:=\set{1,\ldots,n}$
where the edge
$ij$ appears with probability $\pij$, for $i<j$, and these $\binom n2$
events are independent.
We extend this (in a trivial way) by defining
$G(n,A)=G(n,\matris{a_{ij}}):=G(n,\matrisn{a_{ij}\land 1})$
for any symmetric non-negative $n\times n$ matrix $A=\matrisn{a_{ij}}$.
Moreover, the matrix $A$ can be a random, in which case $G(n,A)$ is defined
by first conditioning on $A$. (Hence, edges appear conditionally
independently, given $A$.)

Note that we do not allow loops, so the diagonal entries $p_{ii}$ or
$a_{ii}$ are
ignored, and may be assumed to be 0 without loss of generality.

\subsection{Susceptibility}

The \emph{susceptibility} $\chi(G)$ of a (deterministic or
random) graph $G$ of order $n=|G|$
is defined by
\begin{equation}\label{chi}
  \chi(G):=\frac1n\sumi C_i(G)^2.
\end{equation}
$\chi(G)$ can be interpreted
as the mean size of the component containing a random
vertex, see \cite{SJ232}.
We also exclude the first term in the sum and define
\begin{equation}\label{chix}
  \chix(G):=\frac1n\sum_{i=2}^\infty C_i(G)^2.
\end{equation}
(This is particularly interesting
for a graph $G$ with a single giant component of order $\Theta(n)$,
when the sum in \eqref{chi} is dominated by the first term.)

Viewing each term in the sums \eqref{chi}--\eqref{chix} as $(C_i(G)/n) \, C_i(G)$,
since $\sumi C_i(G)/n=1$
each sum can be viewed as a weighted sum of the sizes $C_i(G)$ with the weights summing to at most 1, so
\begin{align}
\chi(G)&\le C_1(G),   \label{chiC1}
\\
\chix(G)&\le C_2(G).   \label{chixC2}
\end{align}

Let $\pi(G)$ be the probability that two randomly chosen distinct vertices
in $G$ belong to the same component. Then
\begin{equation}\label{pi}
  \pi(G)=\frac{\sumi C_i(G)\bigpar{C_i(G)-1}}{n(n-1)}
=\frac{n\chi(G)-n}{n(n-1)}=\frac{\chi(G)-1}{n-1}.
\end{equation}

\subsection{Kernels and an integral operator}\label{SSintop}

A \emph{kernel}, or \emph{graphon},
is a non-negative symmetric measurable function $\kk:\cS^2\to\ooo$, where
(in a common abuse of notation)
$\cS=(\cS,\cF,\mu)$ is a probability space.

Given a kernel $\kk$ on a probability space $\cS$,
let $T_{\kk}$ be the integral operator defined by (for suitable
functions $f$ on $\cS$),
\begin{equation}\label{tk}
 \bigpar{T_\kk f}(x)=\intS \kk(x,y)f(y)\dd\mu(y),
\end{equation}
and let $\Phi_{\kk}$ be the non-linear operator
\begin{equation}\label{Phik}
  \Phi_\kk f := 1-e^{-T_\kk f}.
\end{equation}

In our cases, the kernel $\kk$ is bounded, and then $T_\kk$ is a compact (in
fact, \HS) operator on $L^2(\cS,\mu)$. Since furthermore $\kk\ge0$,
it follows that there exists an eigenfunction $\psi\ge0$ on $\cS$ with
eigenvalue $\norm{T_k}$, see \cite[Lemma 5.15]{SJ178},
where
$\norm{T_\kk}$ denotes the operator norm of $T_k$ as an operator on $L^2(\cS,\mu)$.

\subsection{Branching processes}\label{SSbranching}
Given a kernel $\kk$ on a probability space $(\cS,\cF,\mu)$,
as in \cite{SJ178} let
$\fX_\kk(x)$ be the multi-type Galton--Watson branching process with type
space $\cS$, starting with a single particle of type $x\in\cS$, and where
in each generation
a particle of type $y$
is replaced by its children, consisting of
a set of particles distributed as a
Poisson process on $\cS$ with intensity $\kk(y,z)\dd\mu(z)$.
Let further $\fX_\kk$ be the same branching process started with a particle of
random type, distributed as $\mu$.

Let
\begin{equation}\label{rhokdef}
  \rho_\kk(x) \quad \text{ and } \quad \rho(\kk)=\rho(\kk;\mu)
\end{equation}
be the probabilities that
$\fX_\kk(x)$ and $\fX_\kk$, respectively, survive for ever.
Thus,
\begin{equation} \label{rhokappa}
  \rho(\kk)=\int_{\cS}\rho_\kk(x)\dd\mu(x).
\end{equation}

By \cite[Theorem 6.1]{SJ178},
assuming e.g.\ that $\kk$ is bounded as in our cases
(much less will do),
the following hold.
\begin{romenumerate}
\item
The function $\rho_\kk$ is a fixed point of $\Phi_\kk$, \ie,
it satisfies the equation
\begin{equation}\label{rhoPhi}
\rho_\kk=  \Phi_\kk\rho_\kk \eqdef 1-e^{-T_\kk\rho_\kk}.
\end{equation}
Moreover, $\rho_\kk$ is the largest non-negative solution of this equation.
\item
If $\norm{T_\kk}\le1$, then $\rho_\kk(x)=0$ for every $x$, and thus
$\rho(\kk)=0$.
\item
If $\norm{T_\kk}>1$, then
$\rho(\kk)>0$.
\end{romenumerate}

\section{Proof of \refT{T1}}\label{Spf}

\subsection{Basics, and an approximating inhomogeneous
 random graph}
The proof of \refT{T1} is based on induction;  we assume
throughout this section that, for $k \geq 1$,
\refT{T1} holds for $k-1$ and show that it holds for $k$.

For convenience, we define $F_0(t):=G_0(t):=K_n$ for every $t\ge0$;
this enables us to
consider $G_1(t)$ together with $G_{k}(t)$ for $k>1$.
(Alternatively, we could refer to known results for the random graph process
$G_1(t)$.)
Note that \refT{T1} then trivially holds for $k=0$, with $\rho_0(t)=1$ for
all $t$ and $\gx_0=0$, except that \ref{T1gx} has to be modified
(since $\rho_0$ is constant).
There are some trivial modifications
below in the case $k=1$
(and also some, more or less important, simplifications);
we leave these to the reader.

Thus,
fix $k\ge1$, assume that \refT{T1} holds for $k-1$ and
consider the evolution of $G_{k}(t)$.
Essentially everything in this proof depends
on $k$, but we often omit it from the notation. (Recall that we also
usually omit $n$.)

We condition on the entire process $(F_{k-1}(s))_{s\ge0}$.
For two distinct vertices $i,j\in[n]$,
let $\tau(i,j)=\tau_{k-1}(i,j)$ be the time that $i$ and $j$ become members of
the same
component in $F_{k-1}(t)$. This is the time when edges $ij$ start to be
passed to $G_{k}(t)$, and it follows that,
conditionally on $(F_{k-1}(s))_{s\ge0}$,
the process $G_{k}(t)$, $t\ge0$,
can be described as $G_1(t)$ above (\refS{Smodel}),
except that for
each pair \set{i,j} of vertices, edges appear according to a Poisson process
on $(\tau(i,j),\infty)$.
In particular, for a fixed time $t$ (a value, independent of $n$) and conditioned on
$\FFm$,
in the multigraph $G_{k}(t)$, the number of edges $ij$ is
$\Po\bigpar{(t-\tau(i,j))_+/n}$, and these numbers are (conditionally)
independent for different pairs \set{i,j}.
Hence, if we merge multiple edges and obtain the simple graph $\xG_k(t)$,
we see that
\begin{equation}\label{xGkt}
\xG_k(t)=\gnpij,
\end{equation}
the random graph defined in \refSS{Gpij}
with
\begin{equation}\label{pij}
  \pij=\pij(t):=1-e^{-(t-\tau(i,j))_+/n}
=\frac{(t-\tau(i,j))_+}{n} +O\bigpar{n\qww}
\end{equation}
when $i\neq j$, and (for completeness) $p_{ii}=0$.
Note that the probabilities $\pij$ depend on $\FFm$ and thus are random,
and recall that therefore
$\gnpij$ is defined by first conditioning on $(\pij)_{i,j}$ (or on $\FFm$).

Define
\begin{equation}\label{tau}
\tau(i)= \tau_{k-1}(i):=\inf\set{t\ge0:i\in\ccc_{k-1}(t)},
\qquad i=1,\dots,n,
\end{equation}
\ie, the first time that vertex $i$ belongs to the \pg{} of $F_{k-1}$.
Note that
\begin{equation}\label{tautau}
  \tau(i,j)\le \tau(i)\vee\tau(j),
\end{equation}
but strict inequality is possible since $i$ and $j$ may both belong to a
component of $F_{k-1}(t)$ that is not the \pg. We shall see that
this does not happen very often, and one of the ideas in the proof is that
we may
regard the inequality \eqref{tautau} as an approximate equality.
This is formalized in the following lemma,
and leads to a more tractable graph defined in
\eqref{gvnkt}
and compared with $\xG_k(t)$ in \refL{L3}.

\begin{lemma}\label{L1}
For any fixed $t>0$,
  \begin{equation}\label{l1}
	\sum_{i\neq j}
\bigpar{(t-\tau(i,j))_+ -(t-\tau(i)\vee\tau(j))_+}
= \op(n^2).
  \end{equation}
\end{lemma}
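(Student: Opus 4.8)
The plan is to exploit the fact that $\tau(i,j) \le \tau(i)\vee\tau(j)$ always (equation~\eqref{tautau}), so each summand in \eqref{l1} is nonnegative, and it is nonzero only for pairs $\{i,j\}$ that are joined in $F_{k-1}$ \emph{before} at least one of them has entered the permanent giant $\ccc_{k-1}$. The strategy is to bound the whole sum by considering, at the moment two vertices first become co-components, which component they lie in. If that component is (a subset of what will become) the permanent giant, then $\tau(i,j)=\tau(i)\vee\tau(j)$ and the summand vanishes; hence only pairs sitting together in some \emph{non-permanent-giant} component of $F_{k-1}(s)$ at some time $s\le t$ contribute, and each such summand is at most $t$. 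So it suffices to show that the number of pairs $\{i,j\}$ which are ever (before time $t$) in a common component of $F_{k-1}$ that is not the permanent giant is $\op(n^2)$.

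First I would fix $t>0$ and observe that the ``bad'' pairs at time $t$ are exactly those $i,j$ lying in a common component $\cC$ of $F_{k-1}(t)$ with $\cC \ne \ccc_{k-1}(t)$ (monotonicity of components in $t$ means that if they were ever together in a non-giant component but are now in the giant, then $\tau(i,j)$ still equals $\tau(i)\vee\tau(j)$ only if the merge happened after both entered the giant — I need to be a little careful here, but the cleaner route is: a pair contributes iff there is a time $s\le t$ at which $i,j$ are in a common component of $F_{k-1}(s)$ that is not contained in the eventual permanent giant, i.e. not in $\ccc_{k-1}(\infty)$; and since $\ccc_{k-1}$ is increasing and eventually all of $[n]$, for $n$ large the condition becomes: at time $\tau(i,j)$ the common component is not yet part of the permanent giant). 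The number of such pairs is at most $\sum_{s} \sum_{\cC \text{ a non-giant comp.\ of } F_{k-1}(s)} \binom{|\cC|}{2}$ summed over the finitely many merge times $\le t$; more simply, it is bounded by $\sum_{i\ge 2} C_i(F_{k-1}(\cdot))^2$ evaluated at the relevant times, i.e. essentially $n$ times the reduced susceptibility $\chix$. So the key reduction is: the left side of \eqref{l1} is at most $t \cdot n \cdot \sup_{s\le t} \chix(F_{k-1}(s)) + O(n)$ type correction, plus a negligible contribution from pairs where the merge predates entry to the giant by only vertices already in the giant.

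Then I would invoke the induction hypothesis. By \refT{T1}\ref{T1C2} applied to $k-1$ (which we are assuming, since the proof of \refT{T1} is by induction on $k$ and this lemma is used in the inductive step), $\sup_{s\ge 0} C_2(G_{k-1}(s))/n \pto 0$, and since $F_{k-1}$ and $G_{k-1}$ have the same components this also gives $\sup_s C_2(F_{k-1}(s))/n \pto 0$. Combined with $\chix(F_{k-1}(s)) \le C_2(F_{k-1}(s))$ from \eqref{chixC2}, we get $\sup_{s} \chix(F_{k-1}(s)) = \op(n)$, hence the bad-pair count is $\op(n^2)$ and each summand is $\le t = O(1)$, giving \eqref{l1}. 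For the case $k=1$ one uses directly the classical fact that $G(n,t)$ has $C_2 = o(n)$ uniformly (the base case of the induction, as the text notes).

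**Main obstacle.** The delicate point is the careful accounting of which pairs actually contribute a nonzero summand — in particular handling pairs $i,j$ for which $\tau(i,j) < \tau(i)\vee\tau(j)$ because, although both eventually join the permanent giant, at the time they first merge their common component is a small component that only \emph{later} gets absorbed into the giant. One must argue that such a pair was at some earlier time $s=\tau(i,j)$ inside a component of $F_{k-1}(s)$ of size that does \emph{not} tend to $\Theta(n)$ relative to the giant, i.e. that it was in some $\cC_\ell(F_{k-1}(s))$ with $\ell \ge 2$ at that time — or, if it was in the largest component at time $s$ but that component was not yet the permanent giant, then that largest component is still not of full giant size in a quantifiable sense. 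Making this dichotomy rigorous and uniform over the $O(n)$ merge times $s \le t$, so that the total is controlled by a single $\sup_s \chix(F_{k-1}(s))$, is where the real work lies; everything after that is immediate from the inductive hypothesis and \eqref{chixC2}.
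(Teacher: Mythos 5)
The overall setup you've identified is right: each summand is nonnegative and at most $t$, it is nonzero exactly when at time $\tau(i,j)$ the common component in $F_{k-1}$ is not the permanent giant, and the induction hypothesis you want to feed in is $\sup_s C_2(G_{k-1}(s))/n\pto 0$ together with $\chix\le C_2$. But the core reduction you propose has a genuine gap, and you have half-noticed it yourself. You want to bound the number of pairs that are \emph{ever} in a common non-giant component of $F_{k-1}$ by $n\sup_{s\le t}\chix(F_{k-1}(s))$ and then multiply by $t$. That inequality does not hold: the set of such pairs is a \emph{union} over the $\Theta(n)$ merge times $s\le t$, and a slice-by-slice bound only controls each time separately. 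In fact the count of such pairs can be $\Theta(n^2)$ even when $\sup_s\chix(F_{k-1}(s))=\op(n)$. For instance, if throughout an interval $\Theta(n)$ small components of size $\approx\sqrt n$ successively form and are absorbed into the giant, each slice has $\chix=O(\sqrt n)$ so $n\sup_s\chix=O(n^{3/2})=o(n^2)$, yet the total number of pairs that were at some moment co-resident in one of these small components is $\Theta(n\cdot n)=\Theta(n^2)$. So the quantity you want to control is not $\op(n^2)$ in general, and the proposed route cannot close.

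The paper's proof sidesteps this with an $\eps$-net that you do not have. Fix $\eps>0$; call a pair \emph{bad} if $\tau(i,j)\le t$ and $\tau(i)\vee\tau(j)-\tau(i,j)>2\eps$. Every non-bad pair contributes at most $2\eps$, so at most $2\eps n^2$ in total; one then lets $\eps\to 0$ at the end. The bad pairs are the only ones that cost $\Theta(1)$ each, and the point is that for a bad pair the interval $[\tau(i,j),\tau(i)\vee\tau(j))$ has length $>2\eps$, so it must straddle one of the fixed grid times $t_\ell=\gx_{k-1}+\ell\eps$, $\ell=1,\dots,L=\lceil t/\eps\rceil$, unless $\tau(i,j)\le\gx_{k-1}-\eps$. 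Thus the bad-pair count is at most $n\chi(G_{k-1}(\gx_{k-1}-\eps))+\sum_{\ell=1}^L n\chix(G_{k-1}(t_\ell))$; the first term is $\op(n^2)$ since $C_1=\op(n)$ before the threshold, the second is $L\cdot\op(n^2)=\op(n^2)$ for fixed $\eps$, and so the total is $2\eps n^2+t\cdot\op(n^2)$. Discretizing to a \emph{fixed} number $L$ of snapshot times (rather than all $\Theta(n)$ merge times), and splitting contributions by the size of the gap $\tau(i)\vee\tau(j)-\tau(i,j)$ rather than merely by whether it is nonzero, is precisely the missing idea; it also automatically handles the pre-threshold regime that you flagged at the end, via the $\chi(G_{k-1}(\gx_{k-1}-\eps))$ term.
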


\begin{proof}
Fix $\eps>0$,
let $L:=\ceil{t/\eps}$ and let $t_\ell:=\gx_{k-1}+\ell \, \eps$, $\ell=1,\dots,L$.

Say that the pair $(i,j)$ is \emph{bad} if
\begin{align}
\tau(i,j)\le t
\qquad\text{and}\qquad
\tau(i)\vee\tau(j)-\tau(i,j)>2\eps.
\end{align}
Note that, using \eqref{tautau},
for any pair $(i,j)$,
\begin{align}\label{winbad}
0\le (t-\tau(i,j))_+ -(t-\tau(i)\vee\tau(j))_+ &\le t
\intertext{and for a good pair (\ie, a pair that is not bad),}
(t-\tau(i,j))_+ -(t-\tau(i)\vee\tau(j))_+ &\le 2\eps.\label{wingood}
\end{align}

By the induction hypothesis \refT{T1}\ref{T1pg},
\whp{}
$G_{k-1}(t_1)$ has a \pg, so we may assume that this holds.
(Failures contribute $\op(n^2)$ to the \rhs{} of \eqref{l1}.)

If $(i,j)$ is bad,
then either $\tau(i,j)\le \gx_{k-1}-\eps$, or there exists $\ell\in [1,L]$ such that
$\tau(i,j)\le t_\ell< \tau(i)\vee\tau(j)$.
In the first case, $i$ and $j$ belong to the same component in
$G_{k-1}(\gx_{k-1}-\eps)$, and in the second case they belong to the same component
in $G_{k-1}(t_\ell)$,
but not to the largest one, since that is assumed to be the \pg.
Hence, for any $t$, the number of bad pairs $(i,j)$
is at most,
using the definitions \eqref{chi}--\eqref{chix},
\begin{equation}\label{eleo}
  n\chi\bigpar{G_{k-1}(\gx_{k-1}-\eps)}
+ \sum_{\ell=1}^{L} n\chix\bigpar{G_{k-1}(t_\ell)}.
\end{equation}
By \eqref{chiC1} and the induction hypothesis \ref{T1C1} and \ref{T1gx},
\begin{equation}\label{eleon}
  \chi\bigpar{G_{k-1}(\gx_{k-1}-\eps)} \le
C_1\bigpar{G_{k-1}(\gx_{k-1}-\eps)}
=n\rho_{k-1}(\gx_{k-1}-\eps) +\op(n)
=\op(n)
\end{equation}
and similarly for every $\ell$,
by \eqref{chixC2} and the induction hypothesis \ref{T1C2},
\begin{equation}\label{eleono}
\chix\bigpar{G_{k-1}(t_\ell)}
\le C_2\bigpar{G_{k-1}(t_\ell)}
=\op(n).
\end{equation}
By \eqref{eleo}--\eqref{eleono}, the number of bad pairs is $\op(n^2)$.
Hence, using \eqref{winbad} and \eqref{wingood}, we obtain
  \begin{equation}
	\sum_{i\neq j}
\bigpar{(t-\tau(i,j))_+ -(t-\tau(i)\vee\tau(j))_+}
\le 2\eps n^2 + t\op(n^2),
  \end{equation}
and the result follows since $\eps$ is arbitrary.
\end{proof}

We use the machinery and notation in \citet[in particular Section 2]{SJ178},
and make the following definitions:
\begin{itemize}
\item
$\cS$ is the set
\begin{equation}\label{Sdef}
 \cS:=\ooo .
\end{equation}
\item
$\mu_{k-1}$ is the probability measure on $\cS$ with distribution function
  \begin{equation}\label{muk}
\mu_{k-1}([0,x])=\rho_{k-1}(x). 	
  \end{equation}
\item
$\xx_n:=(x_1,\dots,x_n)$ where
  \begin{equation} \label{x=tau}
  x_i=\tau_{k-1}(i) .
  \end{equation}
\item
$\nu_n$ is a probability measure given by
\begin{equation}\label{nun}
 \nu_n:=\frac{1}n\sumin \gd_{x_i},
\end{equation}
where $\gd_x$ is the point mass (Dirac
  delta) at $x$. (In other words,
  $\nu_n$ is the
  empirical distribution of $\set{x_1,\dots,x_n}$.
  Put yet another way, for any set $A \subset \cS$,
  $\nu_n(A) := \frac{1}n\bigabs{\set{i \colon x_i \in A}}$.)
\end{itemize}

Note that $\xx_n$ and $\nu_n$ are random, and determined by $\FFm$.

\begin{lemma}\label{L2}
  $\nu_n\pto\mu_{k-1}$ in the space $\cP(\cS)$ of probability measures on $\cS$.
\end{lemma}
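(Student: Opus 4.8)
The plan is to show convergence in probability of the random measures $\nu_n$ to $\mu_{k-1}$ by checking convergence of distribution functions at a dense set of points, which suffices since $\mu_{k-1}$ is a fixed (deterministic) measure on $\cS=\ooo$. Recall that $\nu_n([0,x]) = \frac1n\abs{\set{i : \tau_{k-1}(i)\le x}} = \frac1n\abs{\set{i : i\in\ccc_{k-1}(x)}} = \abs{\ccc_{k-1}(x)}/n$, using the definition \eqref{tau} of $\tau_{k-1}(i)$ as the first time vertex $i$ joins the permanent giant $\ccc_{k-1}$, and the fact that $\ccc_{k-1}(t)$ is increasing in $t$. On the other hand $\mu_{k-1}([0,x]) = \rho_{k-1}(x)$ by \eqref{muk}. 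So the claim \eqref{nun} $\pto$ \eqref{muk} is literally the statement that $\abs{\ccc_{k-1}(x)}/n \pto \rho_{k-1}(x)$ for each fixed $x$, which is exactly the induction hypothesis \refT{T1}\ref{T1pg} (equation \eqref{t1pg}) applied at level $k-1$.

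The remaining work is the standard passage from pointwise convergence of distribution functions to convergence in $\cP(\cS)$ with the weak topology. First I would fix a countable dense set $D\subseteq\ooo$ (say the nonnegative rationals) together with, if one wishes, the point $+\infty$ or large values of $x$; by \refT{T1}\ref{T1pg} at level $k-1$, for each $x\in D$ we have $\nu_n([0,x])\pto\rho_{k-1}(x)$. Taking a union over the countably many $x\in D$, \whp{} all these finitely-many-at-a-time approximations hold simultaneously to within any $\eps$; more precisely, for any finite $D_0\subseteq D$ and any $\eps>0$, \whp{} $\abs{\nu_n([0,x])-\rho_{k-1}(x)}<\eps$ for all $x\in D_0$. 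Since $\rho_{k-1}$ is continuous and increasing with $\rho_{k-1}(x)\to1$ as $x\to\infty$ (by \refT{T1}\ref{T1C1} and \ref{T1rholim} at level $k-1$), a finite grid $D_0$ suffices to control the Kolmogorov (sup) distance between the distribution functions of $\nu_n$ and $\mu_{k-1}$: a monotonicity/sandwiching argument between consecutive grid points bounds $\sup_{x\ge0}\abs{\nu_n([0,x])-\rho_{k-1}(x)}$ by $2\eps$ \whp. Convergence of distribution functions uniformly (equivalently, at all continuity points, and $\mu_{k-1}$ has none that are atoms since $\rho_{k-1}$ is continuous) gives $\nu_n\pto\mu_{k-1}$ weakly, i.e.\ in $\cP(\cS)$.

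I do not expect any real obstacle here: the substance is entirely contained in the induction hypothesis \refT{T1}\ref{T1pg}, and the rest is the routine equivalence between weak convergence of probability measures on $\bbR$ and pointwise convergence of distribution functions at continuity points, transported to the "in probability" setting by a union bound over a countable dense set. The only point requiring a word of care is that $\cP(\cS)$ should be understood with its usual weak topology (metrizable, e.g.\ by the \Levy{} or bounded-Lipschitz metric), so that "$\nu_n\pto\mu_{k-1}$ in $\cP(\cS)$" is a meaningful statement about convergence in probability of $\cP(\cS)$-valued random variables; with that understood, the argument above shows that the relevant metric tends to $0$ in probability.
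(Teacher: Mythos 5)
Your proposal is correct and follows the same route as the paper: both reduce the claim to pointwise convergence of the distribution functions, $\nu_n[0,x]\pto\mu_{k-1}[0,x]$, which is precisely the induction hypothesis \refT{T1}\ref{T1pg} at level $k-1$ (via the identification $\nu_n[0,x]=|\ccc_{k-1}(x)|/n$). The only difference is that you spell out the standard passage from pointwise convergence at continuity points to convergence in $\cP(\cS)$ (countable dense set, monotone sandwiching, union bound), whereas the paper disposes of this step by citing \cite[Lemma A.2 and Remark A.3]{SJ178} and \cite[Section 3]{Billingsley}.
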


\begin{proof}
  The claim is equivalent to
  \begin{equation}
	\label{cp1}
\nu_n[0,x]\pto\mu_{k-1}[0,x]
  \end{equation}
for every continuity point $x$ of $\mu_{k-1}$, see \eg{}
\cite[Lemma A.2 and	Remark A.3]{SJ178} or \cite[Section 3]{Billingsley}.
(In our case, for $k>1$, $\mu_{k-1}$ is a continuous measure, by \eqref{muk}
and \ref{T1C1}, so we should consider all $x$.)
However, \eqref{cp1} follows from \eqref{tau} and the induction hypothesis
\ref{T1pg}, which yield,
for any $x\ge0$,
\begin{equation}
  \nu_n[0,x]=\frac{1}n\bigabs{\set{i:\tau_{k-1}(i)\le x}} \label{xcount}
=\frac{1}n\bigabs{\ccc_{k-1}(x)}
\pto\rho_{k-1}(x) =\mu_{k-1}[0,x].
\end{equation}
\end{proof}

In the terminology of \cite[Section 2]{SJ178},
$(\cS,\mu_{k-1})$ is a \emph{ground space} and, by \refL{L2},
\begin{equation}\label{V}
  \cV:=(\cS,\mu_{k-1},(\xx_n)_{n\ge1})
\end{equation}
is a \emph{vertex space},
meaning that the number of vertices $x_i$ appearing by time $t$
is governed by $\mu_{k-1}$, as made precise by \eqref{xcount}.
We define also, for every $t\ge0$, the \emph{kernel}
\begin{equation}\label{kkt}
  \kk_t(x,y):=(t-x\vee y)_+ = (t-x)_+\land (t-y)_+,
\qquad x,y\in\cS=\ooo.
\end{equation}
Note that, for fixed $t$, the kernel $\kk_t$ is bounded and continuous;
hence $\kk_t$ is a \emph{graphical kernel}
\cite[Definition 2.7, Remark 2.8 and Lemma 8.1]{SJ178}.
Furthermore, $\kk_t$ is strictly positive, and thus irreducible, on
$[0,t)\times[0,t)$, and 0 on the complement
$\cS^2\setminus[0,t)^2$; hence, $\kk_t$ is
\emph{quasi-irreducible}
\cite[Definition 2.11]{SJ178}, provided $t>\gx_{k-1}$ so $\rho_{k-1}(t)>0$.
(If $t\le\gx_{k-1}$, then $\mu_{k-1}[0,t]=\rho_{k-1}(t)=0$, and thus
$\kk_t=0$ $\mu_{k-1}^2$-\aex{} on $\cS^2$.)

\bigskip
As detailed in \cite[Section 2]{SJ178},
specifically near its (2.3),
these ingredients define a random graph
\begin{equation}
  \gvnkt .
\end{equation}
Recall that in our case the kernel
$\kk_t$ is given by \eqref{kkt} while the vertex space
$\cV$ is given by \eqref{V}, in turn
with $\cS$ and $\mu_{k-1}$ given by \eqref{Sdef} and \eqref{muk},
and $\xx_n$ given by \eqref{x=tau}.

In general, $\gvnkt$
denotes a random graph with vertices arriving at random times $\xx_n$,
vertices $i$ and $j$ joined with probability $\kk_t(x_i,x_j)$,
and \cite{SJ178} describes the behavior of such an inhomogeneous random graph.
It suffices to think of $\gvnkt$ in terms of $\cS$, $\mu_{k-1}$, and $\kk_t$,
because as shown in \cite{SJ178}
the particulars of $\xx_n$ are irrelevant as long as $\xx_n$ is consistent with $\mu_{k-1}$ in the sense of \eqref{xcount} and \eqref{nun},
this consistency
following from the fact that $\cV$ is a vertex space (see \eqref{V} and the line following it).

Here, $\gvnkt$ is the random graph
alluded to after \eqref{tautau},
a proxy for $G_k(t)$
with the difference that it is
based on the times $\tau(i)$
of vertices joining the permanent giant
of $F_{k-1}$,
rather than the more complicated two-variable times $\tau(i,j)$
of two vertices first belonging to a common component.
Concretely,
\begin{equation}\label{gvnkt}
\gvnkt \eqdef G(n,\matris{\pmij}),
\end{equation}
the \rhs{} being the random graph defined in \refSS{Gpij} with
(recalling \eqref{kkt} and \eqref{x=tau})
\begin{equation}\label{pmij}
  \pmij:=\frac{1}n \kk_t(x_i,x_j)
=\frac{1}n\bigpar{t-x_i\vee x_j}_+
=\frac{1}n\bigpar{t-\tau(i)\vee \tau(j)}_+
\end{equation}
when $i\neq j$, and (for completeness) $\pmx_{ii}=0$.
We assume throughout that $n \geq t$,
so that $\pmij \in [0,1]$;
this is not an issue since $t$ is fixed while $n \to \infty$.
Note that by \eqref{pij} and \eqref{l1},
\begin{equation}\label{ee0}
  \begin{split}
\sum_{i, j}|\pij-\pmij|
=
\sum_{i\neq j} \frac{(t-\tau(i,j))_+ -(t-\tau(i)\vee\tau(j))_+}n +O(1)
= \op(n).
  \end{split}
\end{equation}

Recall that both $\pij$ and $\pmij$ depend on $\FFm$, and thus are random.
By \eqref{xGkt} and \eqref{gvnkt},
$\xG_{k}(t)=G(n,\matris{\pij})$
and $\gvnkt=G(n,\matris{\pmij})$, so
by making the obvious maximal coupling of $G(n,\matris{\pij})$ and
$G(n,\matris{\pmij})$
conditionally on $\FFm$,
we obtain a coupling of $\xG_k(t)$ and $\gvnkt$ such
that if $e\bigpar{\xG_k(t)\setdiff\gvnkt}$ is the number of edges that are
present in one of the graphs but not in the other, then
\begin{equation}\label{ee}
  \E\bigpar{ e\bigpar{\xG_k(t)\setdiff\gvnkt} \mid\FFm}
=
\sum_{i< j}|\pij-\pmij|.
\end{equation}

\begin{lemma}
  \label{L3}
For every fixed $t\ge0$,
\begin{equation}\label{l3}
  e\bigpar{\xG_k(t)\setdiff\gvnkt} = \op(n).
\end{equation}
\end{lemma}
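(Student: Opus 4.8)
The plan is to deduce \refL{L3} from \refL{L1} together with the coupling bound \eqref{ee}. Recall from \eqref{ee} that, conditionally on $\FFm$, the maximal coupling of $\xG_k(t)=G(n,\matris{\pij})$ and $\gvnkt=G(n,\matris{\pmij})$ satisfies
\begin{equation}
  \E\bigpar{ e\bigpar{\xG_k(t)\setdiff\gvnkt} \mid\FFm}
=\sum_{i<j}|\pij-\pmij|,
\end{equation}
so it suffices to show that this conditional expectation is $\op(n)$.

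First I would invoke the estimate \eqref{ee0}, which was already derived by combining the expansion \eqref{pij} for $\pij$, the definition \eqref{pmij} of $\pmij$, and \refL{L1}; it states precisely that
\begin{equation}
  \sum_{i,j}|\pij-\pmij| = \op(n).
\end{equation}
Since $\sum_{i<j}|\pij-\pmij| \le \sum_{i,j}|\pij-\pmij|$, we get $\E\bigpar{e\bigpar{\xG_k(t)\setdiff\gvnkt}\mid\FFm}=\op(n)$; that is, there is a sequence $a_n=o(1)$ (deterministic) and events of probability $1-o(1)$ on which this conditional expectation is at most $a_n n$. On those events, the conditional Markov inequality gives, for any $\eps>0$,
\begin{equation}
  \P\bigpar{e\bigpar{\xG_k(t)\setdiff\gvnkt} > \eps n \mid \FFm}
  \le \frac{a_n n}{\eps n} = \frac{a_n}{\eps} \to 0 ,
\end{equation}
and taking expectations (and using that the conditioning events have probability $1-o(1)$) yields $\P\bigpar{e\bigpar{\xG_k(t)\setdiff\gvnkt}>\eps n}\to0$, i.e.\ \eqref{l3}.

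I do not expect a genuine obstacle here: the lemma is essentially a repackaging of \refL{L1} through the coupling, and all the analytic work (controlling bad pairs via susceptibility and the induction hypothesis) has already been done in \refL{L1}. The one point requiring a little care is the standard ``conditional $\op$'' bookkeeping — passing from an $\op(n)$ bound on a conditional expectation to an unconditional $\op(n)$ bound on the random variable itself — but this is routine: one writes $\op(n)$ as the statement that for every $\eps,\eta>0$, \whp{} the relevant quantity is $<\eps n$, applies conditional Markov on the high-probability event where $\sum_{i<j}|\pij-\pmij|$ is small, and absorbs the low-probability complement. One should also note explicitly that the maximal coupling exists conditionally on $\FFm$ (the $\pij$ and $\pmij$ are $\FFm$-measurable), which is exactly what was used to write \eqref{ee}.
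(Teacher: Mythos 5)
Your proof is correct and takes essentially the same route as the paper: deduce $\E(X_n \mid \FFm) = \op(n)$ from \eqref{ee} and \eqref{ee0}, then pass from a bound on the conditional expectation to a bound on the random variable via the conditional Markov inequality. The paper carries out this last step by fixing $\eps,\gd>0$, bounding $\P(X_n>\eps n)\le\P(Y_n>\gd n)+\gd/\eps$, and letting $\gd\to0$; your reformulation with a deterministic sequence $a_n=o(1)$ and a high-probability event is an equivalent packaging of the same idea.
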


\begin{proof}
  Let $\eps>0$.
For convenience, let
$X_n:= e\bigpar{\xG_k(t)\setdiff\gvnkt}$
and
$Y_n:=\E\bigpar{X_n \mid\FFm}$.
Then,
using Markov's inequality,
for any $\gd>0$,
\begin{equation}
  \P\bigpar{X_n>\eps n}
\le \P(Y_n>\gd n)+ \E\P(X_n>\eps n\mid Y_n\le \gd n)
\le\P(Y_n>\gd n)+
\frac{\gd n}{\eps n}.
\end{equation}
But $Y_n = \op(n)$ by \eqref{ee} and \eqref{ee0}, so $\P(Y_n>\gd n)=o(1)$.
Hence $\P(X_n>\eps n)\le \gd/\eps+o(1)$. Since $\gd$ is arbitrary, this
shows $\P(X_n>\eps n)=o(1)$, \ie, $0\le X_n\le \eps n$ \whp, which completes the
proof.
\end{proof}

\subsection{Towards part \ref{T1C1}}
The following lemma establishes \eqref{t1} of \refT{T1}\ref{T1C1}
  for any fixed $t \geq 0$; doing so
uniformly for all $t \geq 0$, as the theorem states, follows later.
Here we rely on \cite[Theorem 3.1]{SJ178}, which, roughly speaking,
relates the size of the largest component of a random graph $\gvnkt$,
to the survival probability of the branching process defined by the same kernel $\kk_t$
and the measure (here $\mu_{k-1}$) comprised by the vertex space $\cV$.
By \refL{L3}, the graph $\xG_k(t)$ of interest differs from $\gvnkt$ in only $\op(n)$ edges,
and the stability theorem \cite[Theorem 3.9]{SJ178}
shows that the size of the largest component of $\xG_k(t)$ is about the same as that of $\gvnkt$.

Let $\fX_{t}=\fX_{t,k}:=\fX_{\kk_t}$ be the branching process
defined in \refSS{SSbranching} for the kernel $\kk_t$ and the measure $\mu_{k-1}$,
and (recalling \eqref{rhokdef}) let
$\rho(\kk_t) \eqdef \rho(\kk_t;\mu_{k-1})$ be its survival probability.

\begin{lemma}
  \label{LC1}
For every fixed $t\ge0$, \eqref{t1} holds with
\begin{equation}\label{rhok}
  \rho_k(t):=\rho(\kk_t;\mu_{k-1}) \eqdef \rho(\kk_t),
\end{equation}
the survival probability of the branching process $\fX_t$.
\end{lemma}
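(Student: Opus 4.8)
The plan is to combine the two inputs that were just set up: the structural theory of inhomogeneous random graphs $\gvnkt$ from \cite{SJ178}, and the approximation $\xG_k(t)\approx\gvnkt$ furnished by \refL{L3}. First I would apply \cite[Theorem 3.1]{SJ178} to the random graph $\gvnkt=G^{\cV}(n,\kk_t)$: since $\kk_t$ is a bounded continuous, hence graphical, kernel on the ground space $(\cS,\mu_{k-1})$, and since $\cV=(\cS,\mu_{k-1},(\xx_n)_{n\ge1})$ is a vertex space by \refL{L2}, that theorem gives $C_1(\gvnkt)/n\pto\rho(\kk_t;\mu_{k-1})$, and moreover $C_2(\gvnkt)=\op(n)$. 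This already identifies the candidate limit $\rho_k(t):=\rho(\kk_t;\mu_{k-1})$, the survival probability of the branching process $\fX_t=\fX_{\kk_t}$.

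Next I would transfer this to $\xG_k(t)$ using the stability/robustness theorem \cite[Theorem 3.9]{SJ178}. By \refL{L3}, the symmetric difference $\xG_k(t)\setdiff\gvnkt$ has $\op(n)$ edges under the coupling constructed before \eqref{ee}. The stability theorem says precisely that adding or deleting $o(n)$ edges perturbs $C_1/n$ by $\op(1)$ (and likewise keeps $C_2=\op(n)$); hence $C_1(\xG_k(t))/n\pto\rho(\kk_t;\mu_{k-1})$ as well. One subtlety to handle carefully: $\gvnkt$ is built on the \emph{random} measure $\nu_n$ (through the random points $x_i=\tau_{k-1}(i)$), not on a deterministic approximation of $\mu_{k-1}$; but this is exactly what the vertex-space formalism of \cite{SJ178} is designed for — once \refL{L2} gives $\nu_n\pto\mu_{k-1}$, the conclusions of Theorems 3.1 and 3.9 hold with the limit expressed through $\mu_{k-1}$. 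Because the $p_{ij}$ depend on $\FFm$, I would phrase the argument conditionally on $\FFm$ (equivalently on $(\xx_n)$) and then note that the relevant events hold \whp{} unconditionally, or simply invoke the fact that \cite{SJ178} already incorporates random vertex spaces. I should also recall that $\xG_k(t)$ and $G_k(t)$ have the same components (merging parallel edges does not change connectivity), so the statement about the multigraph $G_k(t)$ in \eqref{t1} follows.

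Finally, there is the bookkeeping that $\rho_k$ as just defined is a genuine function of $t$ with the right range: $\rho(\kk_t;\mu_{k-1})\in\oi$ is automatic since it is a probability; continuity, monotonicity, and the threshold behavior are \emph{not} needed here — those are parts \ref{T1C1} (uniformity), \ref{T1gx}, \ref{T1rholim} of \refT{T1}, proved later — so at this stage I only claim the pointwise convergence \eqref{t1} for each fixed $t$, which is exactly what the lemma asserts. The main obstacle, such as it is, is not a deep one: it is making sure the hypotheses of \cite[Theorems 3.1 and 3.9]{SJ178} are met verbatim — in particular that $\kk_t$ is graphical (bounded continuous, already noted after \eqref{kkt}), that $\cV$ is a legitimate vertex space (\refL{L2}), and that the coupling of \refL{L3} is of the form ``$\gvnkt$ plus/minus $\op(n)$ edges'' to which the stability theorem applies. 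Once those boxes are checked, the lemma is immediate.
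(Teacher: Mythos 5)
Your high-level plan is exactly the paper's: first apply \cite[Theorem 3.1]{SJ178} to $\gvnkt$ to identify the limit $\rho(\kk_t;\mu_{k-1})$, then use the stability theorem \cite[Theorem 3.9]{SJ178} together with \refL{L3} to transfer the conclusion to $\xG_k(t)$, and observe that $\xG_k(t)$ and $G_k(t)$ have the same components. You also correctly flag that the vertex-space machinery handles the randomness of the $x_i$'s given \refL{L2}.

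However, there is a genuine gap in the step ``once those boxes are checked, the lemma is immediate.'' The box you do \emph{not} check is irreducibility: \cite[Theorem 3.9]{SJ178} is stated for irreducible kernels, but as noted after \eqref{kkt}, $\kk_t$ is only \emph{quasi}-irreducible on $\cS=\ooo$ (it vanishes identically on $\cS^2\setminus[0,t)^2$). Applying Theorem 3.9 verbatim therefore fails. The paper's fix is nontrivial: it restricts attention to the vertices $i$ with $x_i=\tau_{k-1}(i)\le t$, i.e.\ to $\ccc_{k-1}(t)$, building a \emph{generalized} vertex space $\cV'$ on $\cS'=[0,t]$ where $\kk_t$ is genuinely irreducible, and takes $G_n:=G^{\cV'}(n,\kk_t)$ and $G_n'$ to be the restriction of $\xG_k(t)$ to $\ccc_{k-1}(t)$. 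Even after this, one must still argue that $C_1(G_k(t))=C_1(G_n')$ \whp{} — i.e.\ that components of $G_k(t)$ lying inside a non-giant component of $G_{k-1}(t)$ are all of size at most $C_2(G_{k-1}(t))=\op(n)$ by the induction hypothesis, hence smaller than the giant identified in $G_n'$. That accounting step is missing from your write-up. A second, smaller omission: the case $t\le\gx_{k-1}$, where $\mu_{k-1}[0,t]=0$ so $\kk_t=0$ $\mu_{k-1}^2$-a.e.\ and even quasi-irreducibility fails, needs (and in the paper receives) a short separate argument directly from the induction hypothesis rather than from \cite[Theorem 3.1]{SJ178}.
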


Do not confuse $\rho_k$ with $\rho_\kk$,
respectively the $\rho$ and $\rho_\kk$ of \eqref{rhokdef}.

\begin{proof}
Fix $t\ge0$.  First, if $t\le\gx_{k-1}$, then, by the induction hypothesis,
$C_1(G_{k-1}(t))/n\pto\rho_{k-1}(t)=0$. Since each component of $G_k(t)$ is
a subset of a component of $G_{k-1}(t)$, we have
$C_1(G_k(t))\le C_1(G_{k-1}(t))$, and thus also
$C_1(G_{k}(t))/n\pto0$, which shows \eqref{t1} with $\rho_k(t)=0$.
The survival probability $\rho(\kk_t)=0$ here as well, establishing \eqref{rhok},
as indeed no particle has any children:
recalling the definitions in \refS{SSbranching},
the total number of children of a particle of any type $y$
is  $\Po \left( \int_{0}^{\infty}\kk_t(y,z)\dd\mu_{k-1}(z)dz \right) = \Po(0)$,
since for $z < \gx_{k-1}$ we have $\mu_{k-1}(z)=0$ thus $\dd\mu_{k-1}(z)=0$,
while for $z \geq \gx_{k-1} \ge t$ we have $\kk_t(y,z)=0$.

Hence we may in the rest of the proof assume $t>\gx_{k-1}$ and thus
$\mu_{k-1}(t)=\rho_{k-1}(t)>0$. As noted after \eqref{kkt} above, the kernel $\kk_t$ then is
quasi-irreducible.
Hence, it follows from \cite[Theorem 3.1]{SJ178} that
\begin{equation} \label{C1Gnu}
  C_1(\gvnkt)/n\pto \rho(\kk_t).
\end{equation}

We have shown in \refL{L3} that $\xG_k(t)$ differs from
$\gvnkt$ by only $\op(n)$ edges, and we appeal to
the stability theorem \cite[Theorem 3.9]{SJ178}
to show that largest components of these two graphs have essentially the same size.
(Alternatively, we could use \cite[Theorem 1.1]{SJ223}.)
A minor technical problem is that this theorem is stated for irreducible
kernels, while $\kk_t$ is only quasi-irreducible. We can extend the theorem
(in a standard way)
by considering only the vertices $i$ with $x_i=\tau_{k-1}(i)\le t$,
\ie, the vertices $i$ in the \pg{} $\ccc_{k-1}(t)$ of $G_{k-1}(t)$, see \eqref{tau}.
This defines a \emph{generalized vertex space} \cite[Section 2]{SJ178}
$\cV'=(\cS',\mu_{k-1}',(\xx_n')_{n\ge1})$, where
$\cS':=[0,t]$, $\mu'_{k-1}$ is the restriction of $\mu_{k-1}$ to $\cS'$, and
$\xx_n'$ is the subsequence of $\xx_n=(x_1,\dots,x_n)$ consisting of all
$x_i\in\cS'$. The kernel $\kk_t$ is strictly positive \aex{} on
$\cS'\times\cS'$, and is thus irreducible.

Thus, we may take $G_n$ in \cite[Theorem 3.9]{SJ178} to be
\begin{equation} \label{Gn}
   G_n := G^{\cV'}(n,\kk_t),
\end{equation}
which may
be thought of as the restriction of $G^{\cV}(n,\kk_t)$ to $\ccc_{k-1}(t)$.
Take the theorem's
$G_n'$ to be
\begin{equation} \label{Gn'}
   G_n' := \xG_k(t) \left[ \ccc_{k-1}(t) \right],
\end{equation}
the restriction of $\xG_k(t)$ to $\ccc_{k-1}(t)$.
For any $\gd>0$,
from \refL{L3}, \whp\ $e\bigpar{\xG_k(t)\setdiff\gvnkt} \le \gd n$.
Restricting each of these graphs to $\ccc_{k-1}(t)$,
it follows that \whp\
\begin{equation} \label{Gn-Gn'}
  e(G_n'\setdiff G_n) \le \gd n .
\end{equation}
Thus, $G_n$ and $G_n'$ fulfill the theorem's hypotheses.
For any $\eps>0$, we may choose $\gd>0$ per the theorem's hypotheses,
and it follows from
the theorem and \eqref{C1Gnu}
that \whp{}
\begin{equation}\label{jul}
 \bigpar{\rho(\kk_t)-\eps}n\le
  C_1(G_n')
  \le \bigpar{\rho(\kk_t)+\eps}n.
\end{equation}

Our aim is to establish \eqref{t1b}, which is \eqref{jul} with $C_1(G_k(t))$ in lieu of $C_1(G_n')$.
Each component $\cC$ of $G_k(t)$
(or equivalently of $\xG_k(t)$)
is a subset of some component of $G_{k-1}(t)$,
either $\cC_1(G_{k-1}(t))$ or some other component.
Since $t > \gx_{k-1}$ and
by the induction hypothesis \ref{T1pg} of \refT{T1},
\whp{} $\ccc_{k-1}(t) \neq \emptyset$
and thus $\cC_1(G_{k-1}(t)) = \ccc_{k-1}(t)$.
Thus, components of $G_k(t)$ contained in $\cC_1(G_{k-1}(t))$ are also
contained in
$G_n'$, and the largest such component is governed by \eqref{jul}.
Components of $G_k(t)$
contained in a smaller component of $G_{k-1}(t)$
have size at most $C_2(G_{k-1}(t))$,
which by the induction hypothesis \ref{T1C2} is \whp{} smaller than any constant times $n$,
and thus smaller than the component described by \eqref{jul}.
Consequently,
\whp{} $C_1(G_k(t))= C_1(G_n')$,
and thus
\eqref{jul} implies \eqref{t1b} \whp, for every $\eps>0$, which is equivalent to
\eqref{t1}.
\end{proof}

\subsection{Towards part \ref{T1C2}}
The next lemma establishes something like \refT{T1} \ref{T1C2},
but only for any fixed $t \ge 0$; extending this to the supremum follows later.

\begin{lemma}\label{LC2}
  For every fixed $t\ge0$,
$C_2(G_k(t))=\op(n)$.
\end{lemma}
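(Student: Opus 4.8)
The plan is to deduce $C_2(G_k(t))=\op(n)$ from the inhomogeneous random graph machinery just as in the proof of \refL{LC1}, by transferring the statement through the coupling with $\gvnkt$. First I would dispose of the trivial case $t\le\gx_{k-1}$: then by the induction hypothesis \ref{T1C1} and \ref{T1gx} we have $C_1(G_{k-1}(t))=\op(n)$, and since every component of $G_k(t)$ is contained in a component of $G_{k-1}(t)$, a fortiori $C_2(G_k(t))\le C_1(G_k(t))\le C_1(G_{k-1}(t))=\op(n)$. So we may assume $t>\gx_{k-1}$, whence $\mu_{k-1}(t)=\rho_{k-1}(t)>0$ and, as noted after \eqref{kkt}, $\kk_t$ is quasi-irreducible; passing to the restricted (generalized) vertex space $\cV'$ on $\cS'=[0,t]$ as in \refL{LC1}, $\kk_t$ becomes irreducible there.

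Next I would invoke the part of \cite[Theorem 3.1]{SJ178} that controls the second-largest component: for the inhomogeneous random graph $G^{\cV'}(n,\kk_t)$ with an irreducible graphical kernel one has $C_2\bigpar{G^{\cV'}(n,\kk_t)}=\op(n)$ (the largest component is the unique giant, and all others are $o(n)$). Then I would carry this across the coupling of \refL{L3}: with $G_n:=G^{\cV'}(n,\kk_t)$ and $G_n':=\xG_k(t)[\ccc_{k-1}(t)]$ as in \eqref{Gn}--\eqref{Gn'}, \refL{L3} gives $e(G_n'\setdiff G_n)=\op(n)$, so the stability theorem \cite[Theorem 3.9]{SJ178} applies. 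That theorem controls not only $C_1$ but the whole component structure up to $o(n)$; in particular it yields $C_2(G_n')=\op(n)$. Finally, as in the last paragraph of the proof of \refL{LC1}, I would pass from $G_n'$ back to $G_k(t)$: components of $G_k(t)$ either lie inside $\cC_1(G_{k-1}(t))=\ccc_{k-1}(t)$ (hence inside $G_n'$), or lie inside a smaller component of $G_{k-1}(t)$ and so have size at most $C_2(G_{k-1}(t))=\op(n)$ by the induction hypothesis \ref{T1C2}. Either way, apart from the largest component of $G_k(t)$ (which, if it comes from inside $\ccc_{k-1}(t)$, is the giant accounted for separately), every component has size $\op(n)$, so $C_2(G_k(t))=\op(n)$.

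The main obstacle I anticipate is purely bookkeeping at the interface between the three sources of ``small'' components: (i) the $\op(n)$ non-giant components that \cite[Theorem 3.9]{SJ178} allows after the sprinkling/removal of $\op(n)$ edges, (ii) the $\op(n)$ discrepancy in vertex count between $G_n'$ (restricted to $\ccc_{k-1}(t)$) and all of $G_k(t)$, and (iii) the components of $G_k(t)$ sitting inside the non-giant components of $G_{k-1}(t)$, bounded via \ref{T1C2}. One must make sure that no one of the $o(n)$-sized leftover pieces can masquerade as a second giant — this is exactly why $C_2(G_{k-1}(t))=\op(n)$ from the induction hypothesis is needed, rather than merely knowing $G_{k-1}(t)$ has a giant. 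A mild technical point, as in \refL{LC1}, is that \cite[Theorem 3.9]{SJ178} is stated for irreducible kernels, handled by the same restriction to $\cS'=[0,t]$ that was used there; I would just cite that device rather than repeat it. Modulo these routine matchings, the statement follows immediately from the results already established.
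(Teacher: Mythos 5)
Your overall strategy is the right one and matches the paper's: dispose of $t\le\gx_{k-1}$ via the induction hypothesis, restrict to $\cS'=[0,t]$ to make $\kk_t$ irreducible, use \cite[Theorem 3.9]{SJ178} across the coupling of \refL{L3}, and transfer from $G_n'$ back to $G_k(t)$ via the induction hypothesis \ref{T1C2} for the components outside $\ccc_{k-1}(t)$. But there is a genuine gap at the central step: you assert that \cite[Theorem 3.9]{SJ178} ``controls not only $C_1$ but the whole component structure up to $o(n)$; in particular it yields $C_2(G_n')=\op(n)$.'' That theorem is a stability statement about $C_1$ only --- it says that perturbing $G^{\cV'}(n,\kk_t)$ by $\op(n)$ edges changes $C_1/n$ by at most $\eps$; it does not by itself say anything about $C_2$ of the perturbed graph. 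In principle, $\op(n)$ edge removals could split the giant of $G_n$ into two $\Theta(n)$ pieces, and nothing you have cited rules this out directly. (Likewise, \cite[Theorem 3.1]{SJ178} tells you $C_2(G^{\cV'}(n,\kk_t))=\op(n)$, but that statement does not automatically transfer to $G_n'$.)

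The missing idea, which is exactly what the paper supplies, is a one-edge sprinkling trick: let $G_n^\dag$ be $G_n'$ with a single extra edge joining $\cC_1(G_n')$ and $\cC_2(G_n')$. Then $e(G_n^\dag\setdiff G_n)\le e(G_n'\setdiff G_n)+1=\op(n)$ still, so \cite[Theorem 3.9]{SJ178} applies to $G_n^\dag$ as well and gives, \whp,
\begin{equation*}
  C_1(G_n')+C_2(G_n') \;=\; C_1(G_n^\dag) \;\le\; \bigpar{\rho(\kk_t)+\eps}n.
\end{equation*}
Combining this with the lower bound $C_1(G_n')\ge\bigpar{\rho(\kk_t)-\eps}n$ already obtained in the proof of \refL{LC1} (namely \eqref{jul}) yields $C_2(G_n')\le 2\eps n$ \whp. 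With that in hand, the rest of your transfer argument back to $G_k(t)$ (using $C_2(G_{k-1}(t))=\op(n)$ from the induction hypothesis for the components outside $\ccc_{k-1}(t)$) is correct and matches the paper. So the proposal is fixable with this one additional device, but as written it leans on a property of the stability theorem that it does not actually have.
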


\begin{proof}
 We use the notation  of the proof of \refL{LC1},
specifically \eqref{Gn} and \eqref{Gn'}.
Let $G_n^\dag$ be the graph $G_n'$  with a
single edge added such that the two largest components $\cC_1(G_n')$ and
$\cC_2(G_n')$ are joined and let $\eps>0$.
(If $\cC_2(G_n')=\emptyset$, let $G_n^\dag:=G_n'$.)
Since \whp{}
the analog of \eqref{Gn-Gn'} holds also for $G_n^\dag$,
\cite[Theorem 3.9]{SJ178} applies also to $G_n$ and
$G_n^\dag$
and shows that \whp{}
\begin{equation}\label{juldag}
  C_1(G_n')+C_2(G_n')=C_1(G_n^\dag) \le \bigpar{\rho(\kk_t)+\eps}n.
\end{equation}
This and \eqref{jul} imply that \whp{}
\begin{equation}\label{annandag}
C_2(G_n') \le 2\eps n.
\end{equation}
Furthermore, as shown in the proof of \refL{LC1}, \whp{} every component of $G_k(t)$
that is \emph{not} part of $G_n'$ has size at most $C_2(G_{k-1}(t))$, which \whp{}
is $\le\eps n$
by the induction hypothesis. Consequently, \whp{}
\begin{equation}\label{tredjedag}
C_2(G_k(t)) \le 2\eps n,
\end{equation}
which completes the proof.
\end{proof}

Let $T_t=T_{t,k}:=T_{\kk_t}$ be the integral operator
defined by
\eqref{tk} with the measure $\mu_{k-1}$.
We regard $T_t$ as an operator on $L^2(\cS,\mu_{k-1})$, and recall that
(since $\kk_t$ is bounded) $T_t$ is a bounded and compact operator for every
$t\ge0$.

\begin{lemma}\label{LTnorm}
The operator norm $\norm{T_t}$
is a continuous function of $t\ge0$.
Furthermore,
\begin{alphenumerate}
\item \label{LT0}
$\norm{T_t}=0\iff t\le\gx_{k-1}$
\item \label{LT>}
$\norm{T_t}$ is strictly  increasing on $[\gx_{k-1},\infty)$.
\item \label{LToo}
$\norm{T_t}\to\infty$ as \ttoo.
\end{alphenumerate}
\end{lemma}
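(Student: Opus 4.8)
The plan is to exploit the explicit form $\kk_t(x,y) = (t-x)_+ \wedge (t-y)_+$ together with the standard characterization of the operator norm of a nonnegative compact self-adjoint operator as $\norm{T_t} = \sup_{\norm f_{L^2(\mu_{k-1})}=1, f\ge 0} \innprod{T_t f, f}$, and the fact (cited from \cite[Lemma 5.15]{SJ178}) that the supremum is attained by a nonnegative eigenfunction $\psi_t$. For \emph{continuity}, I would first observe that $t \mapsto \kk_t(x,y)$ is (for each fixed $x,y$) continuous and nondecreasing, and that $0 \le \kk_t \le \kk_{t'}$ pointwise whenever $t\le t'$. Since the $\mu_{k-1}^2$-measure is finite, dominated convergence gives $\normHS{T_t - T_{t'}} \to 0$ as $t'\to t$ (on any bounded $t$-interval the kernels are uniformly bounded by $t'$), and the operator norm is dominated by the \HS{} norm, so $\bigl|\norm{T_t} - \norm{T_{t'}}\bigr| \le \norm{T_t - T_{t'}} \le \normHS{T_t-T_{t'}} \to 0$. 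That settles continuity.

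For part \ref{LT0}: if $t\le\gx_{k-1}$ then $\mu_{k-1}([0,t]) = \rho_{k-1}(t) = 0$, so $\kk_t = 0$ $\mu_{k-1}^2$-a.e.\ (the support of $\mu_{k-1}$ is contained in $[\gx_{k-1},\infty)$, where $\kk_t$ vanishes), hence $T_t = 0$ and $\norm{T_t}=0$. Conversely, if $t>\gx_{k-1}$ then $\rho_{k-1}(t)>0$, so $\mu_{k-1}$ charges the interval $[\gx_{k-1},t)$; on that set $\kk_t(x,y) = (t-x\vee y)_+ > 0$, and testing with the indicator $f = \ett{x<t}$ gives $\innprod{T_t f, f} = \iint_{[0,t)^2} (t-x\vee y)\dd\mu_{k-1}(x)\dd\mu_{k-1}(y) > 0$, so $\norm{T_t}>0$. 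Part \ref{LToo}: testing again with $f_a = \ett{x < a}$ for a fixed $a > \gx_{k-1}$ with $\mu_{k-1}([0,a)) = c > 0$, we get $\innprod{T_t f_a, f_a}/\norm{f_a}^2 \ge \frac1{c^2}\iint_{[0,a)^2}(t-a)_+\dd\mu_{k-1}\dd\mu_{k-1} = (t-a)_+ \to \infty$, which forces $\norm{T_t}\to\infty$.

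The main work is part \ref{LT>}, strict monotonicity. Fix $\gx_{k-1} \le t < t'$. Using the eigenfunction $\psi = \psi_t \ge 0$ with $\norm\psi=1$ and $T_t\psi = \norm{T_t}\psi$, I would write $\norm{T_{t'}} \ge \innprod{T_{t'}\psi,\psi} = \innprod{T_t\psi,\psi} + \innprod{(T_{t'}-T_t)\psi,\psi} = \norm{T_t} + \iint \bigl(\kk_{t'}(x,y)-\kk_t(x,y)\bigr)\psi(x)\psi(y)\dd\mu_{k-1}(x)\dd\mu_{k-1}(y)$, and the integrand is pointwise nonnegative since $\kk_t \le \kk_{t'}$. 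So it remains to show the last integral is \emph{strictly} positive. Since $t \ge \gx_{k-1}$, $\psi$ cannot vanish $\mu_{k-1}$-a.e.\ (it has norm $1$), and by quasi-irreducibility of $\kk_{t'}$ on $[0,t')^2$ the eigenfunction $\psi$ must in fact be strictly positive $\mu_{k-1}$-a.e.\ on $[0,t')$ --- here I would invoke the Perron--Frobenius-type statement underlying \cite[Lemma 5.15]{SJ178}, or argue directly that $\psi$ equals $\Phi$-type iterations of a positive function --- so on a set of positive $\mu_{k-1}^2$-measure (namely wherever $x\vee y < t'$, equivalently $x,y < t'$, which has positive measure because $\rho_{k-1}(t') \ge \rho_{k-1}(t) > 0$ when $t > \gx_{k-1}$, and a limiting/boundary argument for $t = \gx_{k-1}$) the difference $\kk_{t'}-\kk_t = \bigl[(t'-x\vee y)_+ - (t-x\vee y)_+\bigr]$ is strictly positive. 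The boundary case $t=\gx_{k-1}$ needs a separate word: there $\norm{T_t}=0$, and we have already shown $\norm{T_{t'}}>0$ in part \ref{LT0}, so strictness holds trivially. The one genuinely delicate point I anticipate is justifying that $\psi_t$ is strictly positive a.e.\ on the support of $\mu_{k-1}$ within $[0,t)$ (not merely nonzero), which is exactly where quasi-irreducibility of $\kk_t$ and the cited spectral lemma must be applied carefully; everything else is routine estimation.
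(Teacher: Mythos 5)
Your proposal follows essentially the same route as the paper: an HS-norm bound gives continuity, the vanishing of $\kk_t$ on the support of $\mu_{k-1}$ gives part (a), testing against a positive function gives part (c), and the nonnegative Perron eigenfunction $\psi_t$ combined with strict kernel monotonicity gives part (b); the paper bounds $\normLL{T_u\psi_t}$ where you bound $\innprod{T_{t'}\psi_t,\psi_t}$, which is equivalent. One small slip in (b): you claim $\psi_t>0$ a.e.\ on $[0,t')$ by invoking irreducibility of $\kk_{t'}$, but $\psi_t$ is the eigenfunction of $T_t$, not $T_{t'}$, and since $\kk_t$ vanishes for $x\ge t$ one in fact has $\psi_t=0$ a.e.\ on $[t,\infty)$. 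The correct (and sufficient) statement, which the paper extracts from $\psi_t=T_t\psi_t/\norm{T_t}$ and positivity of $\kk_t$ on $[0,t)^2$, is that $\psi_t>0$ a.e.\ on $[0,t)$; since $\kk_{t'}-\kk_t\ge t'-t>0$ on $[0,t)^2$, which has positive $\mu_{k-1}^2$-measure, the extra integral $\iint(\kk_{t'}-\kk_t)\psi_t\psi_t\,\dd\mu_{k-1}^2$ is still strictly positive and your conclusion stands.
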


\begin{proof}
  If $0\le t\le u<\infty$, then
by \eqref{kkt},
for any $x,y\in\cS=\ooo$,
\begin{equation}
0\le\kk_u(x,y) -\kk_t(x,y)\le u-t,
\end{equation}
and consequently,
with $\normHS{\cdot}$ the \HS{} norm,
  \begin{equation}
	\begin{split}
\bigabs{\norm{T_t}-\norm{T_u}}
\le \norm{T_t-T_u}
\le\normHS{T_t-T_u}
=\norm{\kk_t-\kk_u}_{L^2(\cS^2,\mu_{k-1}^2)}
\le |t-u|.
	\end{split}
  \end{equation}
Hence, $t\mapsto\norm{T_t}$ is continuous.

\pfitemref{LT0}
By \eqref{kkt}, $\kk_t(x,y)>0\iff (x,y)\in\oot^2$.
If $t\le\gx_{k-1}$, then
$\mu_{k-1}\oot=\rho_{k-1}(t)=0$, and thus $\kk_t=0$ a.s., so $T_t=0$.

Conversely, if $t>\gx_{k-1}$, then $\kk_t>0$ on a set of positive measure,
and thus $T_t1> 0$ on a set of positive measure (where $1$ denotes the
function that is constant 1); hence $\norm{T_t}\ge\normLL{T_t1}>0$.

\pfitemref{LT>} Assume $u>t>\gx_{k-1}$.
Then, as just shown, $\norm{T_t}>0$. Moreover, as said in \refSS{SSintop},
there exists an eigenfunction $\psi_t\ge0$ with eigenvalue $\norm{T_t}$,
which we can assume is normalized: $\normLL{\psi_t}=1$.
Since $\kk_t>0$ on $[0,t)\times[0,t)$, and 0 elsewhere, it follows
from $\psi_t = T_t \psi_t/\norm{T_t}$ and \eqref{tk} that
$\psi_t=0$ \aex{} on $[t,\infty)$, and
thus nonzero somewhere on $[0,t)$,
and thus $\psi_t>0$ \aex{} on $[0,t)$.
As $u>t$, then $\kk_u(x,y)>\kk_t(x,y)$ on $[0,t)\times[0,t)$, and it follows
	that for $x\in\oot$,
\begin{equation}
	  \begin{split}
(T_u\psi_t)(x)&
=
\intoo\kk_u(x,y)\psi_t(y)\dd\mu_{k-1}(y)	
\\&
>
\intoo\kk_t(x,y)\psi_t(y)\dd\mu_{k-1}(y)
=T_t\psi_t(x)
=\norm{T_t}\psi_t(x).		
	  \end{split}  \label{Tpsix}
	\end{equation}
Thus,
\begin{equation}
  \norm{T_u}\ge\normLL{T_u\psi_t}>\norm{T_t}\,\normLL{\psi_t}
=\norm{T_t}.
\end{equation}
Consequently, \ref{LT>} holds.

\pfitemref{LToo}
By \eqref{kkt}, $\kk_t(x,y)\upto\infty$ as \ttoo{} for every
$x,y\in\ooo$. Hence,
using monotone convergence,
$T_t1(x)=\intoo\kk_t(x,y)\dd\mu_{k-1}(y)\upto\infty$ for every $x\in\ooo$,
and thus by monotone convergence again,
$\normLL{T_t1}\upto\infty$. Consequently, $\norm{T_t}\ge\normLL{T_t1}\to\infty$.
\end{proof}

\subsection{Proofs of parts \ref{T1gx} and \ref{T1rholim}}
\begin{proof}[Proof of \refT{T1}\ref{T1gx}]
By \refL{LTnorm}, there exists a unique $\gx_k>0$ such that
\begin{equation}\label{tgx1}
\norm{T_{\gx_k}}=1.
\end{equation}
Furthermore, $\norm{T_t}<1$ if $t<\gx_k$ and $\norm{T_t}>1$ if $t>\gx_k$.
Thus,
\begin{equation}\label{tgx2}
    t>\gx_k
\iff
    \norm{T_t}>1
\iff
    \rho_k(t)>0 ,
\end{equation}
where the last equivalence follows from \cite[Theorem 3.1]{SJ178},
establishing $\norm{T_t}>1$ as a necessary and sufficient condition for
the existence of a giant component, and providing its size.
In order to see that $\rho_k$ is strictly increasing on $[\gx_k,\infty)$,
  let $\gx_k<t<u$. Since $\kk_u(x,y)\ge\kk_t(x,y)$ for all $x,y\in\cS$,
we may couple the branching processes $\fX_t=\fX_{\kk_t}$ and
$\fX_u=\fX_{\kk_u}$ such that
$\fX_{u}$ is obtained from $\fX_{t}$ by adding
extra children to some individuals.
(Each individual of type $x$ gets extra children of type $y$
distributed as a Poisson process with intensity
$(\kk_u(x,y)-\kk_t(x,y))\dd\mu_{k-1}(y)$, independent of everything else.)
Then clearly
$\fX_{u}$ survives if
$\fX_{t}$ does, so $\rho_k(u):=\rho(\kk_u)\ge\rho(\kk_t) = \rho_k(t)$.
(See \cite[Lemma 6.3]{SJ178}.)
Moreover, there is a positive probability that
$\fX_{t}$ dies out but
$\fX_{u}$ survives, for example
because the initial particle has no children in
$\fX_{t}$ but at least one in $\fX_{u}$, and this child starts a
surviving branching process.
Hence $\rho_k(u)>\rho_k(t)$.
\end{proof}

\bigskip
We next prove \refT{T1}\ref{T1rholim}.
A simple lemma will be useful here and subsequently.

Consider the process defined in \refS{Smodel}
of all graphs $G_j(t)$, $j\ge1$ and $t\ge0$,
under some edge-arrival process;
consider also a similar set of graphs $G'_j(t)$
coming from a second arrival process thicker than the first
(\ie, containing the same arrivals and possibly others).

\begin{lemma} \label{thicker}
The thicker process yields larger graphs, \ie, $G_j(t) \subseteq G'_j(t)$ for all $j$ and $t$.
Also, any edge $e$ present in both arrival processes, if contained in $F'_1(t) \cup\dots\cup F'_j(t)$,
is also contained in $F_1(t) \cup\dots\cup F_j(t)$.
\end{lemma}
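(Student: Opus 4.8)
The plan is to reduce both assertions to the single inclusion $G_j(t)\subseteq G'_j(t)$ for all $j\ge1$ and all $t\ge0$, and to prove that by induction on edge arrivals. The second assertion follows from the first: if $e$ is present in both processes and $e\in F'_1(t)\cup\dots\cup F'_j(t)$, then $e$ has arrived by time $t$ (so $e\in G_1(t)$) and $e\notin G'_{j+1}(t)$; since $G_{j+1}(t)\subseteq G'_{j+1}(t)$ this gives $e\notin G_{j+1}(t)$, and the defining identity $G_{j+1}(t)=G_1(t)\setminus\bigpar{F_1(t)\cup\dots\cup F_j(t)}$ then forces $e\in F_1(t)\cup\dots\cup F_j(t)$.

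To prove the inclusion I would process the arrivals of the thicker process one at a time in order of arrival time (assuming, as we may in the models considered, that the arrivals are locally finite and occur at distinct times), maintaining the invariant that $G_j\subseteq G'_j$ for every $j$, as edge sets on $[n]$ restricted to the arrivals handled so far; the base case is the pair of empty graphs. For the inductive step, note first that $G_j\subseteq G'_j$ implies that each component of $G_j$ (equivalently of $F_j$) is contained in a component of $G'_j$, since a path in $G_j$ is a path in $G'_j$. Now consider the next arrival $e=uv$. If $e$ belongs only to the thicker process, the unprimed graphs are unchanged while each $G'_j$ either is unchanged or gains $e$, so the invariant persists. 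If $e$ is common to both processes, let $k$ be the index of the forest into which Kruskal's rule places $e$ in the thinner process (which is finite, since at a fixed time only finitely many edges have arrived). By the choice of $k$, the endpoints $u$ and $v$ lie in a common component of $F_i$ for every $i<k$; by the invariant they then lie in a common component of $F'_i$ for every $i<k$, so in the thicker process $e$ is rejected from $F'_1,\dots,F'_{k-1}$ and hence passes at least as far, into $G'_k$. Thus $e$ is added to $G_j$ precisely for $j\le k$ and to $G'_j$ for all those $j$, so the invariant is preserved; and since Kruskal's rule is online --- placing an edge into $F_k$ affects only later arrivals, not the present contents of $G_{k+1},G_{k+2},\dots$ --- nothing else changes. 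Running the induction through all arrivals up to time $t$ yields $G_j(t)\subseteq G'_j(t)$.

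The argument is essentially bookkeeping, so I expect no serious obstacle; the points needing a little care are the treatment of simultaneous or accumulating arrivals (handled by the standing assumptions on the model, and in any case extra arrivals in the thicker process only help), and making explicit that accepting an edge into $F_k$ leaves the current graphs $G_{k+1},G_{k+2},\dots$ untouched, so that the two inductive cases combine cleanly.
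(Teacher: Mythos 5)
Your proof is correct, and the second assertion is deduced from the first in essentially the same way the paper does it. Where you differ is in how the induction for the first assertion is organized: the paper inducts on the index $j$, fixing $j$ and showing $G_j(t)\subseteq G'_j(t)$ for all $t$ (base case $j=1$ is the arrival-set inclusion, and the step is that endpoints sharing a component of $G_{j-1}(s)$ at arrival time $s$ a fortiori share a component of the larger $G'_{j-1}(s)$). You instead induct over arrival events, maintaining the full vector of inclusions $(G_j\subseteq G'_j)_{j\ge1}$ as an invariant and, for a common edge $e$, comparing the Kruskal destination indices $k\le k'$ in the two processes. Both decompositions are sound. The paper's version is more compact and exploits the recursive definition $G_{j+1}=G_j\setminus F_j$ directly; yours is more explicit about the online mechanics of Kruskal's rule (in particular the point that placing $e$ in $F_k$ does not disturb the current $G_{k+1},G_{k+2},\dots$), which makes the bookkeeping unambiguous at the cost of a slightly longer argument. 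A small caution: your step uses that $u,v$ in a common component of $G_i$ implies the same in $G'_i$, which requires the inclusion to hold at the instant just before $e$ is processed; your event-by-event induction supplies exactly this, so the argument closes, but it is worth stating that you apply the invariant at the current time rather than at the final time $t$.
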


\begin{proof}
It is easy to see that adding edges can only make $G_1$ larger, \ie, that $G'_1(t) \supseteq G_1(t)$.
Thus any edge originally passed on to $G_2(t)$ will still be passed on, plus
perhaps some others;
by induction on $j$, any $G_j(t)$ can only increase, \ie, $G'_j(t) \supseteq G_j(t)$.
This proves the first assertion.
The second assertion
follows from
the first.
  If $e$ is not contained in $F_1(t) \cup\dots\cup F_j(t)$
then it is passed on to $G_{j+1}(t)$, and hence, as just shown,
it belongs also to $G'_{j+1}(t)$ and therefore
not to $F'_1(t) \cup\dots\cup F'_j(t)$.
\end{proof}

Let $q_n(t)$ be the probability that two fixed, distinct, vertices in
$G_k(t)$ belong to the same component. By symmetry, this is the same for
any pair of vertices, and thus also for a random pair of distinct vertices.
Hence, recalling \eqref{pi},
\begin{equation}\label{qn}
  q_n(t)=\E \pi(G_k(t))
 .
\end{equation}

\begin{lemma}\label{Lqn}
  There exist constants $b_k,B_k>0$ such that, for every $n\ge2$,
\begin{equation}
  \label{lqn}
q_n(t)\ge 1-B_k e^{-b_kt},
\qquad t\ge0.
\end{equation}
\end{lemma}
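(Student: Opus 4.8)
The plan is to prove \refL{Lqn} by induction on $k$. The case $k=0$ is trivial: $G_0(t)=K_n$ is connected, so $q_n(t)\equiv1$ and \eqref{lqn} holds with any $b_0,B_0>0$. Two reductions simplify the rest. First, since $q_n(t)\ge0$, the bound \eqref{lqn} is automatic whenever $B_ke^{-b_kt}\ge1$, so it suffices to establish it for $t$ beyond some threshold (allowed to depend on $k$), the bounded remaining range being absorbed by enlarging $B_k$. Second, the only external ingredient is a uniform \ER{} estimate: there are absolute constants $C_0,c_0>0$ such that for all $m\ge2$ and $p\in\oi$, two fixed vertices of $G(m,p)$ lie in a common component with probability at least $1-C_0e^{-c_0mp}$. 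This is the standard exploration argument: breadth-first search from a vertex uncovers, while fewer than $m/2$ vertices have been reached, at least a $\Bin(\ceil{m/2},p)$ number of new vertices at each step, so a vertex's component has at most $m/2$ vertices only if an associated Galton--Watson process with offspring mean of order $mp$ becomes extinct, an event of probability $O(e^{-c_0mp})$, while two components of size exceeding $m/2$ cannot coexist.

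For the inductive step, fix $k\ge1$, assume the case $k-1$ of \refL{Lqn}, fix distinct vertices $i,j$, take $t$ large, and condition on the $\gs$-field $\cF_{t/2}$ generated by all edge arrivals during $[0,t/2]$. Put $H:=\cC_1\bigpar{G_{k-1}(t/2)}$, which is $\cF_{t/2}$-measurable. The first point is that $H$ contains both $i,j$ and satisfies $|H|\ge n/2$ outside an event of probability exponentially small in $t$. Combining \eqref{pi} and \eqref{chiC1} gives $\pi(G)\le C_1(G)/n$ for every graph $G$; hence, by the case $k-1$ of \refL{Lqn} and the definition \eqref{qn},
\[
\E C_1\bigpar{G_{k-1}(t/2)}/n\ge\E\pi\bigpar{G_{k-1}(t/2)}\ge 1-B_{k-1}e^{-b_{k-1}t/2}.
\]
Since the construction is symmetric in the vertices, $\P(i\in H)=\E C_1\bigpar{G_{k-1}(t/2)}/n$, so $\P(i\notin H)$ and $\P(j\notin H)$ are each at most $B_{k-1}e^{-b_{k-1}t/2}$, and Markov's inequality gives $\P(|H|<n/2)\le 2B_{k-1}e^{-b_{k-1}t/2}$.

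The second point is a sprinkling argument. Any two vertices of $H$ lie in a common component of $G_{k-1}(t/2)$, hence of $G_{k-1}(s)$ for every $s\ge t/2$ (components only merge), hence --- by the nesting of components recalled in \refSS{Smodelmodel} --- of every $G_\ell(s)$ with $\ell\le k-1$; consequently every edge between them that arrives during $(t/2,t]$ is rejected from each of $F_1,\dots,F_{k-1}$ and so belongs to $G_k(t)$. Conditionally on $\cF_{t/2}$ these arrivals form a fresh family of independent Poisson processes, so on the event $\set{i,j\in H,\ |H|\ge n/2}$ the graph $G_k(t)$ contains, on the vertex set $H$, a copy of $G\bigpar{|H|,\,1-e^{-t/(2n)}}$. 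When $n\ge t$ one has $1-e^{-t/(2n)}\ge t/(4n)$, so this subgraph has $|H|\bigpar{1-e^{-t/(2n)}}\ge t/8$, and the \ER{} estimate bounds the conditional probability that $i\not\sim j$ in it --- hence in $G_k(t)$ --- by $C_0e^{-c_0t/8}$. Combining the two points,
\[
1-q_n(t)=\P\bigpar{i\not\sim j\text{ in }G_k(t)}\le 4B_{k-1}e^{-b_{k-1}t/2}+C_0e^{-c_0t/8},
\]
which has the required form $B_ke^{-b_kt}$ after enlarging $B_k$. The leftover cases --- small $t$, and $n<t$ (where one round of sprinkling may not suffice and one instead iterates it over successive sub-windows of $(t/2,t]$, each round failing to connect $i$ and $j$ with probability bounded by a constant $<1$, while the finitely many values of $n$ below any fixed bound are immediate) --- are routine. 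This closes the induction.

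The step I expect to be the main obstacle is the sprinkling one: one must check carefully that, conditionally on $\cF_{t/2}$, the edges landing inside $H$ during $(t/2,t]$ genuinely constitute an independent \ER{} graph sitting inside $G_k(t)$. Beyond the independence of disjoint Poisson increments, this rests on two structural facts already available --- that ``sharing a component of $G_{k-1}$'' is exactly the criterion for an arriving edge to be routed into $G_k$, and that this criterion is monotone in $t$ and descends the chain $G_{k-1}\supseteq\cdots\supseteq G_1$ of graphs. Everything else, namely the branching-process tail bound for $G(m,p)$ and the two uses of Markov's inequality, is routine.
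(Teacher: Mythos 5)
Your proof is correct in outline, and it takes a genuinely different route from the paper's. The paper's proof of \refL{Lqn} does not induct on $k$ within the lemma itself; rather, it leverages the machinery already available at that point of the paper's induction on $k$: it fixes $t_0>\gx_k$ (using \refT{T1}\ref{T1gx}, proved just before), uses \refL{LC1} and dominated convergence to get a uniform-in-$n$ constant lower bound $q_n(t_0)>q>0$, and then iterates using \refL{thicker} and the fact that arrivals in $\bigpar{(m-1)t_0, mt_0}$ form independent blocks, giving $1-q_n(Mt_0)\le(1-q)^M$ directly for every $n$ and $M$. Your proof instead is a self-contained induction on $k$ requiring none of the inhomogeneous-random-graph machinery: the $(k-1)$ case of the lemma itself controls the large component $H$ of $G_{k-1}(t/2)$, and the sprinkling step --- which you correctly verify via the nesting of components and the pass/reject criterion --- exhibits an independent \ER{} graph on $H$ sitting inside $G_k(t)$. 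This buys some conceptual independence from \refT{T1}, at the cost of importing a uniform exploration bound for $G(m,p)$.

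A few details would need filling in. First, the identity $\P(i\in H)=\E C_1/n$ is only exact for a vertex-exchangeable tie-break rule for $\cC_1$ (the paper allows ``any rule''); this is easily sidestepped by taking $H$ to be the common component of $i$ and $j$ in $G_{k-1}(t/2)$ (which exists outside an event of probability $\le B_{k-1}e^{-b_{k-1}t/2}$, by the inductive hypothesis and exchangeability of $q_n^{(k-1)}$) and using $\E|\cC^{(i)}|/n=1/n+(1-1/n)q_n^{(k-1)}(t/2)$ plus Markov. Second, the single-shot sprinkling bound $C_0e^{-c_0 t/8}$ is valid only for $t\le n$, since for $t\gg n$ the edge probability saturates and $|H|\bigpar{1-e^{-t/(2n)}}=O(n)$, not $\Omega(t)$; your proposed fix --- iterating over $O(1)$-length sub-windows of $(t/2,t]$ (window size an absolute constant $t_1$ chosen large enough that $C_0e^{-c_0 t_1/4}<1/2$), with disjoint Poisson increments giving independent failure events --- does work and yields $e^{-\Omega(t)}$ with absolute constants, but this is essentially the paper's iteration in disguise, and still requires handling $n<t_1$ as a finite residual case. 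Third, the claimed uniformity of the \ER{} estimate over all $m\ge2$ and all $p\in\oi$ with absolute constants needs a careful statement and proof (in particular near the critical regime $mp\asymp1$, where one relies on the bound being vacuous with $C_0\ge e^{c_0\cdot\text{const}}$). None of these is a gap in the approach, but taken together they make the paper's restart-and-iterate argument shorter, since it gets the constant-probability seed $q_n(t_0)>q$ ``for free'' from the already-established convergence $C_1(G_k(t_0))/n\pto\rho_k(t_0)>0$.
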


\begin{proof}
Fix some $t_0>\gx_k$; thus $\rho_k(t_0)>0$ by \ref{T1gx}.
Then, \cf{}
\eqref{pi},
writing $C_i:=C_i(G_k(t_0))$,
\begin{equation}\label{NT}
  q_n(t_0) = \E \frac{\sumi C_i(C_i-1)}{n(n-1)}
\ge \E \frac{ C_1(C_1-1)}{n(n-1)}.
\end{equation}
By \refL{LC1}, $C_1/n\pto\rho_k(t_0)$ as \ntoo, and thus
$C_1(C_1-1)/(n(n-1))\pto\rho_k(t_0)^2$.
Hence, by
\eqref{NT} and dominated convergence
(see \eg{} \cite[Theorems 5.5.4 and 5.5.5]{Gut}),
\begin{equation}\label{NTP}
\liminf_\ntoo  q_n(t_0)
\ge \lim_\ntoo \E \frac{ C_1(C_1-1)}{n(n-1)}=\rho_k(t_0)^2>0.
\end{equation}
Let $q:=\rho_k(t_0)^2/2$, say. Then \eqref{NTP} shows that if $n$ is large
enough,  $q_n(t_0)>q$. By reducing $q$, if necessary, we may assume that
this holds for every $n$, since obviously $q_n(t_0)>0$ for every fixed $n\ge2$.

For an integer $m\ge0$, consider the process defined in \refS{Smodel}
of all graphs $G_j(t)$, $j\ge1$ and $t\ge0$,
but erase all edges and restart at $mt_0$;
denote the resulting random graphs by $G_j\mm(t)$ and note that
$G_j\mm(t+mt_0)\eqd G_j(t)$. In particular, let
$G_{k,m}:=G_k\mm((m+1)t_0)$.
Then $G_{k,m}\eqd G_k(t_0)$; furthermore, the random graphs $G_{k,m}$,
$m=0,1,\dots$, are independent, since they depend on edges arriving in
disjoint time intervals.

Consider the process at times $i \, t_0$ for integers $i$.
By \refL{thicker},
$G_k(i \, t_0)$ dominates what it would have been had no edges arrived by $(i-1) t_0$,
which, for $i\ge2$, is
simply an independent copy of $G_k(t_0)$
(that is, independent of $G_k(t_0)$ but identically distributed).
Consequently, for any integer $M$, vertices $x$ and $y$ can be in different components of $G_k(M t_0)$
only if they are in different components in each of the $M$ copies of $G_k(t_0)$.
Thinking of all values $i \ge1$ at once, these copies of $G_k(t_0)$ are all independent,
as they depend on edge arrivals in disjoint time intervals $\big( (i-1)t_0, i\,t_0 \big]$.
Thus,
\begin{equation}
1- q_n(Mt_0)\le (1-q_n(t_0))^M\le (1-q)^M\le e^{-qM}.
\end{equation}
Thus, for any $t\ge0$, taking $M:=\floor{t/t_0}$,
\begin{equation}
q_n(t)\ge q_n(Mt_0)
\ge 1-e^{-q\floor{t/t_0}}
\ge 1-e^{q-(q/t_0)t},
\end{equation}
which shows \eqref{lqn}.
(In fact, we get $B_k=e^q<e$; we can take $q$ arbitrarily small and thus
$B_k$ arbitrarily close to 1, at the expense of decreasing $b_k$.)
\end{proof}

\begin{proof}[Proof of \refT{T1}\ref{T1rholim}]
Let $\cC^1(t)$ be the component of $G_k(t)$ that contains vertex 1.
Then, by \refL{Lqn},
\begin{equation}\label{kkl}
  \E C_1(G_k(t))
\ge\E \cC^1(t)
=(n-1) q_n(t)
\ge (n-1)\bigpar{1-B_ke^{-b_kt}}.
\end{equation}
Furthermore, by \refL{LC1} and dominated convergence,
$\E C_1(G_k(t))/n\to\rho_k(t)$ as \ntoo. Hence, \eqref{kkl} implies
$\rho_k(t) \ge 1-B_ke^{-b_kt}$, which is \eqref{t1rholim}.
\end{proof}

\subsection{Proofs of parts \ref{T1C1}, \ref{T1C2}, and \ref{T1pg}}
\begin{proof}[Proof of \refT{T1}\ref{T1C1}]
If $t_n$ is a sequence such that either $t_n\upto t$ or $t_n\downto t$,
then $\kk_{t_n}(x,y)\to\kk_t(x,y)$ for all $x$ and $y$ by \eqref{kkt}, and
thus by \cite[Theorem 6.4]{SJ178}, recalling \eqref{rhok},
$\rho_k(t_n) =\rho(\kk_{t_n}) \to\rho(\kk_{t}) =\rho_k(t)$.
Hence, $\rho_k$ is continuous.
That $\rho_k$ is increasing was shown above in
\refT{T1}\ref{T1gx}.

Let $\eps>0$ and let $N$ be an integer with $N\ge\eps\qw$.
Since $\rho_k$ is continuous, and $\rho_k(t)\to1$ as \ttoo{}
by \refT{T1}\ref{T1rholim}, we can choose
$t_0=0<t_1<\dots<t_{N-1}<t_N=\infty$ with
\begin{equation}\label{tj}
  \rho_k(t_j)=j/N .
\end{equation}
By \refL{LC1}, \whp{}
\begin{equation}\label{trettiosju}
\rho_k(t_j)-\eps\le \xfrac{ C_1\bigpar{G_k(t_j)}}n
\le \rho_k(t_j)+\eps
\end{equation}
for every $j\le N$. (The case $j=N$ is trivial, since  $G_k(\infty)$ \as{}
is connected.)
Then, for every $j=1,\dots, N$ and every $t\in[t_{j-1},t_j]$,
\begin{equation}\label{tretton}
\xfrac{C_1\bigpar{G_k(t)}}n \le
\xfrac{C_1\bigpar{G_k(t_j)}}n
\le \rho_k(t_j)+\eps
\le \rho_k(t)+\frac{1}N+\eps
\le \rho_k(t)+2\eps,
\end{equation}
which together with a similar lower bound shows that \whp{}
$|\xfrac{C_1\bigpar{G_k(t)}}n -\rho_k(t)|\le 2\eps$ for all $t\ge0$.
Since $\eps$ is arbitrary, this shows \eqref{t1b}.
\end{proof}

\begin{proof}[Proof of \refT{T1}\ref{T1C2}]
Let $\eps$, $N$, and $t_j$, $j=0,\dots,N$,
be as
in the proof of \refT{T1}\ref{T1C1} above.
Again, \whp, \eqref{trettiosju} holds for every $j\le N$.
Moreover, by \refL{LC2}, for every $j\le N-1$, and trivially when $j=N$, \whp{}
\begin{equation}\label{c2}
  C_2\bigpar{G_k(t_j)}\le \eps n.
\end{equation}
Assume \eqref{trettiosju} and \eqref{c2} for every $j\le N$, and also
that $C_2\bigpar{G_k(t)}>3\eps n$ for some $t\ge0$. Choose $j$ with $1\le
j\le N$ such
that $t\in[t_{j-1},t_j]$.
If $\cC_2(G_k(t))$ has \emph{not} merged with $\cC_1(G_k(t))$ by
time $t_j$,
then
\begin{equation}
  C_2\bigpar{G_k(t_j)}
\ge   C_2\bigpar{G_k(t)}>3\eps n,
\end{equation}
which contradicts \eqref{c2}.
If on the other hand these two
components \emph{have} merged, then,
using \eqref{trettiosju}
and (from \eqref{tj}) that $\rho_k(t_{j-1}) \ge \rho_k(t_j)-\eps$,
\begin{equation}
  \begin{split}
  C_1\bigpar{G_k(t_j)}&
\ge   C_1\bigpar{G_k(t)}+  C_2\bigpar{G_k(t)}
 \,>\,   C_1\bigpar{G_k(t_{j-1})}+  3\eps n
\\&
\ge \rho_k(t_{j-1})n -\eps n+ 3\eps n
 \, \ge \, \rho_k(t_{j})n + \eps n,
  \end{split}
\end{equation}
which contradicts \eqref{trettiosju}.
Consequently, \whp{} $\sup_t C_2\bigpar{G_k(t)}\le3\eps n$.
\end{proof}

\begin{proof}[Proof of \refT{T1}\ref{T1pg}]
  If $t>\gx_k$, then $\rho_k(t)>0$ by \refT{T1}\ref{T1gx}.
Let $\gd=\rho_k(t)/2$.
Then, by \ref{T1C1} and \ref{T1C2}, \whp{}
$C_1(G_k(t))>\gd n$,
and, simultaneously for every $u\ge0$,
$C_2(G_k(u))<\gd n$.
Assume that these inequalities hold.
Then, in particular, the largest component of $G_k(t)$ is a \ug.
(Recall the definition from \refS{SSmore}.)
Moreover, for every $u\ge t$, the component $\cC$ of $G_k(u)$
that contains the largest component of $G_k(t)$ then satisfies
\begin{equation}
|\cC|\ge C_1(G_k(t))>\gd n > C_2(G_k(u)),
\end{equation}
showing that $\cC$ is the \ug{} of $G_k(u)$.
Hence, the largest component of $G_k(t)$ is \whp{} a \pg.

Consequently, if $t>\gx_k$, then \whp{}
$|\ccc_k(t)|=C_1(G_k(t))$ and \eqref{t1pg} follows from \eqref{t1}.
On the other hand, if $t\le\gx_k$, then \eqref{t1} and \ref{T1gx} yield
\begin{equation}
|\ccc_k(t)|/n\le C_1(G_k(t))/n \pto \rho_k(t)=0,
\end{equation}
and \eqref{t1pg} follows in this case too.
\end{proof}

This completes the proof of \refT{T1}.

\subsection{A corollary}
We note the following corollary.

\begin{corollary}
  \label{Cchi}
For $k\ge1$,
uniformly for all $t\in\ooo$,
\begin{align}
  \chi\bigpar{G_k(t)}/n&\pto \rho_k(t)^2, \label{cchi}
\\
  \chix\bigpar{G_k(t)}/n&\pto0.\label{cchix}
\end{align}
\end{corollary}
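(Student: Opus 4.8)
The plan is to deduce the corollary entirely from \refT{T1}, using only the elementary identities relating the (modified) susceptibility to the component sizes, so that no new random-graph input is needed. The starting point is the decomposition
\[
\chi\bigpar{G_k(t)} = \tfrac1n C_1\bigpar{G_k(t)}^2 + \chix\bigpar{G_k(t)},
\]
immediate from \eqref{chi}--\eqref{chix}, which reduces the corollary to controlling the two terms on the right-hand side, uniformly in $t$.

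For \eqref{cchix}, I would simply invoke \eqref{chixC2}: $\chix\bigpar{G_k(t)}\le C_2\bigpar{G_k(t)}$ for every $t$, hence $\chix\bigpar{G_k(t)}/n \le C_2\bigpar{G_k(t)}/n$, and \refT{T1}\ref{T1C2} gives $\sup_{t\ge0}C_2\bigpar{G_k(t)}/n\pto0$, with the uniformity in $t$ inherited directly from that part. This proves \eqref{cchix}. For \eqref{cchi}, the displayed decomposition together with \eqref{cchix} reduces matters to showing $C_1\bigpar{G_k(t)}^2/n^2\pto\rho_k(t)^2$ uniformly in $t$; this follows from \refT{T1}\ref{T1C1}, since for any $\eps>0$ we have \whp, for all $t\ge0$, $\abs{C_1(G_k(t))/n-\rho_k(t)}\le\eps$, and as both $C_1(G_k(t))/n$ and $\rho_k(t)$ lie in $[0,1]$, on this event $\abs{C_1(G_k(t))^2/n^2-\rho_k(t)^2}\le 2\eps$ for all $t$ (the map $x\mapsto x^2$ being $2$-Lipschitz on $[0,1]$). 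Letting $\eps\downarrow0$ gives the claim, and combining it with \eqref{cchix} via the decomposition yields \eqref{cchi}.

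There is essentially no obstacle here; the only point requiring a little attention is that each estimate borrowed from \refT{T1} holds uniformly in $t$ — which is exactly how parts \ref{T1C1} and \ref{T1C2} are stated — so that the resulting bounds on $\chi\bigpar{G_k(t)}/n$ and $\chix\bigpar{G_k(t)}/n$ come out uniform in $t$ as well.
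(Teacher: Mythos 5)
Your proof is correct and follows exactly the same route as the paper's: it uses the identity $\chi(G_k(t))/n = \chix(G_k(t))/n + C_1(G_k(t))^2/n^2$, derives \eqref{cchix} from \eqref{chixC2} together with \refT{T1}\ref{T1C2}, and then obtains \eqref{cchi} from the decomposition, \eqref{cchix}, and \refT{T1}\ref{T1C1}. The Lipschitz remark about $x\mapsto x^2$ on $[0,1]$ is a small explicit justification of a step the paper leaves implicit, but it is the same argument.
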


\begin{proof}
First, \eqref{cchix} follows immediately from \eqref{chixC2} and
\refT{T1}\ref{T1C2}.

Next, by the definitions \eqref{chi}--\eqref{chix},
\begin{equation}
  \chi\bigpar{G_k(t)}/n
=   \chix\bigpar{G_k(t)}/n + C_1\bigpar{G_k(t)}^2/n^2,
\end{equation}
and \eqref{cchi} follows by \eqref{cchix} and \eqref{t1}.
\end{proof}

\begin{remark}
Using \cite[Theorem 4.7 and Lemma 2.2]{SJ232}
together with results above
(in particular \eqref{xGkt}--\eqref{pij} and
Lemmas \ref{L1} and \ref{L2}),
it is not difficult to prove the much stronger
results that if $t<\gx_k$ is fixed,
then there exists a finite constant $\chi_{k}(t)$ such that
$
  \chi(G_k(t))\pto \chi_{k}(t)
$,
and
if $t\neq\gx_k$ is fixed,
then there exists a finite constant $\chix_{k}(t)$ such that
$
  \chix(G_k(t))\pto \chix_{k}(t)
$.
Furthermore, these limits can be calculated from the branching process
$\fX_t=\fX_{\kk_t}$ on $(\cS,\mu_{k-1})$:
if we let $|\fX_t|$ be the total population of the
branching process, then
$  \chi_k(t)=\E(|\fX_t|)$
and $\chix_k(t)=\E\bigpar{|\fX_t| \, \ett{|\fX_t|<\infty}}$.
We omit the details.
\end{remark}

\begin{proof}[Proof of \refT{Tkk-1}]
  Since $\kk_t(x,y)=0$ for every $y$ when $x\ge t$, a particle of type
$x\ge  t$ will not get any children at all in the branching process
  $\fX_{t,k}=\fX_{\kk_t}$,
hence has survival probability $\rho_\kappa(x)=0$. Thus,
recalling \eqref{rhokappa} and \eqref{muk},
the survival probability
  \begin{equation}\label{kk1}
	\rho_k(t)
  = \int_{0}^{\infty} \rho_{\kappa}(x) d\mu_{k-1}(x)
  \leq \int_{0}^{t} 1 \, d\mu_{k-1}(x)
  = \mu_{k-1}\oot =\rho_{k-1}(t).
  \end{equation}
Moreover, even if $x<t$, there is a positive probability that $x$ has no
children in $\fX_{t,k}$, and thus there is strict inequality in \eqref{kk1}
whenever $\rho_{k-1}(t)>0$.

Turning to the threshold, we note that
$\mu[0,\gx_{k-1})=\rho_{k-1}(\gx_{k-1})=0$, and thus $\mu_{k-1}$-\aex{} $x$
satisfies $x\ge\gx_{k-1}$,
in which case $\kk_t(x,y)\le (t-\gx_{k-1})_+$.
In particular, if $t<\gx_{k-1}+1$, then
$\norm{T_t}\le\normHS{T_t}
=\normLL{\kk_t}
\le(t-\gx_{k-1})_+<1$ and hence $t< \gx_k$,
see after $\eqref{tgx1}$.
Consequently, $\gx_k\ge\gx_{k-1}+1$.

An alternative view of the last part is that, asymptotically,
no edges arrive in $G_k(t)$ until $t=\gx_{k-1}$, and even if
all edges were passed on to $G_k(t)$ from that instant,
$G_k(t)$ would thenceforth evolve as a simple \ER\ random graph,
developing a giant component only 1 unit of time later, at $t=\gx_{k-1}+1$.
\end{proof}

\section{Proof of \refT{T0multi}}\label{SpfT0}

For $a$ and $b$ with $0\le a< b\le\infty$, let $N_k(a,b)$ be the number of
edges that arrive to $G_1(t)$
during the interval $\ab$ and are \emph{not} passed on to $G_{k+1}(t)$;
furthermore, let $W_k(a,b)$ be their total cost.
In other words, we consider the edges, arriving in $\ab$, that end up in
one of $T_1=F_1(\infty), \dots, T_k=F_k(\infty)$.
In particular, for $0\le t\le\infty$,
\begin{align}
  N_k(0,t)&=\sum_{i=1}^k e(F_i(t)),\label{nk}\\
 W_k(0,t)&=\sum_{i=1}^k w(F_i(t))\label{wk}
\intertext{and thus}
    W_k(0,\infty)&=\sum_{i=1}^k w(T_i).\label{wtk}
\end{align}
Since an edge arriving at time $t$ has cost $t/n$, we have
\begin{equation}\label{nw}
  \frac{a}{n}N_k(a,b) \le W_k(a,b)\le \frac{b}{n}N_k(a,b).
\end{equation}

\begin{lemma}
  \label{LN}
Let $0\le a<b\le \infty$ and $k\ge1$.
For any $\eps>0$, \whp
\begin{equation}\label{ln}
\tfrac12  (b-a)\bigpar{1-\rho_k(b)^2-\eps}n
\le N_k(a,b)
\le \tfrac12  (b-a)\bigpar{1-\rho_k(a)^2+\eps}n.
\end{equation}
\end{lemma}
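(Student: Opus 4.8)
The plan is to compute $N_k(a,b)$ by a compensator (martingale) argument and then identify the compensator using \refC{Cchi}. If $b=\infty$ the two bounds in \eqref{ln} are $+\infty$ and, since $\rho_k(t)\to1$ (\refT{T1}\ref{T1rholim}), $-\infty$, while $0\le N_k(a,\infty)\le k(n-1)$; so the statement is then trivial and I may assume $b<\infty$.

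First I would record the point-process description of $N_k$. By the construction in \refS{Smodelmodel}, an edge arriving at time $t$ is \emph{not} passed on to $G_{k+1}(t)$ exactly when its two endpoints already lie in a common component of $G_k(t)$; hence $N_k(a,b)$ counts the arrivals in $\ab$ whose endpoints lie in distinct components of $G_k$ at the arrival instant. Edges arrive as a Poisson process of rate $\binom n2/n=\tfrac12(n-1)$, and by Poisson colouring the (unordered) pair of endpoints of each arrival is uniform over all $\binom n2$ pairs and independent of the past. Therefore, relative to the natural filtration, the counted arrivals form a point process on $\ab$ with predictable intensity $\tfrac12(n-1)\bigpar{1-\pi(G_k(t))}$, where $\pi$ is as in \eqref{pi}; equivalently, for $a\le s\le b$,
\begin{equation*}
 M(s):=N_k(a,s)-\tfrac12(n-1)\int_a^s\bigpar{1-\pi(G_k(u))}\dd u
\end{equation*}
is a martingale (started at $M(a)=0$) with jumps of size $1$ and with deterministically bounded predictable quadratic variation $\langle M\rangle_b\le\tfrac12(n-1)(b-a)=O(n)$. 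Doob's $L^2$-inequality (or Freedman's inequality) then gives $\sup_{a\le s\le b}\abs{M(s)}=\op(n)$, so that
\begin{equation*}
 N_k(a,b)=\tfrac12(n-1)\int_a^b\bigpar{1-\pi(G_k(u))}\dd u+\op(n).
\end{equation*}

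It remains to estimate the integral. By \eqref{pi}, $\pi(G_k(u))=\bigpar{\chi(G_k(u))-1}/(n-1)$, and \refC{Cchi} gives $\chi(G_k(u))/n\pto\rho_k(u)^2$ uniformly in $u\ge0$; hence for any $\eps'>0$, \whp{} $\sup_{u\ge0}\bigabs{\pi(G_k(u))-\rho_k(u)^2}\le\eps'$. On that event, since $\rho_k$ is increasing (\refT{T1}\ref{T1C1}) we have $1-\rho_k(b)^2-\eps'\le 1-\pi(G_k(u))\le 1-\rho_k(a)^2+\eps'$ for all $u\in[a,b]$, so the integral lies between $(b-a)\bigpar{1-\rho_k(b)^2-\eps'}$ and $(b-a)\bigpar{1-\rho_k(a)^2+\eps'}$. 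Substituting this and writing $\tfrac12(n-1)=\tfrac12 n+O(1)$, we get \whp
\begin{equation*}
 \tfrac12 n(b-a)\bigpar{1-\rho_k(b)^2-\eps'}-\op(n)\le N_k(a,b)\le \tfrac12 n(b-a)\bigpar{1-\rho_k(a)^2+\eps'}+\op(n).
\end{equation*}
As $\tfrac12(b-a)$ is a fixed positive constant, choosing $\eps'$ small and absorbing the $\op(n)$ errors yields \eqref{ln}.

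The only real work is the concentration step: justifying that the counted arrivals have the claimed predictable intensity (the Poisson colouring fact that the next arrival is a uniform pair independent of the past) and that the compensated process concentrates on scale $\op(n)$ (immediate from the $O(n)$ bound on $\langle M\rangle_b$ and the unit jumps); the reduction from $\pi$ to $\rho_k^2$ via \refC{Cchi}, the monotonicity of $\rho_k$, and the trivial $b=\infty$ case are routine. A shorter route is available if one may use \refT{Tlang}: the recursion $e(G_i)=e(F_i)+e(G_{i+1})$ telescopes to $N_k(a,b)=\bigpar{e(G_1(b))-e(G_1(a))}-\bigpar{e(G_{k+1}(b))-e(G_{k+1}(a))}$, where the first difference is $\Po\bigpar{\binom n2(b-a)/n}=\tfrac12 n(b-a)+\op(n)$ and, by \refT{Tlang} with $k+1$ in place of $k$, the second is $\tfrac12 n\int_a^b\rho_k(s)^2\dd s+\op(n)$; monotonicity of $\rho_k$ then gives \eqref{ln} again. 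This is legitimate only if \refT{Tlang} is proved independently of the present lemma.
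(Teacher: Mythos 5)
Your proof is correct and establishes the same result, but your concentration step differs from the paper's. Both arguments start from the same observation that the counted process has conditional arrival rate $r_k(t)=\tfrac12(n-1)\bigpar{1-\pi(G_k(t))}$; the difference is how this is converted into a bound on $N_k(a,b)$. You compensate the counting process and apply a martingale maximal inequality, using the \emph{uniform} convergence in \refC{Cchi} to control $\pi(G_k(u))$ simultaneously for all $u\in[a,b]$, then bound the resulting integral via the monotonicity of $\rho_k$. The paper instead conditions on the single event $\set{r_k(a)\le(1-\rho_k(a)^2+\eps)n/2}$ (whp, by the pointwise version of \refC{Cchi}), exploits the deterministic monotonicity of $r_k(t)$ to couple the counted process with a homogeneous Poisson process of constant rate that is thicker (respectively, thinner, after a stopping-time truncation) and finishes with the law of large numbers for that Poisson count. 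Your route trades the coupling construction for a martingale bound on $\sup_s\abs{M(s)}$; the paper's trades the uniform-in-$t$ concentration for the simple observation that $r_k(t)$ is decreasing. Both are standard and neither imports anything the paper has not already established, so the choice is a matter of taste. Two small points: in your first sentence of the point-process paragraph you describe the counted edges as those whose endpoints already lie in a common component, which is the opposite of what $N_k$ counts, but you correct this in the following sentence, so it is only a slip. Also, you are right to flag that the shorter route via \refT{Tlang} would be circular: in the paper, \refT{Tlang} is proved using the present lemma, so it cannot be used to prove it.
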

\begin{proof}
Let $\cF_t$ be the $\gs$-field generated by everything that has happened up
to time $t$.
At time $t$, the fraction of edges arriving to $G_1(t)$ that are
rejected by all of $F_1(t),\dots,F_k(t)$ is simply the fraction lying within a component of $F_k(t)$,
namely $\pi(G_k(t))$ (see \eqref{pi}).
Since edges arrive to $G_1(t)$ at a total rate $\frac{1}n\binom n2=\frac{n-1}2$,
conditioned on $\cF_t$,
edges are added to $F_1(t)\cup\dots\cup F_k(t)$ at a rate,
using \eqref{pi},
\begin{equation}\label{mia}
  r_k(t):=\frac{n-1}2\bigpar{1-\pi(G_k(t))}
=\frac{n-\chi(G_k(t))}{2}.
\end{equation}
By \refC{Cchi}, for every fixed $t$,
\begin{equation}\label{rklim}
  r_k(t)/n\pto \bigpar{1-\rho_k(t)^2}/2.
\end{equation}

Condition on the event
$r_k(a)\le \bigpar{1-\rho_k(a)^2+\eps}n/2$, which by \eqref{rklim} occurs \whpx.
Then, since $r_k(t)$ is a decreasing function of $t$,
the process of edges that are added to $F_1(t)\cup\dots\cup F_k(t)$ can for
$t\ge a$ be coupled with a Poisson process with constant intensity
$\bigpar{1-\rho_k(a)^2+\eps}n/2$ that is thicker (in the sense defined just
before \refL{thicker}).
Thus, letting $Z$ be the number arriving in the latter process in $\ab$,
we have \whp{}
\begin{equation}\label{nk1}
  N_k(a,b) \le Z \sim \Po \bigpar{(b-a)\xpar{1-\rho_k(a)^2+\eps}n/2}.
\end{equation}
Furthermore,  by the law of large numbers, \whp
\begin{equation}\label{nkz}
Z \le (b-a)\bigpar{1-\rho_k(a)^2+2\eps}n/2.
\end{equation}
Combining \eqref{nk1} and \eqref{nkz} yields
the upper bound
in \eqref{ln} (with $2\eps$ in place of $\eps$).

For the lower bound, we stop the entire process
as soon as
$r_k(t)<\frac12\bigpar{1-\rho_k(b)^2-\eps}n$.
Since $r_k(t)$ is decreasing, if the stopping condition does not hold at time $t=b$
then it also does not hold at any earlier time, so by
\eqref{rklim}, \whp{} we do not stop before $b$.
As long as we have not stopped,
we can couple with a Poisson process with constant intensity
$\bigpar{1-\rho_k(b)^2-\eps}n/2$ that is thinner (\ie, opposite to thicker),
and we obtain the lower bound in
\eqref{ln} in an
analogous way as the upper bound.
\end{proof}

\begin{lemma}
  \label{LW1}
If\/  $0<b<\infty$, then
\begin{equation}
  \label{lw1}
W_k(0,b) \pto \frac12\int_0^b \bigpar{1-\rho_k(t)^2}t\dd t.
\end{equation}
\end{lemma}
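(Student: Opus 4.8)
The plan is to reduce the statement to \refL{LN} by chopping the interval $[0,b]$ into short pieces, applying the crude cost bounds \eqref{nw} on each piece, and recognizing the resulting upper and lower estimates as Riemann sums for the target integral.

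Write $L:=\tfrac12\int_0^b\bigpar{1-\rho_k(t)^2}t\dd t$. Since $\rho_k$ is continuous by \refT{T1}\ref{T1C1}, the integrand $g(t):=\bigpar{1-\rho_k(t)^2}t$ is continuous, hence uniformly continuous and Riemann integrable, on $[0,b]$. Fix $\eta>0$. First I would choose $\eps>0$ with $\tfrac12 b^2\eps<\eta/4$, and then a partition $0=t_0<t_1<\dots<t_m=b$ fine enough that the two sums $\tfrac12\sum_{j=1}^m t_j\bigpar{1-\rho_k(t_{j-1})^2}(t_j-t_{j-1})$ and $\tfrac12\sum_{j=1}^m t_{j-1}\bigpar{1-\rho_k(t_j)^2}(t_j-t_{j-1})$ each lie within $\eta/4$ of $L$; this is possible because, by uniform continuity of $\rho_k^2$ on $[0,b]$, the summands differ from $g$ by $o(1)$ uniformly over the subintervals as the mesh shrinks, so both sums converge to $L$.

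Since $m$ is fixed, \refL{LN} applied to each interval $(t_{j-1},t_j]$ together with a union bound gives that \whp, simultaneously for all $j=1,\dots,m$,
\begin{equation*}
\tfrac12(t_j-t_{j-1})\bigpar{1-\rho_k(t_j)^2-\eps}n\le N_k(t_{j-1},t_j)\le\tfrac12(t_j-t_{j-1})\bigpar{1-\rho_k(t_{j-1})^2+\eps}n.
\end{equation*}
Edges arriving in $(t_{j-1},t_j]$ have cost in $(t_{j-1}/n,t_j/n]$, so \eqref{nw} yields $\tfrac{t_{j-1}}n N_k(t_{j-1},t_j)\le W_k(t_{j-1},t_j)\le\tfrac{t_j}n N_k(t_{j-1},t_j)$, while $W_k(0,b)=\sum_{j=1}^m W_k(t_{j-1},t_j)$ by additivity over disjoint time intervals. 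Combining, \whp
\begin{equation*}
W_k(0,b)\le \tfrac12\sum_{j=1}^m t_{j}\bigpar{1-\rho_k(t_{j-1})^2}(t_j-t_{j-1}) + \tfrac12\eps\sum_{j=1}^m t_{j}(t_j-t_{j-1}),
\end{equation*}
and, symmetrically, the reverse inequality holds with $t_{j-1}$, $\rho_k(t_j)^2$ and $-\eps$ in place of $t_j$, $\rho_k(t_{j-1})^2$ and $+\eps$. Bounding $\sum_j t_j(t_j-t_{j-1})\le b^2$ (and likewise with $t_{j-1}$), the choices above force $L-\eta/2\le W_k(0,b)\le L+\eta/2$ \whp, i.e.\ $\P\bigpar{\abs{W_k(0,b)-L}>\eta}=o(1)$. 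Since $\eta$ was arbitrary, this is \eqref{lw1}.

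There is no essential difficulty: the two things to be careful about are that the ``bad'' events from \refL{LN} are taken over a \emph{fixed} finite family of subintervals (so the union bound costs nothing), and that the two limits — mesh $\to0$ and $\eps\to0$ — are taken in the right order, which is handled by fixing $\eta$ first and choosing $\eps$ and the partition in terms of it. Finiteness of $b$ is used twice: to keep $\sum_j t_j(t_j-t_{j-1})$ bounded, and because \refL{LN} controls $N_k(a,b)$ only for $b<\infty$; passing to the full cost $W_k(0,\infty)=\sum_{i\le k}w(T_i)$ will need the additional tail control provided by the exponential decay in \refT{T1}\ref{T1rholim}.
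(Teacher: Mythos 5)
Your proof is correct and takes essentially the same approach as the paper's: partition $[0,b]$ into finitely many subintervals, apply \refL{LN} on each piece (union-bounding over the fixed finite family), use the crude cost bound \eqref{nw} per piece, and recognize the resulting upper and lower estimates as Riemann sums converging to the target integral. The only difference is presentational — the paper packages the estimate as a piecewise-constant majorant $f_N$ with $\eps=1/N$ and invokes dominated convergence, while you decouple $\eps$ from the mesh and appeal to uniform continuity — but the substance is identical.
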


\begin{proof}
Let $N\ge1$ and define $t_j:=jb/N$.
By \eqref{nw} and \refL{LN}, for every $j \in [N]$,
\whp
\begin{equation}
 W_k(t_{j-1},t_j)
\le t_j \tfrac12(t_j-t_{j-1})  \bigpar{1-\rho_k(t_{j-1})^2+N\qw}
=\int_{t_{j-1}}^{t_j} f_N(t)\dd t,
\end{equation}
where we define the piecewise-constant function $f_N$ by
\begin{equation}\label{fN}
f_N(t):=  \tfrac12 t_j  \bigpar{1-\rho_k(t_{j-1})^2+N\qw},
\qquad t_{j-1}\le t<t_j.
\end{equation}
Consequently, \whp,
\begin{equation}\label{sopor}
  W_k(0,b)=\sum_{j=1}^N W_k(t_{j-1},t_j)
\le\sum_{j=1}^N \int_{t_{j-1}}^{t_j} f_N(t)\dd t
=\int_0^b f_N(t)\dd t.
\end{equation}

Now let $N\to\infty$. The functions $f_N(t)$ are uniformly bounded on $[0,b]$,
and $f_N(t)\to f(t):=\frac12t\bigpar{1-\rho_k(t)^2}$ as \Ntoo{} for every
$t$ by \eqref{fN} and the continuity of $\rho_k(t)$.
Hence, dominated convergence yields
$\int_0^b f_N(t)\dd t\to \int_0^b f(t)\dd t$. Given $\eps>0$, we may thus
choose $N$ such that
$\int_0^b f_N(t)\dd t< \int_0^b f(t)\dd t+\eps$, and then \eqref{sopor}
shows that \whp
\begin{equation}
  W_k(0,b) <  \int_0^b f(t)\dd t+\eps.
\end{equation}
We obtain a corresponding lower bound
similarly, using the lower bounds in \eqref{nw} and \eqref{ln}.
Consequently, $W_k(0,b)\pto\int_0^b f(t)\dd t$, which is \eqref{lw1}.
\end{proof}

We want to extend \refL{LW1} to $b=\infty$.
This will be \refL{LWoo}, but to prove it we need the following lemma.
\begin{lemma}\label{LWin}
For any $k\ge1$ there exist constants $b_k',B_k'>0$
such that, for all $t\ge0$,
\begin{equation}  \label{lwin}
  \E W_k(t,\infty)
\le B_k' e^{-b_k't}.
\end{equation}
  \end{lemma}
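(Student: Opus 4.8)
The plan is to write $\E W_k(t,\infty)$ as an integral of the expected rate at which cost accrues to the first $k$ forests, and then to feed in the exponential tail bound already established in \refL{Lqn}.

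First I would recast $W_k(t,\infty)$ in terms of a point process. Let $M_k$ be the counting process of arrival times of those edges that, on arrival, are accepted into one of $F_1,\dots,F_k$ (equivalently, are not passed on to $G_{k+1}$); since Kruskal's algorithm never deletes an edge, these are exactly the edges that end up in $T_1\cup\dots\cup T_k$. As in the proof of \refL{LN}, conditionally on $\cF_s$ a newly arriving edge is accepted into the first $k$ forests with probability $1-\pi(G_k(s))$, and edges arrive overall at rate $\binom n2/n=(n-1)/2$; hence $M_k$ has $\cF_s$-predictable intensity $r_k(s)=\tfrac{n-1}{2}\bigpar{1-\pi(G_k(s))}$, which is precisely the quantity $r_k$ of \eqref{mia}. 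Since an edge arriving at time $s$ costs $s/n$, we have $W_k(t,\infty)=\tfrac1n\int_{(t,\infty)}s\,\dd M_k(s)$, so by the compensator formula for the expectation of an integral of a nonnegative predictable process against $M_k$ (Tonelli taking care of the sign),
\[
  \E W_k(t,\infty)=\frac1n\int_t^\infty s\,\E r_k(s)\dd s .
\]

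Next I would estimate $\E r_k(s)$. By \eqref{qn}, $\E\pi(G_k(s))=q_n(s)$, so $\E r_k(s)=\tfrac{n-1}{2}\bigpar{1-q_n(s)}$, and \refL{Lqn} supplies constants $b_k,B_k>0$, valid uniformly for $n\ge2$, with $1-q_n(s)\le B_k e^{-b_k s}$. Hence $\E r_k(s)\le \tfrac n2 B_k e^{-b_k s}$ and
\[
  \E W_k(t,\infty)\le \frac{B_k}{2}\int_t^\infty s\,e^{-b_k s}\dd s .
\]
Finally, writing $c_k:=\sup_{s\ge0}s\,e^{-b_k s/2}<\infty$ and using $s\,e^{-b_k s}\le c_k e^{-b_k s/2}$ gives $\int_t^\infty s\,e^{-b_k s}\dd s\le (2c_k/b_k)e^{-b_k t/2}$, so \eqref{lwin} holds with $b_k':=b_k/2$ and $B_k':=B_k c_k/b_k$.

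The only delicate point — the ``hard part'', such as it is — is the justification of the identity $\E W_k(t,\infty)=\tfrac1n\int_t^\infty s\,\E r_k(s)\dd s$: one must note that $r_k$ (or its left-continuous modification) is a bounded, $\cF_s$-adapted process serving as the intensity of $M_k$, and then invoke the standard fact (a form of Campbell's formula / the Dynkin--Itô formula) that the expectation of a nonnegative predictable integrand summed over the points of $M_k$ equals its expected integral against the compensator $\int_0^{\cdot} r_k(s)\dd s$. Everything downstream of that identity is elementary, the exponential decay being inherited verbatim from \refL{Lqn}.
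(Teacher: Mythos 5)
Your proof is correct, and it uses the same key inputs as the paper's (the intensity $r_k(s)=\tfrac{n-1}{2}\bigpar{1-\pi(G_k(s))}$, the identity $\E r_k(s)=\tfrac{n-1}{2}\bigpar{1-q_n(s)}$, the tail bound of \refL{Lqn}, and the observation that cost equals arrival time divided by $n$), but the bookkeeping is packaged differently. The paper avoids the compensator/Campbell formula for marked point processes: it decomposes $(t,\infty)$ into unit subintervals, bounds $\E N_k(t,t+1)\le\E r_k(t)$ using only that $r_k$ is decreasing (a one-sided, elementary version of the identity you invoke), bounds each edge cost in $(t,t+1]$ by $(t+1)/n$ via \eqref{nw}, and then sums a geometrically decaying series. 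Your version goes directly for the exact identity $\E W_k(t,\infty)=\tfrac1n\int_t^\infty s\,\E r_k(s)\dd s$ and integrates, which is cleaner and avoids the discretization, at the cost of appealing to the predictable-intensity formula; you correctly flag that $r_k$ should be read as its left-continuous modification, and since $r_k$ is a decreasing step process this changes nothing in the estimates. Both routes land on the same $b_k'=b_k/2$. The paper's version is slightly more self-contained; yours is slightly more streamlined once the martingale fact is granted.
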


\begin{proof}
For any $t$, recalling that $N_k$ counts edges arriving at rate $r_k(t)$
and that $r_k(t)$ is a decreasing function,
 we obtain by \eqref{mia},
 \eqref{qn}, and \refL{Lqn},
\begin{equation}
  \E N_k(t,t+1)
  \leq \E r_k(t)
  = \frac{n-1}{2} \bigpar{1-q_n(t)}
\le {n\,} B_k e^{-b_k t} .
\end{equation}
Thus, by \eqref{nw}, for $b_k':=b_k/2$ and some $B_k''<\infty$,
\begin{equation}
  \E W_k(t,t+1) \le \frac{t+1}n\E N_k(t,t+1)
\le (t+1) B_k e^{-b_k t}
\le B_k'' e^{-b_k' t}.
\end{equation}
Hence, for some $B_k'<\infty$ and all $t\ge0$,
\begin{equation}
  \E W_k(t,\infty) = \sum_{j=0}^\infty \E W_k(t+j,t+j+1) \le B_k' e^{-b_k't}.
\end{equation}
\end{proof}

\begin{lemma}  \label{LWoo}
\begin{equation}
  \label{lwoo}
W_k(0,\infty) \pto \frac12\int_0^\infty \bigpar{1-\rho_k(t)^2}\,t\dd t
\: < \: \infty.
\end{equation}
\end{lemma}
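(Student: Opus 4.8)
The plan is to combine \refL{LW1} (which handles finite $b$) with the tail estimate \refL{LWin}, via a routine truncation argument. First I would check that the proposed limit is finite: by \refT{T1}\ref{T1rholim} we have $1-\rho_k(t)\le B_k e^{-b_k t}$, and since $0\le\rho_k(t)\le 1$,
\begin{equation}
 0\le 1-\rho_k(t)^2=\bigpar{1-\rho_k(t)}\bigpar{1+\rho_k(t)}\le 2B_k e^{-b_k t},
\end{equation}
so $\int_0^\infty\bigpar{1-\rho_k(t)^2}\,t\dd t<\infty$, which gives the final inequality in \eqref{lwoo} and shows that the partial integrals $\tfrac12\int_0^b\bigpar{1-\rho_k(t)^2}\,t\dd t$ increase to this finite value as $b\to\infty$.

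Next I would write, for any fixed $b\in(0,\infty)$, the decomposition $W_k(0,\infty)=W_k(0,b)+W_k(b,\infty)$ (both summands nonnegative). By \refL{LW1}, $W_k(0,b)\pto\tfrac12\int_0^b\bigpar{1-\rho_k(t)^2}\,t\dd t$ as \ntoo. For the tail, \refL{LWin} gives $\E W_k(b,\infty)\le B_k' e^{-b_k'b}$, so by Markov's inequality, for every $\eta>0$, $\P\bigpar{W_k(b,\infty)>\eta}\le B_k' e^{-b_k'b}/\eta$, which is small once $b$ is large, \emph{uniformly in $n$}. This uniformity is the key point that makes the exchange of limits legitimate.

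Then the argument is the standard three-step estimate. Given $\eps>0$, first choose $b$ large enough that (i) $\bigabs{\tfrac12\int_0^b\bigpar{1-\rho_k(t)^2}\,t\dd t-\tfrac12\int_0^\infty\bigpar{1-\rho_k(t)^2}\,t\dd t}<\eps$ and (ii) $B_k' e^{-b_k'b}/\eps<\eps$, so that $\P\bigpar{W_k(b,\infty)>\eps}<\eps$ for all $n$. With this $b$ fixed, \refL{LW1} gives that \whp{} $\bigabs{W_k(0,b)-\tfrac12\int_0^b\bigpar{1-\rho_k(t)^2}\,t\dd t}<\eps$. Combining the three bounds, \whp{} (with probability at least $1-\eps-o(1)$) we have $\bigabs{W_k(0,\infty)-\tfrac12\int_0^\infty\bigpar{1-\rho_k(t)^2}\,t\dd t}<3\eps$. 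Since $\eps$ is arbitrary, this is exactly \eqref{lwoo}.

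I do not anticipate a serious obstacle here, as the substantive work is already done in \refL{LW1} and \refL{LWin}; the only point requiring care is that the tail bound in \refL{LWin} is uniform in $n$, so that a single choice of $b$ controls $W_k(b,\infty)$ simultaneously for all large $n$, allowing the interchange of the limits $n\to\infty$ and $b\to\infty$.
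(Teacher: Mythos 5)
Your proposal is correct and follows essentially the same route as the paper's proof: truncate at a finite $b$, use Lemma~\ref{LW1} for $W_k(0,b)$, use Lemma~\ref{LWin} together with Markov's inequality for the tail $W_k(b,\infty)$, and assemble via a three-epsilon argument (the paper presents the lower and upper bounds separately, but the substance is identical).
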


\begin{proof}
  First,
  $1-\rho_k(t)^2\le 2(1-\rho_k(t))\le 2B_ke^{-b_kt}$ by
  \refT{T1}\ref{T1rholim},
  establishing that
  the integral converges.

Let $\eps>0$. We may choose $b<\infty$ such that
\begin{equation}
\frac12\int_0^b\bigpar{1-\rho_k(t)^2}t\dd t
> \frac12\int_0^\infty\bigpar{1-\rho_k(t)^2}t\dd t-\eps,
\end{equation}
and then \refL{LW1} shows that \whp{}
\begin{equation}\label{winb}
  W_k(0,\infty)\ge W_k(0,b)
\ge \frac12\int_0^\infty\bigpar{1-\rho_k(t)^2}t\dd t-\eps.
  \end{equation}

For an upper bound, we note that
by \refL{LWin},
given $\gd,\eps>0$, we may choose $b$ such that
$\E W_k(b,\infty)<\gd\eps$ (for all $n$), and thus with probability
$\ge1-\gd$,
$W_k(b,\infty)<\eps$.
It then follows from \refL{LW1} that with probability $\ge 1-\gd-o(1)$
(failure probabilities at most $\gd$ for the first inequality and $o(1)$ for the second),
\begin{equation}\label{winc}
  \begin{split}
  W_k(0,\infty)&<W_k(0,b)+\eps
\le
 \frac12\int_0^b\bigpar{1-\rho_k(t)^2}t\dd t+2\eps
\\&
\le
 \frac12\int_0^\infty\bigpar{1-\rho_k(t)^2}t\dd t+2\eps. 	
  \end{split}
\end{equation}
Since $\gd$ is arbitrary, \eqref{winc} holds \whp, which together with
\eqref{winb} shows \eqref{lwoo}.
\end{proof}

\begin{proof}[Proof of \refT{T0multi}]
  By \eqref{wtk} and \refL{LWoo} (with $W_0(0,\infty)=0$),
  \begin{equation}\label{wlim}
	w(T_k)=W_k(0,\infty)-W_{k-1}(0,\infty)
\pto
\gam_k:=
\frac12\int_0^\infty \bigpar{\rho_{k-1}(t)^2-\rho_k(t)^2}\,t\dd t.
  \end{equation}
\end{proof}

\begin{example}\label{Egamma1}
  The limit $\gam_k$ in \refT{T0multi} is thus given by the integral in
  \eqref{wlim}. Unfortunately, we do not know how to calculate this, even
  numerically, for $k\ge2$. However, we can illustrate the result with the
  case $k=1$. In this case, $\rho_1(t)$ is the asymptotic relative size of
  the giant component in $G(n,t/n)$, and as is well-known, and follows from
  \eqref{rhok} and \eqref{rhoPhi}
  noting that $\kk_t(x,y)=t$,
$\gx_1=1$ and for $t>1$, $\rho_1(t)=1-e^{-t\rho_1(t)}$. The latter function
  has the inverse $t(\rho)=-\log(1-\rho)/\rho$, $\rho\in(0,1)$. Hence, by an
  integration by parts and two changes of variables, with $\rho=1-e^{-x}$,
  \begin{equation}\label{kings}
	\begin{split}
\gam_1
&=
\frac12\int_0^\infty \bigpar{1-\rho_1(t)^2}\,t\dd t
=\frac{1}4 \bigsqpar{t^2(1-\rho_1(t)^2)}_0^\infty
+\frac12\int_0^\infty t^2 \rho_1(t)\dd \rho_1(t)	
\\&
=\frac12\int_0^1 t(\rho)^2 \rho\dd \rho	
=\frac12\int_0^1 \frac{\log^2(1-\rho)}{\rho}\dd \rho	
\\&
=\frac12\intoo \frac{x^2e^{-x}}{1-e^{-x}}\dd x
=\zeta(3),
	\end{split}
\raisetag{1.2\baselineskip}
  \end{equation}
where the final integral can be evaluated using a series expansion.
Hence we recover the limit $\zeta(3)$ found by \citet{Frieze}.
\end{example}

\begin{remark}
An argument similar to the proofs of Lemmas \ref{LW1} and \ref{LWoo}
shows that
\begin{equation}
  N_k(0,\infty)/n\pto  \frac12\int_0^\infty\bigpar{1-\rho_k(t)^2}\dd t.
\end{equation}
However, since $T_k$ has $n-1$ edges, we trivially
have $N_k(0,\infty)=k(n-1)$ a.s.
Hence, for any $k\ge1$,
\begin{equation}\label{intk}
 \frac12\int_0^\infty\bigpar{1-\rho_k(t)^2}\dd t = k.
\end{equation}
(This is easily verified for the case $k=1$, by calculations similar to
\eqref{kings}.)
Equivalently, for any $k\ge1$ (since \eqref{intk} holds trivially for $k=0$
too),
\begin{equation}\label{intkk}
 \frac12\int_0^\infty\bigpar{\rho_{k-1}(t)^2-\rho_k(t)^2}\dd t = 1.
\end{equation}
\end{remark}
Equation \eqref{intkk} is weakly supportive of \refConj{Conj-rhoInfinity}.

\begin{proof}[Proof of \refT{TE}]
It follows from \eqref{nk} that $N_k(0,t)\le k(n-1)$ and
thus, using also \eqref{nw}, $W_k(0,b)\le kb$. Consequently, \refL{LW1} and
dominated convergence yield,
for every $b<\infty$,
\begin{equation}
  \label{elw1}
\E W_k(0,b) \to \frac12\int_0^b \bigpar{1-\rho_k(t)^2}t\dd t.
\end{equation}

\refL{LWin} shows that
$\E W_k(0,b) = \E W_k(0,\infty)-\E W_k(b,\infty)\to \E W_k(0,\infty)$
uniformly in $n$ as $b\to\infty$.
Hence,
\eqref{elw1} holds for $b=\infty$ too by the following routine
three-epsilon argument:
We have
\begin{align}
  &\E W_k(0,\infty)- \frac12\int_0^\infty \bigpar{1-\rho_k(t)^2}t\dd t
 \notag \\ &\hskip4em
=  \Bigpar{\E W_k(0,b)- \frac12\int_0^b \bigpar{1-\rho_k(t)^2}t\dd t}
  \notag\\&\hskip6em
  + \E W_k(b,\infty)- \frac12\int_b^\infty \bigpar{1-\rho_k(t)^2}t\dd t,
\end{align}
where, for any $\eps>0$, we can make all three terms on the \rhs{} less than
$\eps$ (in absolute value) by choosing first $b$ and then $n$ large enough.

The result follows since $w(T_k)=W_k(0,\infty)-W_{k-1}(0,\infty)$, \cf{}
\eqref{wlim}.
\end{proof}

We can now prove \refT{Tlang}.

\begin{proof}[Proof of \refT{Tlang}]
  Let $0\le a<b\le\infty$, and
let $N(a,b)$ be the total number of edges arriving to $G_1(s)$ in
  the interval $s\in\ab$.
Then $N(a,b)\sim\Po\bigpar{\binom n2\frac{1}n(b-a)}$, and by
  the law of large numbers,
for any $\eps>0$, \whp
\begin{equation}\label{matteus}
  \tfrac{1}2(b-a-\eps)n\le N(a,b)\le \tfrac{1}2(b-a+\eps)n.
\end{equation}
The number of edges passed to $G_k(s)$ in $\ab$ is $N(a,b)-N_{k-1}(a,b)$,
  and thus it follows from \eqref{matteus} and \eqref{ln} that
for any $\eps>0$, \whp
\begin{equation*}\quad
e(G_k(b))-e(G_k(a))=N(a,b)-N_{k-1}(a,b)
\;
\begin{cases}
\ge \tfrac{1}2(b-a)\bigpar{\rho_{k-1}(a)^2-\eps}n,
\\
\le \tfrac{1}2(b-a)\bigpar{\rho_{k-1}(b)^2+\eps}n.
\end{cases}
\end{equation*}

For any fixed $t>0$, we obtain \eqref{langfredag}
by partitioning the interval $[0,t)$ into small
subintervals and taking limits as in the proof of \refL{LW1}.
Uniform convergence for $t\in[0,T]$ then follows as in the proof of
\refT{T1}\ref{T1C1}, \cf{} \eqref{tretton}.
\end{proof}

\section{Simple graphs and proof of \refT{T0}}\label{SotherModels}
In the \emph{Poisson (process) model} studied so far, we have a multigraph
with an
infinite number of parallel edges (with increasing costs) between each pair
of vertices.

It is also of interest to consider the simple graph $K_n$ with a single edge
(with random cost) between each pair of vertices, with the costs \iid{}
random variables.
We consider two cases, the
\emph{exponential model} with costs $\Exp(1)$ and
the \emph{uniform model} with costs $U(0,1)$.
When necessary, we distinguish the three models by superscripts
P, E, and U.

We use the standard coupling of the exponential and uniform models:
if $\eX_{ij}\sim\Exp(1)$ is the cost of edge $ij$ in the exponential model,
then the costs
\begin{equation}
  \label{xue}
\uX_{ij}:=1-\exp(-\eX_{ij})
\end{equation}
are \iid{} and $U(0,1)$, and thus
yield the uniform model.
Since the mapping $\eX_{ij}\mapsto \uX_{ij}$ is monotone,
the Kruskal algorithm selects the same set of edges for both models, and
thus the trees $T_1,T_2,\dots$ (as long as they exist) are the same for both
models; the edge costs are different, but since we select edges with small
costs,
$\uX_{ij}\approx \eX_{ij}$ for all edges in $T_k$ and thus $w(T\xUni_k)\approx
w(T\xExp_k)$; se \refL{LTB} for a precise statement.

\begin{remark}\label{Rotherdistributions}
  We can in the same way couple the exponential (or uniform) model with a
  model with \iid{} edge costs with any given distribution.
It is easily seen,
by the proof below and
arguments as in \citet{Frieze} or \citet{Steele1987} for $T_1$,
that
\refT{T0} extends to any edge costs $X_{ij}$
that have
a continuous distribution on $\ooo$ with the distribution
function
$F(x)$ having a right derivative $F'(0+)=1$ (for example,
an absolutely continuous distribution with a density function $f(x)$ that
is right-continuous at $0$ with $f(0+)=1$);
if $F'(0+)=a>0$, we obtain instead $w(T_k)\pto \gam_k/a$.
This involves no new arguments, so we confine ourselves to the
important models above as an illustration, and leave the general case to the
reader.
\end{remark}

Moreover, we obtain the exponential model from the Poisson model by
keeping only the first (cheapest) edge for each pair of vertices. We
assume throughout the section this coupling of the two models.
We regard also the exponential model as evolving in time, and define
$\eG_k(t)$ and $\eF_k(t)$ recursively as we did $G_k(t)$ and $F_k(t)$ in
\refS{Smodel}, starting with $\eG_1(t):=\xG_1(t)$, the simple subgraph of $K_n$
obtained by merging parallel edges and giving the merged edge the smallest cost
of the edges (which is the same as
keeping just the first edge
between each pair of vertices).

Recall from the introduction
that
while in the Poisson model every $T_k$ exists a.s.,
in the exponential and uniform models there is
a positive probability that $T_k$ does not exist,
for any $k\ge2$
and any $n\ge2$.
(In this case we define $w(T_k):=\infty$.)
The next lemma shows, in particular, that this
probability is $o(1)$ as \ntoo.
(The estimates in this and the following lemma are not best possible and can
easily be improved.)

\begin{lemma}\label{L2k}
In any of the three models and for any fixed $k\ge1$,
\whp{} $T_k$ exists and, moreover,
uses only edges of costs $\le2k\log n/n$.
\end{lemma}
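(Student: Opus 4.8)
The plan is to prove the (formally stronger) statement that, in the Poisson model, whp the multigraph $G_k(2k\log n)$ is already connected, and likewise $\eG_k(2k\log n)$ in the exponential model. This suffices: if $G_k(2k\log n)$ is connected then $F_k(2k\log n)$ is a spanning tree, and since Kruskal's algorithm adds no further edges once the forest spans, $T_k=F_k(\infty)$ equals $F_k(2k\log n)$, all of whose edges arrived by time $2k\log n$ and hence have cost $\le 2k\log n/n$; moreover $G_j(2k\log n)\supseteq G_k(2k\log n)$ is then connected for every $j\le k$, so $T_1,\dots,T_{k-1}$ exist as well and obey the same bound. For the uniform model it then suffices to recall that, under the coupling \eqref{xue}, $T_k^{\mathrm U}$ and $T_k^{\mathrm E}$ consist of the same edges, and $\uX_{ij}=1-e^{-\eX_{ij}}\le \eX_{ij}$, so each such edge also has $U(0,1)$-cost $\le 2k\log n/n$.

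I would prove ``$G_k(2k\log n)$ connected whp'' by induction on $k$. For $k=1$, $\xG_1(2\log n)=G(n,1-e^{-2\log n/n})$ and $n(1-e^{-2\log n/n})=2\log n-O\bigpar{(\log n)^2/n}=\log n+c_n$ with $c_n\to\infty$, so this is connected whp by the classical fact that $G(n,p)$ is connected whp whenever $np-\log n\to\infty$. For the inductive step, write $t_{k-1}:=2(k-1)\log n$ and assume whp $G_{k-1}(t_{k-1})$ is connected. On that event, $G_j(t)$ is connected for every $j\le k-1$ and every $t\ge t_{k-1}$ (as $G_j(t)\supseteq G_{k-1}(t)\supseteq G_{k-1}(t_{k-1})$ and edges are only added over time), so every edge arriving after time $t_{k-1}$ has both endpoints in the unique component of each of $G_1(t),\dots,G_{k-1}(t)$ at its arrival time $t$, is therefore rejected from $F_1,\dots,F_{k-1}$, and is passed on to $G_k$; since $G_k(\cdot)$ is increasing, any such edge arriving by time $2k\log n$ lies in $G_k(2k\log n)$. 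Hence, on that event, $G_k(2k\log n)$ contains the simple graph $H$ on $[n]$ in which $\set{i,j}$ is an edge iff some $\set{i,j}$-edge arrives during $(t_{k-1},2k\log n]$; by the Poisson structure these events are independent over pairs, each of probability $1-e^{-2\log n/n}$, so $H\sim G(n,1-e^{-2\log n/n})$, which is connected whp exactly as in the base case. Since the inductive event and the event $\set{H\text{ connected}}$ each have probability $1-o(1)$, so does their intersection, proving $G_k(2k\log n)$ connected whp.

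The exponential case is proved the same way. Here $\eG_1(2\log n)=G(n,1-e^{-2\log n/n})$ as before, and in the inductive step, on the event that $\eG_{k-1}(t_{k-1})$ is connected, the graph $\eG_k(2k\log n)$ contains the simple graph $H^{\mathrm E}$ in which $\set{i,j}$ is present iff $t_{k-1}/n<\eX_{ij}\le 2k\log n/n$; these indicators are functions of the independent variables $\eX_{ij}$, hence independent, so $H^{\mathrm E}\sim G(n,p')$ with $p'=e^{-t_{k-1}/n}-e^{-2k\log n/n}=(1+o(1))\,2\log n/n$, for which again $np'-\log n\to\infty$, giving connectivity whp. (We do not need $H^{\mathrm E}$ to be independent of the inductive event, only a union bound over the two failure probabilities.) There is essentially no hard step: the only point to appreciate is that once $G_{k-1}$ becomes connected the process effectively restarts, feeding all later arrivals into $G_k$, which thereafter behaves like a fresh \ER{} random graph; the single quantitative check is that an additional time window of length $2\log n$ comfortably exceeds the connectivity threshold $\log n$ for $G(n,p)$, which is exactly why the budget $2k\log n$ is convenient. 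As the statement of the lemma already warns, this bound is far from best possible.
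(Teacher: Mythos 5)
Your proof is correct and takes essentially the same approach as the paper's: induct on $k$, observe that once $G_{k-1}$ (or $\eG_{k-1}$) is connected at time $2(k-1)\log n$ all subsequent arrivals are handed to the $k$th graph, which then receives a fresh $G(n,p)$ with $p=(2+o(1))\log n/n$ over the next time window and is therefore whp connected. The paper carries out the induction only in the exponential model and invokes the couplings for the other two, whereas you spell out the Poisson and exponential cases separately and note explicitly that only a union bound (not independence) is needed for the inductive event; these are cosmetic differences, not a different proof.
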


\begin{proof}
Consider the exponential model;
the result for the other two models is an immediate consequence by the
couplings above (or by trivial modifications of the proof).
The result then says that \whp{} $\eG_k(2k\log n)$ is connected.

By induction, we may for $k\ge1$
assume that the result holds for $k-1$.
Thus, \whp, $\eG_{k-1}\bigpar{2(k-1)\log n}$ is connected, and then all later
edges are passed to $\eG_k(t)$.

Consider now the edges arriving in $(2(k-1)\log n,2k\log n]$.
They form a random graph $G(n,p)$ with
\begin{equation}
  \begin{split}
  p&=e^{-2(k-1)\log n/n}-e^{-2k\log n/n}
=e^{-2k\log n/n}\bigpar{e^{2\log n/n}-1}
\\&
=\bigpar{2+o(1)}\xfrac{\log n}n.	
  \end{split}
\end{equation}
As is well-known since the beginning of random graph theory
\cite{ER1959}, see \eg{} \cite{Bollobas},
such a random graph $G(n,p)$ is \whp{} connected. We have also seen that
\whp{} this graph $G(n,p)$ is a subgraph of $\eG_k(2k\log n)$.
Hence, $\eG_k(2k\log n)$ is \whp{} connected, which completes the induction.
\end{proof}

\begin{lemma}\label{LTA}
  For each fixed $k$,
  \begin{equation}\label{ltapprox}
	0\le w(T\xExp_k)-w(T\xPoi_k) = \Op\Bigpar{\frac{\log^3n}n}.
  \end{equation}
\end{lemma}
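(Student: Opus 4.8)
The plan is to use the coupling already fixed in this section, under which the exponential model is obtained from the Poisson model by keeping, for each pair of vertices, only its cheapest (``first'') copy; this is exactly the ``thinner versus thicker'' arrival process of \refL{thicker}. I will prove the two halves of \eqref{ltapprox} separately: first the sign $w(\eT_k)\ge w(\pT_k)$ by a soft monotonicity argument, then the estimate $\Op(\log^3 n/n)$ by bounding how few edge copies the first $k$ Kruskal trees in the two models can disagree on.

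For the sign, if $\eT_k$ does not exist there is nothing to prove, so assume it does; then $\eG_k(\infty)=K_n\setminus(\eT_1\cup\dots\cup\eT_{k-1})$ is connected. Fix any pair $ij$ used by none of $\eT_1,\dots,\eT_{k-1}$ and apply \refL{thicker} with the Poisson process as the thicker one and with $e$ the first copy of $ij$ (an edge present in both arrival processes). The contrapositive of its second assertion gives that this first copy lies in none of $F\xPoi_1(\infty),\dots,F\xPoi_{k-1}(\infty)$, hence --- since $G\xPoi_{\ell+1}=G\xPoi_\ell\setminus F\xPoi_\ell$ for all $\ell$ --- is passed on to $G\xPoi_k(\infty)$, where it carries the same cost $\eX_{ij}$ as edge $ij$ has in $\eG_k(\infty)$. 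Thus the simple graph underlying $G\xPoi_k(\infty)$ contains $K_n\setminus(\eT_1\cup\dots\cup\eT_{k-1})$ with no larger edge costs, so its minimum spanning tree $\pT_k$ has weight at most that of any spanning tree lying inside $K_n\setminus(\eT_1\cup\dots\cup\eT_{k-1})$, in particular at most $w(\eT_k)$.

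For the magnitude, pass to partial sums $W_k\xExp:=\sum_{i\le k}w(\eT_i)$ and $W_k\xPoi:=\sum_{i\le k}w(\pT_i)$ (see \eqref{wtk}); since $w(\eT_k)-w(\pT_k)=(W_k\xExp-W_k\xPoi)-(W_{k-1}\xExp-W_{k-1}\xPoi)$, it suffices to bound $|W_k\xExp-W_k\xPoi|$ for each fixed $k$. Let $U\xExp_k$ and $U\xPoi_k$ denote the edge multisets $F\xExp_1(\infty)\cup\dots\cup F\xExp_k(\infty)$ and $F\xPoi_1(\infty)\cup\dots\cup F\xPoi_k(\infty)$; each has exactly $k(n-1)$ edges. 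By \refL{thicker}, every first copy occurring in $U\xPoi_k$ also occurs in $U\xExp_k$, and such a copy contributes the same cost $\eX_{ij}$ to both $W_k\xExp$ and $W_k\xPoi$; hence $W_k\xExp-W_k\xPoi$ equals the difference of the total costs of the $m$ edges of $U\xExp_k$ that are not first copies occurring in $U\xPoi_k$, and of the $m$ non-first copies occurring in $U\xPoi_k$, both counts being $k(n-1)$ minus the number of first copies in $U\xPoi_k$. Now work on the \whp\ event of \refL{L2k} that $\eT_1,\dots,\eT_k,\pT_1,\dots,\pT_k$ all exist and use only edges of cost $\le2k\log n/n$; then all $2m$ of these costs are $\le2k\log n/n$, so $|W_k\xExp-W_k\xPoi|\le4km\log n/n$. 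Finally, a non-first copy counted by $m$ lies in a pair possessing at least two copies of cost $\le2k\log n/n$ (its first copy is even cheaper), so $m\le\sum_{i<j}(N_{ij}-1)_+$ with $N_{ij}\sim\Po(2k\log n/n)$ independent over the $\binom n2$ pairs; since $\E(N_{ij}-1)_+=O\bigpar{(\log n/n)^2}$, we get $\E m=O(\log^2 n)$ and so $m=\Op(\log^2 n)$. Combining, $|W_k\xExp-W_k\xPoi|=\Op(\log^3 n/n)$ for each fixed $k$, and (with $W_0=0$) this yields \eqref{ltapprox}.

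Every step is routine once the coupling and \refL{thicker} are in hand; the only care needed is in the bookkeeping --- verifying that the shared first copies carry identical costs in the two models, and that \refL{thicker} and its contrapositive are applied in the correct directions, so as to obtain both that the first copies of $U\xPoi_k$ lie in $U\xExp_k$ and that the simple graph underlying $G\xPoi_k(\infty)$ contains $K_n\setminus(\eT_1\cup\dots\cup\eT_{k-1})$. I do not expect a substantive obstacle; the Poisson tail estimate for $m$ is elementary.
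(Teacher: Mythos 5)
Your proof is correct and follows essentially the same route as the paper: compare the union multisets $U\xExp_k$ and $U\xPoi_k$ via \refL{thicker}, bound the number of mismatched edges by counting repeat arrivals up to time $2k\log n$ (you via $\E\sum_{i<j}(N_{ij}-1)_+=O(\log^2 n)$ and Markov, the paper via a whp bound on the number of doubled pairs and the absence of tripled pairs), and deduce \eqref{ltapprox} from the telescoping identity for $w(\eT_k)-w(\pT_k)$ together with the sign $w(\pT_j)\le w(\eT_j)$. Your treatment of that sign, tracking the first copy of each pair into $G\xPoi_k(\infty)$ by the contrapositive of \refL{thicker}, is a welcome unpacking of what the paper compresses into ``additional edges can never hurt.''
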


\begin{proof}
Since the exponential model is obtained from the Poisson model by deleting
some edges,
we have by \refL{thicker} that every edge contained in both processes
and contained in $F_1(t)\cup\dots\cup F_k(t)$
is also contained in $\eF_1(t)\cup\dots\cup \eF_k(t)$;
the only edges ``missing'' from the latter are those that were repeat edges in the Poisson model.

At time $t_k := 2k\log n$, how many repeat edges are there?
For two given vertices $i$ and $j$, the number of parallel edges is $\Po(t_k/n)$,
so the probability that it is two or more is
$p_2(t_k):=\P(\Po(t_k/n)\ge 2)\le (t_k/n)^2/2$.
(We use that the $k$th factorial moment of $\Po(\lambda)$ is $\lambda^k$,
and Markov's inequality.)
Hence, the number of pairs
$\set{i,j}$ with more than one edge is $\Bi(\binom n2,p_2(t_k))$, which is
stochastically smaller than $\Bi(n^2,(t_k/n)^2)$, which by Chebyshev's
inequality \whp{} is $\le 2t_k^2=8k^2\log^2n$.
Similarly,
the probability that $i$ and $j$ have three or more parallel edges is
$\leq (t_k/n)^3/6$ and thus
\whp{} there are no triple edges in $G_1(t_k)$.

By \refL{L2k}, \whp{}
$\pT_1\cup \dots\cup \pT_k
=F_1(t_k)\cup\dots\cup F_k(t_k)$,
and we have just established that
\whp{} all but at most
$2t_k^2$
of the edges in
$F_1(t_k)\cup\dots\cup F_k(t_k)$ are also in
$\eF_1(t_k)\cup\dots\cup \eF_k(t_k)\subseteq \eT_1\cup\dots\cup\eT_k$.
Since each spanning tree has exactly $n-1$ edges, the missing edges are
replaced by the same number of other edges, which by \refL{L2k} \whp{} also
have cost $\le t_k/n$
each, thus total cost at most $2t_k^3 = 16k^3\log^3n/n$.
Consequently, \whp,
\begin{equation}\label{ltab}
  w\bigpar{\eT_1\cup\dots\cup \eT_k}
\le
  w\bigpar{\pT_1\cup\dots\cup \pT_k}+16k^3\log^3 n/n.
\end{equation}
Having additional edges can never hurt (in this matroidal context),
so
\begin{equation}
  \label{ltabc}
  w(\pT_j)\le w(\eT_j),
  \qquad j\ge1.
\end{equation}
This yields the first inequality in \eqref{ltapprox},
while the second follows from \eqref{ltab} together with \eqref{ltabc} for
$j\le k-1$.
\end{proof}

\begin{lemma}
  \label{LTB}
  For each fixed $k$,
  \begin{equation}\label{ltb}
	0\le w(T\xExp_k)-w(T\xUni_k) = \Op\Bigpar{\frac{\log^2n}n}.
  \end{equation}
\end{lemma}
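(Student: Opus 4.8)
The plan is to work directly with the standard coupling $\uX_{ij} = 1 - e^{-\eX_{ij}}$ from \eqref{xue}, together with \refL{L2k}. Since $x \mapsto 1 - e^{-x}$ is increasing, Kruskal's algorithm makes identical choices in the exponential and uniform models, so (whenever they exist, which is \whp{} the case by \refL{L2k}) the trees $\uT_j$ and $\eT_j$ are the same sets of edges for every $j \le k$; in particular $\eT_k$ and $\uT_k$ differ only in the costs assigned to their $n-1$ common edges, and it suffices to compare these costs edge by edge.

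First I would record the elementary bound
\begin{equation*}
0 \le x - \bigpar{1 - e^{-x}} \le \tfrac12 x^2, \qquad x \ge 0,
\end{equation*}
which holds since $f(0) = f'(0) = 0$ and $f''(x) = e^{-x} \le 1$ for $f(x) := x - 1 + e^{-x}$. Applying this with $x = \eX_e$ for each edge $e$ of the common tree $T_k$ and summing over its $n-1$ edges gives both $w(\eT_k) \ge w(\uT_k)$ (the first inequality in \eqref{ltb}) and
\begin{equation*}
0 \le w(\eT_k) - w(\uT_k) = \sum_{e \in T_k} \bigpar{\eX_e - \uX_e} \le \frac{n-1}{2}\, \max_{e \in T_k} (\eX_e)^2 .
\end{equation*}

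Finally I would invoke \refL{L2k}: \whp{} $T_k$ exists and every edge it uses has exponential cost at most $2k\log n / n$, so on that event $\max_{e \in T_k}(\eX_e)^2 \le (2k\log n/n)^2$ and hence $w(\eT_k) - w(\uT_k) \le 2k^2 \log^2 n / n$. Since the complementary event has probability $o(1)$, this is exactly $w(\eT_k) - w(\uT_k) = \Op(\log^2 n / n)$, as claimed. There is no real obstacle; the one point deserving a word of care is that in the simple-graph models $T_k$ may fail to exist (in which case $w(T_k) := \infty$), but this is harmless for an $\Op$ bound, since that failure event has probability $o(1)$ and on its complement $\eT_k$ and $\uT_k$ really are the same edge set.
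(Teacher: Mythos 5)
Your proof is correct and follows essentially the same route as the paper: use the monotone coupling \eqref{xue} to identify the edge sets of $\eT_k$ and $\uT_k$, apply the elementary bound $0\le x-(1-e^{-x})\le\tfrac12 x^2$ edge by edge, and invoke \refL{L2k} to bound each $\eX_e$ by $2k\log n/n$ \whp. The only cosmetic difference is that you bound the sum by $\tfrac{n-1}{2}\max_e(\eX_e)^2$ while the paper writes $\sum_e(\eX_e)^2\le n\max_e(\eX_e)^2$, which changes the constant but not the $\Op(\log^2 n/n)$ conclusion.
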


\begin{proof}
  As said above, $\eT_k$ and $\uT_k$ consist of the same edges, with edge
  costs related by \eqref{xue}. Since \eqref{xue} implies
$0\le \eX_{ij}-\uX_{ij}\le \frac12(\eX_{ij})^2$, it follows that, using
  \refL{L2k}, \whp{}
  \begin{equation}
0\le w\bigpar{\eT_k}-w\bigpar{\uT_k}
\le \sum_{e\in E(T_k)}(\eX_e)^2
\le n \Bigparfrac{2k\log n}{n}^2.
  \end{equation}
\end{proof}

\begin{proof}[Proof of \refT{T0}]
  It follows  from \refT{T0multi} and Lemma \ref{LTA}
that for each fixed $k$, $w(\eT_k)\pto\gam_k$, and
then from \refL{LTB} that
$w(\uT_k)\pto\gam_k$,
which is \refT{T0}.
\end{proof}

Recall that the corresponding statement for the expectation is false,
as $\E w(\eT_k)=\E w(\uT_k)=\infty$
for $k\ge2$;
see \refR{RE}.

\section{The second threshold}\label{Ssecond}
As noted in \refE{Egamma1} we do not know how to
calculate the limit $\gam_2$.
However, we can find the threshold $\gx_2$.
In principle, the method works for $\gs_k$ for any $k\ge2$,
provided we know $\rho_{k-1}$, so we will explain the method for general
$k$. However, we will assume the following:
\begin{align}\label{Arho}
  &\text{$\rhoki(x)$ is continuously differentiable
  on $(\gxki,\infty)$, with $\rhoki'(x)>0$,}
 \notag \\&\text{and $\rhoki'(x)$ is continuous as $x\downto\gxki$.}
\end{align}
This is, we think, not a serious restriction, for the following reasons.
First, \eqref{Arho} is easily verified for $k=2$, since we know $\rho_1$
explicitly (see \refE{Egamma1}),
so the calculation of $\gs_2$ is rigorous.
Second, we conjecture that \eqref{Arho} holds for all $k\ge2$, although we
have not proved this. (Cf.\ what we have proved in \refT{T1}.)
Third, even if this conjecture is wrong and
\eqref{Arho} does not hold for some $k$, we believe that the result below is
true, and can be shown
by suitable modifications of the argument and perhaps replacing
$\rhoki$ by suitable approximations.

By \eqref{tgx1}, $\gx_k$ is defined by $\norm{T_{\gx_k}}=1$.
As said in \refSS{SSintop},
since $\kk_{\gx_k}$ is bounded,
there exists an eigenfunction $\psi(x)\ge0$ of $T_{\gx_k}$ with eigenvalue
$\norm{T_{\gx_k}}=1$, \ie,
\begin{equation}\label{ev1}
  \psi(x)
=T_{\gx_k}\psi(x)
\eqdef
\intoo \kk_{\gx_k}(x,y)\psi(y)\dd \mu_{k-1}(y)
.
\end{equation}
By \eqref{kkt}, $\kk_\gxk(x,y)=0$ when
$x\ge\gxk$, and thus $\psi(x)=0$ when $x\ge\gxk$; furthermore, we can write
\eqref{ev1} as
\begin{equation}
  \label{ev2}
\psi(x)=\int_0^\gxk \bigpar{\gxk- x\vee y}\psi(y)\dd \mu_{k-1}(y),
\qquad x\in[0,\gxk].
\end{equation}
Since $\psi\in L^2(\muki)$, it follows from \eqref{ev2} that $\psi$ is
bounded and continuous on $[0,\gxk]$, and thus on $\ooo$. Moreover, it is
easily verified by dominated convergence
that we can differentiate under the
integral sign in \eqref{ev2}; thus $\psi(x)$ is differentiable on $(0,\gxk)$
with, using the definition \eqref{muk} of $\muki$,
\begin{equation}
  \label{ev'}
\psi'(x)=-\int_0^x \psi(y)\dd \mu_{k-1}(y)
=-\int_0^x \psi(y)\dd \rho_{k-1}(y),
\qquad x\in(0,\gxk).
\end{equation}

We now use our assumption in \eqref{Arho}
that $\rhoki(x)$ is continuously differentiable
on $(\gxki,\infty)$.
Then \eqref{ev'} yields, for $ x\in(\gxki,\gxk)$,
\begin{equation}
  \label{ev''}
\psi''(x)=-  \rho'_{k-1}(x)\psi(x),
\end{equation}
Furthermore, we have the boundary conditions
$\psi(\gxk)=0$ and, by \eqref{ev'}, $\psi'(\gxki)=0$.
We can thus find $\gxk$ by solving the
Sturm--Liouville equation \eqref{ev''} for $x\ge\gxki$
with the initial conditions $\psi(\gxki)=1$, say,
and $\psi'(\gxki)=0$; then $\gxk$ is the first zero of $\psi$ in
$(\gxki,\infty)$. (Note that the solution $\psi$ of \eqref{ev''} with these
initial conditions is unique,
again using \eqref{Arho}.)

We can transform the solution further as follows, which is advantageous
for $k=2$, when we know the inverse of $\rhoki$ explicitly.
Since $\rhoki$ is strictly increasing on $[\gxki,\infty)$, there exists an
  inverse
\begin{equation}\label{rhoki}
  \gf:=\rhoki\qw:[0,1)\to[\gxki,\infty) .
\end{equation}
Let
\begin{equation} \label{skdef}
  s_k:=\rhoki(\gxk)\in(0,1)
\end{equation}
and, for $x\in[0,s_k]$, let
\begin{equation} \label{h}
  h(x):=\psi(\gf(x))
\end{equation}
and
\begin{equation}\label{H}
 H(x):=\int_0^x h(y)\dd y.
\end{equation}
Then $h$ is continuous on $[0,s_k]$, and on the interior $(0,s_k)$,
\begin{equation}  \label{H'}
  H'(x)=h(x).
\end{equation}
The assumption \eqref{Arho} implies that
$\gf$ is differentiable on $(0,1)$. Thus, for $x\in(0,s_k)$,
\eqref{H'} and \eqref{h} yield
\begin{equation}
  H''(x)=h'(x)=\psi'(\gf(x))\gf'(x).    \label{H''1}
\end{equation}

We now make a change of variables in \eqref{ev'}.
In \eqref{change1} we use that $\rho_{k-1}(y)=0$ for $y \leq \gs_{k-1}$
from \refT{T1}\ref{T1gx};
in \eqref{change2} that $\gf(\rho_{k-1}(y))=y$ for $y\ge\gxki$ by definition \eqref{rhoki};
and in \eqref{change3} again that $\rho_{k-1}=\gf^{-1}$, as well as
$\rho_{k-1}(\gxki)=0$.
\begin{align}
\psi'(\gf(x))
&
=-\int_{0}^{\gf(x)} \psi(y)\dd \rho_{k-1}(y)
\notag
\\
 &= -\int_{\gxki}^{\gf(x)} \psi(y) \frac{\dd \rho_{k-1}(y)}{\dd y} \dd y
\label{change1}
\\
& = -\int_{\gxki}^{\gf(x)} \psi(\gf(\rho_{k-1}(y))) \rho'_{k-1}(y)  \dd y
\label{change2}
\\&
=-\int_0^{x} \psi(\gf(\rho))\dd \rho
\label{change3}
\\&
=-\int_0^{x} h(\rho)\dd \rho
=-H(x) .  \label{negH}
\end{align}
By \eqref{H''1} and \eqref{negH},
for $x\in(0,s_k)$,
\begin{equation}\label{H''}
  H''(x)=-\gf'(x)H(x).
\end{equation}
Note that $H(0)=0$ by \eqref{H} while, by \eqref{H'}, \eqref{h}, \eqref{skdef}, \eqref{rhoki}, and \eqref{ev2},
\begin{equation} \label{H'0}
  H'(s_k)
   \eqdef h(s_k)
   \eqdef \psi(\gf(s_k))
   = \psi(\gf(\rhoki(\gxk)))
   =\psi(\gxk)=0 .
\end{equation}
Furthermore, \eqref{ev2} yields $\psi(x)>0$ for $x\in[0,\gs_k)$,
  and thus \eqref{h} and \eqref{rhoki}--\eqref{skdef}
  yield
\begin{align}\label{h>0}
  h(x) & \eqdef \psi(\gf(x)) >0 \text{ for } x\in[0,s_k),
\end{align}
as $\gf(0)=0$ while $\gf(s_k)=\gs_k$.

Consequently, if we solve the Sturm--Liouville equation \eqref{H''} with
\begin{align}\label{H0H'0}
  H(0)=0 \quad\text{and}\quad H'(0)=1
\end{align}
(the former, repeating, by \eqref{H},
the latter by \eqref{H'}, \eqref{h>0}, and convenient scaling,
going back to the arbitrary scale of $\psi$ in \eqref{ev1}),
then $s_k$ is the smallest positive root of $H'(s_k)=0$,
and by \eqref{skdef} and \eqref{rhoki},
$\gxk=\gf(s_k)$.

We can reduce \eqref{H''} to a first-order equation by the substitution
\begin{align}\label{HRgth}
  H=R\sin\gth\quad\text{and}\quad H'=R\cos\gth,
\end{align}
which yields
  \begin{align}
    R'(x) \sin \gth(x) + R(x)\gth'(x)\cos \gth(x)&=H'(x)=R(x)\cos\gth(x),
    \\
   R'(x) \cos \gth(x) - R(x)\gth'(x)\sin \gth(x)&=H''(x)=-\gf'(x)R(x)\sin\gth(x).
  \end{align}
Multiplying the first equation by $\cos\gth(x)$ and the second by
$\sin\gth(x)$ and subtracting yields, after division by $R(x)$,
\begin{equation}\label{ior}
  \gth'(x)=\cos^2(\gth(x))+\gf'(x)\sin^2(\gth(x)).
\end{equation}
Since $\gf'(x)>0$ on $(0,1)$, by \eqref{rhoki} and the assumption
  $\rho_{k-1}'>0$ on $(\gxki,\infty)$ in \eqref{Arho},
  it follows from \eqref{ior} that
  $\gth'(x)>0$, and thus the function $\gth(x)$ is strictly increasing and
  thus invertible. Moreover,
  taking reciprocals,
  \eqref{ior} shows that the inverse function $x(\gth)$ satisfies
\begin{equation}\label{dxgth}
  \frac{\ddx x}{\ddx\gth} =\frac{1}{\cos^2\gth+\gf'(x(\gth))\sin^2\gth}.
\end{equation}
Now, $H(0)=0$ and $H'(0)=1$ (see \eqref{H0H'0}),
so from \eqref{HRgth} $\gth(0)$ is a multiple of $2\pi$, and we take
$\gth(0)=0$.
Also, by \eqref{h>0} $H$ is increasing up to $s_k$ and
thus $H(s_k)>0$, while $H'(s_k)=0$ by \eqref{H'0}, so
\eqref{HRgth} yields
$\gth(s_k)=\pi/2$. Thus
\begin{equation} \label{skx}
  s_k=x(\pi/2) ,
\end{equation}
where
$x(\gth)$
is the solution of \eqref{dxgth} with $x(0)=0$
(since $\gth(0)=0$ as just shown).

For $k=2$, as said in \refE{Egamma1} we have
$\rho_1(t)=1-e^{-t\rho_1(t)}$ and thus
the inverse $\gf(x)=-\log(1-x)/x$.
A numerical solution of \eqref{dxgth} (with Maple) yields
from \eqref{skx} that
$s_2\doteq \num{0.91511}$ and
thus
\begin{equation}\label{gx2}
\gx_2=\gf(s_2)\doteq \num{2.69521}.
\end{equation}

\section{A related problem by Frieze and Johansson}\label{Sunion}

As said in the introduction,
\citet{FriezeJ} recently considered the problem of finding the minimum
total cost of $k$ edge-disjoint spanning trees in $K_n$, for a fixed integer
$k\ge2$.
(They used random costs with the uniform model, see \refS{SotherModels}; we
may consider all three models used above.)
We denote this minimum cost by $\mmm_k$, %=\mmm_k(K_n)
following \cite{FriezeJ} (which uses $\mmm_k(K_n,\mathbf{X})$
for the random variable,
where $\mathbf{X}$ is the vector of random edge costs, and uses $\mmm_k(K_n)$ for its expectation).
Trivially,
\begin{equation}
  \mmm_k\le \sumik w(T_i),
\end{equation}
and as said in the introduction, it is easy to see that strict inequality
may hold when $k\ge2$, \ie,
that our greedy procedure of choosing $T_1,T_2,\dots$ successively does not
yield the minimum cost set of $k$ disjoint spanning trees.

We assume in this section that $n\ge 2k$; then $k$ edge-disjoint spanning
trees exist and thus $\mmm_k<\infty$.
(Indeed, $K_{2k}$ can be decomposed into $k$ Hamilton paths,
as shown in 1892 by Lucas \cite[pp.~162--164]{Lucas}
using a construction he attributes to Walecki.%
\footnote{%
Lucas introduces the problem as one of ``Les Jeux de Demoiselles'', namely ``Les Rondes Enfantines'',
a game of children holding hands in a circle repeatedly, never repeating a partner.
The conversion between the Hamilton cycles of the game and the Hamilton paths serving as our spanning trees is simple, and Walecki's construction is more naturally viewed in terms of Hamilton paths.
For much stronger recent results on Hamilton decompositions, see for example \cite{KO}.
})

\begin{remark} \label{RunionMatroid}
As observed by \citet{FriezeJ}, the problem is equivalent to finding the
minimum
cost of a basis in the matroid $\cM_k$,
defined as the union matroid
of $k$ copies of the cycle matroid of $K_n$.
This means that the elements of
$\cM_k$ are the edges in $K_n$, and a set of edges is
independent in $\cM_k$ if and only if it can be written as the union of $k$
forests, see \eg{} \cite[Chapter 8.3]{Welsh}.
(Hence, the bases, i.e., the maximal independent sets, are
precisely the unions of $k$ edge-disjoint spanning trees.
For the multigraph version in the Poisson model, of course we use instead the
union matroid of $k$ copies of the cycle matroid of $\Knoo$; we use the same
notation $\cM_k$.)
We write $r_k$ for rank in this matroid.
\end{remark}

Kruskal's algorithm, recapitulated in the introduction, is valid for finding a minimum cost
basis in any matroid; see \eg{} \cite[Chapter 19.1]{Welsh}.
In the present case it means that we process the edges
in order of increasing cost and keep the ones that are not dependent
(in $\cM_k$)
on the ones already selected; equivalently, we keep the next
edge $e$ if
$r_k(S\cup\set{e})>r_k(S)$, where $r_k$ is the rank function in $\cM_k$ and
$S$ is the set of edges already selected.

\begin{remark}\label{RFJmodels}
  It follows that the largest individual edge cost
for the optimal set of $k$ edge-disjoint spanning trees is at most the
largest edge cost for any given set of $k$ edge-disjoint spanning trees.
Hence, it follows from \refL{L2k} that for the random models studied here,
the optimal $k$ spanning trees \whp{} use only edges of cost $\le 2k\log n/n$.
It follows, with only minor modifications of the proofs,
that analogues of Lemmas \ref{LTA} and \ref{LTB} hold for
$\mmm_k$ for the three different models.
Hence, for limits in probability, the three models are equivalent for
$\mmm_k$ too.

Moreover, one can similarly show that for any $b>0$ there is a constant $B$
such that with probability at least $1-n^{-b}$,
the optimal $k$ spanning trees \whp{} use only edges of cost $\le Bk\log n/n$.
One can then argue as for the minimum spanning tree, see \eg{}
\cite{FriezeMcDiarmid}, \cite[Section 4.2.3]{McDiarmid1998} or
\cite[Example 3.15]{BoucheronEtAl}, and obtain
strong concentration of $\mmm_k$ for any of the three models; in particular
$\Var(\mmm_k)=o(1)$, and thus convergence of the expectation $\E(\mmm_k)$ is
equivalent to convergence in probability of $\mmm_k$.

\citet{FriezeJ} stated their results for the expectation $\E\mmm_k$
(for the uniform model), but the
results thus hold also for convergence in probability (and for any of the
three models).
\end{remark}

For $k=2$,
\citet{FriezeJ} show
that the expectation
\begin{equation} \label{mu2}
\E\mmm_2\to\mu_2\doteq 4.170\xxdots .  %4.17042881$
\end{equation}
This is strictly smaller than our estimate
for the total cost of two edge-disjoint spanning trees chosen successively,
$\gam_1+\gam_2 \doteq 1.202\ldots + 3.09\ldots > 4.29$;
see Tables \ref{sim10} and \ref{Tgamma10}.
This would show that choosing minimum spanning trees one by one is not optimal,
even asymptotically, except that our
estimates are not rigorous.
The following theorem is less precise but establishes rigorously
(subject to the numerical solution to \eqref{dxgth} giving $\gx_2$
as in \eqref{gx2})
that the values are indeed different.

\begin{theorem}\label{Tdiff}
There exists $\gd>0$ such that,
for any of the three models,
 \whp{}
$w(T_1)+w(T_2)\ge \mmm_2+\gd$.
\end{theorem}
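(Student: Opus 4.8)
The plan is to track, for each time $s$, the number of edges ``accepted by time $s$'' in the greedy construction of $T_1,T_2$ and in the construction of the optimal pair, and to show that on an interval of times the greedy count lags behind by a quantity of order $n$. By the model equivalences (\refL{LTA}, \refL{LTB} and their $\mmm_2$-analogues, see \refR{RFJmodels}), it suffices to argue in the Poisson model. Writing $N_2(0,s)=e(F_1(s))+e(F_2(s))$, an edge of $T_1\cup T_2$ arriving at time $t$ has cost $t/n$ and lies in $F_i(s)$ iff it belongs to $T_i$ and arrived by time $s$, so exactly as in the proof of \refL{LW1},
\begin{equation}\label{Tdiff-greedy}
w(T_1)+w(T_2)=W_2(0,\infty)=\frac1n\int_0^\infty\bigpar{2(n-1)-N_2(0,s)}\dd s .
\end{equation}
Running Kruskal's algorithm for the union matroid $\cM_2$ (\refR{RunionMatroid}), the selected edges form a minimum-cost basis $B$ of $\cM_2$, i.e.\ a cheapest pair of edge-disjoint spanning trees; by the matroid-greedy property, at each time $s$ the set of edges of $B$ that have arrived is a maximum $\cM_2$-independent subset of the edge set of $G_1(s)$, so its size is the $\cM_2$-rank $r_2(G_1(s))$ of that edge set. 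Hence $\mmm_2=\frac1n\int_0^\infty\bigpar{2(n-1)-r_2(G_1(s))}\dd s$, and subtracting,
\begin{equation}\label{Tdiff-opt}
w(T_1)+w(T_2)-\mmm_2=\frac1n\int_0^\infty\bigpar{r_2(G_1(s))-N_2(0,s)}\dd s .
\end{equation}

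Two deterministic facts drive the rest. First, $F_1(s)\cup F_2(s)$ is a union of two edge-disjoint (simple) forests, hence $\cM_2$-independent, so $r_2(G_1(s))\ge N_2(0,s)$ for every $s$ and the integrand in \eqref{Tdiff-opt} is always nonnegative. Second, since $G_{j+1}(s)=G_j(s)\setminus F_j(s)$ we have $e(G_1(s))=e(F_1(s))+e(F_2(s))+e(G_3(s))=N_2(0,s)+e(G_3(s))$; combined with $r_2(G_1(s))\le e(G_1(s))$ this gives
\begin{equation}\label{Tdiff-sandwich}
0\le r_2(G_1(s))-N_2(0,s)\le e(G_3(s)),
\end{equation}
with equality on the right exactly when the edge set of $G_1(s)$ is itself a union of two forests. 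Therefore it suffices to find a constant $c>0$ and constants $\gx_2<t_1<t_0$ such that, \whp, $G_1(t_0)$ is a union of two forests and $e(G_3(s))\ge cn$ for all $s\in[t_1,t_0]$: since $G_1$ only grows in $s$ and deleting edges preserves being a union of two forests, $G_1(s)$ is then \whp{} a union of two forests for all $s\le t_0$, so by \eqref{Tdiff-opt} and \eqref{Tdiff-sandwich},
\begin{equation}\label{Tdiff-final}
w(T_1)+w(T_2)-\mmm_2\ge\frac1n\int_{t_1}^{t_0}e(G_3(s))\dd s\ge c\,(t_0-t_1)=:\gd>0 .
\end{equation}

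To choose the constants, let $c_3\doteq\num{3.351}$ be the $3$-core threshold of $G(n,c/n)$, and note $\gx_2<c_3$: this follows from $\gx_2\doteq\num{2.69521}$ in \eqref{gx2}, or already from the crude bound $\norm{T_{c_3}}\ge\langle\ett{x\in[1,2]},T_{c_3}\ett{x\in[1,2]}\rangle/\rho_1(2)\ge(c_3-2)\,\rho_1(2)>1$, using $\rho_1(2)>\tfrac34$ and \refL{LTnorm}. Fix constants $\gx_2<t_1<t_0<c_3$. For $t_0<c_3$, whp the simple graph underlying $G_1(t_0)$ has empty $3$-core, hence no subgraph of minimum degree $\ge3$; since any $H$ with $e(H)\ge2\card{V(H)}-1$ must contain one (repeatedly delete vertices of degree $\le2$: the quantity $e-2\card{V}$ never decreases and stays $\ge-1$, so the process stops at a nonempty subgraph of minimum degree $\ge3$), and whp $G_1(t_0)$ has at most $O(1)$ double edges and no triple ones, a short argument shows that whp every $H\subseteq G_1(t_0)$ satisfies $e(H)\le2(\card{V(H)}-1)$, i.e.\ (Nash--Williams) $G_1(t_0)$ is a union of two forests; thus $r_2(G_1(s))=e(G_1(s))$ for all $s\le t_0$, \whp. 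Finally, $\rho_2(u)>0$ for $u>\gx_2$ by \refT{T1}\ref{T1gx}, and \refT{Tlang} with $k=3$ gives $e(G_3(s))/n\pto\tfrac12\int_0^s\rho_2(u)^2\dd u$ uniformly on $[0,t_0]$; as $t_1>\gx_2$, whp $e(G_3(s))\ge cn$ for all $s\in[t_1,t_0]$ with $c:=\tfrac14\int_0^{t_1}\rho_2(u)^2\dd u>0$. This yields \eqref{Tdiff-final}, hence \refT{Tdiff} in the Poisson model; transferring via \refL{LTA}, \refL{LTB} and \refR{RFJmodels} gives it in all three models (with $\gd$ possibly halved).

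The main technical point is the claim that \whp{} $G_1(t_0)$ is a union of two forests. Its heart is that whp $G(n,t_0/n)$ has empty $3$-core, which holds precisely because $t_0<c_3$; the peeling observation then converts this into the Nash--Williams condition. The only fiddly part is bookkeeping for the Poisson multigraph's $O(1)$ double edges and $o(1)$-probability triple edges, which contribute $O(1)$ to every quantity above and so do not affect the order-$n$ estimates. Everything else --- the integral identities \eqref{Tdiff-greedy}--\eqref{Tdiff-opt}, the sandwich \eqref{Tdiff-sandwich}, and the lower bound on $e(G_3(s))$ via \refT{Tlang} --- is routine given the results already established.
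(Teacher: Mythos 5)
Your proposal follows the same overall route as the paper: reduce to the Poisson model via \refL{LTA}, \refL{LTB} and \refR{RFJmodels}; write the cost difference as $\frac1n\int_0^\infty\bigpar{r_2(G_1(s))-N_2(0,s)}\dd s$ (the paper packages this as Lemmas~\ref{LF1} and~\ref{LF2}, you rederive it directly, but the content is identical); observe $r_2(G_1(s))-N_2(0,s)\le e(G_3(s))$ with equality when $G_1(s)$ is a union of two forests; use the 3-core threshold $c_3$ to guarantee this equality whp up to some $t_0<c_3$; note $\gx_2<c_3$; and invoke \refT{Tlang} to make $e(G_3(s))$ linear in $n$ on an interval above $\gx_2$. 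This is precisely the paper's proof.

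Two points are worth flagging. First, positively: your Rayleigh-quotient bound $\norm{T_{c_3}}\ge(c_3-2)\,\rho_1(2)>1$ is a nice touch — the paper's lead-in to \refT{Tdiff} explicitly leaves the comparison $\gx_2<c_3$ dependent on the numerical solution of \eqref{dxgth}, and your estimate makes that comparison rigorous without it (though you should note that it also requires $c_3>10/3$, which holds but is a tight margin: $c_3\doteq 3.351$).

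Second, negatively: the step ``a short argument shows that whp every $H\subseteq G_1(t_0)$ satisfies $e(H)\le 2(\card{V(H)}-1)$'' is a genuine gap as written. Emptiness of the 3-core of the \emph{simple} graph $\xG_1(t_0)$ together with having $O(1)$ double edges does not immediately give Nash--Williams for the multigraph: a double edge sitting inside a subgraph whose simple version already achieves the extremal density $e(\xG_1[A])=2\card A -2$ would push $e(G_1[A])$ over $2(\card A -1)$, and your peeling bound on the simple graph does not rule out such $A$. The paper avoids this by citing the 3-core result \emph{for random multigraphs} directly (from Janson--{\L}uczak \cite{SJ184}) and applying \refL{Lcore} to the multigraph; alternatively, and closer to what your closing remark gestures at, one can apply the simple-graph 3-core result and \refL{Lcore} to $\xG_1(t_0)$ to get $r_2(\xG_1(s))=e(\xG_1(s))$, and then use $r_2(G_1(s))\ge r_2(\xG_1(s))=e(G_1(s))-O(1)$ whp, so the integrand is $\ge e(G_3(s))-O(1)$, still of order $n$. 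Either route closes the gap; the ``short argument'' you invoke, taken at face value, does not.
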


With $\mu_2$ defined by the limit in \eqref{mu2}, this can be restated in the following equivalent form.

\begin{corollary}\label{Cdiff}
  $\gam_1+\gam_2>\mu_2$.
\end{corollary}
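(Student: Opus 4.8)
The plan is to prove \refT{Tdiff} directly, at the level of individual realisations, and then read off \refC{Cdiff} by passing to limits: $w(T_1)+w(T_2)\pto\gam_1+\gam_2$ by \refT{T0multi}, while $\mmm_2\pto\mu_2$ by \citet{FriezeJ} together with \refR{RFJmodels}, so a \whp{} gap $w(T_1)+w(T_2)\ge\mmm_2+\gd$ forces $\gam_1+\gam_2\ge\mu_2+\gd>\mu_2$. I would work in the Poisson model (the other two being transferred by the couplings of Lemmas~\ref{LTA},~\ref{LTB} and \refR{RFJmodels}). The key point is that both quantities are outputs of Kruskal's algorithm — the successive trees in the graphic matroid iterated, the optimal pair in the union matroid $\cM_2$ — and Kruskal in a matroid retains, among the edges seen by time $t$, a maximum independent set, of size equal to the rank. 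Writing $\tau(e)$ for the arrival time of an edge and recalling that cost is $\tau(e)/n$, a layer-cake identity (both structures have $2(n-1)$ edges) gives
\[
 w(T_1)+w(T_2)-\mmm_2=\frac1n\int_0^\infty\Bigpar{r_2(G_1(t))-N_2(0,t)}\dd t ,
\]
where $N_2(0,t)=e(F_1(t))+e(F_2(t))=\bigabs{F_1(t)\cup F_2(t)}$ and $r_2$ is the rank in $\cM_2$. Since $F_1(t)\cup F_2(t)$ is a union of two forests contained in $G_1(t)$, the integrand is $\ge0$ pointwise (recovering $\gam_1+\gam_2\ge\mu_2$ in the limit), and everything reduces to showing the integrand is $\ge\eps n$ on a fixed interval, \whp.

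Next I would evaluate the integrand on the range where it is tractable. The graph $G_1(t)$ is the (multigraph version of the) \ER{} graph $G(n,t/n)$, and by Nash--Williams its edge set fails to decompose into two forests only if some vertex subset $B$ spans more than $2(|B|-1)$ edges. A routine first-moment estimate — bounded-size subsets ruled out by direct enumeration, linear-size subsets by a Chernoff bound against the union bound over $\binom n{|B|}$ sets, intermediate scales similarly — shows that for every fixed $t<4$, \whp{} no such $B$ exists; equivalently $r_2(G_1(t))=e(G_1(t))$, and since $\operatorname{def}_2:=e-r_2$ is monotone under adding edges this holds simultaneously for all smaller $t$. On this range, using \refT{Tlang} with $k=1$ (so $e(G_1(t))/n\pto t/2$) and \refL{LN} (so $N_2(0,t)/n\pto\frac12\int_0^t(1-\rho_2(s)^2)\dd s$), the integrand satisfies
\[
 \bigpar{r_2(G_1(t))-N_2(0,t)}/n\;\pto\;\tfrac t2-\tfrac12\int_0^t\bigpar{1-\rho_2(s)^2}\dd s=\tfrac12\int_0^t\rho_2(s)^2\dd s ,
\]
uniformly on compact subsets of $[0,4)$ (monotone functions converging to a continuous limit), and this limit is strictly positive for every $t>\gx_2$ by \refT{T1}\ref{T1gx}.

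It remains to pick an interval $[a,b]$ with $\gx_2<a<b<4$. The numerical value $\gx_2\doteq\num{2.69521}$ from \eqref{gx2} provides one; alternatively the elementary estimate $\norm{T_t}\ge(t-s)\rho_1(s)$ for $t\ge s$ (test $f=\etta_{[1,s]}$ in the Rayleigh quotient), optimised over $s$, already gives $\gx_2<\num{3.2}<4$, so the window $(\gx_2,4)$ is genuinely nonempty either way. Fix such an $[a,b]$ and set $\gd:=\tfrac14(b-a)\int_{\gx_2}^a\rho_2(s)^2\dd s>0$. Then \whp{} $r_2(G_1(t))=e(G_1(t))$ for all $t\le b$ while the integrand exceeds $2\gd n/(b-a)$ throughout $[a,b]$, so $w(T_1)+w(T_2)-\mmm_2\ge\frac1n\int_a^b\bigl(2\gd n/(b-a)\bigr)\dd t=2\gd$, which is \refT{Tdiff} (with room to spare); \refC{Cdiff} then follows by passing to limits.

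The step I expect to be the main obstacle is the claim that $G(n,t/n)$ is \whp{} a union of two forests for every fixed $t<4$ — equivalently, that its fractional-arboricity threshold is exactly $t=4$: the $\Theta(n)$-sized subsets are handled by a Chernoff-plus-union-bound argument and the bounded ones by enumeration, but the intermediate scales require uniform care, and one must also make sure this threshold comfortably exceeds $\gx_2$ so that the window $(\gx_2,4)$ — on which the successive procedure defers $\Theta(n)$ cheap edges that the optimal pair accepts, namely the cycle-edges of $G_2(t)$ — is available. Everything else is either exact per-realisation bookkeeping or a direct appeal to Theorems~\ref{T1} and~\ref{Tlang} and Lemma~\ref{LN}.
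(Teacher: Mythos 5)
Your proposal reproduces the paper's overall structure: you derive, via \refL{LF2} with $k=2$, the identity $w(T_1)+w(T_2)-\mmm_2=\tfrac1n\int_0^\infty\bigl(r_2(G_1(t))-N_2(0,t)\bigr)\dd t$, recognize the integrand as $e(G_3(t))$ on any range where $r_2(G_1(t))=e(G_1(t))$, and invoke \refT{Tlang} and \refT{T1}\ref{T1gx} to get a $\Theta(n)$ contribution on an interval just above $\gx_2$. The extra observation that the test function $\etta_{[1,s]}$ in the Rayleigh quotient gives the elementary bound $\gx_2\le\min_{s}\bigl(s+1/\rho_1(s)\bigr)<3.2$ is a genuine improvement: the paper relies on the numerical computation \eqref{gx2} to know $\gx_2<c_3\doteq3.35$, and your bound makes that step self-contained.

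The genuine gap is in the step you yourself flag as the main obstacle: proving $r_2(G_1(t))=e(G_1(t))$ \whp{} via Nash--Williams and ``a routine first-moment estimate'' for every fixed $t<4$. That first moment does not behave routinely. For $B$ of linear size $\alpha n$, the marginal tail $\P\bigl(\Bi(\binom{|B|}2,t/n)>2|B|-2\bigr)$ ignores the real constraint that all the $e(G[B])$ are simultaneously bounded by the tightly concentrated total $e(G_1(t))\approx tn/2$; consequently the union bound $\binom n{\alpha n}\cdot\P(\cdots)$ is already exponentially large in $n$ at, say, $t=2.7$, $\alpha=0.8$ (and in fact blows up near $t\approx2$), i.e.\ well below $\gx_2$. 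So the bound as described proves nothing in the range you need. Repairing it --- by conditioning on the edge count and running a hypergeometric estimate, or by pinning down the arboricity-$2$ threshold of $G(n,t/n)$ --- would amount to establishing a separate random-graph threshold result, which is far more work than \refC{Cdiff} requires. The paper avoids all of this by the one-way implication of \refL{Lcore} (empty $3$-core $\Rightarrow$ $r_2=e$, a short peeling argument) together with the known $3$-core threshold $c_3=\min_{\gl>0}\gl/\P(\Po(\gl)\ge2)\doteq3.35$; since $\gx_2<3.2<c_3$, the window exists at no cost. Your scaffolding is exactly right, but the full-rank claim should be routed through the $3$-core (or a cited arboricity threshold), not a bare Nash--Williams first moment.
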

\begin{proof}
The equivalence of the statements in \refT{Tdiff} and \refC{Cdiff}
is immediate since $w(T_1)\pto\gam_1$ and $w(T_2)\pto\gam_2$
by Theorem \ref{T0} or \ref{T0multi} (depending on the choice of model), and $\mmm_2\pto\mu_2$ by
\cite{FriezeJ} and \refR{RFJmodels}.
\end{proof}

The proof of the theorem is based on the fact that many edges are rejected from $T_1$ and $T_2$
after time $\gx_2$, but none is rejected from the union matroid until a time $c_3$,
and $c_3$ (which we will show is the threshold for appearance of a 3-core
in a random graph) is later than $\gx_2$.

We begin with three elementary lemmas
that are deterministic, and do not assume any particular distribution
of edge costs; nevertheless,
we use the same scaling of time as before, and say that an edge with cost
$w$ is born at time $nw$.
(\refL{LF1}  has been used in several works,
including \cite{FriezeJ}, in the study of minimum spanning trees.)

\begin{lemma}\label{LF1}
Suppose that we select $N$ edges $e_1,\dots,e_N$, by any procedure, and that
$e_i$ has cost $w_i$.
Let $N(t):=|\set{i:\wi\le t/n}|$, the number of selected
edges born at or before time
$t$. Then the total cost is
\begin{equation}
  \sum_{i=1}^N\wi = \frac{1}n\intoo (N-N(t))\dd t.
\end{equation}
\end{lemma}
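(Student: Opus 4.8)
The plan is to recognise this as a one-line application of Tonelli's theorem (the ``layer-cake'' representation), with no probabilistic or combinatorial input whatsoever — indeed the identity holds for any $N$ reals $\wi\ge0$, which is exactly why the statement says ``by any procedure''.

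First I would rewrite the integrand in indicator form. Since an edge of cost $\wi$ is, by our convention, born at time $n\wi$, we have $N(t)=\sum_{i=1}^N\ett{n\wi\le t}$, and therefore
\begin{equation*}
N-N(t)=\sum_{i=1}^N\ett{n\wi> t},\qquad t\ge0.
\end{equation*}
Substituting this into $\frac1n\intoo(N-N(t))\dd t$ and interchanging the finite sum with the integral — legitimate because the sum has finitely many terms (or, if one prefers, because the terms are non-negative and one invokes Tonelli) — gives
\begin{equation*}
\frac1n\intoo\bigpar{N-N(t)}\dd t
=\frac1n\sum_{i=1}^N\intoo\ett{t< n\wi}\dd t
=\frac1n\sum_{i=1}^N n\wi
=\sum_{i=1}^N\wi ,
\end{equation*}
where I have used $\intoo\ett{t<a}\dd t=a$ for every $a\ge0$. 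This is precisely the asserted identity.

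I do not anticipate any obstacle: the only point meriting a word is finiteness of the integral, but this is immediate from the display above, since $N-N(t)=0$ for all $t> n\max_i\wi$, so the integrand is supported on the bounded interval $[0,n\max_i\wi]$. Everything else is the routine exchange of a finite sum and an integral.
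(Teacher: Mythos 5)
Your proof is correct and is essentially the paper's own argument in reverse direction: you expand $N-N(t)$ as a sum of indicators, swap sum and integral, and evaluate each $\int_0^\infty \mathbf{1}\{t<n\wi\}\,\mathrm{d}t = n\wi$, whereas the paper writes each $\wi$ as such an integral and then collects the sum inside. Same layer-cake identity, same use of Tonelli/finite-sum interchange; no substantive difference.
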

\begin{proof}
  \begin{equation*}
	\begin{split}
  \sum_{i=1}^N\wi
&=
 \sum_{i=1}^N \frac{1}n\intoo \ett{t/n<\wi}\dd t
=
 \frac{1}n\intoo \sum_{i=1}^N\bigpar{1- \ett{\wi\le t/n}}\dd t
\\&
= \frac{1}n\intoo (N-N(t))\dd t.
	\end{split}
\qedhere
  \end{equation*}
\end{proof}

For the next lemma, recall from \refR{RunionMatroid} that $r_k$ is rank in
the union matroid $\cM_k$.
We consider several (multi)graphs with the same vertex set $[n]$, and we
  define the intersection $G\cap H$ of two such graphs by
  $E(G\cap  H):=E(G)\cap E(H)$.
(We regard the multigraphs as having labelled edges, so parallel edges are
distinguishable.)
Note too that the trees $\xT_i$ in the lemma are arbitrary, not necessarily
the trees $T_i$ defined in \refS{Smodelmodel}.
\begin{lemma}\label{LF2}
Consider $\Knoo$
with any costs $w_e\ge 0$.
Suppose that $\xT_1,\dots,\xT_k$ are any $k$ edge-disjoint spanning trees.
For $t\ge0$, let $G(t)$ be the graph with edge set
$\set{e\in E(\Knoo):w_e\le t/n}$,
and let $N(t) :=
  e\bigpar{G(t) \cap (\xT_1 \cup \dots \cup \xT_k)}$.
Then,
  $N(t)\le r_k(G(t))$ for every $t$,
and
\begin{equation}\label{lf2}
  \sum_{i=1}^k w(\xT_i)-\mmm_k = \frac{1}n\intoo \bigpar{r_k(G(t))-N(t)}\dd t.
\end{equation}
\end{lemma}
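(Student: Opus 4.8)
The plan is to derive \eqref{lf2} from two applications of \refL{LF1}: one to the $k$ given trees $\xT_1,\dots,\xT_k$, and one to the edge set selected by Kruskal's greedy algorithm run on the union matroid $\cM_k$. First I would dispose of the inequality $N(t)\le r_k(G(t))$, which comes for free: the edge set of $G(t)\cap(\xT_1\cup\dots\cup\xT_k)$ is $\bigcup_{i=1}^k\bigpar{E(\xT_i)\cap E(G(t))}$, and each $\xT_i\cap G(t)$ is a subgraph of a tree, hence a forest; so this set is a union of $k$ forests, hence independent in $\cM_k$ by \refR{RunionMatroid}, and since it is contained in $E(G(t))$ its cardinality is at most $r_k(G(t))$.

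Next I would treat the two sides of \eqref{lf2} separately. Since the $\xT_i$ are edge-disjoint spanning trees, $\xT_1\cup\dots\cup\xT_k$ has exactly $k(n-1)$ edges, so \refL{LF1} applied to these $N:=k(n-1)$ edges — whose associated $N(t)$ is precisely the $N(t)$ in the statement — gives $\sum_{i=1}^k w(\xT_i)=\tfrac1n\intoo\bigpar{k(n-1)-N(t)}\dd t$. For $\mmm_k$: by \refR{RunionMatroid}, $\mmm_k$ equals the minimum cost of a basis of $\cM_k$, and such a basis is produced by Kruskal's algorithm on $\cM_k$ (recalled just before the lemma), which selects $r_k(\Knoo)=k(n-1)$ edges $e_1^*,\dots,e_{k(n-1)}^*$. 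Letting $M(t)$ count those born by time $t$, the standard matroid-greedy fact is that once the algorithm has processed all elements of cost $\le t/n$ — that is, all of $E(G(t))$ — the set it has kept is a maximal independent subset of $E(G(t))$, hence of size $r_k(G(t))$; thus $M(t)=r_k(G(t))$ for every $t$. A second application of \refL{LF1}, to the $e_i^*$, then yields $\mmm_k=\tfrac1n\intoo\bigpar{k(n-1)-r_k(G(t))}\dd t$. Subtracting the two identities gives \eqref{lf2}, and both integrals converge because $N(t)$ and $r_k(G(t))$ are non-decreasing and both equal $k(n-1)$ once $G(t)$ contains $k$ edge-disjoint spanning trees.

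The step I expect to need the most care is the identity $M(t)=r_k(G(t))$: here one invokes the characterization of the matroid greedy algorithm (truncating the cost order at $t/n$ leaves the selected set a basis of the restricted matroid $\cM_k$ on $E(G(t))$), checks that this is insensitive to ties among edge costs (the greedy algorithm is optimal for any tie-breaking rule, and $r_k(G(t))$ does not depend on ties), and matches the "born by time $t$" bookkeeping of \refL{LF1} against the definitions of $G(t)$ and $N(t)$. Everything else is routine.
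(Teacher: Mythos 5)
Your proof is correct and follows essentially the same route as the paper's: establish $N(t)\le r_k(G(t))$ by noting $G(t)\cap(\xT_1\cup\dots\cup\xT_k)$ is an independent set in $\cM_k$ contained in $E(G(t))$; apply \refL{LF1} to the $k(n-1)$ edges of the given trees; apply it again to a minimum-cost basis produced by Kruskal's algorithm on $\cM_k$, using the matroid-greedy property that the selected edges born by time $t$ form a maximal independent subset of $E(G(t))$ (hence have cardinality $r_k(G(t))$); and subtract. The only cosmetic difference is that you introduce separate notation $M(t)$ for the optimal basis's count, whereas the paper reuses $N(t)$, and you add a brief remark about tie-breaking that the paper leaves implicit — both harmless.
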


\begin{proof}
First,
$N(t)$ is by definition
the number of edges in
$E\bigpar{G(t) \cap (\xT_1 \cup \dots \cup \xT_k)}$,
an independent
(with respect to $\cM_k$)
subset of $E(G(t))$, and thus $N(t)\le r_k(G(t))$,
as asserted.

Now apply \refL{LF1}, taking $N=k(n-1)$,  taking the edges $e_1,\dots,e_N$
to be the $N$ edges in $\xT_1\cup\dots\cup \xT_k$,
and noting that the definition of
$N(t)$ in \refL{LF1} matches that here.
This yields
\begin{equation}\label{lf2a}
  \sum_{i=1}^kw(\xT_i) = \frac{1}n\intoo (N-N(t))\dd t.
\end{equation}
Next, as a special case, consider a collection of $k$ spanning trees $\hT_1,\ldots,\hT_k$
with minimum total cost.
(Since we are in a deterministic setting, such a collection may not be unique.)
We may assume that they are found
  by Kruskal's algorithm,
and thus,
  for every $t$, the set of edges in
$G(t) \cap(\hT_1\cup\dots\cup\hT_k)$ is a maximal set of independent
edges in $G(t)$ (independent with respect to $\cM_k$),
hence the number of these edges is $N(t) = r_k(G(t))$.
Consequently, \refL{LF1} yields
\begin{equation}\label{lf2b}
\mmm_k=  \sum_{i=1}^kw(\hT_i) = \frac{1}n\intoo \bigpar{N-r_k(G(t))}\dd t.
\end{equation}
The result \eqref{lf2} follows by subtracting \eqref{lf2a} from \eqref{lf2b}.
\end{proof}

\begin{lemma}  \label{Lcore}
Let the multigraph $G$ be a subgraph of $\Knoo$
and assume that the $(k+1)$-core of $G$ is empty for some $k\ge1$.
Then
the edge set $E(G)$
is a union of $k$ disjoint forests.
In other words,
$r_k(G)=e(G)$.
\end{lemma}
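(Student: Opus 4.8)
The plan is to deduce the statement from the classical fact that a (multi)graph with empty $(k+1)$-core decomposes into $k$ edge-disjoint forests. Once such a decomposition is in hand, $E(G)$ is exhibited as a union of $k$ forests, hence is independent in the union matroid $\cM_k$, so $r_k(G)=e(G)$; the reverse inequality $r_k(G)\le e(G)$ is automatic since the rank of a set never exceeds its cardinality. (If the decomposition happens to use fewer than $k$ forests we simply pad with empty ones.) So the whole content is the forest decomposition, and the proof is elementary and constructive.

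First I would recall that the emptiness of the $(k+1)$-core is equivalent to $G$ being \emph{$k$-degenerate}: repeatedly deleting a vertex of degree at most $k$ (degrees counted with multiplicity in the multigraph) must exhaust $G$, because otherwise what remains is a nonempty subgraph of minimum degree at least $k+1$, which lies in the $(k+1)$-core. Reversing the order of deletion yields an enumeration $v_1,\dots,v_n$ of $V(G)$ in which each $v_i$ is joined by at most $k$ edges to $\{v_{i+1},\dots,v_n\}$. Note that a pair of vertices joined by $k+1$ parallel edges would itself be a nonempty subgraph of minimum degree $k+1$, so no such pair occurs — consistent with the fact that a forest contains at most one edge from any parallel class.

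Next, orient every edge of $G$ from its lower-indexed endpoint to its higher-indexed endpoint in this enumeration; then every vertex has out-degree at most $k$. At each vertex $v$ label its (at most $k$) out-edges by distinct elements of $\{1,\dots,k\}$, and colour each edge $e$ by the label it receives at its tail; this partitions $E(G)$ into at most $k$ colour classes. I would then check that each class is a forest: in a cycle all of whose edges carry a common colour $c$, the vertex of smallest index on the cycle has both of its incident cycle-edges directed away from it and both coloured $c$, contradicting the distinctness of the labels of its out-edges. (Here ``cycle'' is meant in the multigraph sense, so in particular two parallel edges never get the same colour.) Hence every colour class is acyclic, i.e.\ a forest, and $E(G)$ is a union of $k$ forests, as required.

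This argument is entirely routine, so there is no genuine obstacle; the only points deserving care are the passage from the empty-core hypothesis to the degeneracy enumeration and the multigraph bookkeeping (parallel edges behaving as $2$-cycles, degrees taken with multiplicity). As an alternative to the constructive colouring, the same conclusion follows from Nash--Williams' arboricity theorem: the degeneracy enumeration gives, by induction on $|H|$, the bound $e(H)\le k(|H|-1)$ for every subgraph $H\subseteq G$, which is exactly the condition guaranteeing arboricity at most $k$.
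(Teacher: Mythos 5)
Your argument is correct and is essentially the same as the paper's: both rest on the fact that an empty $(k+1)$-core gives a $k$-degeneracy elimination ordering and then distribute each vertex's $\le k$ ``back-edges'' among the $k$ forests, one per forest. The paper phrases this as an induction on $|G|$ (remove a vertex $v$ of degree $\le k$, decompose the rest, add $v$'s edges one to each forest, noting $v$ is a leaf in each), while you phrase it as a one-shot orientation-and-colouring with acyclicity verified by the ``smallest index on a monochromatic cycle'' argument; these are the unrolled and rolled-up versions of the same proof, and your multigraph bookkeeping (parallel edges automatically getting distinct colours at their common tail) is sound. One small slip in wording: the enumeration you want is the \emph{deletion order itself}, not its reversal — as you use it, $v_i$ is the $i$th vertex deleted and has at most $k$ edges into $\{v_{i+1},\dots,v_n\}$ — but this does not affect the argument. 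You also note the Nash--Williams alternative, which the paper gives as well.
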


The properties in two last sentences are equivalent by \refR{RunionMatroid}.

\begin{proof}
We use induction on  $|G|$; the base case $|G|=1$ is trivial.

If $|G|>1$ and $|G|$ has an empty $(k+1)$-core, then there exists a vertex $v$
  in $G$ of degree $d(v)\le k$. Let $G'$ be $G$ with $v$ and its incident edges
  deleted. By the induction hypothesis, $E(G')$ is the union of $k$
  edge-disjoint forests $F_1',\dots,F_k'$.
  These forests do not contain any edge with $v$ as an endpoint,
so we may simply add the first edge of $v$ to $F'_1$,
the second to $F'_2$, and so on, to
obtain the desired decomposition of $E(G)$.

Alternatively, the lemma follows easily from a multigraph version of
a theorem of \citet{Nash-Williams1964},
appearing also as \cite[Theorem 8.4.4]{Welsh};
specifically, the matroidal proof in \cite{Welsh} extends to multigraphs.
This theorem hypothesizes that $G$ is ``sparse'',
meaning that for every vertex subset $A$, $e(G[A]) \leq k(|A|-1)$,
but this follows from our hypothesis.
If $G$ has empty core, so does $G[A]$,
thus $G[A]$ has a vertex $v$ of degree $\leq k$,
whose deletion leaves another such vertex,
and so on until there are no edges, showing that $e(G[A]) \leq k(|A|-1)$.
\end{proof}

\begin{proof}[Proof of \refT{Tdiff}]
  By Lemmas \ref{LTA}--\ref{LTB} and \refR{RFJmodels},
the choice of model does not matter; for convenience we again take the Poisson model.

We use \refL{LF2}, with
the first and second minimum spanning trees,
\ie, $k=2$ and $\xT_j=T_j$, $j=1,2$.
Then the lemma's $G(t) = G_1(t)$ as defined in \refS{Smodelmodel} and used
throughout, and
\begin{equation}\label{eff}
  N(t)  \eqdef e\bigpar{G_1(t) \cap (T_1 \cup T_2)} =
  \sum_{i=1}^2 e\bigpar{G_1(t)\cap T_i}
=e(F_1(t))+e(F_2(t)).
\end{equation}

Now, by \citet{PittelSpencerWormald}, see also \cite{SJ184},
the 3-core threshold of a random graph is
$c_3:=\min_{\gl>0} \left( \gl/\P(\Po(\gl)\ge 2) \right) \doteq3.35$,
so that for any $b<c_3$, \whp{} the 3-core of $G(n,b/n)$ is empty.
In our context this says that for all $t\le b$,
$\xG_1(t)$ has an empty 3-core.
This holds for the multigraph $G_1(t)$ too, \eg{} by the proof in \cite{SJ184}
which uses random multigraphs. (Alternatively, we can use $\xG_1(t)$ in
\eqref{ex} below, with a negligible error.)
Hence, \refL{Lcore} shows that \whp, for all $t\le b$,
$G(t)=G_1(t)$ has full rank, \ie, $r_2(G(t))=e(G(t))$.

Furthermore, by \eqref{gx2}, $\gx_2<c_3$.
Choosing any $a$ and $b$ with $\gx_2<a<b<c_3$,
by \eqref{lf2} and \eqref{eff},
\whp,
\begin{equation}\label{ex}
  \begin{split}
    w(T_1)+w(T_2)-\mmm_2
&\ge
\frac{1}n\int_0^b \bigpar{r_k(G(t))-N(t)}\dd t
\\&
= \frac{1}n\int_0^b \bigpar{e(G(t))-e(F_1(t))-e(F_2(t))}\dd t	
\\&
= \frac{1}n\int_0^b {e(G_3(t))}\dd t	
\ge\frac{1}n (b-a) e(G_3(a)).
  \end{split}
\end{equation}
But by \refT{Tlang},
  \begin{equation}\label{ex2}
    (b-a)  e(G_3(a))/n \pto (b-a)\cdot\frac12\int_0^a\rho_2(s)^2\dd s
    =:2\gd,
  \end{equation}
  where $\gd>0$ since
  $\rho_2(s)>0$ for $s>\gx_2$ by \refT{T1}.
  Thus the theorem is established by \eqref{ex} and \eqref{ex2}.
\end{proof}

\section{Conjectured asymptotics of
\texorpdfstring{$\rho_k(t)$}{\unichar{"03C1}\unichar{"2096}(t)}}
As discussed in \refS{Sbounds}, $\gam_k\sim 2k$ for large $k$, see
for example Corollary \ref{Cbound2}.
Moreover, simulations (see \refS{Snumerical})
suggest that the functions
$\rho_k(t)$ converge, after suitable translations. If so, and
assuming suitable tail bounds, \eqref{intk} implies that
the translations should be by $2k$, up to an arbitrary constant plus $o(1)$;
this is formalized in \refConj{Conj-rhoInfinity}.

It is easy to see that this, together with suitable tail bounds justifying
dominated convergence, by \eqref{wlim}
and \eqref{intkk} would imply
  \begin{equation}\label{gamk2}
	\begin{split}
\gam_k-2k&=
\frac12\int_0^\infty \bigpar{\rho_{k-1}(t)^2-\rho_k(t)^2}\,(t-2k)\dd t
\\&
=
\frac12\int_{-2k}^\infty \bigpar{\rho_{k-1}(x+2k)^2-\rho_k(x+2k)^2}\,x\dd x
\\&
\to
\frac12\int_{-\infty}^\infty
 \bigpar{\rho_{\infty}(x+2)^2-\rho_\infty(x)^2}\,x\dd x
,
	\end{split}
  \end{equation}
which would show \refConj{Conj-gamma2},
with
$\gd=\frac12\int_{-\infty}^\infty
\bigpar{\rho_{\infty}(x+2)^2-\rho_\infty(x)^2}\,x\dd x$
(and necessarily $\gd\in[-1,0]$,
see \refR{R-gamma2}).

Recall that $\rho_k(t)$ is given by Lemma \ref{LC1}
as the survival probability
of the branching process $\fX_t$ defined in \refSS{SSbranching}
with kernel $\kk_t(x,y)$ on the probability space $(\bbR_+,\mu_{k-1})$ where
$\mu_{k-1}$ has the distribution function $\rho_{k-1}(t)$.
More generally, we could start with any distribution function $F(t)$ on
$\bbR_+$ and the corresponding probability measure $\mu$ and define a new
distribution function $\Psi(F)(t)$ as the survival probability
$\rho(\kk_t;\mu)$. This defines a map from the set of distribution functions
(or probability measures) on $\ooo$ into itself, and we have
$\rho_k=\Psi(\rho_{k-1})$. If one could show that $\Psi$ is a contraction for
some complete metric (perhaps on some suitable subset of distribution
functions), then
Banach's fixed point theorem would imply the existence of a unique fixed
point $\rho_\infty$, and convergence of $\rho_k$ to it.
However, the mapping $\Psi$ is quite complicated, and we leave the possible
construction of such a metric as an open problem.

Recall also that $t=\gs_k$ is where $\rho_k(t)$ becomes non-zero, see
\refT{T1}\ref{T1gx}.
Hence, \refConj{Conj-rhoInfinity} suggests also the following, related conjecture.
\begin{conjecture}\label{Conj-gx}
  There exists a real constant $\gsx$ such that as \ktoo,
  \begin{equation}
  \gx_k=2k+\gsx+o(1).
\end{equation}
In particular,
  \begin{equation}
  \gx_k-\gx_{k-1}\to 2.
\end{equation}
\end{conjecture}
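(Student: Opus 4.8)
The plan is to deduce \refConj{Conj-gx} from \refConj{Conj-rhoInfinity} (used in the uniform form stated there, which already encodes strong tail control) by exploiting the \emph{translation covariance} of the kernel family: from $\kk_t(x,y)=(t-x\vee y)_+$ one reads off at once that $\kk_{t+c}(x+c,y+c)=\kk_t(x,y)$ for every $c\ge0$. Let $\tilde\mu_{k-1}$ be the push-forward of $\mu_{k-1}$ under the shift $x\mapsto x-2(k-1)$, so that $\tilde\mu_{k-1}$ has distribution function $x\mapsto\rho_{k-1}\bigpar{x+2(k-1)}$. Then translation by $2(k-1)$ is a unitary equivalence $L^2(\mu_{k-1})\to L^2(\tilde\mu_{k-1})$ carrying $T_{\kk_t}$ to $T_{\kk_{t-2(k-1)}}$. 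Writing $u_k:=\gx_k-2(k-1)$, the defining relation $\norm{T_{\gx_k}}=1$ of \eqref{tgx1} becomes
\[
\bignorm{T_{\kk_{u_k}}}_{L^2(\tilde\mu_{k-1})}=1 ,
\]
so it suffices to show that $u_k$ converges; then $\gx_k=2(k-1)+u_k=2k+\gsx+o(1)$ with $\gsx:=\lim_k u_k-2$, and in particular $\gx_k-\gx_{k-1}\to2$.

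Granting \refConj{Conj-rhoInfinity} (applied with $k-1$ in place of $k$), we have $\rho_{k-1}\bigpar{2(k-1)+\cdot}\to\rho_\infty$ uniformly on $\bbR$, whence $\tilde\mu_{k-1}\to\mu_\infty$, where $\mu_\infty$ is the probability measure with distribution function $\rho_\infty$ (a genuine distribution function on $\bbR$, by the uniformity). The next step is to establish that $(t,\mu)\mapsto\norm{T_{\kk_t}}_{L^2(\mu)}$ is jointly continuous: the dependence on $t$ is Lipschitz exactly as in the proof of \refL{LTnorm}, and the dependence on $\mu$ (in the weak topology, over a tight family) follows from $\kk_t$ being bounded and continuous, by approximating the extremal quadratic form $f\mapsto\iint\kk_t(x,y)f(x)f(y)\dd\mu(x)\dd\mu(y)$, $\norm f_{L^2(\mu)}=1$, by continuous test functions and passing to the limit --- this is essentially the ground-space continuity behind \cite[Theorem 6.4]{SJ178}. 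Given a uniform-in-$k$ tail bound one also gets that $\{u_k\}$ is precompact with subsequential limits in the region where the norm is positive: the upper bound on $u_k$ comes from $\norm{T_{\kk_t}}_{L^2(\tilde\mu_{k-1})}\to\infty$ uniformly in $k$ as $t\to\infty$ (test $T_{\kk_t}$ against an indicator of a fixed left-edge interval carrying mass bounded below uniformly in $k$), and a lower bound away from the left edge of $\mu_\infty$ follows since $\norm{T_{\kk_t}}_{L^2(\tilde\mu_{k-1})}\le\normHS{T_{\kk_t}}\to0$ as $t$ decreases past the (convergent) left edges. Finally, the analogue for $\mu_\infty$ of the strict-monotonicity part of \refL{LTnorm} --- whose proof transfers verbatim since $\mu_\infty$ charges every interval $(-\infty,t)$ above its left edge --- shows that there is a \emph{unique} $\tau_\infty$ with $\norm{T_{\kk_{\tau_\infty}}}_{L^2(\mu_\infty)}=1$. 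Hence every subsequential limit of $u_k$ equals $\tau_\infty$, so $u_k\to\tau_\infty$ and $\gsx=\tau_\infty-2$.

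An alternative, working directly with the Sturm--Liouville description of \refS{Ssecond}, is available when the regularity hypothesis \eqref{Arho} (with $C^1$ control) holds uniformly in $k$: there $\gx_k=\gf(s_k)$ with $\gf=\rho_{k-1}\qw$ and $s_k=x(\pi/2)$ the value at $\pi/2$ of the solution of the first-order equation \eqref{dxgth}. Since $\gf(\cdot)-2(k-1)$ and its derivative converge to those of $\rho_\infty\qw$ once $\rho_{k-1}(2(k-1)+\cdot)\to\rho_\infty$, the solution $x(\gth)$, hence $s_k$, hence $\gx_k-2(k-1)$, converge, giving the same conclusion. I would nonetheless present the operator-theoretic argument as the main line, since it requires no regularity beyond what \refT{T1} already supplies for $\rho_\infty$.

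The main obstacle is precisely the input: \refConj{Conj-rhoInfinity} is itself unproved, and the natural route --- finding a complete metric on a class of distribution functions on which the recentred iteration $\Psi$ (with $\Psi(F)$ the distribution function of $\rho(\kk_t;\mu_F)$) is a contraction, and then invoking Banach's fixed point theorem --- is blocked by the fact that $\Psi$ is defined only implicitly, through a multitype branching process, and is hard to control quantitatively. A secondary but genuine obstacle, needed to make the limit step above rigorous, is the uniform-in-$k$ exponential tail bound $1-\rho_k(t)\le Be^{-b(t-2k)}$: the estimate in \refT{T1}\ref{T1rholim} comes with constants $b_k,B_k$ that a priori degrade with $k$, and one would have to re-run \refL{Lqn} together with the domination of \refL{thicker}, tracking the $k$-dependence, to remove this.
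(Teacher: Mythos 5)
The statement you are attempting to prove is labelled a \emph{conjecture} in the paper, and the paper offers no proof: it only remarks, after \refConj{Conj-rhoInfinity}, that the latter ``suggests also the following, related conjecture,'' and separately sketches the (also open) Banach-fixed-point strategy for obtaining $\rho_\infty$. So there is no paper proof to compare against. What you have done is turn that informal ``suggests'' into a genuine conditional reduction, and it is worth recording why this is not trivial and why your route is the right one. Uniform convergence $\rho_k(2k+\cdot)\to\rho_\infty$ does \emph{not} by itself pin down $\gx_k-2k$: below the limiting threshold, $\rho_\infty$ is identically $0$, and uniform convergence is perfectly compatible with $\rho_k(2k+\cdot)$ being tiny-but-positive on an interval to the left of where $\rho_\infty$ becomes positive, so the thresholds could a priori drift without contradicting \refConj{Conj-rhoInfinity}. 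Your operator-norm reformulation, via the translation covariance $\kk_{t+c}(x+c,y+c)=\kk_t(x,y)$ and the resulting unitary equivalence $\norm{T_{\kk_{\gx_k}}}_{L^2(\mu_{k-1})}=\norm{T_{\kk_{u_k}}}_{L^2(\tilde\mu_{k-1})}$ with $u_k=\gx_k-2(k-1)$, sidesteps exactly this issue by characterizing the threshold through a quantity that depends continuously on the shifted measure rather than on the location of its support's left edge. That conjugation identity and its use of \eqref{tgx1} check out, and the strict-monotonicity step to force uniqueness of $\tau_\infty$ mirrors \refL{LTnorm}\ref{LT>} faithfully.

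Two cautions, both of which you flag to varying degrees, deserve emphasis if this is ever to be made rigorous. First, the ``joint continuity'' of $(t,\mu)\mapsto\norm{T_{\kk_t}}_{L^2(\mu)}$ needs the shifted measures $\tilde\mu_{k-1}$ to be tight with supports uniformly bounded below; but the lower edge of $\tilde\mu_{k-1}$ is $\gx_{k-1}-2(k-1)$, precisely the quantity whose convergence you are trying to prove, and the a priori bounds available (from \refT{Tkk-1} and \refC{Cbound2}) only give $\gx_k-2k\in[-k,O(\sqrt k)]$. So the precompactness of $\{u_k\}$ cannot be bootstrapped from the operator-norm framework alone without an input beyond \refConj{Conj-rhoInfinity} as literally stated — some version of your ``uniform-in-$k$ exponential tail'' is genuinely needed, and your left-edge argument should be made to confront this circularity explicitly. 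Second, you correctly note that \eqref{t1rholim} comes with constants $b_k,B_k$ that a priori degrade with $k$; removing that degradation by reworking \refL{Lqn} would be a real technical contribution and is not merely bookkeeping. In summary: no comparison with a paper proof is possible because none exists, but your conditional argument is a genuine advance over the paper's one-line ``suggests,'' provided the two obstacles above are stated as additional hypotheses rather than glossed.
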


\section{Computational results}\label{Snumerical}

\subsection{Naive simulations}
For intuition and as a sanity check on all calculations,
we first directly simulate the problem described in the introduction's Poisson edge-weight model.
Specifically, we take a graph with $n$ vertices and random edge weights
(\iid exponential random variables with mean 1),
find the MST, add fresh exponentials to the weights of the MST edges, and repeat
to get the second and subsequent MSTs.
For each MST, we plot each edge's rank within the MST, divided by $n$
(so, $1/n$ for the first edge, up to $(n-1)/n$ for the last) on the vertical axis,
against the edge's weight (multiplied by $n$ in accordance with our time scaling) on the horizontal axis.
The results are shown in \refF{FigNaive1}.
\begin{figure}[htbp]
  \centering
  \includegraphics[width=4in]{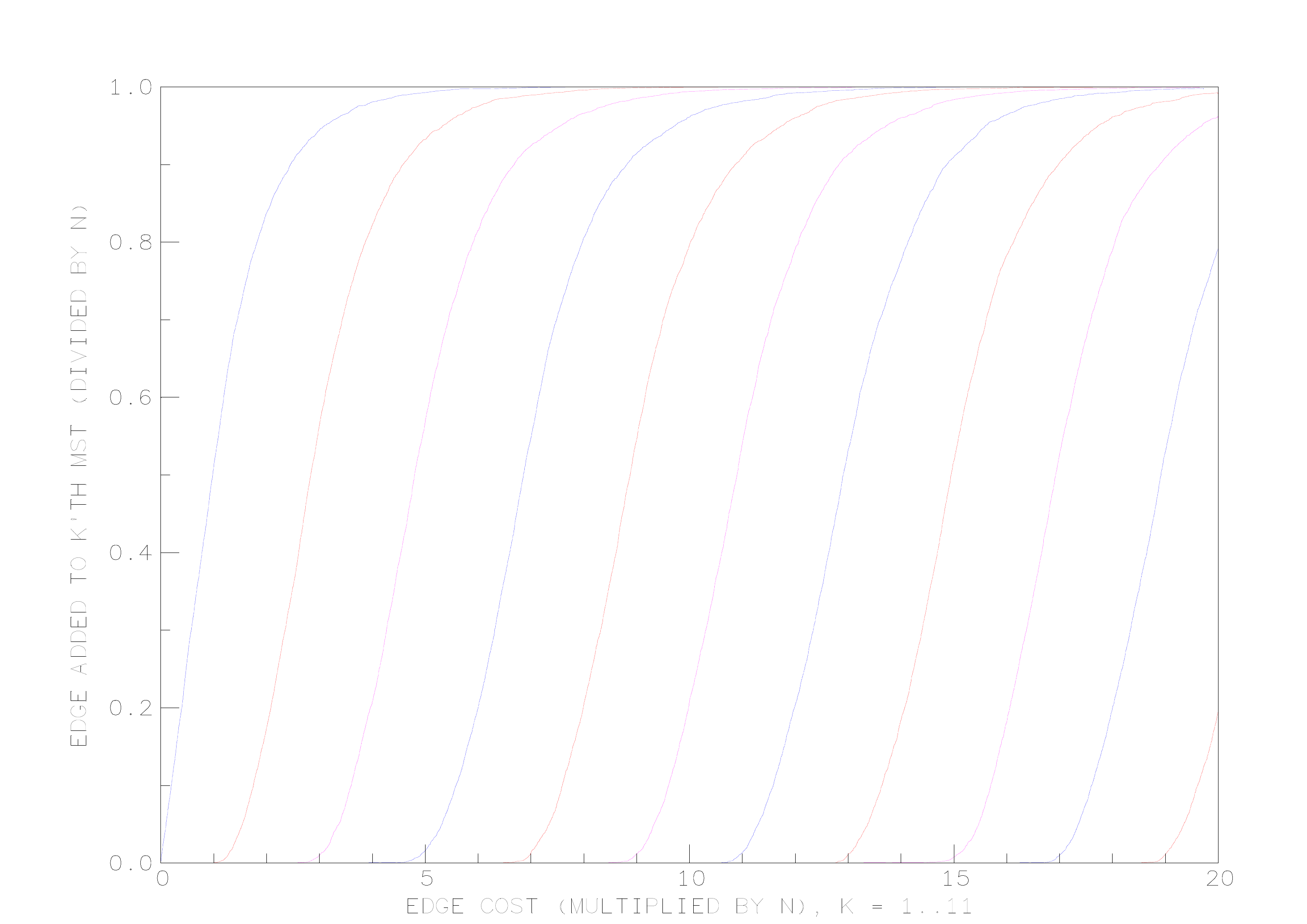}
  \caption{Size of $k$th MST (divided by $n$) plotted against the weight of the
  next edge added (multiplied by $n$), with $n=\num{4000}$,
  for $k=1,\ldots,11$.} \label{FigNaive1}
\end{figure}
The corresponding estimates of $\gamma_k$, for $k$ up to 5, are
$1.197, 3.055, 5.035, 7.086, 9.100$.
% $11.113, 13.130, 15.180, 17.158, 19.132$.
This was done for just a single graph with $n=\num{4000}$, not averaged over several graphs.
For a sense of the limited accuracy of the estimates, remember that $\gamma_1 = \zeta(3) = 1.2020\ldots$.

\subsection{Better simulations}
Better simulations can be done with reference to the model introduced in \refS{Smodelmodel}
and used throughout.
We begin with $k$ empty graphs of order $n$.
At each step we introduce a random edge $e$ and, in the first graph $G_i$
for which $e$ does not lie within a component, we merge the two components given by its endpoints.
(If this does not occur within the $k$ graphs under consideration, we do nothing, just move on to the next edge.)
For each graph we simulate only the components (i.e., the sets of vertices comprised by each component);
there is no need for any more detailed structure.
The edge arrivals should be regarded as occurring as a Poisson process of intensity
$(n-1)/2$
but instead we simply treat them as arriving at times
$2/n$, $4/n$, etc.

Figure \ref{Sim1M} depicts the result of a single such simulation
with $n=\num{1 000 000}$,
showing for each $k$ from 1 to 5 the size of the largest component of $G_k$ (as a fraction of $n$)
against time.
\begin{figure}
  \centering
  \includegraphics[width=4in]{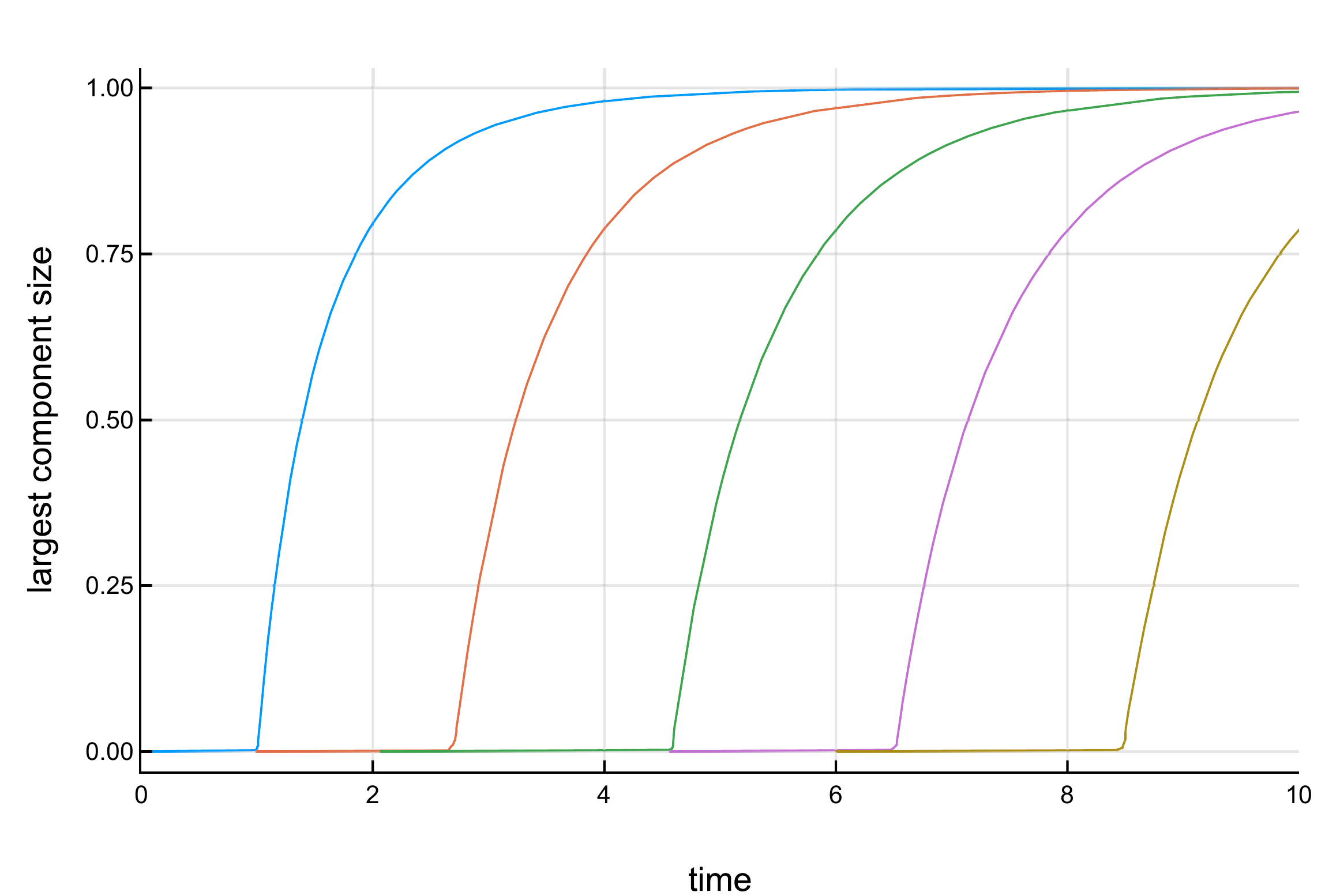}
  \caption{Largest component sizes, as a fraction of $n$, for graphs $G_1,\ldots,G_5$,
   based on a single simulation with $n=\num{1 000 000}$.}\label{Sim1M}
\end{figure}

A similar experiment was done with
10 simulations each with $n=\num{1 000 000}$, and another with
 100 simulations each with $n=\num{100 000}$.
Taking the measured MST costs as estimates of $\gamma_1,\ldots,\gamma_5$,
Table \ref{sim10} shows the sample means and standard errors.
The results of the two experiments are broadly consistent, including in that their estimated means differ by amounts on the order of their estimated standard errors.
\begin{table}
  \centering
\begin{tabular}{|c|c|c|c|c|c|}
  \hline
  \multicolumn{6}{|c|}{10 simulations each with $n=\num{1 000 000}$} \\
  \hline
   & $\gamma_1$ & $\gamma_2$ & $\gamma_4$ & $\gamma_4$ & $\gamma_5$ \\
  mean & 1.2025 & 3.0928 & 5.0457 & 7.0302 & 9.0149 \\
  std err & 0.0008 & 0.0011 & 0.0011 & 0.0019 & 0.0026 \\
  \hline
  \multicolumn{6}{c}{ \mbox{ }} \\
  \hline
  \multicolumn{6}{|c|}{100 simulations each with $n=\num{100 000}$} \\
  \hline
   & $\gamma_1$ & $\gamma_2$ & $\gamma_4$ & $\gamma_4$ & $\gamma_5$ \\
  mean & 1.2026 & 3.0913 & 5.0469 & 7.0299 & 9.0159 \\
  std err & 0.0005 & 0.0009 & 0.0014 & 0.0019 & 0.0020 \\
  \hline
\end{tabular}
\caption{Estimates of $\gamma_1,\ldots,\gamma_5$ from 10 simulations
each with $n=\num{1 000 000}$,
and 100 simulations each with $n=\num{100 000}$.}  \label{sim10}
\end{table}

A larger simulation, using 10 simulations each with $n=$10M, and up to time $t=40$
(i.e., 200M steps), supports
\refConj{Conj2k-1} that $\gamma_k = 2k-1+o(1)$;
see Figure \ref{sim10M}.
\begin{figure}[htbp]
  \centering
  \includegraphics[width=4in]{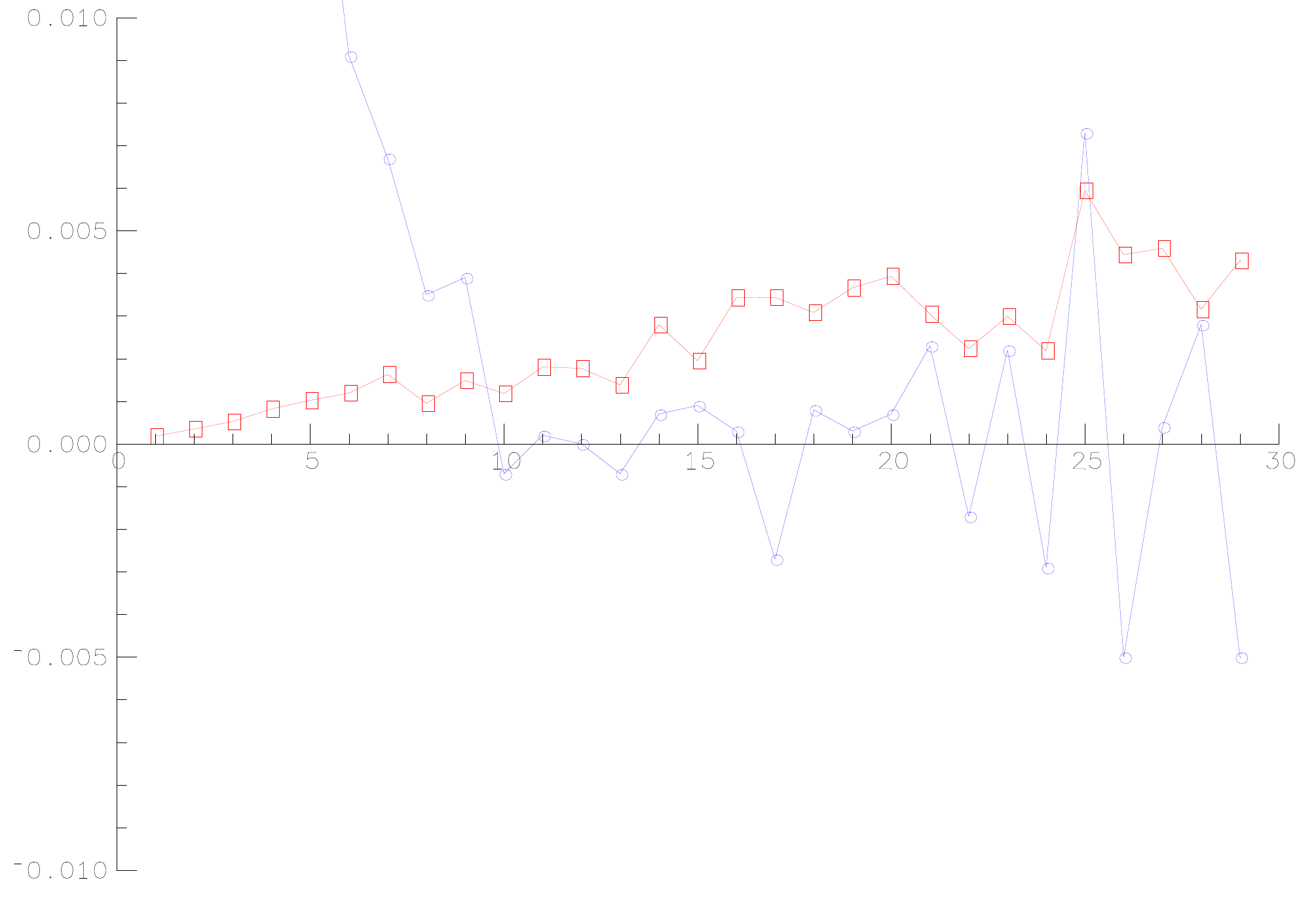}
  % APL2 variable gamma10m in MSTFIXEDPT3B.
  \caption{%
  Sample means of $\gamma_k-(2k-1)$ plotted against $k$ (in blue)
  and (in red) the standard errors for $\gamma_k$,
  supporting \refConj{Conj2k-1} that $\gamma_k = 2k-1+o(1)$.
  The statistics are taken from 10 simulations each with $n=$10M, and up
   to time $t=40$ (i.e., 200M steps).
 (We do not estimate $\gamma_{k}$ for $k>29$ because
 in one of the 10 simulations the giant component in $G_{30}$ did not yet cover all vertices.)
 }
   \label{sim10M}
\end{figure}

\subsection{Estimates of the improved upper bound} \label{Srho2Sim}
The differential equation system \eqref{pg},
giving the improved upper bound of \refS{SUB2},
is easy to solve numerically.
We did so as a discrete-time approximation,
setting $g_k(t+\Delta t) = g_k(t)+\frac12\Delta\bigpar{g_{k-1}(t)^2 - g_k(t)^2}$,
using $\Delta t= \num{0.000 01}$ and considering $k$ up to 50.

\refF{rho2OccSim} shows the results up to time $t=10$.
Because $g_k(t)$ pertains to a model in which all edges of $F_k$
are imagined to be in a single component,
this plot is comparable both to that in \refF{FigNaive1}
(which counts all edges)
and to those in \refF{Sim1M}
(which counts edges in the largest component)
and \refF{FigRho3from1to5}
(the theoretical giant-component size).

\begin{figure}[htbp]
  \centering
  \includegraphics[width=4in]{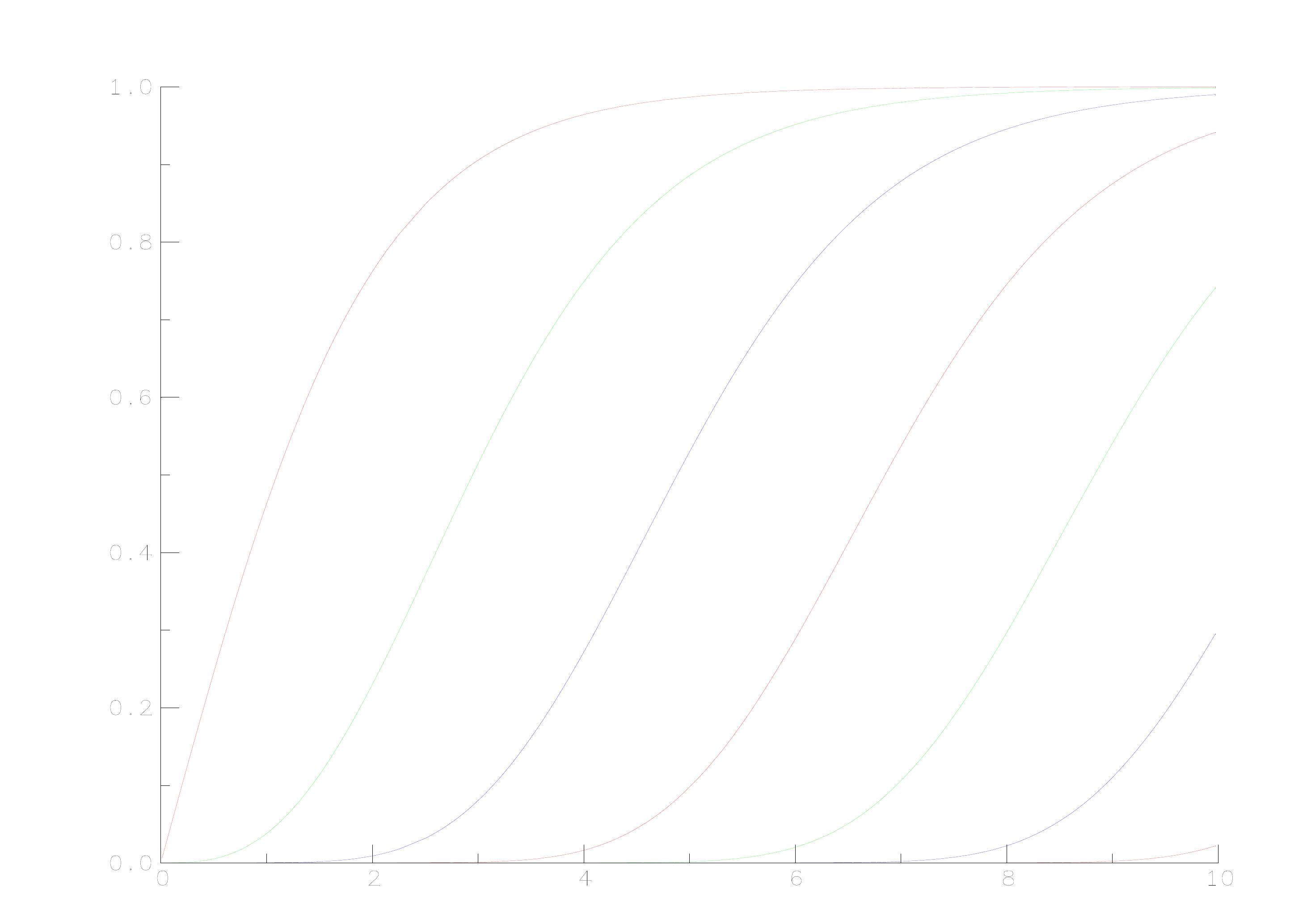}
  % APL2 variable gamma10m in MSTFIXEDPT3B.
  \caption{%
  Values $g_k(t)$ plotted against $t$.
  The function $g_7(t)$ is just rising from 0
  within the plot range;
  the values of $g_k(t)$ for larger $k$
  are too close to 0 to be seen.
 }
   \label{rho2OccSim}
\end{figure}

\refTab{TgGimproved} and
\refF{FGammaKrho2UB} show the corresponding upper bounds on $\gG_k$.
Specifically, the bound on $\gG_k$ from \eqref{gam+},
\newcommand{\gup}{\overline{\gG}}
call it $\gup_k$,
is estimated as
$\gup_k \doteq \tfrac12 \Delta \, \sum_{t \in T} t (1-g_k(t)^2)$
where $T=\set{0, \Delta, 2\Delta,\ldots}$.
Since we cannot sum to infinity, we terminate when the final $g_k$
under consideration is judged sufficiently close to 1,
specifically within \num{0.0000001} of 1.
It appears experimentally that the gap $1-g_k(t)$ decreases exponentially fast
(very plausible in light of \eqref{t1rholim})
so termination should not be a large concern;
see also \eqref{lwin}.

\begin{figure}[htbp]
  \centering
  \includegraphics[width=4in]{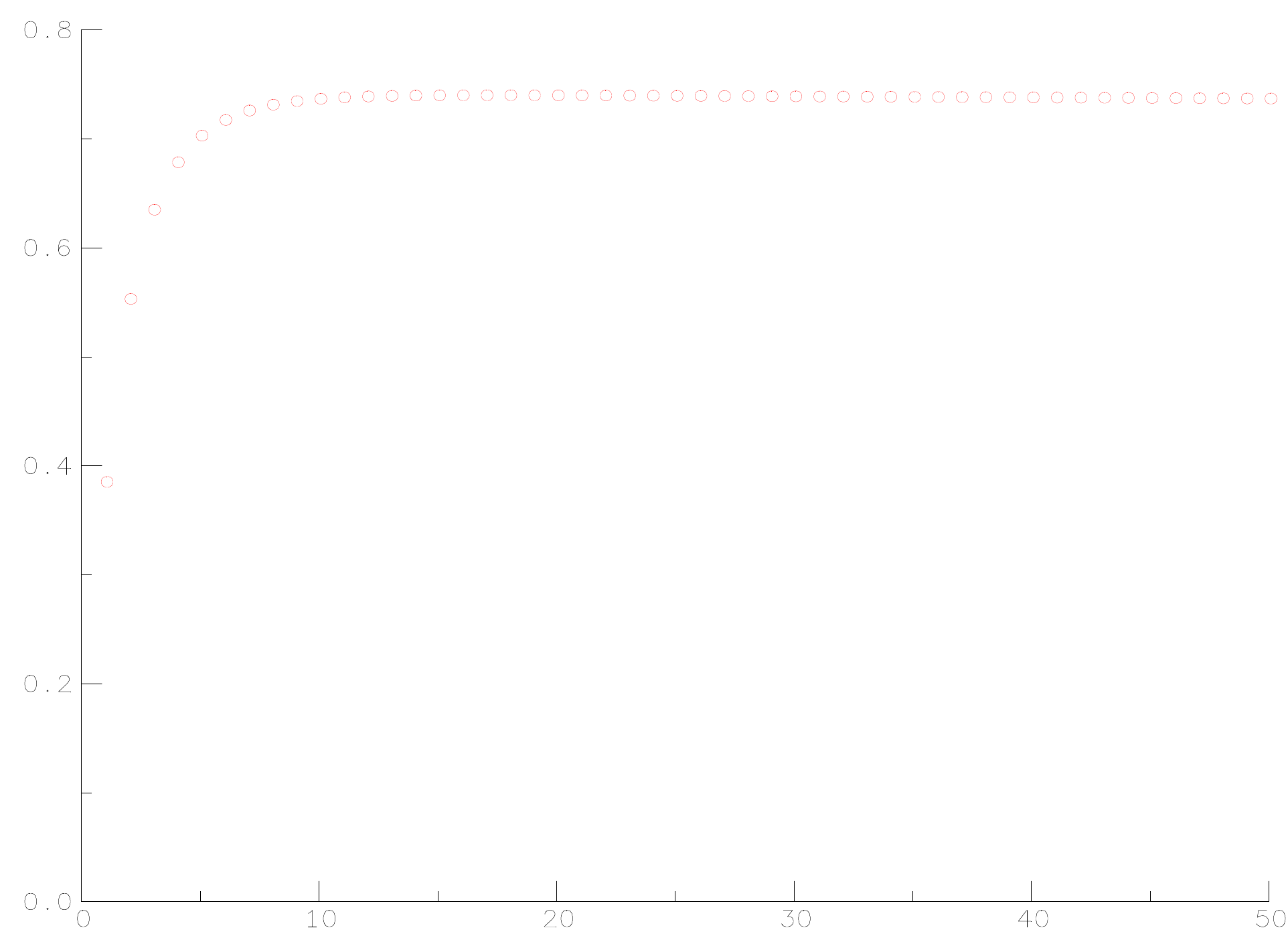}
  \caption{%
  Upper bounds from \eqref{gam+} on $\gG_k-k^2$, for $k$ from 1 to 50,
  based on numerical solution to the differential equations \eqref{pg}.
 }
   \label{FGammaKrho2UB}
\end{figure}

\begin{table}[htbp]
  \centering
  \footnotesize
  \begin{tabular}{|c|ccccccccc|}
    \hline
    $k$ & 1 & 2 & 3 & 4 & 5 & 6 & 7 & 8 & 9 \\
    $\gG_k$ &
    1.3863 & 4.5542 & 9.6362 & 16.6799 & 25.7045
  & 36.7189 & 49.7277 & 64.7331 & 81.7365 \\% & 100.7387 \\
    \hline
  \end{tabular}
  \caption{Upper bounds on $\gG_k$ obtained from numerical solution of \eqref{gam+}.
  }\label{TgGimproved}
\end{table}

\refF{FGammaKrho2UB} suggests that the gaps $\gup_k-k^2$ level off at about $0.743$.
(Beyond about $k=25$ the gaps decrease,
but using $\Delta t = \num{0.000 1}$ they continued to increase,
and in either case the degree of change is comparable with $\Delta t$
and thus numerically unreliable.)
This suggests the following conjecture.
(Recall from \eqref{cbounds} that $\gG_k \geq k^2$.)
\begin{conjecture} \label{ConjImproved}
For every $k \geq 1$, $\gG_k \leq \gup_k \leq k^2+\gdimp$
for some constant $\gdimp$.
\end{conjecture}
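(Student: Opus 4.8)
Since $\gG_k\le\gup_k$ is exactly \eqref{gam+}, only the bound $\gup_k\le k^2+\gdimp$ needs attention; recall $\gup_k=\tfrac12\int_0^\infty t\bigl(1-g_k(t)^2\bigr)\,dt$, with $g_k$ as in \eqref{pg}. The first step is bookkeeping. Integrating $g_j'=\tfrac12(g_{j-1}^2-g_j^2)$ over $[0,\infty)$, using $g_j(0)=0$ and $g_j(\infty)=1$ (a routine consequence of $g_j$ being increasing, bounded by $g_{j-1}\le1$, and $g_j'\in L^1$), gives $\tfrac12\int_0^\infty(g_{j-1}^2-g_j^2)\,dt=1$ for each $j$; summing over $j=1,\dots,k$ with $g_0\equiv1$ yields the conservation law
\begin{equation}\label{consrv}
  \tfrac12\int_0^\infty\bigl(1-g_k(t)^2\bigr)\,dt=k .
\end{equation}
Subtracting $k^2=\tfrac12\int_0^{2k}(2k-t)\,dt$ and rearranging with the help of \eqref{consrv} gives the exact identity
\begin{equation}\label{decomp}
  \gup_k-k^2=\tfrac12\int_0^{2k}(2k-t)\,g_k(t)^2\,dt+\tfrac12\int_{2k}^\infty(t-2k)\bigl(1-g_k(t)^2\bigr)\,dt ,
\end{equation}
both terms of which are nonnegative (so \eqref{decomp} also re-proves $\gup_k\ge k^2$). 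Thus \refConj{ConjImproved} is equivalent to bounding each term in \eqref{decomp} uniformly in $k$, and $\gdimp$ then emerges as the supremum of their sum; no identification of a limiting front profile is needed.

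The tail term is the easy one. The plan is to show, by induction on $k$ exactly as in the proof of \refT{T1}\ref{T1rholim} (compare \refL{Lqn}), that once $g_{k-1}$ is within $\eta$ of $1$ the comparison equation $h'=\tfrac12\bigl((1-\eta)^2-h^2\bigr)$ forces $1-g_k$ to decay logistically; iterating and using $g_0\equiv1$ gives $1-g_k(t)\le B\,e^{-b(t-2k)}$ for $t\ge2k$ with $b,B$ independent of $k$. (In fact, past its front $\delta_k:=1-g_k$ satisfies the near-linear recursion $\delta_k'\approx\delta_{k-1}-\delta_k$, whose solution is of Erlang shape $\sim t^{\,k-1}e^{-t}$, so $\delta_k(2k)$ is exponentially small \emph{in $k$}.) Hence $\tfrac12\int_{2k}^\infty(t-2k)(1-g_k^2)\,dt\le B\int_0^\infty u\,e^{-bu}\,du=B/b^2=O(1)$.

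The substance of the conjecture is the first term in \eqref{decomp}: a bound
\begin{equation}\label{claimA}
  \int_0^{2k}(2k-t)\,g_k(t)^2\,dt\le C\qquad\text{uniformly in }k ,
\end{equation}
which says the front of $g_k$ has not fallen appreciably behind position $2k$; it would suffice to know that, for a large constant $L$, $g_k(2k-L)=O(k^{-2})$, since then the part of \eqref{claimA} over $[0,2k-L]$ is at most $2k\int_0^{2k-L}g_k\le(2k)^2g_k(2k-L)=O(1)$, while the part over $[2k-L,2k]$ is at most $\tfrac12L^2$. The naive attempt --- linearise to $g_k'\le\tfrac12 g_{k-1}$, whence $g_k(t)\le(t/2)^k/k!$ --- controls $g_k$ only for $t\lesssim 2k/e$, because the linearised dynamics merely translate the front without damping it: the term $-\tfrac12 g_k^2$, which is precisely what caps the front and fixes its speed at $2$, has been discarded. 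So \eqref{claimA} genuinely needs a nonlinear front-propagation argument. I would attempt to build an ordered travelling super-solution $\bar g_k(t)=\min\{1,\varphi(t-2k)\}$ with $\varphi$ increasing, $\varphi(-\infty)=0$, $\varphi(+\infty)=1$, such that the pair $(\bar g_{k-1},\bar g_k)$ is a super-solution of the monotone system \eqref{pg}, and conclude $g_k\le\bar g_k$ by a comparison principle; an alternative is to find a Lyapunov functional, or a monotone ``front-width'' quantity --- e.g.\ a variance of the probability density $g_k'$ --- obeying a favourable recursion. In either case the admissible front speed is determined by a Hamilton--Jacobi/variational computation for the discrete-space reaction equation \eqref{pg}, and showing that the selected speed is exactly $2$, with $O(1)$ phase corrections, is essentially the same difficulty as the open Conjectures \ref{Conj-rhoInfinity} and \ref{Conj-gx}. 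This step --- proving that the front of \eqref{pg} does not lag --- is where I expect the argument to stall, and it is presumably why \refConj{ConjImproved} is stated only as a conjecture.
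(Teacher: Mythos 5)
This statement is labeled a \emph{conjecture} in the paper: the inequality $\gG_k\le\gup_k$ is proved in \refS{SUB2} (and you correctly cite \eqref{gam+} for it), but the bound $\gup_k\le k^2+\gdimp$ has no proof in the paper; it is supported only by the numerics in \refS{Srho2Sim} (\refF{FGammaKrho2UB}, suggesting $\gdimp\approx0.743$). So there is no "paper proof" to compare against, and your proposal should be read as an analysis of what a proof would require --- which you yourself acknowledge at the end.

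Your bookkeeping is correct and adds genuine content beyond what the paper records. The conservation law \eqref{consrv} is the exact analogue for $g_k$ of \eqref{intk} for $\rho_k}$, and the decomposition \eqref{decomp} is a valid algebraic identity (add and subtract $\tfrac12\int_0^\infty 2k(1-g_k^2)\,dt=2k^2$, split at $t=2k$, and regroup), with both summands manifestly nonnegative. This cleanly separates the two obstructions --- front-lag behind $2k$ and tail mass past $2k$ --- and re-derives $\gup_k\ge k^2$ along the way. You also correctly diagnose why the naive linearization $g_k'\le\tfrac12 g_{k-1}$ is useless past $t\approx 2k/e$: dropping $-\tfrac12 g_k^2$ removes precisely the saturation that pins the front speed at $2$.

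Two remaining gaps, both of which you flag but which deserve to be stated precisely. First, your tail argument asserts $1-g_k(t)\le Be^{-b(t-2k)}$ with $b,B$ \emph{uniform in $k$}; the analogue in the paper (\refL{Lqn} and \refT{T1}\ref{T1rholim}) produces constants $b_k,B_k$ that are allowed to depend on $k$, so the uniformity is an additional claim that your logistic-comparison/Erlang sketch does not yet establish rigorously --- and without uniformity the tail term in \eqref{decomp} is not bounded independently of $k$. Second, and more fundamentally, \eqref{claimA} is exactly the statement that the front of \eqref{pg} lags position $2k$ by only $O(1)$, with an $O(1)$ front width; this is a quantitative front-selection statement for the monotone ODE system \eqref{pg} and is essentially the deterministic shadow of the open Conjectures \ref{Conj-rhoInfinity} and \ref{Conj-gx}. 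Your suggested routes (ordered travelling super-solutions, a Lyapunov/front-width functional, a variational speed computation) are all plausible lines of attack, but none is carried out, so the proposal does not prove the conjecture --- consistent with the paper's own assessment that the statement is open.
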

We established in \refS{SUB2} that $\gG_k \leq \gup_k$,
so only $\gup_k \leq k^2+\gdimp$ is conjectural.
If the conjecture holds, then it follows, using also \eqref{cbounds},
that
$\gamma_k = \gG_k-\gG_{k-1} \geq (k^2) - ((k-1)^2+\gdimp) = 2k-1-\gdimp$
and
$\gamma_k = \gG_k-\gG_{k-1} \leq (k^2+\gdimp)-(k-1)^2 = 2k-1+\gdimp$.
Hence, the conjecture would imply
\begin{equation}
2k-1-\gdimp \leq \gamma_k \leq 2k-1+\gdimp.
\end{equation}
In particular, if Conjecture \ref{ConjImproved} holds with
$\gdimp \leq 1$ as it appears, then $2k-2 \leq \gamma_k \leq 2k$.

\subsection{Estimates of the fixed-point distributions
 \texorpdfstring{$\rho_k$}{\unichar{"03C1}\unichar{"2096}}}
We also numerically estimated the distributions $\rho_k$;
recall from \refT{T1} that $C_1(G_k(t))/n$ $\pto$ $\rho_k(t)$.
We may begin with either $\rho_0(t)$, which is 0 for $t<0$ and 1 for $t \geq 0$,
or with $\rho_1(t)$, which as described in \refE{Egamma1} is the inverse function of
$-\log(1-\rho)/\rho$.
(Both choices gave similar results, the latter being slightly preferable numerically.)
We use $\rho_{k-1}$ to obtain $\rho_k$,
following the branching process described in \refS{SSbranching}.
The survival probability $\rho_t(x)$ at time $t$ of a particle born at time $x$ in the branching process equivalent of $G_k$
is given by the function $\rho_t = \rho_\kk$
which (see \eqref{rhoPhi})
is the largest fixed point of
\begin{align}\label{Phiksim}
  \rho_t &= \Phi_\kk\rho_t \eqdef 1-e^{-T_\kk\rho_t}
\end{align}
(the time $t$ is implicit in the kernel $\kk =\ kk_t$
thus in the operators $\Phi_\kk$ and $T_\kk$),
where (see \eqref{tk}) $T_\kk$ is given by
$\bigpar{T_\kk f}(x)=\intS \kk(x,y)f(y)\dd\mu(y)$.
With reference to the kernel $\kk$ defined in \eqref{kkt},
$\bigpar{T_\kk f}(x)=0$ for $x>t$, while otherwise,
with $\mu=\rho_{k-1}$ as in \eqref{muk},
\begin{align}
  \bigpar{T_\kk f}(x)
   &=     \int_{0}^{t} (t-x\vee y) f(y)\dd \rho_{k-1}(y) \notag
   \\&=   \int_{0}^{x} (t-x) f(y)\dd \rho_{k-1}(y)
        + \int_{x}^{t} (t-y) f(y)\dd \rho_{k-1}(y) . \label{Tksim}
\end{align}
Given $t$, to find $\rho_t$ numerically we iterate
\eqref{Phiksim},
starting with some $f$ known to be larger than $\rho_t$ and repeatedly
setting $f(x)$ equal to $\bigpar{\Phi_\kk f}(x)$;
this gives a
sequence of functions that converges to
the desired largest fixed point $\rho_t$, cf.\ \cite[Lemma 5.6]{SJ178}.
We will estimate $\rho_t$ for times $i \, \Delta t$,
for $\Delta t$ some small constant and $i=0,\ldots,I$,
with $I \Delta t$ judged to be sufficient time to observe all relevant behavior.
We initialize with $f \equiv 1$ to find $\rho_{I \Delta t}$,
then iteratively initialize with $f = \rho_{i \Delta t}$
to find $\rho_{(i-1) \Delta t}$.
Since the branching process is monotone in $t$
--- each vertex can only have more children by a later time $t$ ---
so is the survival probability, thus
$\rho_{i \Delta t}$ is larger than $\rho_{(i-1) \Delta t}$
and therefore a suitable starting estimate.
In practice we find that the process converges in 20 iterations or so
even for $\rho_{I \Delta t}$, and less for subsequent functions $\rho_{i \Delta t}$,
with convergence defined as two iterates differing
by at most $10^{-8}$ for any $x$.

For each $k$ in turn, we do the above for all times $t$,
whereupon the desired function
 $\rho_k (t) \eqdef \rho(\kk) = \rho(\kk_t) $
is given by  (see \eqref{rhokappa} and \eqref{rhok})
\begin{align}\label{rhokiterate}
 \rho_k(t)= \int_{0}^{\infty} \rho_t(x) \dd\rho_{k-1}(x) .
\end{align}
Do not confuse $\rho_t$ of \eqref{Phiksim} and $\rho_k$ of \eqref{rhokiterate},
respectively the $\rho_\kk$ and $\rho$ of \eqref{rhokdef};
see also \eqref{rhok} and the comment following it.

All the calculations were performed with time ($t$, $x$, and $y$) discretized to multiples of $\Delta t = 0.01$
and restricted to the interval $[0,10]$.
For a fixed $t$, the calculation in \eqref{Tksim} can be done
efficiently for all $x$.
The derivative of \eqref{Tksim} with respect to $x$
is $-\int_{0}^{x} f(y) \dd \rho_{k-1}(y)$
(cf.{} \eqref{ev2} and \eqref{ev'}).
So, given the value of \eqref{Tksim} for some $x$,
that at the next discrete $x$ is the discrete sum corresponding to this integral,
and in one pass we can compute these integrals (discretized to summations)
for all $x$.
Each computed $\rho_k(t)$ is translated by $2k$ to keep the functions' interesting regimes
within the time range $[0,10]$, before doing the computations for $k+1$,
but these translations are reversed before interpreting the results.

The first observation is that the estimates of $\rho_k$ are consistent with \refConj{Conj-rhoInfinity}.
As shown in \refF{FigRho3from1to5}, even the first few functions $\rho_k$ have visually very similar forms.
\begin{figure}[htbp]
  \centering
  \includegraphics[width=4in]{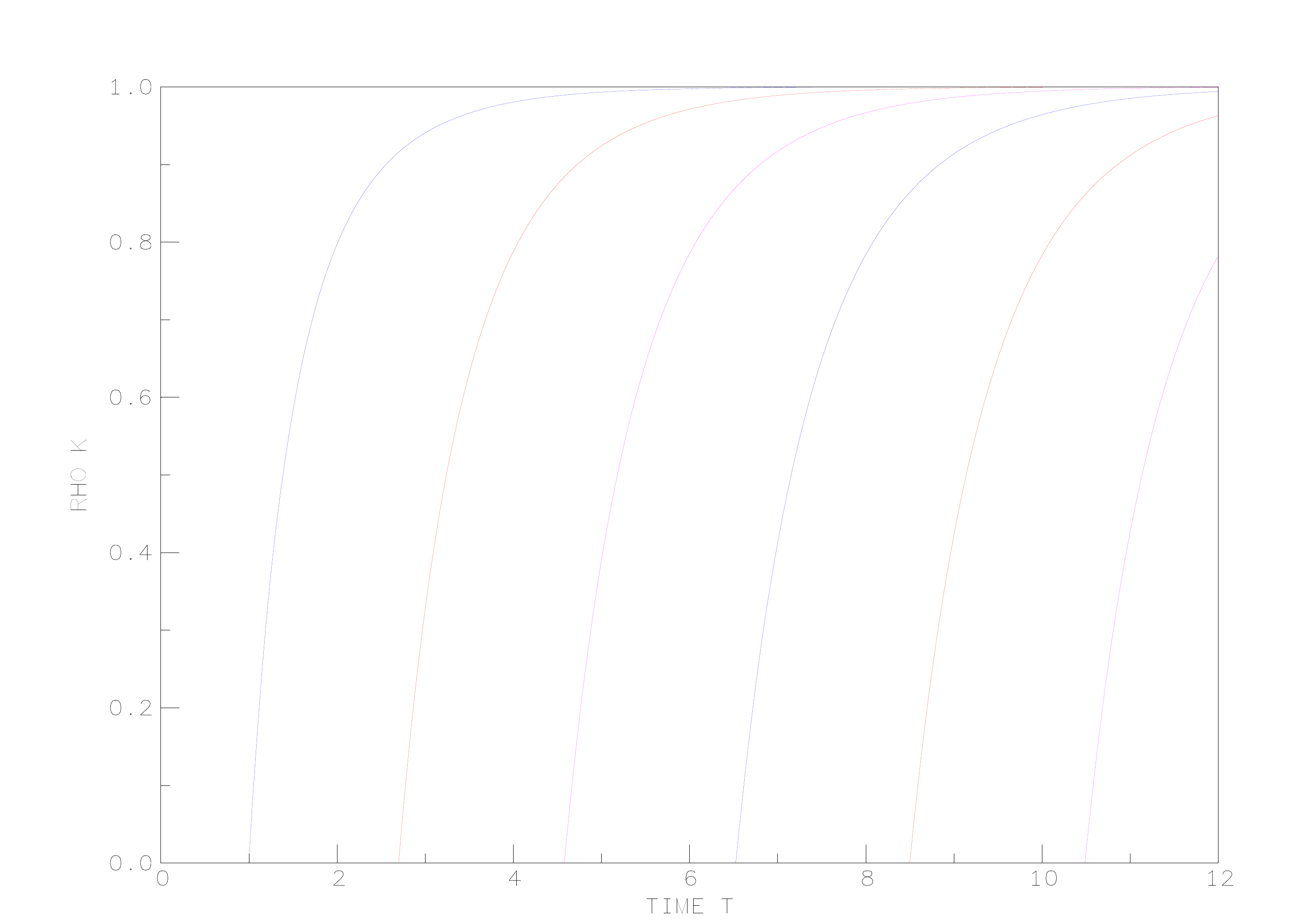}
  \caption{Estimates of $\rho_k$ for $k=1,\ldots,6$,
  already suggesting a limiting $\rho_\infty$ (up to translation).}\label{FigRho3from1to5}
\end{figure}

To make a more precise comparison, we time-shift each function $\rho_k$ so that it
reaches the value $1-e^{-1}$ at time $t=4$ (arbitrarily chosen).
\refF{rho3shift12thou} shows the thus-superposed curves for $\rho_1$, $\rho_2$, and $\rho_{\num{1000}}$;
the curve $\rho_5$ (not shown) is already visually indistinguishable from $\rho_{\num{1 000}}$.
\begin{figure}[htbp]
  \centering
  \includegraphics[width=4in]{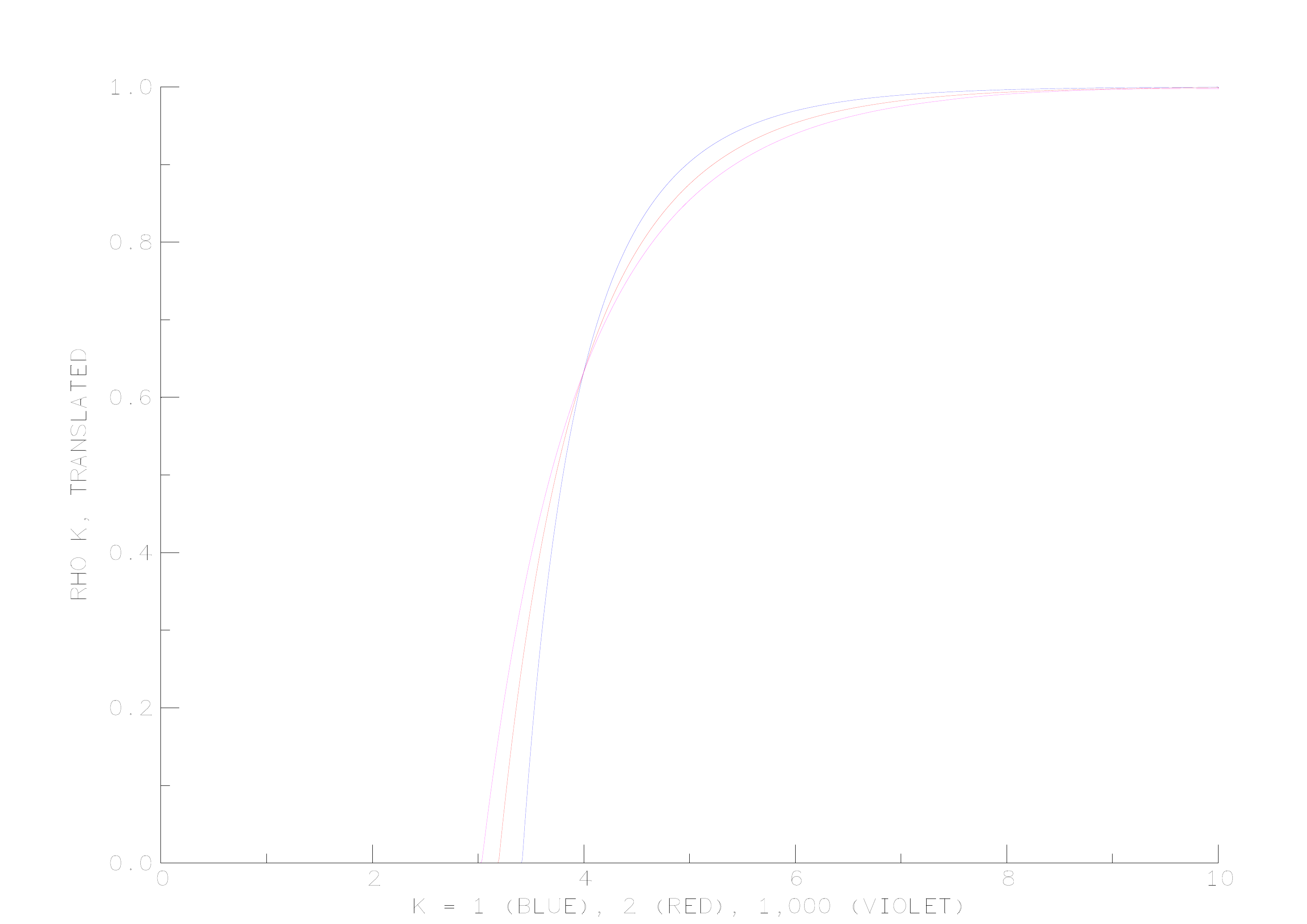}
  \caption{Functions $\rho_k$ time-shifted to coincide as nearly as possible,
  each shifted so as to make $\rho_k(4)=1-e^{-1}$,
  for $k=1$, 2, and \num{1000}.}\label{rho3shift12thou}
\end{figure}

Estimates for $\gamma_k$,
obtained from those for $\rho_k$ via \eqref{wlim},
are shown in \refTab{Tgamma10}.
Estimates of $\gamma_k$ for large $k$ were deemed numerically unreliable for two reasons.
First, discretization of time to intervals of size $\Delta t=0.01$ is problematic:
the timing of $\rho_1$ is uncertain to this order,
that of $\rho_2$ additionally uncertain by the same amount, and so on,
and translation of $\rho_k$ directly affects the corresponding estimate of $\gamma_k$.
Second, the time range $t \in [0,10]$ (translated as appropriate) used in the computations
proved to be too narrow, in that for large $k$ the maximum value of $\rho_k$ observed
only about $0.9975$,
and the gap between this and 1 may be enough to throw off the estimates of $\gamma_k$ perceptibly.

\begin{table}[htbp]
  \centering
  \footnotesize
  \begin{tabular}{|c|cccccccccc|}
    \hline
    $k$ & 1 & 2 & 3 & 4 & 5 & 6 & 7 & 8 & 9 & 10 \\
    $\gamma_k$ &
 1.202 & 3.095 & 5.057 & 7.043 & 9.038 & 11.039 & 13.042 & 15.047 & 17.052 & 19.058 \\
    \hline
  \end{tabular}
  \caption{Estimates of $\gamma_k$ from \eqref{wlim}.
  }\label{Tgamma10}
\end{table}

\section{Open questions}
We would be delighted to confirm
the various conjectures above, in particular
Conjectures \ref{Conj-rhoInfinity}--\ref{Conj2k-1},
and get a better understanding of (and ideally a closed form for) $\rho_\infty$
(provided it exists).

It is also of natural interest to ask this $k$th-minimum question for structures other than spanning trees.
Subsequent to this work, the length of the $k$th shortest $s$--$t$ path in a complete graph
with random edge weights has been studied in \cite{GerkeMS}.
The behavior is quite different:
the first few paths cost nearly identical amounts,
while \cite{GerkeMS} gives results for all $k$ from 1 to $n-1$.

The ``random assignment problem'' is to determine the cost of a
minimum-cost perfect matching in a complete bipartite graph with random edge weights,
and a great deal is known about it, by a variety of methods;
for one relatively recent work, with references to others, see \cite{WastlundAP}.
It would be interesting to understand the $k$th cheapest matching.

It could also be interesting to consider other variants of all these questions.
One, in the vein of \cite{FriezeJ}, is to consider the $k$ disjoint structures which together have the smallest possible total cost.
Another is to consider a second structure not disjoint from the first, but differing in at least one element, either of our choice or as specified by an adversary.

\section*{Acknowledgments}
We thank Oliver Riordan for helpful comments which simplified our proof,
and Bal\'azs Mezei for assistance with Julia programming.

\newcommand\AAP{\emph{Adv. Appl. Probab.} }
\newcommand\JAP{\emph{J. Appl. Probab.} }
\newcommand\JAMS{\emph{J. \AMS} }
\newcommand\MAMS{\emph{Memoirs \AMS} }
\newcommand\PAMS{\emph{Proc. \AMS} }
\newcommand\TAMS{\emph{Trans. \AMS} }
\newcommand\AnnMS{\emph{Ann. Math. Statist.} }
\newcommand\AnnPr{\emph{Ann. Probab.} }
\newcommand\CPC{\emph{Combin. Probab. Comput.} }
\newcommand\JMAA{\emph{J. Math. Anal. Appl.} }
\newcommand\RSA{\emph{Random Struct. Alg.} }
\newcommand\ZW{\emph{Z. Wahrsch. Verw. Gebiete} }
\newcommand\DMTCS{\jour{Discr. Math. Theor. Comput. Sci.} }

\newcommand\AMS{Amer. Math. Soc.}
\newcommand\Springer{Springer-Verlag}
\newcommand\Wiley{Wiley}

\newcommand\vol{\textbf}
\newcommand\jour{\emph}
\newcommand\book{\emph}
\newcommand\inbook{\emph}
\def\no#1#2,{\unskip#2, no. #1,} %(typeset after year)
\newcommand\toappear{\unskip, to appear}

\newcommand\arxiv[1]{\texttt{arXiv:#1}}
\newcommand\arXiv{\arxiv}

\def\nobibitem#1\par{}


\begin{thebibliography}{99}

\bibitem[Billingsley(1968)]{Billingsley}
Patrick Billingsley,
\book{Convergence of Probability Measures}.
%1st ed.,
Wiley, New York, 1968.


\bibitem[Bollob\'as(2001)]{Bollobas}
B. Bollob\'as,
\book{Random Graphs}, 2nd ed., Cambridge Univ. Press,
Cambridge, 2001.

\bibitem{BollobasFrieze}
  B\'ela Bollob\'as and Alan M. Frieze,
  On matchings and Hamiltonian cycles in random graphs.
  \emph{Random graphs '83 (Poznań, 1983)}, 23–46,
%  North-Holland Math. Stud., 118,
  \emph{Ann. Discrete Math.}, \textbf{28}, North-Holland, Amsterdam, 1985.

\bibitem[Bollob\'as, Janson and Riordan(2007)]{SJ178}
B\'ela Bollob\'as, Svante Janson and Oliver Riordan,
The phase transition in inhomogeneous random graphs.
\RSA \vol{31} (2007), 3--122.

\bibitem[Bollob\'as, Janson and Riordan(2010)]{SJ223}
B\'ela Bollob\'as, Svante Janson and Oliver Riordan,
 The cut metric, random graphs, and branching processes.
\emph{J. Statistical Physics} \textbf{140} (2010), 289--335.
%\arxiv{0901:2091}


\bibitem{BoucheronEtAl}
St{\'e}phane Boucheron, G{\'a}bor Lugosi and Pascal Massart,
\emph{Concentration Inequalities. A Nonasymptotic Theory of Independence}.
%With a foreword by Michel Ledoux.
Oxford University Press, Oxford, 2013.
%x+481 pp. ISBN: 978-0-19-953525-5

\bibitem[Chebolu, Frieze, Melsted and Sorkin(2009)]{CFMS}
Prasad Chebolu, Alan Frieze, P{\'a}ll Melsted, and Gregory B. Sorkin,
Average-case analyses of {V}ickrey costs.
\emph{Approximation, Randomization, and Combinatorial Optimization},
{in Lecture Notes in Comput. Sci.}
volume {5687},
{434--447},
% {Springer},
{2009}.

\bibitem{SJ277}
Colin Cooper,  Alan Frieze,  Nate Ince, Svante Janson and Joel Spencer,
On the length of a random minimum spanning tree.
\emph{Combin. Probab. Comput.} \textbf{25} (2016), no. 1, 89--107.

\bibitem{ER1959}
Paul  Erd\H os and Alfr\'ed R\'enyi, % P. and A.
On random graphs I.
\emph{Publ. Math. Debrecen} \vol6 (1959), 290--297.


\bibitem[Frieze(1985)]{Frieze} Alan M. Frieze, %A. M.
On the value of a random minimum spanning tree problem,
\emph{Discrete Applied Mathematics}
\textbf{10} (1985), 47--56.

\bibitem[Frieze and Johansson(2015)]{FriezeJ}
Alan Frieze and Tony Johansson,
On edge disjoint spanning trees in a randomly weighted complete graph.
\emph{Combin. Probab. Comput.} \textbf{27} (2018), no. 2, 228--244.
%Preprint, 2015. \arXiv{1505.03429}

\bibitem{FriezeMcDiarmid}
A. M. Frieze and  C. J. H. McDiarmid,
On random minimum length spanning trees.
\emph{Combinatorica} \vol9 (4) (1989), 363--374.

\bibitem{GerkeMS}
Stefanie Gerke, Bal\'azs Mezei and Gregory B. Sorkin,
Successive shortest paths.
Manuscript in preparation.

\bibitem{Gut}
Allan Gut,
\emph{Probability: A Graduate Course},
2nd ed., Springer, New York, 2013.


\bibitem{SJ110}
Svante Janson,
The minimal spanning tree in a complete graph and a functional limit
theorem for trees in a random graph.
\emph{Random Structures and Algorithms} \textbf{7} (1995), 337--355.

%\bibitem{SJ249}
%Svante Janson,
%Graphons, cut norm and distance, couplings and rearrangements.
%\emph{NYJM Monographs} \vol4, 2013.

\bibitem{SJ97}
Svante Janson, Donald E. Knuth,  Tomasz {\L}uczak and  Boris Pittel,
The birth of the giant component.
\emph{Random Structures Algorithms} \textbf4 (1993), no. 3, 231--358.

\bibitem{SJ184}
Svante Janson and Malwina J. Luczak,
A simple solution to the $k$-core problem.
\emph{Random Structures Algorithms}
\vol{30} (2007), no. 1-2, 50--62.

%\bibitem[Janson and Riordan(2011)]{SJ230}
%Svante Janson and Oliver Riordan,
%Duality in inhomogeneous random graphs, and the cut metric.
%\emph{Random Structures Algorithms} \textbf{39} (2011), no. 3, 399--411.

\bibitem[Janson and Riordan(2012)]{SJ232}
Svante Janson and Oliver Riordan,
Susceptibility in inhomogeneous random graphs.
\emph{Electronic J. Combin.}
\textbf{19}(1) (2012), \#P31.
%\arxiv{0905.0437}

\bibitem[Janson and Sorkin(2013)]{JS}
Svante Janson and Gregory B. Sorkin,
VCG auction mechanism cost expectations and variances.
Preprint, 2013.
\arXiv{1310.1777}

\bibitem{SJ110-addendum}
Svante Janson and Johan W\"astlund,
Addendum to ``The minimal spanning tree in a complete graph and a functional
limit theorem for trees in a random graph'',
\emph{Random Structures and Algorithms} \textbf{28} (2006), no. 4,  511--512.

\bibitem{Kruskal}
Joseph B. Kruskal, Jr.,
On the shortest spanning subtree of a graph and the traveling salesman problem.
\emph{Proc. Amer. Math. Soc.} \vol7 (1956), 48--50.

\bibitem{KO}
Daniela K\"uhn and Deryk Osthus,
Hamilton decompositions of regular expanders: a proof of Kelly's conjecture for large tournaments.
\emph{Adv. Math.} \vol{237} (2013), 62--146.

\bibitem{Lovasz}
L\'aszl\'o Lov\'asz,
\emph{Large Networks and Graph Limits},
American Mathematical Society, Providence, RI, 2012.

\bibitem{Lucas}
\'Edouard Lucas,
\emph{R\'ecr\'eations Math\'ematiques}, Vol. 2.
Gauthier--Villars, Paris, 1892.
(A facsimile of the 1896 second printing is accessible at \url{https://archive.org/details/recretionmatedou02lucarich}.)

\bibitem{McDiarmid1998}
Colin McDiarmid,
Concentration.
\inbook{Probabilistic Methods for Algorithmic Discrete Mathematics},
eds. M. Habib, C. McDiarmid, J. Ramirez and B. Reed,
%Algorithms Combin. 16,
Springer, Berlin, 1998,
pp. 195--248.

\bibitem[Nash-Williams(1964)]{Nash-Williams1964}
C. St. J. A. Nash-Williams,
Decomposition of finite graphs into forests.
\emph{J. London Math. Soc.} \vol{39} (1964), 12.

\bibitem[Pittel,  Spencer and Wormald(1996)]{PittelSpencerWormald}
Boris Pittel, Joel Spencer and Nicholas Wormald,
Sudden emergence of a giant $k$-core in a random graph.
\emph{J. Combin. Theory Ser. B} \vol{67} (1996), no. 1, 111--151.

\bibitem[Steele(1987)]{Steele1987}
J. Michael Steele,
On Frieze's $\zeta(3)$ limit for lengths of minimal spanning trees.
\emph{Discrete Appl. Math.} \vol{18} (1987), no. 1, 99--103.

\bibitem[Talwar(2003)]{Talwar}
Kunal Talwar,
The price of truth: frugality in truthful mechanisms,
\emph{Proceedings of {STACS} 2003},
 {in Lecture Notes in Comput. Sci.}
volume {2607},
{608--619},
2003.

\bibitem{Wastlund}
Johan W\"astlund,
Evaluation of Janson's constant for the variance in the random minimum
spanning tree problem.
\emph{Link\"oping Studies in Mathematics} No. 7 (2005).
\url{www.ep.liu.se/ea/lsm/}
%http://www.ep.liu.se/PubList/Default.aspx?SeriesID=5405

\bibitem{WastlundAP}
Johan W\"astlund,
An easy proof of the $\zeta(2)$ limit in the random assignment problem,
\emph{Elect. Comm. in Probab.} volume 14, {261-–269}, 2009.


\bibitem{Welsh}
Dominic J. A. Welsh,
\emph{Matroid theory}.
%L. M. S. Monographs, No. 8.
Academic Press, %[Harcourt Brace Jovanovich, Publishers], %London-
New York, 1976.
Reprinted, Dover, Mineola, New York, 2010.
%xi+433 pp.


\end{thebibliography}
\end{document}